\documentclass[a4paper, 10pt]{article}
\usepackage{amsmath}
\numberwithin{equation}{section}
\usepackage{amssymb,esint,hyperref}
\usepackage{amscd}
\usepackage{xspace}
\usepackage{fancyhdr}
\usepackage{color}
\usepackage{verbatim}
\usepackage{graphicx}
\usepackage{cite}
\usepackage{stmaryrd}
\usepackage{bbm}
\usepackage{xcolor}
\usepackage{mathrsfs}
\usepackage{srcltx} 

\usepackage{marginnote}
\let\oldmarginpar\marginpar
\renewcommand\marginpar[1]{\-\oldmarginpar[\raggedleft\footnotesize #1]%
	{\raggedright\footnotesize\color{red} #1}} 
\newtheorem{theorem}{Theorem}[section]
\newtheorem{corollary}[theorem]{Corollary}

\newtheorem{definition}[theorem]{Definition}

\newtheorem{lemma}[theorem]{Lemma}

\newtheorem{proposition}[theorem]{Proposition}
\newtheorem{remark}[theorem]{Remark}

\def\da{\mathsf{Data}}
\def\B{\mathsf{B}}
\def\A{\mathsf{A}}
\def\M{\mathsf{M}}

\def\E{\mathsf{E}}
\def\F{\mathsf{F}}
\def\G{\mathsf{G}}

\def\K{\mathsf{K}}

\def\N{\mathsf{N}}

\newcommand{\eps}{\varepsilon}

\newenvironment{proof}[1][Proof]{\textbf{#1.} }{\hfill\rule{0.5em}{0.5em}}
{\catcode`\@=11\global\let\AddToReset=\@addtoreset
	\AddToReset{equation}{section}
	
	\AddToReset{theorem}{section}

	\title{Schauder-type Estimates and Well-posedness for Nonlocal Quasilinear Evolution Equations in Fluid Dynamics 
    \footnote{This article forms Part I of a series originating from the comprehensive preprint arXiv:2407.05313, focusing on establishing Schauder-type estimates and their application to the well-posedness of certain free boundary problems.}}
	\author{
		{{\bf Ke Chen,\thanks{E-mail address: k1chen@polyu.edu.hk, Department of Applied Mathematics, The Hong Kong Polytechnic University, Kowloon, Hong Kong, PR China.}
				~~Ruilin Hu\thanks{E-mail address: rh2488@bath.ac.uk, Department of Mathematical Sciences, University of Bath, Bath BA2 7AY, UK.}
				~~Quoc-Hung Nguyen\thanks{E-mail address: qhnguyen@amss.ac.cn, Academy of Mathematics and Systems Science, Chinese Academy of Sciences, Beijing, 100190, China.}}}}
	
\begin{document}
		\maketitle
        \begin{abstract}
We establish Schauder-type estimates for linear parabolic systems driven by variable-coefficient nonlocal pseudo-differential operators of order $s>0$.
These estimates are formulated in critical time-weighted H\"older/Besov-type spaces and are tailored to quasilinear equations at scaling-critical regularity. 

A key ingredient is a kernel-adapted freezing-coefficient method. After freezing the coefficients at a reference point, we derive explicit representation formulas through the corresponding fundamental kernels and then evaluate the resulting bounds at the physical point. This avoids treating the coefficient variation as a separate lower-order perturbation and yields robust control of the residual terms within the leading-order dynamics.

As an application, we obtain a general well-posedness framework for a class of nonlocal quasilinear parabolic equations in critical spaces. In particular, we prove critical local and, in suitable regimes, global well-posedness for the Muskat equation with surface tension and for the two- and three-dimensional Peskin problems with nonlinear elastic tension. These results provide a unified critical framework for distinct nonlocal evolution equations arising in fluid dynamics and related areas.
\end{abstract}

		\section{Introduction}\label{secintro}
		The general form of partial differential equations in fluid dynamics can be expressed as a system of conservation laws that account for transport, diffusion, and sources of mass, momentum, and energy. A prototypical formulation synthesizes four essential components: first-order temporal evolution, nonlinear convective transport, second-order diffusive effects, and external forcing terms. This comprehensive structure can be mathematically represented as:
		\begin{align*}
			\partial_t \mathbf{U} +\nabla\cdot F_{inviscid}(\mathbf{U})-\nabla\cdot F_{viscous}(\mathbf{\nabla U})=S,
		\end{align*}
		where the constituent elements are defined as
		\begin{itemize}
			\item $\mathbf{U}$: State vector of conserved quantities (e.g.,mass, momentum, and energy).
			\item $F_{inviscid}$: Inviscid flux tensor (e.g., convective and pressure terms).
			\item $F_{viscous}$: Viscous flux tensor (e.g., stress and heat diffusion terms, dependent on $\nabla\mathbf{U}$).
			\item $S$: Source terms (e.g., gravitational forces, heat sources).
		\end{itemize}
		While this general framework has been extensively employed in fluid modeling, critical challenges persist in establishing rigorous well-posedness, particularly for systems where parabolic dissipation mechanisms interact with nonlinear convective phenomena. The current investigation addresses this fundamental issue through a focused study of quasilinear parabolic systems with the following canonical structure:
		\begin{align}\label{quasi}
			\partial_t u(t,x)+\mathcal{A}_s[t,x,u,\nabla u,\cdots, \nabla^l u](u)(t,x)=\mathcal{N}(t,x,u),\quad\quad\quad (t,x)\in \mathbb{R}^+\times\mathbb{R}^d,
		\end{align}
		where $l\in\mathbb{N}, s\in\mathbb{R}^+$. For any fixed $t\in\mathbb{R}^+, x_0\in\mathbb{R}^d$, the operator $$\mathcal{A}_s[t,x_0,u(t,x_0),\cdots,\nabla^lu(t,x_0)]$$ is a
		linear differential operator of order $s$ with $s>l$, and $\mathcal{N}$ represents nonlinear forcing terms containing derivatives of $u$ of order strictly less than $s$. The operator  $\mathcal{A}_s[t,x,u,\nabla u,\cdots, \nabla^l u](u)(t,x)$ can be equivalently represented as a pseudo-differential operator. 
		This hierarchical differentiation structure ensures the viscous dissipation mechanism governs the highest-order dynamics.

		In this paper, we develop a new analytical framework for investigating the well-posedness of quasilinear partial differential equations of the form \eqref{quasi}. Central to this framework is a novel methodology for deriving Schauder-type a priori estimates applicable to general linear parabolic operators—a cornerstone result underpinning the establishment of regularity properties for this class of equations. Subsequently, we apply these estimates to establish local and global well-posedness results for several nonlocal and nonlinear evolution equations arising in fluid dynamics.
		
		The cornerstone of our approach lies in establishing sharp Schauder-type estimates for linear parabolic operators. As background and motivation, we recall that classical Schauder theory provides fundamental Hölder-norm bounds for solutions to linear elliptic equations in terms of the data. For instance, the well-known result \cite{GTbook} for the Poisson equation $\Delta u = \operatorname{div} F$ in $\mathbb{R}^d$ yields:
		\begin{align*}
			\|\nabla u\|_{\dot{C}^{a}} \lesssim\left(\sup _{k \in \mathbb{N}} \frac{1}{|a-k|}\right)\|F\|_{\dot{C}^{a}},\quad \quad \forall a \in (0, \infty)  \backslash \mathbb{N}.
		\end{align*}
		This extends via the freezing coefficient method \cite{Evans98} to variable-coefficient elliptic systems $\operatorname{div}(\A(x) \nabla u) = \operatorname{div} F$ (with uniformly elliptic $\A$):
		\begin{align*}
			\|\nabla u\|_{\dot{C}^{a}} \lesssim\left(\sup _{k \in \mathbb{N}} \frac{1}{|a-k|}\right)\left(\|F\|_{\dot{C
				}^{a}}+\left(1+\|\A\|_{\dot{C}^{a}}\right)\|u\|_{L^{\infty}}\right),\ \  \forall a \in(0, \infty) \backslash \mathbb{N}.
		\end{align*}
		Building on these classical elliptic estimates, we establish their parabolic counterparts through spacetime scaling analysis. A prototype is the heat equation
		\begin{equation}\label{heat}
			\begin{aligned}
				&\partial_t u-\Delta u=\operatorname{div} F ,\ \ \text{in} \ (0,\infty)\times\mathbb{R}^d ,\\
				&u|_{t=0}=u_0,
			\end{aligned}
		\end{equation}
		for which we have the sharp regularity estimate
		\begin{align}
			&\sup _{t>0}(\|u(t)\|_{L^{\infty}}+ t^{\frac{1+a}{2}}\|\nabla u(t)\|_{\dot{C}^{a}} )\lesssim \left\|u_{0}\right\|_{L^{\infty}}+\frac{1}{a(1-a)}\sup _{t>0} t^{\frac{1+a}{2}}\|F(t)\|_{\dot{C}^{a}},\ \ \quad\forall a\in(0,1).\label{heatb}
		\end{align}
		The estimate \eqref{heatb} can be extended to equations with smooth coefficients. 
		Consider
		\begin{equation}\label{heatvar}
			\begin{aligned}
				&\partial_t u-\operatorname{div}(\A(x)\nabla u) =\operatorname{div}(F),\ \ \text{in} \ (0,\infty)\times \mathbb{R}^d,\\
				&u|_{t=0}=u_0,
			\end{aligned}
		\end{equation}
		where the coefficient function $\A$ is assumed to be H\"{o}lder continuous. By the fundamental observation that equations with H\"{o}lder continuous coefficients can be treated locally as a 
		perturbation of constant coefficient equations, we can use the classical freezing coefficient method to reduce \eqref{heatvar} to \eqref{heat}. However, this approach inherently yields estimates that are only valid locally in time:
		\begin{align}
			&\sup _{t\in[0,T]}(\|u(t)\|_{L^{\infty}}+ t^{\frac{1+a}{2}}\|\nabla u(t)\|_{\dot{C}^{a}} )\lesssim \left\|u_{0}\right\|_{L^{\infty}}+\frac{1}{a(1-a)}\sup _{t\in[0,T]} t^{\frac{1+a}{2}}\|F(t)\|_{\dot{C}^{a}},\ \ \quad\forall a\in(0,1).\label{heatloc}
		\end{align}
		Here, the time horizon $T>0$ depends critically on $\|\A\|_{\dot{C}^{a}}$, reflecting the intrinsic dependence of parabolic regularity on coefficient smoothness. The necessity for a finite $T$ arises from the need to control lower-order terms generated during the coefficient freezing process.
		
		To extend Schauder theory beyond classical local operators and accommodate critical applications in modern fluid dynamics, particularly those involving nonlocal effects or system coupling, we develop a unified methodology to derive sharp  priori estimates for general linear parabolic systems, mirroring the global/local regularity characteristics (\eqref{heat} and \eqref{heatloc}) seen in the classical heat equation case.
		\subsection{Schauder-type estimates for linear parabolic systems}	\label{secsch}
	Let $T\in(0,+\infty]$. We investigate the non-local parabolic system governing vector-valued functions $u:(0,T)\times \mathbb{R}^d\to \mathbb{R}^N$,
		\begin{equation}\label{eqpara}
			\begin{aligned}
				&	\partial_{t} u(t, x)+\mathcal{L}_{s} u(t, x)=f(t, x) \quad \text { in } (0, T)\times\mathbb{R}^{d},\\
				&	u|_{t=0}=u_0,
			\end{aligned} 
		\end{equation}
		with parameter $s>0$. 
		
		To establish precise mathematical foundations, we formalize the following parameter specifications and operator conditions:\vspace{0.2cm}\\
		1. {\bf Regularity Parameters:} We fix two parameters $0<b< s$. The parameter $s$ denotes the order of the underlying linear operator. The choice of
		$b$ is motivated by its role as the critical regularity index in the target quasilinear equation.\vspace{0.2cm}\\
		For the proof concerning Schauder-type estimates of the linear system \eqref{eqpara}, we additionally select parameters $m,\kappa $ such that  $\kappa,\kappa+b,\kappa+b-s\notin\mathbb{N}$ and  \begin{align}
			\label{defpara}
			m\in\mathbb{N}_+, \ \ \ \ s-b<\kappa<s.
		\end{align}
     For simplicity, denote $\kappa_0=b-s+\kappa$. \vspace{0.2cm}\\
		2. {\bf Forcing Terms:} The external input $f:(0,T)\times \mathbb{R}^d\to \mathbb{R}^N$ is known force term satisfying $\da_T(f)<\infty$, where
		\begin{align*}
			\da_T(f)=  \sup _{t \in[0,T]} \left(t^{\frac{\kappa}{s}}\|f(t)\|_{\dot{C}^{\kappa_0}(\mathbb{R}^d)}+t^{\frac{m+\kappa}{s}}\|f(t)\|_{\dot{C}^{m+\kappa_0}(\mathbb{R}^d)}\right).
		\end{align*}
		3. {\bf Principal Operator:} The pseudo-differential operator $\mathcal{L}_{s}$  of order $s>0$ is defined by
		\begin{align}\label{defop}
			&\mathcal{L}_su(t,x)=(2\pi)^{-\frac{d}{2}}\int_{\mathbb{R}^d}\A(t,x,\xi)\hat u(t,\xi)e^{ix\cdot\xi}d\xi,\quad\quad\quad
		\end{align}
		where $\hat u(t,\xi)$  denotes the Fourier transform of $u(t,x)$ with respect to the spatial variable $x$, $\A(t,x,\xi)\in \mathbb{R}^{N\times N}$ is a  matrix satisfying 
		\begin{equation}\label{condop}
			\begin{aligned}
				&\quad\A(t,x,\xi)\geq c_0|\xi|^s\mathrm{Id},\\
				& \sum_ {l\leq d+s+2 } {|\xi|^{l-s}}{\left|\nabla^{l}_\xi \A(t,x,\xi)\right| }\leq c_1, \ \ \ \forall  \xi \neq 0, (t,x)\in(0,T)\times\mathbb{R}^d,
			\end{aligned} 
		\end{equation}
		for some constants $0<c_0<c_1$. Here $\mathrm{Id}$ is the $N\times N$ identity matrix, and for any  $Q_1, Q_2\in\mathbb{R}^{N\times N}$, we write  $Q_1\geq Q_2$ if $Q_1-Q_2$ is coercive. 
        The operator $\mathcal{L}_s u(t,x)$ can also be written in the shorthand notation:
\begin{equation*}
\mathcal{L}_s u(t,x) = \A(t,x,\nabla) u(t,x).
\end{equation*}
        Moreover, we have the following regularity assumption for $\A$:
		\begin{equation}\label{defnormA}
			\|\A\|_{B_T}=\sup_{t\in[0,T]}\sup_{\xi\in\mathbb{R}^d}\sum_{l\leq d+s+2}\left(t^{\frac{\eta}{s}}\frac{\|\nabla_\xi^{l}\A(t,\cdot,\xi)\|_{\dot C^{\eta}}}{|\xi|^{s-l}}+t^{\frac{m+\kappa}{s}}\frac{\|\nabla_\xi^l\A(t,\cdot,\xi)\|_{\dot C^{m+\kappa}}}{|\xi|^{s-l}}\right)<\infty,
		\end{equation}
        where $\eta$ is a fixed small parameter that satisfies 
        \begin{equation}	\label{defparab}
            		 \begin{aligned}
	&	0<\eta\leq \frac{\min\{s-b,\kappa+b-s-[\kappa+b-s],\kappa+b-[\kappa+b]\}}{4},\\&
	 0<\eta\leq \frac{b-[b]}{4}\quad~\text{if}~\quad b\notin \mathbb{N},
	\end{aligned}
        \end{equation}
        where $[b]:=\max\{n: n\in\mathbb{N},n\leq b\}$. \\
        4. {\bf Scaling critical time weighted H\"{o}lder norm:} Define  the following semi-norms:
		\begin{equation}\label{nor}
			\begin{aligned}
				&\|u\|_{T}=	\sup_{t\in[0,T]} \left(\|u(t)\|_{\dot C^{b}}+t^\frac{m+\kappa }{s}\|u(t)\|_{\dot C^{m+\kappa +b}}\right),\\
				&\|u\|_{T,*}=	\sup_{t\in[0,T]} \left(t^{\frac{\eta}{s}}\|u(t)\|_{\dot C^{b+\eta}}+t^\frac{m+\kappa }{s}\|u(t)\|_{\dot C^{m+\kappa +b}}\right),
			\end{aligned}
		\end{equation}
		where $\eta$ is a fixed parameter that satisfies \eqref{defparab}. By interpolation inequality, we have $\|u\|_{T,*}\lesssim \|u\|_{T}$. 
        
		\vspace{0.2cm}
		We now present the Schauder-type estimate for the parabolic system \eqref{eqpara}. 
		\begin{theorem}\label{lemmain}
	Let $T>0$ and  $\|\A\|_{B_T}<\infty$. Suppose $u$ is a solution to the Cauchy problem \eqref{eqpara} in $C((0,T],\dot C^{b}_x(\mathbb{R}^d))\cap L^\infty_{loc}((0,T],\dot C^{m+\kappa +b}_x(\mathbb{R}^d))$ with $\A(t,x,\xi)$ satisfying \eqref{condop} and \eqref{defnormA}, then the following estimates hold:
            		\begin{align}\label{main3}
				&\|u\|_T\lesssim \|u_0\|_{\dot C^b}+  \da_T(f)+\|u\|_{T,*}\|\A\|_{B_T},\\
             &\|u\|_{T,*}\lesssim \|u_0\|_{\dot B^b_{\infty,\infty}}+  \da_T(f)+\|u\|_{T,*}\|\A\|_{B_T},   \label{main4}
			\end{align}
            where the implicit constants  depend only on $d,s,m,\kappa,b,\eta,c_0,c_1$.
            In particular, there exists $\eps_0>0$ independent of $T$ such that if $\|\A\|_{B_T}\leq \varepsilon_0$, then 
            \begin{align*}
				&\|u\|_T\lesssim \|u_0\|_{\dot C^b}+  \da_T(f),\\
             &\|u\|_{T,*}\lesssim \|u_0\|_{\dot B^b_{\infty,\infty}}+  \da_T(f).
			\end{align*}
		\end{theorem}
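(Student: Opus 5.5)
The plan is to run the kernel-adapted freezing-coefficient argument described in the abstract. Fix a reference point $x_0\in\mathbb{R}^d$ and write $\A_{x_0}(t,\xi)=\A(t,x_0,\xi)$. Rewrite \eqref{eqpara} as
\begin{align*}
\partial_t u+\A_{x_0}(t,\nabla)u=f+\big(\A_{x_0}(t,\nabla)-\A(t,x,\nabla)\big)u=:f+R_{x_0}u .
\end{align*}

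\textbf{Step 1: the frozen system.} The frozen operator has $x$-independent symbol, so the solution operator is a Fourier multiplier,
\begin{align*}
\widehat{S_{x_0}(t,\tau)}(\xi)=\exp\Big(-\int_\tau^t\A_{x_0}(r,\xi)\,dr\Big),
\end{align*}
a time-ordered exponential if the matrices do not commute. Using the coercivity and symbol bounds \eqref{condop} with $l\le d+s+2$ derivatives in $\xi$, integrate by parts in $\xi$ to get pointwise kernel bounds. For every multi-index $|\beta|\le m+\kappa+b+1$ the kernel $K_{x_0}(t,\tau,\cdot)$ should satisfy
\begin{align*}
|\nabla^\beta K_{x_0}(t,\tau,z)|\lesssim \frac{(t-\tau)^{-|\beta|/s}}{(t-\tau)^{d/s}}\Big(1+\frac{|z|}{(t-\tau)^{1/s}}\Big)^{-d-\gamma}
\end{align*}
for some $\gamma>0$. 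The derivative count $d+s+2$ in \eqref{condop} is what buys this decay.

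With these bounds, the homogeneous part $S_{x_0}(t,0)u_0$ is handled by Littlewood--Paley analysis. This gives $\|\cdot\|_{\dot C^b}$ control by $\|u_0\|_{\dot C^b}$, and the weighted higher-norm control by $\|u_0\|_{\dot B^b_{\infty,\infty}}$; the latter also covers the $t^{\eta/s}\|\cdot\|_{\dot C^{b+\eta}}$ term in $\|\cdot\|_{T,*}$. The Duhamel part $\int_0^t S_{x_0}(t,\tau)f(\tau)\,d\tau$ is split at $\tau=t/2$:
\begin{itemize}
\item for $\tau<t/2$, put all derivatives on the kernel;
\item for $\tau>t/2$, use the $\dot C^{\kappa_0}$ or $\dot C^{m+\kappa_0}$ regularity of $f$ together with cancellation $\int\nabla^\beta K=0$.
\end{itemize}
The restrictions $s-b<\kappa<s$ and $\kappa,\kappa_0\notin\mathbb{N}$ make the time integrals $\int_0^t (t-\tau)^{-(b-\kappa_0)/s}\tau^{-\kappa/s}\,d\tau$ converge and scale correctly. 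This yields the constant-coefficient versions of \eqref{main3}--\eqref{main4} with right side $\da_T(f)$, uniformly in $x_0$.

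\textbf{Step 2: evaluate at the physical point.} We need $\dot C^{b}$ and $\dot C^{m+\kappa+b}$ seminorms of $u(t)$. These are characterized by finite differences of order $>m+\kappa+b$ centered at a point, at scale $h$. Centre each such difference at $x_0$ itself, so the frozen representation is used only near $x_0$. The residual is
\begin{align*}
R_{x_0}u(\tau,x)=\big(\A(\tau,x_0,\nabla)-\A(\tau,x,\nabla)\big)u(\tau,x).
\end{align*}
The symbol difference vanishes at $x=x_0$ and is bounded by $\min(|x-x_0|^\eta\tau^{-\eta/s},\dots)\,\|\A\|_{B_T}|\xi|^s$, with the analogous higher-order Taylor bound at level $m+\kappa$. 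Plugging this into the Duhamel formula, the factor $|x-x_0|^\eta$ is absorbed by the kernel, since $|z|^\eta|K|\lesssim(t-\tau)^{\eta/s}\times$ (a kernel of the same type). The operator $\A(\tau,\cdot,\nabla)$ of order $s$ hitting $u$ costs $\|u(\tau)\|_{\dot C^{b+\eta}}$ or $\|u(\tau)\|_{\dot C^{m+\kappa+b}}$. The time integral then closes using $t^{\eta/s}$ and $t^{(m+\kappa)/s}$ weights, with the gap $\eta\le\frac14\min\{\cdots\}$ from \eqref{defparab} guaranteeing integrability and that no integer index is hit. Product terms that arise when the Leibniz rule distributes derivatives between the coefficient difference and $u$ are handled by interpolation between the two pieces of $\|u\|_{T,*}$. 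The result is the bound $\|u\|_{T,*}\|\A\|_{B_T}$. The $\|u\|_T$ estimate \eqref{main3} is then identical, with the $\dot C^b$ part of the data term replaced by $\|u_0\|_{\dot C^b}$.

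\textbf{Main obstacle and conclusion.} The hardest step is Step 2. There the residual $R_{x_0}u$ is of full order $s$, so it cannot be treated as lower order. One must show that the vanishing of the coefficient difference at $x_0$, once the seminorm is evaluated at $x_0$, gains exactly the $(t-\tau)^{\eta/s}$ needed to compensate the singularity $(t-\tau)^{-1}$. Otherwise the integral $\int(t-\tau)^{-1+\eta/s}\tau^{-\eta/s}\,d\tau$ diverges logarithmically. I would first verify this in the pure $\dot C^{b}$ case using the decay bound for the kernel, and then handle the top norm by commuting $m$ derivatives through, treating commutator terms with the $\dot C^{m+\kappa}$ part of $\|\A\|_{B_T}$.

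The constants are independent of $T$ by scaling invariance of all weighted norms. Hence choosing $\eps_0$ small enough lets us absorb $\|u\|_{T,*}\|\A\|_{B_T}$, first in \eqref{main4} and then in \eqref{main3} via $\|u\|_{T,*}\lesssim\|u\|_T$. Finiteness of the absorbed term comes from the assumed regularity $u\in L^\infty_{loc}((0,T],\dot C^{m+\kappa+b})$, after a short-time cutoff $t\ge\delta$ followed by $\delta\to0$.
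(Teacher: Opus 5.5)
Your proposal follows the paper's proof. You freeze the symbol at $x_0$ and write $u=u_{L,x_0}+u_{N,x_0}+u_{R,x_0}$ with the frozen kernel. You bound the first two pieces uniformly in $x_0$, using Littlewood--Paley for the data and a split at $t/2$ with kernel cancellation for $f$. You then evaluate the seminorms at the frozen point, so that $|R_{x_0}u(\tau,z)|\lesssim|z-x_0|^{\eta}\tau^{-(\eta+s-b)/s}\|\A\|_{B_T}\|u\|_{T,*}$, and move derivatives onto $R_{x_0}u$ for $\tau\in(t/2,t)$ at top order. Your differences centered at $x_0$ play the role of the paper's $\delta_\alpha\nabla^n(\cdot)|_{x_0=x}$.

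One slip in your ``main obstacle'' paragraph: $\int_0^t(t-\tau)^{-1+\eta/s}\tau^{-\eta/s}\,d\tau$ is a convergent Beta integral. At the $\dot C^b$ level, the vanishing of the symbol difference at $x_0$ is not needed for integrability. Its role is to trade the weight $\tau^{-\eta/s}$ carried by $\|\A\|_{B_T}$ for $(t-\tau)^{\eta/s}$, which produces the small factor $\|\A\|_{B_T}$ in place of $c_1$. Genuine integrability problems arise only at the top order near $\tau=t$, which is exactly where the paper moves derivatives onto $R_{x_0}u$.
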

        \begin{remark}\label{extmthm}
If $\|\A\|_{B_T}<\infty$ and $\|\A\|_{B_{T'}}$ is small for some $0<T'<T$, the existence of solution to the Cauchy problem \eqref{eqpara} in $[0,T]$ follows from the standard compactness method, see the proof of Proposition \ref{propb=0} for details.
        \end{remark}
		\begin{remark}
			Theorem \ref{lemmain} can extend to $u:(0,+\infty)\times\mathcal{M}\to \mathbb{R}^N$, where $\mathcal{M}$ is a smooth manifold without boundary, see Theorem \ref{thmmani}.
		\end{remark}
		\subsection{Applications to quasilinear evolution equations}
		Theorems \ref{lemmain} establishes broad applicability to parabolic systems. To demonstrate the core strategy for proving local and global well-posedness, we consider a general quasilinear evolution system for $U: [0,T]\times\mathbb{R}^d \to \mathbb{R}^N$:
		\begin{equation}\label{besovgen}
			\partial_t U(t,x) + A(\nabla U(t,x), \nabla)U(t,x) = \mathcal{N}(U)(t,x),
		\end{equation}
		 the symbol $A$ satisfies $A(Z, \xi) \sim_{Z} |\xi|^s$ for some $s > 1$ and all $Z \in \mathbb{R}^{d \times N}$. We assume that the Lipschitz space constitutes a critical space for this problem, which corresponds to $b=1$ in section \ref{secsch}.
		
		Fix parameters $m,\kappa,\eta$ satisfying \eqref{defpara}. For $T>0$, we define the following time-dependent critical semi-norms:
        \begin{equation}\label{musdefnorm}
		\begin{aligned}
			\|f\|_{T} &= \sup_{t \in [0,T]} \left( \|\nabla f(t)\|_{L^\infty} + t^{\frac{m+\kappa}{s}} \|\nabla f(t)\|_{\dot C^{m+\kappa}} \right), \\
			\|f\|_{T,*} &= \sup_{t \in [0,T]} \left( t^{\frac{\eta}{s}} \|\nabla f(t)\|_{\dot C^\eta} + t^{\frac{m+\kappa}{s}} \|\nabla f(t)\|_{\dot C^{m+\kappa}} \right),
		\end{aligned}
        \end{equation}
        which coincides with the definition \eqref{nor} with $b=1$.\vspace{0.2cm}\\        
		The nonlinear term $\mathcal{N}(U)$ and the coefficient $A(Z,\xi)$ are subject to the following assumptions:\vspace{0.1cm}\\
		{\bf Regularity of the nonlinear term.}
		There exist $\epsilon_0, c_1 > 0$ such that for any $T>0$, and any function $U: [0,T]\times\mathbb{R}^d \to \mathbb{R}^N$, it holds
		\begin{equation}\label{besovno}
			\begin{split}
			\da_T(\mathcal{N}(U)):=	&  \sup_{t \in [0,T]} \Bigl( t^{\frac{\kappa}{s}} \|\mathcal{N}(U)(t)\|_{\dot C^{\kappa-s+1}} + t^{\frac{m+\kappa}{s}} \|\mathcal{N}(U)(t)\|_{\dot C^{m+\kappa-s+1}} \Bigr) \\
				\lesssim& \|U\|_{T,*}^{1+\epsilon_0} (1 + \|\nabla U\|_{L^\infty_T L^\infty_x} + \|U\|_{T,*})^{c_1}.
			\end{split}
		\end{equation}
		{\bf Regularity of coefficients.} 
		Assume $A(Z,\xi)$ is smooth in $Z$, and there exists $c_2>1$ such that 
		\begin{align*}
			A(Z,\xi)\geq (1+|Z|)^{-c_2}|\xi|^s\mathrm{Id},\quad\quad \ \ \sum_{l\leq d+s+2} |\xi|^{l-s}|\nabla_\xi^lA(Z,\xi)|\leq (1+|Z|)^{c_2}.
		\end{align*}
		Moreover, the operator symbol $\A(t,x,\xi) := A(\nabla U(t,x),\xi)$ satisfies the condition \eqref{condop}, and there exists a constant $c_3>0$ such that
		\begin{align}\label{ABT}
			\|\A\|_{B_T} \lesssim (1 + \|\nabla U\|_{L^\infty_T L^\infty_x})^{c_3} \|U\|_{T,*},
		\end{align}
		where $\|\cdot\|_{B_T}$ is defined in \eqref{defnormA}. \vspace{0.2cm}\\
		Under this analytical framework, we establish the following a priori estimate.
		\begin{proposition}\label{prop111}
			There exist $\varepsilon_0>0$ and $c=c(c_1,c_2,c_3)>0$ such that for any $\eps\in(0,\eps_0)$, if the initial data 
			$U_0$ satisfies 
			\begin{align}\label{bdini}
				(1 + \|\nabla U_0\|_{L^\infty})^c \|\nabla (U_0 - \Phi)\|_{\dot B^0_{\infty,\infty}}\leq \varepsilon,
			\end{align}
            for some  $\Phi \in C^{m+4}(\mathbb{R}^d)$, 
			 then there exists $T_0=T_0(\|\Phi\|_{C^{m+s+1}})$ such that for any solution $U$ to \eqref{besovgen} with $\|U\|_{T_0,*}<\infty$, it holds
            \begin{equation}\label{Ubes}
				\begin{aligned}
					&\|\nabla U\|_{L^\infty_{T_0}L^\infty}\leq C(\eps+ \|\nabla U_0\|_{L^\infty}),\\
					&\|U-\Phi\|_{T_0,*}\leq C  \eps.
				\end{aligned}
			\end{equation}
			In particular, if $\Phi\equiv 0$, then \eqref{Ubes} holds globally $(T_0=\infty)$.
		\end{proposition}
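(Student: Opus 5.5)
The plan is to apply Theorem \ref{lemmain} with $b=1$ to the difference $V:=U-\Phi$, viewed as a solution of a linear system with frozen symbol $\A(t,x,\xi)=A(\nabla U(t,x),\xi)$, and to close the resulting estimates by a continuity (bootstrap) argument in $T$.

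\textbf{Setting up the linear equation.} Since $\Phi$ is time independent and smooth,
\[
\partial_t V+\A(t,x,\nabla)V=\mathcal{N}(U)-\A(t,x,\nabla)\Phi=:f .
\]
By the structural assumption on $A$, the symbol $\A$ satisfies \eqref{condop} with $c_0=(1+M)^{-c_2}$ and $c_1=(1+M)^{c_2}$, where $M:=\|\nabla U\|_{L^\infty_TL^\infty}$. Let
\[
\mathcal{T}:=\sup\Big\{T\le T_0:\ M\le 2C_0(\eps+\|\nabla U_0\|_{L^\infty}),\ \|V\|_{T,*}\le 2C_0\eps\Big\}.
\]
By the continuity in time of the norms $\|\cdot\|_{T,*}$, and since $\|V\|_{T,*}\to \lesssim\|\nabla(U_0-\Phi)\|_{\dot B^0_{\infty,\infty}}$ as $T\to0$, we have $\mathcal{T}>0$. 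The goal is to improve both bounds in the definition of $\mathcal{T}$ to the constant $C_0$.

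\textbf{Estimating the data.} On $[0,\mathcal{T}]$, \eqref{ABT} gives
\[
\|\A\|_{B_T}\lesssim (1+M)^{c_3}\big(\|V\|_{T,*}+\|\Phi\|_{T,*}\big).
\]
Here $\|\Phi\|_{T,*}\lesssim T^{\eta/s}\|\Phi\|_{C^{m+s+1}}$, so it is small once $T_0=T_0(\|\Phi\|_{C^{m+s+1}})$ is small, and it vanishes when $\Phi\equiv0$.

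For the forcing, \eqref{besovno} together with $\|U\|_{T,*}\le \|V\|_{T,*}+\|\Phi\|_{T,*}$ gives a superlinear contribution $\lesssim (2C_0\eps)^{1+\epsilon_0}(1+M)^{c_1}$ plus $\Phi$-terms. The term $\A\Phi$ is bounded in $\da_T$ by $(1+M)^{c}\big(T^{\kappa/s}+T^{(m+\kappa)/s}\big)\|\Phi\|_{C^{m+s+1}}(1+\|U\|_{T,*})$, using that $\A$ is H\"older in $x$ with the time-weighted norm. This is small when $T_0$ is small.

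Applying \eqref{main4} of Theorem \ref{lemmain}, with constants tracked polynomially in $(1+M)$ (the implicit constant depends on $c_0,c_1$), once $\|\A\|_{B_T}\le\eps_0$ we obtain
\[
\|V\|_{T,*}\le C(1+M)^{c'}\Big(\|\nabla(U_0-\Phi)\|_{\dot B^0_{\infty,\infty}}+\eps^{1+\epsilon_0}+o_{T_0}(1)\Big).
\]
Because $M\lesssim \eps+\|\nabla U_0\|_{L^\infty}$ on $[0,\mathcal{T}]$, hypothesis \eqref{bdini} with a suitable exponent $c$ absorbs the factor $(1+M)^{c'}$. For $\eps$ and $T_0$ small, this yields $\|V\|_{T,*}\le C_0\eps$.

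\textbf{Bounding $M$.} Use \eqref{main3} for $V$. Its right-hand side contains $\|\nabla(U_0-\Phi)\|_{L^\infty}$, which is not small. We therefore write the bound as
\[
M\le \|\nabla\Phi\|_{L^\infty}+\|V\|_{T}\lesssim \|\nabla U_0\|_{L^\infty}+\|\nabla\Phi\|_{L^\infty}+\eps .
\]
If $\Phi\not\equiv0$, we choose $\Phi$ controlled by $U_0$ (e.g. a mollification), or absorb $\|\nabla \Phi\|_{L^\infty}$ into the $T_0$-dependence.

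\textbf{Main obstacle.} The delicate point is this $L^\infty$ bound when $\nabla U_0$ is large. Its constant must not depend on $M$ in a way that destroys the bootstrap. I would first try applying \eqref{main3} with constants depending on $c_0,c_1$ evaluated at the bootstrap level $2C_0(\eps+\|\nabla U_0\|_{L^\infty})$. This works because $\|V\|_{T,*}\|\A\|_{B_T}$ is small, so the only large term is linear in the data.

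Continuity then gives $\mathcal{T}=T_0$. When $\Phi\equiv0$, no step uses smallness of $T$, so $T_0=\infty$.
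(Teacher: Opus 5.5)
There is a genuine gap in your bound for $M=\|\nabla U\|_{L^\infty_TL^\infty}$, at exactly the step you flagged. You apply \eqref{main3} to $V=U-\Phi$ with the symbol $A(\nabla U(t,x),\xi)$, whose ellipticity constants are $c_0=(1+M)^{-c_2}$ and $c_1=(1+M)^{c_2}$. The constant this produces (Remark \ref{rmkcons}) is of order $(c_0^{-1}c_1)^{m+s+2}\sim(1+M)^{2c_2(m+s+2)}=:K(M)$. It multiplies $\|\nabla(U_0-\Phi)\|_{L^\infty}$, which is \emph{not} small under \eqref{bdini}; only its $\dot B^0_{\infty,\infty}$ norm is. To improve the bootstrap hypothesis $M\le L:=2C_0(\eps+\|\nabla U_0\|_{L^\infty})$ you would need
\[
\|\nabla\Phi\|_{L^\infty}+K(L)\,\|\nabla(U_0-\Phi)\|_{L^\infty}+(\text{small terms})\le \tfrac{L}{2}.
\]
Since $K(L)$ grows like a high power of $L$, this fails for every choice of $C_0$ once $\|\nabla(U_0-\Phi)\|_{L^\infty}$ is of order one. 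The smallness of $\|V\|_{T,*}\|\A\|_{B_T}$ does not help, because the term that is linear in the data carries an $M$-dependent coefficient. So your argument closes only when $\|\nabla(U_0-\Phi)\|_{L^\infty}$ itself is small (the Lipschitz regime), not in the Besov regime of the statement. The same objection applies verbatim to the global case $\Phi\equiv0$.

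The missing idea, which is what the paper does, is a \emph{second} linearization for the endpoint bound. Freeze the symbol at the profile rather than at the solution, and apply Theorem \ref{lemmain} to $U$ itself:
\[
\partial_tU+A(\nabla\Phi(x),\nabla)U=\mathcal N(U)+\big(A(\nabla\Phi,\nabla)-A(\nabla U,\nabla)\big)U .
\]
Now the ellipticity constants depend only on $\Phi$; by \eqref{KL1} and Remark \ref{rmkcons}, the constant in front of the data in the low-order bound does not depend on them at all. Also, $\|A(\nabla\Phi(\cdot),\cdot)\|_{B_T}$ is $O(T^{\eta/s})$ with a constant depending on $\|\Phi\|_{C^{m+s+1}}$, and it vanishes when $\Phi\equiv0$. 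The new forcing is controlled by $(1+M+\|\nabla\Phi\|_{L^\infty})^{c_2}\big(\|U-\Phi\|_T+T^{1/s}\|\Phi\|_{C^{m+2}}\big)\|U\|_{T,*}$, i.e.\ bounded times small. Writing $B=\|U-\Phi\|_{T,*}$, this gives
\[
M\lesssim\|\nabla U_0\|_{L^\infty}+\big(B+T^{\eta/s}\|\Phi\|_{C^{m+s+1}}\big)\big(1+M+B+T^{1/s}\|\Phi\|_{C^{m+s+1}}\big)^{c/2},
\]
with an $M$-independent constant in front of $\|\nabla U_0\|_{L^\infty}$. This closes jointly with your non-endpoint estimate for $B$; there the symbol $A(\nabla U,\cdot)$ is harmless, since the data term is small and \eqref{bdini} absorbs the polynomial factor in $M$. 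Two smaller points. First, applying the estimate to $U$ rather than to $U-\Phi$ gives $\|\nabla U_0\|_{L^\infty}$ directly. Your splitting $M\le\|\nabla\Phi\|_{L^\infty}+\|V\|_T$ leaves an additive $\|\nabla\Phi\|_{L^\infty}$ that \eqref{bdini} does not bound by $\eps+\|\nabla U_0\|_{L^\infty}$, that no smallness of $T_0$ can absorb, and that you cannot remove by choosing $\Phi$, since $\Phi$ is given. Second, your bootstrap level $\|V\|_{T,*}\le 2C_0\eps$ should be weighted, e.g.\ $\|V\|_{T,*}\lesssim(1+\|\nabla U_0\|_{L^\infty})^{c'}\|\nabla(U_0-\Phi)\|_{\dot B^0_{\infty,\infty}}$. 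Otherwise the term $(1+M)^{c'}\eps^{1+\epsilon_0}$ is not $\le C_0\eps$ when $\|\nabla U_0\|_{L^\infty}$ is large.
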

		The Proposition \ref{prop111} establishes a priori estimate to the quasilinear system \eqref{besovgen}. To obtain the existence of solution, we use contraction mapping theorem by defining a map $\mathcal{S}:V\to U$, where $U$ is the solution to the linear system 
		\begin{align*}
			\partial_t U(t,x) + A(\nabla \Phi(x), \nabla)U(t,x) = \mathcal{N}(V)(t,x)+F(V,\Phi)(t,x),
		\end{align*}
		where $F(V,\Phi)= A(\nabla \Phi, \nabla)V- A(\nabla V, \nabla)V$. It is straightforward to check that a fixed point of $\mathcal{S}$ is a solution to the original system \eqref{besovgen}. Under the condition 
		\begin{align*}
			(1 + \|\nabla U_0\|_{L^\infty})^c \|\nabla (U_0 - \Phi)\|_{L^\infty} \ll 1,
		\end{align*}
		for some $c\geq 1$, and provided that the nonlinear operator $\mathcal{N}$ is Lipschitz continuous in the relevant function space (i.e., 
$\|\mathcal{N}(U_1)-\mathcal{N}(U_2)\| \leq \frac{1}{2} \|U_1-U_2\|$), 
we prove that the map $\mathcal{S}$ has a unique fixed point in a suitable set. This implies the existence of a solution with initial data satisfying \eqref{bdini} by the compactness method. 

 We also note that the critical rough-data viewpoint adopted here is related in spirit to our earlier works on the one-dimensional compressible Navier--Stokes system \cite{CHN-CNS-local,CHHN-CNS-global}. Although the underlying equations are of a different nature, both settings rely on time-weighted critical estimates and on a linear-to-nonlinear framework adapted to low-regularity data.

		Building on the Schauder-type estimates presented in Theorem \ref{lemmain}, we prove well-posedness theory to the following non-local quasilinear equations arising from fluid dynamics, for which the linearized equation has the form \eqref{eqpara}.\vspace{0.2cm}\\
		{\bf The 2D Muskat equation with surface tension}\\ 
		The Muskat equation, first introduced in the pioneering works of Darcy \cite{DF96} and Muskat \cite{Mus34}, describes the motion of two incompressible and immiscible fluids in a porous medium. We consider the particular case where the fluids have equal viscosities and assume that the flows are two-dimensional. The free boundary separating the fluids
		is described by the evolution equation
		\begin{equation}\label{eqmst}
			\begin{aligned}
				&\partial_t f(x)=\frac{\mathbf{k}}{2\pi\mu}\mathrm{P.V.} \int_{\mathbb{R}}\frac{1+\partial_x f(x)\Delta_\alpha f(x)}{\langle\Delta_\alpha f(x)\rangle^2}\partial_x\left(\sigma'\kappa(f)- \varrho_0 f\right)(x-\alpha)\frac{d\alpha}{\alpha},\\
				&f|_{t=0}=f_0,
			\end{aligned}
		\end{equation}
		where  $\mathbf{k}, \mu, \sigma'$ are positive constants that denote the permeability of the
		homogeneous porous medium,  the viscosity of the fluids and the surface tension
		coefficient respectively, $\kappa(f)=\frac{\partial_x^2f}{\langle \partial_xf \rangle^3}$ denoting the curvature of the graph $\{y=f(t,x)\}$, and 
		\begin{align}\label{defvr0}
			\varrho_0=\mathbf{g}(\rho_--\rho_+),
		\end{align}
		where $\mathbf{g}$ denotes the Earth’s gravity and $\rho_+,  \rho_-$ denote the density of the upper fluid and lower fluid.   We remark that  $\varrho_0>0$ corresponds to the stable regime (heavier fluid below), and $\varrho_0<0$ corresponds to the unstable regime  (heavier fluid up).
		Moreover, to shorten the notation we denote $\langle A\rangle=(1+A^2)^\frac{1}{2}$, and 
		$$
		\delta_\alpha f(x)=f(x)-f(x-\alpha),\quad\quad \ \ \Delta_\alpha f(x)=\frac{\delta_\alpha f(x)}{\alpha}.
		$$
		We refer readers to \cite{1Matioc2018} for a proof of equivalence of \eqref{eqmst} to the classical formulation of the Muskat problem. 
		
		The Muskat problem has attracted much interest in the last decades. From a mathematical perspective, the Muskat problem is governed by a nonlinear degenerate parabolic equation of fractional order, which encapsulates the interplay of diffusion, convection, and gravitational effects on the evolving interface. The equation's degenerate nature and nonlocal structure pose significant analytical challenges, making it a rich subject for both theoretical and computational studies in applied mathematics.
 Over the years, it has attracted considerable attention from both the analytical and numerical communities, and has been shown to exhibit a rich variety of behaviors, including turnover of the interface leading to loss of regularity, as established by Castro {\em et al.} \cite{RT2012, CCFG}, stability shifting between stable and unstable regimes, as shown by C\'ordoba, G\'omez-Serrano, and Zlato\v{s} \cite{CGZ2015, CGZ2017}, and splash singularities in the one-phase setting \cite{CCFG2016}. We also mention the work of Shi \cite{Shi:regularity-muskat,Shi2024}, which proves analyticity for solutions whose interfaces have already turned over. For broader background on the Muskat problem, we refer the reader to the surveys by Gancedo and by Granero-Belinch\'{o}n--Lazar \cite{Gancedo:survey,GBL2020}.

As for well-posedness, substantial progress has been made in the subcritical regime in recent years. Local-in-time well-posedness of the Cauchy problem in subcritical Sobolev spaces, as well as global existence for small initial data, is now fairly well understood. Early results in high-regularity Sobolev spaces were obtained by Yi \cite{Yi}, Ambrose \cite{Amb04,Amb07}, and Caflisch--Howison--Siegel \cite{SCH04}, the latter also identifying ill-posedness in certain unstable regimes. Later, D. C\'ordoba and Gancedo \cite{CG07,CG09} established well-posedness in the infinite-depth setting without viscosity jump in the space \(H^{d+2}(\mathbb{R}^d)\), for \(d=1,2\). This result was subsequently extended, together with A. C\'ordoba, to include viscosity jump and non-graphical interfaces \cite{CCG2011, CCG2013}. A significant step toward lower regularity was achieved by Cheng, Granero-Belinch\'on, and Shkoller \cite{CGS2016}, who reduced the regularity threshold in two dimensions to \(H^2\), without relying on the contour equation and therefore allowing for more general domains. Since then, a number of works have been devoted to pushing the theory toward barely subcritical spaces.

In the constant-viscosity case, Constantin, Gancedo, Shvydkoy, and Vicol \cite{CGSV} constructed solutions for initial data in \(W^{2,p}\), \(p\in(1,\infty]\). Matioc obtained local well-posedness results for data in \(H^2\) and \(H^{3/2+\epsilon}\) \cite{Matio2018, 1Matioc2018}, while Abels and Matioc \cite{Am} treated the subcritical \(L^p\)-Sobolev range. Alazard and Lazar \cite{Alazard-Lazar} further extended the theory to allow data outside the \(L^2\)-framework. Most recently, H. Q. Nguyen and Pausader \cite{1HuyNguyen2020} proved well-posedness for the full \(d\)-dimensional Muskat problem with or without bottom and with or without viscosity jump in \(H^s(\mathbb{R}^d)\) for all \(s>d/2+1\).

We now turn to the Muskat problem at critical regularity. The first results in this direction concerned small-data solutions in the Wiener algebra \(\mathcal{L}_{1,1}\), which consists of functions whose first derivative is integrable in Fourier space. Constantin, C\'ordoba, Gancedo, and Strain \cite{1Constantin2010} established the existence of global small-data solutions without viscosity jump, and later, together with Piazza, extended this result to the three-dimensional setting \cite{CCG2016}. This was further improved by Gancedo, Garc\'ia-Ju\'arez, Patel, and Strain \cite{GGPS}, who constructed global small-data strong solutions allowing viscosity jump in both two and three dimensions.

There has been substantial recent progress in the critical Sobolev space \(\dot{H}^{3/2}\). The first important result is due to C\'ordoba and Lazar \cite{Cordoba-Lazar-H3/2}, who, although working in the subcritical space \(\dot{H}^{5/2}\), derived a priori estimates at the critical level. This was later extended to the three-dimensional setting by Gancedo and Lazar \cite{1Gancedo2020Global}. The first fully critical result was obtained by Alazard and the third author, who constructed two-dimensional solutions with initial data in \(H^{3/2}\cap \dot{W}^{1,\infty}\) \cite{Alazard2020}, together with the log-subcritical work \cite{AN2021}, where unbounded slopes are allowed. Depending on the size of the data, these results yield either local-in-time well-posedness for large data or global-in-time well-posedness for small data. In subsequent work, they removed the \(L^\infty\)-assumption altogether and obtained well-posedness directly in \(H^{3/2}\) \cite{AN2020endpoint}. They later extended this analysis to the three-dimensional setting in \(\dot{H}^{2}\cap W^{1,\infty}\) \cite{TH4}.

Further critical small-data results include the work of  Cameron, who first studied well-posedness in \(\dot{W}^{1,\infty}\cap L^2\) in two dimensions \cite{Cameron2019}, and later in three dimensions under merely sublinear growth at infinity \cite{Cameron2020}. A notable feature of Cameron's results is that they allow ``medium-sized'' initial data, in the sense that the slope is assumed to be bounded by \(1\), rather than by a perturbative small constant \(\eps\). H. Q. Nguyen \cite{HuyNguyen2021} established well-posedness in the Besov space \(\dot{B}_{\infty,1}^1\), which is embedded in the critical space \(\dot{W}^{1,\infty}\). For genuinely large data, global regularity is not expected in general, due to the possible formation of singularities. For $d=1$ and under the assumption that the initial interface is monotone, Deng--Lei--Lin \cite{DengLei2017} constructed global weak solutions. The local well-posedness in the critical H\"older space $C^1$ was established by the first and the third author and Xu \cite{KeC1}. In the more recent work \cite{CCHNX}, the local existence of classical solutions was established in regimes where the product of maximal and minimal slopes is allowed to be large for any dimension. Moreover,
in \cite{GGNP}, Garc\'ia-Ju\'arez, G\'omez-Serrano, H. Q.  Nguyen and Pausader established the existence of small self-similar solutions emanating from exact corner singularities, which become instantaneously regularized. We also refer to \cite{GGHP} for the desingularization of small moving corners.

		The introduction of surface tension ($\sigma' > 0$) fundamentally alters this paradigm. It is well known that the regularizing effect of surface tension bypasses the Rayleigh-Taylor
		stability condition required for well-posedness of free boundary problems in the absence of surface tension. The well-posedness of the Muskat problem with surface tension effects has been investigated in periodic geometries in \cite{Escher11,Escher12,Escher18}. For unbounded settings, H. Q. Nguyen \cite{HNguyen} established local existence for arbitrary initial data in the largest $L^2$-based subcritical Sobolev spaces $H^{s}(\mathbb{R}^d)$ ($s>1+\frac{d}{2}$), covering general settings including single/two-phase flows, arbitrary viscosity jumps, and all dimensions while permitting unbounded initial curvature.
		Subsequently, $L^p$-extensions were achieved by A.-V. Matioc and B.-V. Matioc \cite{MATIOC2022} through quasilinear parabolic theory, proving local well-posedness and instantaneous smoothing in subcritical $W_p^s(\mathbb{R})$ for $p \in (1,2]$ and $s \in (1+1/p, 2)$. More recently, Lazar \cite{Lazar24} proved a global well-posedness result for sufficiently regular initial data that are small in a critical norm, while possibly being arbitrarily large in Lipschitz. The proof relies on a new reformulation of the equation and on important cancellation mechanisms that allow one to close the key a priori estimates at the critical level.  Note that for the case $\varrho_0=0$, the equation \eqref{eqmst} is invariant under the scaling transform 
		$$
		f_\lambda(t,x)=\lambda^{-1}f(\lambda ^3t,\lambda x),\ \ \ \forall \lambda>0.
		$$
		Hence, $\dot W^{1,\infty}(\mathbb{R})$ and $\dot H^{\frac{3}{2}}(\mathbb{R})$ are two examples of scaling critical spaces.

		In this paper, we prove the first result on local and global well-posedness of the Muskat equation with surface tension in the largest critical space $\dot B^{1}_{\infty,\infty}$. 	For brevity, fix $m\in\mathbb{N}_+,m\geq 3$ and $0<3-\kappa\ll 1$. Denote 
		\begin{equation}\label{normmst}
			\begin{aligned}
				&	\|h\|_{T}:=\sup_{t\in[0,T]}(\| \partial_xh(t)\|_{L^\infty}+t^\frac{m+\kappa}{3}\| \partial_xh(t)\|_{\dot C^{m+\kappa}}),\\
				&\|h\|_{T,*}:=\sup_{t\in[0,T]}(t^\frac{1}{15}\| \partial_xh(t)\|_{\dot C^\frac{1}{5}}+t^\frac{m+\kappa}{3}\| \partial_xh(t)\|_{\dot C^{m+\kappa}}),\\
				&\|h\|_{X_T}:=\|h\|_{L^\infty_TL^\infty}+\|h\|_T.
			\end{aligned}
		\end{equation}
		The main result is the following.
		\begin{theorem}\label{thmMusbes}
			There exists $\varepsilon_0>0$ such that for any $0<\eps<\varepsilon_0$, the following statements hold.	~\\
			i) ($ \varrho_0=0$) For any initial data $f_0 \in \dot W^{1,\infty}$ satisfying the smallness condition 
			\begin{align}\label{bound1}
				(1+\| f_0\|_{\dot W^{1,\infty}})^{4(m+5)}\|f_0\|_{\dot B^{1}_{\infty,\infty}}\leq \varepsilon,
			\end{align} the equation \eqref{eqmst} admits a unique global solution $f$ in the class 
			$$\mathcal{X}:=\{f\in L^\infty_{loc}((0,\infty),C^{m+\kappa}(\mathbb{R})): \|f\|_{\infty}\leq C(\eps+\|f_0\|_{\dot W^{1,\infty}}), \|f\|_{\infty,*}\leq C \varepsilon\}.$$
			Moreover, the solution exhibits stability: for any two solutions $f,g\in\mathcal{X}$ corresponding to initial data $f_0, g_0$ satisfying \eqref{bound1}, it holds
			\begin{align*}
				\sup_{t>0}\|(f-g)(t)\|_{\dot W^{1,\infty}}\leq C\|f_0-g_0\|_{\dot W^{1,\infty}}.
			\end{align*}
			ii) ($ \varrho_0\in\mathbb{R}$) For any initial data $f_0 \in W^{1,\infty}$, if  
			\begin{align}\label{bound2}
				(1+\|f_0\|_{ W^{1,\infty}})^{4(m+5)}\| f_0-\phi\|_{\dot B^{1}_{\infty,\infty}}\leq \varepsilon,
			\end{align}
         holds for some    $\phi \in C^{m+5}(\mathbb{R})$, 
			then   there exists $T>0$ such that \eqref{eqmst} admits a unique solution  $f$ in the class 
			$$\mathcal{X}_{T,\phi}:=\{f\in L^\infty_{loc}((0,T),C^{m+\kappa}(\mathbb{R})): \|f-\phi\|_{X_T}\leq C(\eps+\|f_0\|_{W^{1,\infty}}), \|f\|_{T,*}\leq C \varepsilon\}.$$
			Moreover, we have the stability property: For any two solutions $f,g\in\mathcal{X}_{T,\phi}$ corresponding to initial data $f_0, g_0$ satisfying \eqref{bound2}, it holds
			\begin{align*}
				\sup_{t\in[0,T]}\|(f-g)(t)\|_{W^{1,\infty}}\leq C\|f_0-g_0\|_{W^{1,\infty}}.
			\end{align*}
		\end{theorem}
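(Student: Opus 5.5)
The plan is to cast \eqref{eqmst} in the abstract form \eqref{besovgen} with $d=1$, $N=1$, $s=3$, $b=1$, and then combine Proposition \ref{prop111} (for a priori bounds) with a contraction argument built on Theorem \ref{lemmain}.

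\textbf{Step 1: quasilinear structure.} Expand the integrand of \eqref{eqmst} using $\partial_x\kappa(f)=\partial_x^3 f\,\langle\partial_x f\rangle^{-3}+\dots$. The leading part is
\[
\frac{\mathbf{k}\sigma'}{2\pi\mu}\,\mathrm{P.V.}\int\frac{1+(\partial_x f(x))^2}{\langle\partial_xf(x)\rangle^2}\,\frac{\partial_x^3 f(x-\alpha)}{\langle \partial_x f(x)\rangle^{3}}\,\frac{d\alpha}{\alpha}
=c\,\langle\partial_xf(x)\rangle^{-3}\,\mathcal{H}\partial_x^3 f .
\]
Thus the principal symbol is $A(Z,\xi)=c\langle Z\rangle^{-3}|\xi|^3$, which satisfies the ellipticity and growth conditions with $c_2=3$. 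For this dependence on $Z=\partial_xf$ alone, \eqref{ABT} reduces to composition estimates in the time-weighted Hölder spaces: $\|\langle \partial_xf\rangle^{-3}\|_{\dot C^\eta}\lesssim\|\partial_x f\|_{\dot C^\eta}$, together with the analogous bound at order $m+\kappa$ (Faà di Bruno plus interpolation). This gives \eqref{ABT} with $c_3\approx m+5$, which is consistent with the exponent $4(m+5)$ appearing in \eqref{bound1}.

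\textbf{Step 2: remainder estimate \eqref{besovno}.} The remainder $\mathcal{N}(f)$ consists of three types of terms:
\begin{itemize}
\item[(a)] the commutator between $\Delta_\alpha f(x)$ in the kernel and its frozen value $\partial_xf(x)$;
\item[(b)] the terms coming from differentiating $\langle\partial_xf\rangle^{-3}$ inside $\partial_x\kappa$;
\item[(c)] the gravity term $\varrho_0\partial_x f$.
\end{itemize}
For (a), write $\Delta_\alpha f(x)-\partial_xf(x)=-\frac1\alpha\int_0^\alpha(\partial_xf(x-\beta)-\partial_xf(x))\,d\beta$. This gains a factor $|\alpha|^{\eta}\|\partial_xf\|_{\dot C^\eta}$ near $\alpha=0$, which integrates against $\partial_x^3 f(x-\alpha)\alpha^{-1}$. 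Splitting at $|\alpha|\sim t^{1/3}$ and using the time weights, I expect to bound $t^{\kappa/3}\|\cdot\|_{\dot C^{\kappa-2}}$ and $t^{(m+\kappa)/3}\|\cdot\|_{\dot C^{m+\kappa-2}}$ by $\|f\|_{T,*}^2$ times polynomial factors in $\|\partial_x f\|_{L^\infty}$. This gives the superlinearity exponent $1+\epsilon_0$ in \eqref{besovno}. Terms of type (b) are at least quadratic in $\partial_x^2 f$, so they are bounded the same way.

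\textbf{Main obstacle: the gravity term.} The gravity term $\varrho_0\partial_x f$ is linear and does not scale. In case i), $\varrho_0=0$, so it is absent and Proposition \ref{prop111} with $\Phi\equiv0$ gives global bounds. In case ii), I would treat the gravity contribution as a forcing. Its contribution to $\da_T$ carries an extra factor $T^{2/3}$, because of the gap between order $1$ and order $3$. It can therefore be absorbed for small $T$, at the cost of a local result that also needs $\|f\|_{L^\infty}$ (hence the norm $X_T$). For case ii) I would also linearize around $\phi$: apply Theorem \ref{lemmain} to $f-\phi$ with frozen symbol $A(\partial_x\phi,\xi)$, where $\phi$ is smooth so $\|\A\|_{B_{T}}\lesssim T^{\eta/3}\|\phi\|_{C^{m+5}}$ is small for small $T$. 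The error $F(V,\phi)$ is then controlled by $\|V-\phi\|_{T,*}$.

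\textbf{Step 3: existence and stability.} Define the map $\mathcal S$ as described after Proposition \ref{prop111}. Show that $\mathcal S$ maps the ball $\mathcal{X}$ (respectively $\mathcal{X}_{T,\phi}$) into itself, using \eqref{main4} and the bounds from Steps 1--2. Then show that $\mathcal S$ is a contraction in a weaker difference norm: $\sup_t\|\partial_x(f-g)\|_{L^\infty}$ plus the corresponding weighted norm. This requires redoing Step 2 for differences, where the trilinear structure gives a small factor $\varepsilon$. For initial data only in $\dot W^{1,\infty}$ rather than smooth, regularize the data, apply the uniform bounds, and pass to the limit by compactness. The stability estimate follows from \eqref{main3} applied to the difference equation, whose coefficients and remainder are again small by $\varepsilon$.
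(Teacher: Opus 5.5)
Your decomposition and the case i) a priori bounds via Proposition \ref{prop111} with $\Phi\equiv0$ are in line with the paper, and so is your stability argument. The existence mechanism in Step 3, however, fails under \eqref{bound1}--\eqref{bound2}.

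With the coefficient frozen at $\partial_x\Phi$, the error term is $F(V,\Phi)=\G[V,\Phi]=(\langle\partial_x\Phi\rangle^{-3}-\langle\partial_xV\rangle^{-3})\Lambda^3V$. The available bound (Lemma \ref{GGG}) is only $\da_T(\G[V,\Phi])\lesssim\|V-\Phi\|_T\|V\|_{T,*}(1+\cdots)$, and for differences it contains a term $\|V_1-V_2\|_{T,*}\|V_2-\Phi\|_T$. This is not an artifact of the estimate: where $|\partial_xV|\gg1$ and $\Phi\equiv0$, the forcing is essentially $\Lambda^3V$ itself.

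The factor $\|V-\Phi\|_T\geq\sup_t\|\partial_x(V-\Phi)\|_{L^\infty}$ is the \emph{Lipschitz} deviation, which $\|\cdot\|_{T,*}$ does not see. So your claim that $F(V,\phi)$ is controlled by $\|V-\phi\|_{T,*}$ is false.
\begin{itemize}
\item Under \eqref{bound1}, the slope $\|f_0\|_{\dot W^{1,\infty}}$ may be arbitrarily large; only its product with the Besov norm is small.
\item Under \eqref{bound2}, $\dot B^1_{\infty,\infty}$-closeness gives no smallness of $\|\partial_x(f_0-\phi)\|_{L^\infty}$.
\end{itemize}
Hence $\|\mathcal SV-\Phi\|_{T,*}\lesssim\varepsilon+\|V-\Phi\|_T\,\|V\|_{T,*}(\cdots)$ does not return to the ball $\{\|\cdot\|_{T,*}\le C\varepsilon\}$, for any choice of radius once $\|V-\Phi\|_T$ is of order one. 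The contraction constant is likewise of size $\|V_2-\Phi\|_T$, not $\varepsilon$. The trilinear smallness you invoke lives only in $\mathcal N$, not in $F$. This frozen-coefficient fixed point closes only under Lipschitz smallness of $f_0-\Phi$. That is exactly why the paper first proves the relaxed Proposition \ref{propLip}, and why the remark after Proposition \ref{prop111} requires $\|\nabla(U_0-\Phi)\|_{L^\infty}\ll1$ for $\mathcal S$.

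What you need instead is the paper's approximation scheme.
\begin{enumerate}
\item Use the contraction only to produce smooth local solutions $f_\vartheta$ for mollified data $f_0\ast\rho_\vartheta$, with a reference profile that is Lipschitz-close to it (e.g.\ the mollified data itself).
\item Obtain $\vartheta$-uniform bounds by applying Theorem \ref{lemmain} with the solution's own coefficient $\langle\partial_xf_\vartheta\rangle^{-3}$ whenever you estimate the small quantity. This is $\|f_\vartheta\|_{T,*}$ in case i), and $\|f_\vartheta-\phi\|_{T,*}$ in case ii); there the only extra forcing, $-\Lambda^3\phi\,\langle\partial_xf_\vartheta\rangle^{-3}$, is small for small $T$ by smoothness of $\phi$. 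Coefficient variation then enters only through $\|\A\|_{B_T}\|u\|_{T,*}\lesssim\|f_\vartheta\|_{T,*}\|u\|_{T,*}$, which is quadratic in the small quantity.
\item Use the frozen coefficient $\langle\partial_x\phi\rangle^{-3}$ only for the merely bounded quantity $\|f_\vartheta\|_{L^\infty_TW^{1,\infty}}$ (with time integration for the $L^\infty$ part), where ``bounded times small'' is harmless.
\item Run a continuity argument to get an existence time independent of $\vartheta$ (infinite when $\varrho_0=0$), then pass to the limit by compactness using the bound on $t^{2/3}\|\partial_tf_\vartheta\|_{L^\infty}$.
\end{enumerate}
Your stability step already works for exactly this reason: there the coefficient is that of one of the solutions, and $\G[g,f]$ is bounded by $\|f-g\|_T\|g\|_{T,*}$.
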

		\begin{remark}
			When $\varrho_0 < 0$, our approach fails to establish global existence. This limitation stems from the fact that $\Lambda^3 f$ fails to dominate $\Lambda f$ at low frequencies, which is consistent with Rayleigh-Taylor instability mechanisms \cite{CCG2011}.
		\end{remark}
		\begin{remark}
			Theorem \ref{thmMusbes} yields the following consequences:
            \begin{itemize}
                \item i) Global well-posedness for small Lipschitz data when $\varrho_0 = 0$;
                \item  ii) Local well-posedness for arbitrary $C^1$ data when $\varrho_0 \in \mathbb{R}$.
            \end{itemize}
            These follow from the observations that $\|f_0\|_{\dot{W}^{1,\infty}} \ll 1$ ensures condition \eqref{bound1}, while $f_0 \in C^1(\mathbb{R})$ guarantees the existence of a function $\phi$ satisfying \eqref{bound2}.
		\end{remark}
    
To study the well-posedness of the problem \eqref{eqmst}, we  rewrite \eqref{eqmst} in a quasilinear parabolic form where the surface tension produces the principal dissipative term. More precisely, after decomposing the nonlinear integral operator, we write the equation as
\begin{align}\label{musrefo}
\partial_t f+\frac{\Lambda^3 f}{\langle \partial_x f\rangle^3}=N[f],
\end{align}
where $N[f]$ consists only of lower-order nonlinear terms. Thus, the principal part is given by a third-order nonlocal quasilinear parabolic operator whose coefficients depend on $\partial_x f$. Hence, the equation fits into the framework of Theorem \ref{lemmain} and Proposition \ref{prop111}.

The proof is then divided into three steps. First, we establish a relaxed Lipschitz version of the theorem, namely Proposition \ref{propLip}, where the Besov smallness assumptions are replaced by stronger Lipschitz smallness assumptions. In this regime, the coefficient $\langle \partial_x f\rangle^{-3}$ is easily controlled, and the nonlinear remainder $N[f]$ can be estimated perturbatively in the time-weighted H\"older norms associated with the scaling of the equation. Applying the Schauder-type estimate for the linearized operator then yields the required a priori bounds.

In the second step, we pass from the Lipschitz-type statement to the Besov-type result of Theorem \ref{thmMusbes}. The key point here is that the a priori estimates only require smallness of the critical norm measuring the deviation from a smooth profile, while the solution itself becomes instantly smoother for positive time. One therefore constructs approximate solutions with smooth initial data, applies the estimates obtained in the first step uniformly, and then passes to the limit by a compactness argument. This yields existence both in the global case $\varrho_0=0$ and in the local case $\varrho_0\in\mathbb{R}$, depending on whether one takes $\Phi\equiv 0$ or a general smooth reference profile $\Phi$.

Finally, one proves a stability estimate for the difference of two solutions in $W^{1,\infty}$. This is done by writing the equation for the difference and exploiting again the fact that the highest-order part is parabolic and coercive, while all commutator and remainder terms are lower order. The resulting estimate gives continuous dependence on the initial data and, in particular, uniqueness of the solution.

The distinction between the global case $\varrho_0=0$ and the local case $\varrho_0\in\mathbb{R}$ comes from the low-frequency behavior of the linear part. When $\varrho_0=0$, the leading cubic dissipation $\Lambda^3$ controls the dynamics globally in time under the small critical assumption. By contrast, for general $\varrho_0$, and especially when $\varrho_0<0$, the lower-order gravitational contribution cannot in general be dominated at low frequencies, so the argument yields only local well-posedness.

A number of difficulties arise in the proof of Theorem \ref{thmMusbes}. The first one is that, although surface tension yields a third order dissipative term, the equation remains fully nonlocal and quasilinear: after rewriting \eqref{eqmst} in the form
\eqref{musrefo}, 
the coefficient of the principal operator still depends on the solution through \(\partial_x f\), while the remainder \(N[f]\) consists of singular nonlocal terms that must be shown to be genuinely lower order. Thus one has to separate the cubic parabolic part from the nonlinear remainders in a way compatible with the time-weighted H\"older norms used in the Schauder estimates. A second difficulty is that our goal is to work in the critical space \(\dot B^1_{\infty,\infty}\), which is weaker than \(W^{1,\infty}\). In particular, the Besov smallness does not directly provide the Lipschitz control needed to run a standard fixed-point argument, so one must first prove a relaxed Lipschitz version of the theorem and then recover the critical Besov result by approximation and compactness. Finally, when \(\varrho_0\neq 0\), the lower-order gravitational contribution affects the low-frequency behavior of the equation. In particular, for \(\varrho_0<0\), the cubic dissipation \(\Lambda^3\) no longer dominates the order-one term \(\Lambda\) at low frequencies, which explains why the present argument yields only local well-posedness in the general case and does not provide global existence in the unstable regime.

		Our methodology is not applicable to the Muskat equation in the absence of surface tension (i.e., $\sigma'=0$ with $\varrho_0>0$). The primary difficulty stems from an inability to transfer derivatives from the term $\partial_x f(x)-\Delta_\alpha f(x)$ to lower-order components. This obstruction precludes closing the a priori estimates under Lipschitz initial data regularity. We note that this limitation can be circumvented using alternative approaches, as demonstrated in \cite{KeC1}. For analogous reasons concerning derivative transfer, our framework cannot be directly applied to the Muskat equation with different viscosities; further discussion is provided in Section \ref{secfu}. \vspace{0.3cm}\\
		\textbf{The Peskin Problem in 2D and 3D}\\
		We consider the problem of an immersed elastic structure interacting with a Stokesian fluid: specifically, a one-dimensional elastic string immersed in a two-dimensional Stokes fluid, or a two-dimensional elastic membrane immersed in a three-dimensional Stokes fluid. This fundamental model, known as the 2D/3D Peskin problem, originated in the work of Peskin~\cite{PeskinFlow1972,Peskin1972Thesis} for simulating blood flow around heart valves. The Immersed Boundary Method (IBM) developed for this purpose has since become a cornerstone technique for fluid-structure interaction (FSI) problems, finding extensive applications across physics, biology, and medical science~\cite{PeskinImmersed2002,MittalImmersed2005,HouNumerical2012}.
		
		Despite its physical significance and practical utility, rigorous mathematical analysis of the Peskin problem remains challenging. The model's inherent nonlinearity, non-local character, and the presence of singular forces arising from the immersed boundary pose significant difficulties. Unlike fluid interface problems where surface tension dominates, the elastic force density here depends critically on the stretching of the structure, governed by its parametrization. This dependence introduces an additional layer of complexity that complicates analysis.
		
		We begin with the 2D formulation. Let $\Gamma(t)$ be a simple closed curve partitioning $\mathbb{R}^2$ into an interior domain $\Omega^+$ and an exterior domain $\Omega^- = \mathbb{R}^2\backslash\Omega^+$. The curve is parameterized by $X(t,x) = (X_1(t,x), X_2(t,x))$, where $x \in \mathbb{S} := \mathbb{R}/(2\pi\mathbb{Z})$ is the material coordinate and $t \geq 0$ denotes time. The position $X(t,x)$ moves with the local fluid velocity. The elastic force density on the boundary is given by:
		\begin{equation*}
			F(X) = \partial_x \left( \mathcal{T}(|\partial_x X|) \tau(X) \right),
		\end{equation*}
		where $\tau(X) = \frac{\partial_x X}{|\partial_x X|}$ is the unit tangent vector and $\mathcal{T}:\mathbb{R}^+\to \mathbb{R}^+$ is a smooth elastic tension function satisfying 
	\begin{equation}\label{conten}
	       \begin{aligned}
	&	\inf_{r\in [0,\lambda]}\min\left\{\mathcal{T}'(r),\frac{\mathcal{T}(r)}{r}\right\}\geq 	\mathfrak{c}(\lambda)>0,\\
       & \sup_{r\in [0,\lambda]}|\mathcal{T}^{(k)}(r)|\lesssim_k \mathfrak{C}(\lambda)<\infty,\ \quad\quad\ \ \forall \lambda>0,\ \ k\in\mathbb{N}.
		\end{aligned}
	\end{equation}
     Denoting the fluid velocity by $u$ and pressure by $p$, the coupled Peskin system is:
		\begin{align*}
			\begin{cases}
				-\Delta u =-\nabla p~~~~~&\text{in} ~\mathbb{R}^2\backslash\Gamma(t),  \\
				\operatorname{div} u =0 ~~~~\quad\quad&\text{in} ~\mathbb{R}^2\backslash\Gamma(t),\\
				\llbracket u \rrbracket =0 ~~~~\quad\quad&\text{on} ~\Gamma(t),\\
				\llbracket\left(\nabla u+\nabla u^{\top}-p \mathrm{Id}\right) n\rrbracket=\frac{F(X)}{|\partial_x X|}~~~&\text{on}~\Gamma(t),\\
				\partial_tX=u~~~~\quad\quad&\text{on}~ \Gamma(t).
			\end{cases}	
		\end{align*}
		Here $n$ is the outward unit normal to the free boundary $\Gamma(t)$ and $\llbracket\cdot\rrbracket$ denotes the jump across $\Gamma$:
		\[\llbracket U\rrbracket=U^+-U^-,\]
		where $U^{\pm}$ denotes the limiting value of $U$ evaluated on $\Gamma$ from the $\Omega^\pm$ side.
		
		Equivalently, the 2D Peskin problem can be expressed as a contour evolution equation for $X:[0,T]\times \mathbb{S}\to \mathbb{R}^2$ (\cite{LinTongSolvability2019,MoriWell2019}):
		\begin{equation}
			\label{peskin}
			\begin{aligned}
				\partial_{t} X(t,x) &=\int_{\mathbb{S}} {G}(X(t,x)-X(t,\sigma)) F(X(t,\sigma)) \mathrm{d} \sigma, \\
				{G}(Z)&=\frac{1}{4\pi}\left(-\log |Z|         \mathrm{Id}+\frac{Z\otimes Z}{|Z|^2}\right),\quad \quad Z=(z_1,z_2)\in \mathbb{R}^2\backslash\{0\},
			\end{aligned}
		\end{equation}
		where ${G}$ is the fundamental solution of the 2D Stokes problem, and $d\sigma$ denotes the standard measure on the circle.
		
		For the 3D case, let $\Gamma(t)$ denote a closed elastic membrane enclosing a simply connected domain $\Omega$ filled with Stokes fluid. The membrane is parameterized by a mapping $$X(t,\widehat{\boldsymbol{x}}) = (X_1, X_2, X_3) \in \mathbb{R}^3,$$
        where $\widehat{\boldsymbol{x}} \in \mathbb{S}^2$ is the material (Lagrangian) coordinate.   Here, $\mathbb{S}^2$ denotes the 2-dimensional unit sphere embedded in 3-dimensional Euclidean space, defined by
$$\mathbb{S}^2 = \left\{ (x_1, x_2, x_3) \in \mathbb{R}^3 \,:\, x_1^2 + x_2^2 + x_3^2 = 1 \right\}.$$
 The elastic force density has an analogous form:
		\begin{equation*}
			F(X)=\nabla_{\mathbb{S}^2}\left(\mathcal{T}\left(\left|\nabla_{\mathbb{S}^2} X\right|\right) \frac{\nabla_{\mathbb{S}^2} X}{\left|\nabla_{\mathbb{S}^2}X\right|}\right),
		\end{equation*}
		where $\nabla_{\mathbb{S}^2}$ denotes the surface gradient operator on the unit sphere, and $\mathcal{T}$ satisfies \eqref{conten}. The 3D model then can be reformulated as \cite{3Dpeskin}
		\begin{equation}\label{eqpes3d}
			\begin{aligned}
				&	\partial _tX(\widehat{\boldsymbol{x}}) =\int_{\mathbb{S}^2} G(X(\widehat{\boldsymbol{x}})-X(\widehat{\boldsymbol{y}})) \nabla_{\mathbb{S}^2} \cdot\left(\mathcal{T}\left(\left|\nabla_{\mathbb{S}^2} X(\widehat{\boldsymbol{y}})\right|\right) \frac{\nabla_{\mathbb{S}^2} X(\widehat{\boldsymbol{y}})}{\left|\nabla_{\mathbb{S}^2}X(\widehat{\boldsymbol{y}})\right|}\right) d \mu_{\mathbb{S}^2}(\widehat{\boldsymbol{y}}), \\
				&	\left.X(\widehat{\boldsymbol{x}})\right|_{t=0} =X_0(\widehat{\boldsymbol{x}}),
			\end{aligned}
		\end{equation}
		where $d\mu_{\mathbb{S}^2}$ is the standard measure on the unit sphere, $G(\widehat{\boldsymbol{x}})$ is the 3D Stokeslet tensor
		
		\begin{equation}\label{3dpesdefG}
		    G(\widehat{\boldsymbol{x}})=\frac{1}{8 \pi}\left(\frac{1}{|\widehat{\boldsymbol{x}}|} \mathrm{Id}_3+\frac{\widehat{\boldsymbol{x}} \otimes \widehat{\boldsymbol{x}}}{|\widehat{\boldsymbol{x}}|^3}\right).
		\end{equation}
		
		A key observation is that both the 2D system \eqref{peskin} and the 3D system \eqref{eqpes3d} are invariant under the scaling transformation $X_\lambda(t,x) = \lambda^{-1} X(\lambda t, \lambda x)$ for any $\lambda > 0$. This scaling invariance identifies the critical spaces $\dot{B}^1_{\infty,\infty}$, ${\rm BMO}^1$, and $\dot{W}^{1,\infty}$ as natural candidates to study the well-posedness of the Cauchy problem.
		
		The breakthrough in the analytical study of the Peskin problem was initiated in \cite{LinTongSolvability2019,MoriWell2019}. 
		Lin and Tong \cite{LinTongSolvability2019} proved the local well-posedness for arbitrary $H^\frac{5}{2}$ data. Their proof relies on energy arguments and an application of the Schauder fixed point theorem. They also proved the global existence result and exponential decay towards equilibrium when the initial configuration is sufficiently close to the equilibrium. Tong \cite{TongRegularized2021} also established global well-posedness of a regularized Peskin problem and proved the convergence as the regularization parameter diminishes. Mori, Rodenberg and Spirn \cite{MoriWell2019} established a local well-posedness result for initial data in $C^{1,\gamma}$ with $\gamma>0$ (see also \cite{Rodenberg2018}).
		These spaces are subcritical under the scaling of the Peskin problem. For the well-posedness in critical spaces, Garcia-Juarez, Mori and Strain \cite{GarciaViscosityContrast2020} proved the global well-posedness result with initial data in the Wiener space $\mathcal{F}^{1,1}$ and sufficiently close to the stationary states. Their result holds for two fluids with different viscosities. Gancedo, Belinch\'{o}n and Scrobogna \cite{GancedoGlobal2020} considered a toy model of the Peskin problem and proved a global existence result in the critical Lipschitz space. In the work \cite{KN} of the first and third authors, the local and global well-posedness of the Peskin problem (for Hookean material) are established in critical Besov space $\dot B^1_{\infty,\infty}$. The new ingredient in \cite{KN} is the construction of a new norm based on the structure of the nonlinear terms of the equation.   More recently, Cameron and Strain \cite{Camepeskin} considered the problem with fully nonlinear tension, and proved local well-posedness in the critical Besov space $\dot B^\frac{3}{2}_{2,1}$. The global well-posedness and  asymptotic stability for small perturbations near circle solutions in the critical space $\dot W^{1,\infty}\cap \dot B^{\frac{3}{2}}_{2,\infty}$ are established in \cite{GHpeskin}. For 3D Peskin problem, Garcia-Juarez, Kuo, Mori and Strain  \cite{3Dpeskin} derived a boundary integral formulation and also proved local well-posedness in H\"{o}lder spaces $C^{1,\gamma}$, which offers important insights for our work.
		
		Investigations on related models have expanded the research landscape. Li \cite{Li2020Stability} explored the coupled bending-stretching dynamics. Tong \cite{Tong24} considered a simplified framework involving an infinitely long elastic string undergoing purely tangential stretching. Notably, this work established the existence of globally defined weak solutions within the energy space 
		$H^1$, eliminating any requirement for amplitude restrictions. Progressing beyond this foundation,  Tong and Wei \cite{TongWei} demonstrated global well-posedness for the Peskin system in any subcritical Hölder spaces $C^{1,\gamma}$, requiring only mild geometric constraints on normal deviations from equilibrium configurations.  More recently, Tong and Wei \cite{TW2026} established well-posedness of the Peskin problem in the Navier--Stokes case for critical $C^1$ configuration.
		
		In Sections \ref{secpeskin} and \ref{secpes3d}, we consider general nonlinear elastic laws, corresponding to the fully nonlinear Peskin problem in 2D and 3D, respectively. We remark that linear elasticity leads to a semilinear system, whereas a nonlinear tension law renders the system quasilinear. By applying Theorem \ref{lemmain}, we establish the local well-posedness of the systems \eqref{peskin} and \eqref{eqpes3d} in the critical Besov/Lipschitz space. 
        
       The definition of solutions requires that the interface remain non-degenerate and free of self-intersections. To this end, we introduce the arc-chord condition. For the 2D Peskin problem \eqref{peskin}, define
		\begin{equation*}
			\mathbf{\Theta}_Y=\sup_{x_1,x_2\in \mathbb{S},x_1\neq x_2}\frac{|x_1-x_2|}{|Y(x_1)-Y(x_2)|},
		\end{equation*}
		where $|x_1-x_2|=\inf_{k\in \mathbb{Z}}|x_1-x_2-2k\pi|$ is the distance between $x_1$ and $x_2$ on the torus. For simplicity, we denote
		\begin{equation*}
			\mathbf{\Theta}_X(t)=	\sup_{\tau\in[0,t]}\mathbf{\Theta}_{X(\tau)},\quad \mathbf{\Theta}_0=	\mathbf{\Theta}_{X_0}.
		\end{equation*}
		We say that  the initial configuration $X_0$ satisfies the arc-chord condition if $\mathbf{\Theta}_0 < \infty$. 
        
       Analogously, for the 3D Peskin problem \eqref{eqpes3d}, we impose the arc-chord condition:
       \begin{align}\label{well3D}
			\mathbf{\Theta}_X:=\sup_{\widehat{\boldsymbol{x}},\widehat{\boldsymbol{y}}\in\mathbb{S}^2,\widehat{\boldsymbol{x}}\neq\widehat{\boldsymbol{y}}}\frac{|\widehat{\boldsymbol{x}}-\widehat{\boldsymbol{y}}|}{|{X}(\widehat{\boldsymbol{x}})-{X}(\widehat{\boldsymbol{y}})|}<+\infty,
		\end{align}
		where $|\widehat{\boldsymbol{x}}-\widehat{\boldsymbol{y}}|$ is the standard Euclidean distance in  $\mathbb{R}^3$. The notations $\mathbf{\Theta}(t)$ and $\mathbf{\Theta}_0$ are used in the same manner as in the 2D case.
		\begin{remark}
			The arc-chord conditions are essential, since otherwise the integral on the right hand side of \eqref{peskin} and \eqref{eqpes3d} are not well-defined. For example, in the 3D case, \eqref{well3D} ensures that 
			\begin{equation*}
				|G(X(\widehat{\boldsymbol{x}})-X(\widehat{\boldsymbol{y}}))|\lesssim \frac{1}{|\widehat{\boldsymbol{x}}-\widehat{\boldsymbol{y}}|}.
			\end{equation*}
			Note that the arc-chord conditions imply that the interface does not have some pathological behavior, like self-intersect, and ensures that the operator is non-degenerate, which enables us to establish well-posedness in Lipschitz space.
		\end{remark}
	Despite the nonlinear and nonlocal nature of the system, a central difficulty arises from parametrization. Unlike the Muskat problem, where the interfacial geometry dominates, here the stretching induced by the parametrization plays a decisive role in the evolution dynamics, necessitating carefully designed schemes to accurately capture the structure of the underlying equations. A further challenge is the propagation of the arc-chord condition: we will show that the solution remains a small perturbation of the initial configuration and that the quantity $\mathbf{\Theta}_X(t)$ remains uniformly controlled by the initial data. The main results are summarized in the following theorems; in particular, for the 2D Peskin problem \eqref{peskin}, we have:
		\begin{theorem}\label{thmPesB} 
			Fix $m\in\mathbb{N}^+$.
			For any  initial data  $X_0$ satisfying $$z_0:=\|X_0\|_{\dot W^{1,\infty}}+\mathbf{\Theta}_0<\infty,$$ there exist $\varepsilon_0>0$ and a function $\mathfrak{M}: [0,\infty)\to [1,\infty)$ depending on the tension $\mathcal{T}$ and the parameter $m$,  such that for any $0<\varepsilon\leq \varepsilon_0$, if  
        \begin{align}\label{pesconbe}
				 \mathfrak{M}(z_0)\| X_0-\Phi\|_{\dot B^{1}_{\infty,\infty}}\leq \varepsilon, 
			\end{align} 
			holds for some smooth function  $\Phi\in C^\infty(\mathbb{S})$, then the 2D Peskin problem \eqref{peskin} admits a unique solution  $X$ in the class
            \begin{align*}
				&\mathcal{Z}:=\left\{X(t,x):(0,T)\times\mathbb{S}\rightarrow \mathbb{R}^2:\sup_{t\in[0,T]}(\|\partial_x X(t)\|_{L^\infty}+t^{m+\kappa}\|\partial_xX(t)\|_{ \dot C^{m+\kappa}})\leq C(\eps+ \|X_0\|_{W^{1,\infty}}),\right.\\
               & \quad\quad\quad\quad\quad\left.\sup_{t\in[0,T]}	(t^\eta\|\partial_x X(t)\|_{\dot C^{\eta}}+t^{m}\|\partial_x X(t)\|_{\dot C^{m}})\leq C \varepsilon,\mathbf{\Theta}_X(T)\leq 10\mathbf{\Theta}_0\right\},
			\end{align*} 
            for some $C>1$, $T=T(\varepsilon_0,\|\Phi\|_{C^{m+3}})>0$, and $\kappa\in(\frac{1}{2},1), \eta\in(0,\frac{1}{2})$ are fixed parameters. Moreover, the solution is stable in the following sense: for any two solutions $X,Y\in\mathcal{Z}$ corresponding to initial data $X_0,Y_0$ satisfying \eqref{pesconbe}, we have
            \begin{equation*}
                \sup_{t\in[0,T]}\|(X-Y)(t)\|_{\dot W^{1,\infty}}\leq C\|X_0-Y_0\|_{\dot W^{1,\infty}}.
            \end{equation*}
		\end{theorem}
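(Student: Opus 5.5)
The plan is to reduce Theorem \ref{thmPesB} to the abstract framework of Theorem \ref{lemmain} and Proposition \ref{prop111}, with $s=1$, $b=1$, $d=1$, on the manifold $\mathbb{S}$ (using the manifold version, Theorem \ref{thmmani}).

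\textbf{Step 1: quasilinear form.} Expand $F(X)=\partial_x(\mathcal{T}(|\partial_xX|)\tau(X))$. Its highest-order part is
$$\mathcal{T}'(|\partial_xX|)\,(\tau\otimes\tau)\,\partial_x^2X+\frac{\mathcal{T}(|\partial_xX|)}{|\partial_xX|}\,(\mathrm{Id}-\tau\otimes\tau)\,\partial_x^2X .$$
Next, split the Stokeslet integral in \eqref{peskin} by writing $X(x)-X(\sigma)\approx\partial_xX(x)(x-\sigma)$ near the diagonal. The principal part is then
$$\mathcal{L}X=\A(t,x,\partial_x)X,\qquad \A(t,x,\xi)=|\xi|\,M(\partial_xX(t,x)),$$
where $M(Z)$ is an explicit $2\times2$ matrix obtained from the Fourier transform of $G(Z\cdot)$ composed with the tension matrix. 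Everything else is collected into a nonlinear remainder $\mathcal{N}(X)$, consisting of commutator and difference-quotient terms. The coercivity $\A\ge c_0|\xi|\mathrm{Id}$ should follow from the positivity of $\mathcal{T}'$ and $\mathcal{T}(r)/r$ in \eqref{conten}, from the positive definiteness of the symbol of the 2D Stokes operator restricted to the curve, and from $|\partial_xX|\gtrsim\mathbf{\Theta}_X^{-1}$. Smoothness of $M$ in $Z$ on the region $\{|Z|\in[\Theta^{-1},z_0]\}$ gives \eqref{condop} and the coefficient bound \eqref{ABT}, with constants depending on $z_0$. This dependence is the source of the function $\mathfrak{M}$.

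\textbf{Step 2: nonlinear estimate \eqref{besovno}.} We bound $\mathcal{N}(X)$ in the time-weighted Hölder norms by $\|X\|_{T,*}^{1+\epsilon_0}$ times a polynomial in $\|\partial_xX\|_{L^\infty}$ and $\mathbf{\Theta}_X$. The method is to write every kernel in terms of $\Delta_\alpha X$ and use
$$\Delta_\alpha X-\partial_xX=O\bigl(|\alpha|^\eta\|\partial_xX\|_{\dot C^\eta}\bigr),$$
together with the arc-chord bound to keep denominators nondegenerate. The log part of $G$ produces a smoothing remainder of lower order. I expect this step to be the main obstacle. Each remainder must gain at least one small power of the $\eta$-norm, which requires transferring derivatives off the singular kernel. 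The high norm $\dot C^{m+\kappa-s+1}$ requires Leibniz-type and paraproduct bookkeeping, and the $m$-th derivatives falling on $\mathcal{T}(|\partial_xX|)$ and on $\tau$ must be handled with weights $t^{(m+\kappa)/s}$.

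\textbf{Step 3: a priori estimates and propagation of the arc-chord condition.} Apply Proposition \ref{prop111} with reference profile $\Phi$. For $t\le T(\varepsilon_0,\|\Phi\|_{C^{m+3}})$ this gives $\|X-\Phi\|_{T,*}\le C\varepsilon$ and $\|\partial_xX\|_{L^\infty}\le C(\varepsilon+z_0)$. To close the bootstrap on $\mathbf{\Theta}_X\le10\mathbf{\Theta}_0$, estimate
$$|(X(t)-X_0)(x_1)-(X(t)-X_0)(x_2)|\le|x_1-x_2|\,\sup_y|\partial_x(X(t)-X_0)(y)| .$$
The right-hand side is small: split $X(t)-X_0=(X(t)-e^{-t\mathcal{L}_0}X_0)+(e^{-t\mathcal{L}_0}X_0-X_0)$ and control the pieces using the Besov closeness to smooth $\Phi$ and the smallness of $T$. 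Here $\mathfrak{M}$ is chosen large enough to absorb all constants depending on $z_0$.

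\textbf{Step 4: existence, uniqueness and stability.} Existence follows by the contraction scheme sketched after Proposition \ref{prop111}, applied to mollified data, followed by compactness. Stability comes from writing the equation for $X-Y$, whose principal part is $\A_X(\partial_x)(X-Y)$. The remaining difference terms are bounded using the $\dot W^{1,\infty}$ norm times small factors, and Theorem \ref{lemmain} is applied with $b=1$ at the $L^\infty$ level of $\partial_x(X-Y)$. Uniqueness in $\mathcal{Z}$ then follows.
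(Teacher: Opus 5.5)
Your overall architecture matches the paper: a principal symbol $|\xi|\A(\partial_xX)$, a priori bounds on $\|\partial_xX\|_{L^\infty}$ and $\|X-\Phi\|_{T,*}$, a Lipschitz-level contraction for mollified data, compactness, and a difference equation for stability. However, two steps fail as written.

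\textbf{The choice $s=b=1$.} You cannot apply Theorem \ref{lemmain} or Proposition \ref{prop111} with $s=b=1$. The theorem requires $0<b<s$ (note $0<\eta\le (s-b)/4$ in \eqref{defparab}), and the proposition assumes $s>1$. This is not a formality. The pointwise bound \eqref{ptRx} on the freezing remainder needs $u$ strictly above the critical level, namely $s-\epsilon\ge b+\eta$. At $b=s=1$ the remainder is $\bigl(M(\partial_xX(t,x_0))-M(\partial_xX(t,x))\bigr)\Lambda X$. But $\Lambda X=\mathcal{H}\partial_xX$ is not controlled by $\|\partial_xX\|_{L^\infty}$; a corner already produces a logarithm. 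So the natural substitute for \eqref{ptRx} loses a factor $\log(1/t)$ in \eqref{newestuRint}, and the estimate does not close. Theorem \ref{thmmani} does not rescue this: it rests on Proposition \ref{propb=0}, needs smooth coefficients, and gives no $\dot B^1_{\infty,\infty}$ estimate.

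The missing idea is the half-derivative splitting. Apply $\Lambda^{1/2}\mathcal{H}$ to \eqref{eqpes} and use the commutator identity \eqref{pesopdecop} to reach \eqref{pesre}, i.e.\ $\partial_tu+\A(\partial_xX)\Lambda u=\Lambda^{1/2}\mathcal{H}\mathcal{N}(X)-\mathsf{M}(\partial_xX)$ for $u=\Lambda^{1/2}\mathcal{H}X$. Now $s=1$ and $b=\tfrac12$. The Lipschitz norm appears as $\|\Lambda^{1/2}u\|_{L^\infty}=\|\partial_xX\|_{L^\infty}$ through Corollary \ref{remlinf}; the norm $\|u\|_{\dot C^{1/2}}$ would only see $\dot B^1_{\infty,\infty}$. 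The commutator $\mathsf{M}(\partial_xX)$ then needs its own quadratic bound, which is Lemma \ref{lempesR}. Your stability step has the same defect. The paper instead freezes the coefficient at $\partial_x\Phi$ and applies \eqref{main31} to $\Lambda^{1/2}\mathcal{H}(X-Y)$.

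\textbf{The arc-chord propagation.} Your argument requires $\sup_y|\partial_x(X(t)-X_0)(y)|$ to be small. But \eqref{pesconbe} only makes $\partial_x(X_0-\Phi)$ small in $\dot B^0_{\infty,\infty}$, not in $L^\infty$. Since $\partial_xX(t)$ is continuous for $t>0$, you have $\|\partial_x(X(t)-X_0)\|_{L^\infty}\ge\operatorname{dist}_{L^\infty}(\partial_xX_0,C^0)$ for every $t>0$. This distance can be of order one, in particular not small compared with $\mathbf{\Theta}_0^{-1}$, for admissible data. An example is a tangent angle oscillating like $\sin(\log\log(1/|x|))$ near a point. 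Its Littlewood--Paley pieces are $O(1/j)$, so it is $\dot B^0_{\infty,\infty}$-close to its mollifications, yet it has no limit at that point. Neither small $T$ nor the split through $e^{-t\mathcal{L}_0}X_0$ repairs this, and the paper stresses exactly this point.

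The paper follows \cite{KN} instead. It estimates $Q_X(T)=\sup_{t,\alpha,x}\frac{|\alpha|^{\tilde\varepsilon}}{t^{\tilde\varepsilon}}\bigl|\frac{1}{|\Delta_\alpha X(t,x)|}-\frac{1}{|\Delta_\alpha X_0(x)|}\bigr|$ by integrating the equation in time. This gives \eqref{esQX}, with only the non-endpoint norm $\|X\|_{T,*}$ on the right, hence smallness. It then invokes \cite[Lemma 2.8]{KN}, which turns small $Q_X$, together with the bound on $t^{a}\|\partial_xX\|_{\dot C^{a}}$, into control of $\mathbf{\Theta}_X$. This step must be placed inside the bootstrap alongside the $\dot W^{1,\infty}$ and $\|X-\Phi\|_{T,*}$ bounds, because every nonlinear and coefficient estimate depends on $\mathbf{\Theta}_X$.
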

		Note that $(C^\infty)^{\dot B^1_{\infty,\infty}}\backslash \dot W^{1,\infty}\neq\emptyset$, see \cite[V.4.3.1]{Steinbook} and \cite{DLN14}. It is well-known that  $C^1\subsetneqq VMO^1\subsetneqq \dot W^{1,\infty}\cap (C^\infty)^{\dot B^1_{\infty,\infty}}$.
		As we remarked before, here we require $X_0\in \dot W^{1,\infty}$ to control the variable coefficients. 
		
		A new challenge stems from propagating the arc-chord condition. While proving it is straightforward for $C^1$ initial data, which ensures the smallness of the deviation from the well-behaved profile $\Phi$ in the Lipschitz norm, this guarantee breaks down when considering the initial data satisfying  \eqref{pesconbe}. This is because the Lipschitz norm cannot be controlled by the Besov norm $\dot B^{1}_{\infty,\infty}$. To overcome this difficulty, we adapt the method from \cite{KN}. Specifically, we control the arc-chord constant using a non-endpoint quantity and leverage the structure of the evolution equation. Define the quantity
		\begin{equation*}
			Q_h(T)=\sup_{t\in[0,T]}\sup_{\alpha,s}\frac{|\alpha|^\varepsilon}{t^\varepsilon}\left|\frac{1}{|\Delta_\alpha h(t,x)|}-\frac{1}{|\Delta_\alpha h(0,x)|}\right|,
		\end{equation*}
		where $\Delta_\alpha h(x)=\frac{\delta_\alpha h(x)}{\alpha}$, and $\varepsilon$ is a small positive constant. By  \cite[Lemma 2.8]{KN}, if $Q_X(T)+\sup_{t\in(0,T)}t^{a}\|\partial_xX\|_{\dot C^{a}}$
        is finite for some $a>0$, then $X$ satisfies the arc-chord condition on $[0,T]$.
		
	Recently, a remarkable work by Garc\'{\i}a-Ju\'{a}rez and Haziot \cite{GHpeskin} established global well-posedness and asymptotic stability for the 2D Peskin problem with fully nonlinear tension, for small perturbations around circular solutions in the critical space $\dot W^{1,\infty}\cap B^{3/2}_{2,\infty}$. Their analysis is based on a careful linearization around the circle and a Fourier-mode decomposition that separates the symmetry-induced non-decaying modes from the dissipative ones. By combining Duhamel’s formula with a fixed-point argument, they further proved that the solution converges exponentially to a translated and rotated disk.
    In Section \ref{secglo}, we extend the result to small perturbations in the critical space $\dot B^1_{\infty,\infty}$.\\

		For the 3D Peskin problem \eqref{eqpes3d}, we have the following result.
		\begin{theorem}\label{thmPes3d}
			Fix  $m\in\mathbb{N}^+$. 	For any  initial data  $X_0$ satisfying $$z_0:=\|X_0\|_{ W^{1,\infty}(\mathbb{S}^2)}+\mathbf{\Theta}_0<\infty,$$ there exist $\varepsilon_0>0$ and a function $\mathfrak{M}: [0,\infty)\to [1,\infty)$ depending on the tension $\mathcal{T}$ and the parameter $m$,  such that for any $0<\varepsilon\leq \varepsilon_0$, if  
			\begin{align}\label{con3dpes}
			\mathfrak{M}(z_0)	\|X_0- \Phi\|_{ W^{1,\infty}}\leq \varepsilon,
			\end{align}
			holds for some smooth function $\Phi\in C^\infty(\mathbb{S}^2)$,
			then the 3D Peskin problem \eqref{eqpes3d} has a unique solution ${X}$ in the class
			\begin{align*}
				\mathcal{Z}_T:=&\big\{Y(t,\widehat{\boldsymbol{x}}):(0,T)\times\mathbb{S}^2\rightarrow \mathbb{R}^3:\mathbf{\Theta}_Y(T)\leq 2\mathbf{\Theta}_0,\\
				&\quad\|Y\|_{Z_T}:=\sup_{t\in(0,T)}(\| Y(t)\|_{W^{1,\infty}}+t^{m+\kappa}\|Y(t)\|_{ C^{m+1+\kappa}})\leq C \|X_0\|_{W^{1,\infty}}\big\}
			\end{align*}
			for some universal $C>1$, $T=T(\varepsilon_0,\|\Phi\|_{C^{m+3}})>0$, and $\kappa\in(0,1)$ is a fixed parameter. 	Moreover, the solution is stable in the following sense: for any two solutions $X,Y\in \mathcal{Z}_T$ corresponding to initial data $X_0, Y_0$ satisfying \eqref{con3dpes}, it holds
			\begin{align*}
				\sup_{t\in[0,T]}(\|(X-Y)(t)\|_{W^{1,\infty}}+t^{m+\kappa}\|(X-Y)(t)\|_{ C^{m+1+\kappa}})\leq C\|X_0-Y_0\|_{ W^{1,\infty}}.
                \end{align*}
		\end{theorem}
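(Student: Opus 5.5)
The plan is to recast \eqref{eqpes3d} as a quasilinear parabolic system of order $s=1$ on the manifold $\mathbb{S}^2$, with critical index $b=1$, and run the linear-to-nonlinear scheme behind Proposition \ref{prop111}. The manifold version of Theorem \ref{lemmain} (Theorem \ref{thmmani}) supplies the linear estimates.

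\textbf{Principal part.} Localize to a point $\widehat{\boldsymbol{x}}$ with a partition of unity and stereographic charts, and expand the Stokeslet kernel near the diagonal. Writing $X(\widehat{\boldsymbol{y}})-X(\widehat{\boldsymbol{x}})\approx \nabla X(\widehat{\boldsymbol{x}})(\widehat{\boldsymbol{y}}-\widehat{\boldsymbol{x}})$ and integrating by parts once to move the surface divergence onto $G$, the leading term should be
\[
\partial_t X+\A(t,\widehat{\boldsymbol{x}},\nabla)X=\mathcal{N}(X),
\]
where $\A$ is a first-order matrix symbol. It is built from the Fourier transform of the frozen kernel $\nabla G(\nabla X(\widehat{\boldsymbol{x}})\,\cdot)$ composed with the linearized tension map $Z\mapsto \nabla_Z\big(\mathcal{T}(|Z|)Z/|Z|\big)$ evaluated at $Z=\nabla X(\widehat{\boldsymbol{x}})$.

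Coercivity \eqref{condop} comes from three ingredients: positivity of the Stokes operator, the condition $\min\{\mathcal{T}',\mathcal{T}/r\}\ge\mathfrak{c}$ in \eqref{conten}, and the arc-chord bound $\mathbf{\Theta}$. Together these give $c_0,c_1$ depending only on $z_0$. Smoothness of $G$ away from the origin then yields $\|\A\|_{B_T}\lesssim \mathfrak{M}(z_0)\,\|X\|_{T,*}$, exactly as in \eqref{ABT}.

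\textbf{Remainder estimates.} The remainder $\mathcal{N}$ collects two kinds of terms. The first are second-order Taylor errors, such as $X(\widehat{\boldsymbol{y}})-X(\widehat{\boldsymbol{x}})-\nabla X(\widehat{\boldsymbol{x}})(\widehat{\boldsymbol{y}}-\widehat{\boldsymbol{x}})$ appearing in the kernel. The second are curvature and chart corrections, which are smoothing. I would bound $\da_T(\mathcal{N})$ by $\|X-\Phi\|_{T,*}^{1+\epsilon_0}$ plus $T^{\delta}C(\|\Phi\|_{C^{m+3}})$, using singular-integral estimates in time-weighted Hölder norms.

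\textbf{Linear scheme and fixed point.} Since the data are only small in $W^{1,\infty}$ relative to $\Phi$, the linearized operator is frozen at $\nabla\Phi$. Set $\mathcal{S}:V\mapsto U$, where
\[
\partial_tU+\A_\Phi U=\mathcal{N}(V)+(\A_\Phi-\A_V)V .
\]
Here $\|\A_\Phi-\A_V\|_{B_T}$ is small because $\|V-\Phi\|$ is small, and the constant-in-time smooth part $\Phi$ only produces errors of size $T\|\Phi\|_{C^{m+3}}$. Applying Theorem \ref{thmmani} to $U-\Phi$ shows that $\mathcal{S}$ maps the ball of $\mathcal{Z}_T$ to itself once $T$ is small and $\mathfrak{M}(z_0)\varepsilon$ is small.

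The contraction estimate is the same computation applied to differences. It gives uniqueness and the stated $W^{1,\infty}$-stability at the same time; the $L^\infty$ part of the norm follows by integrating the equation in time.

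\textbf{Arc-chord propagation.} I must also show $\mathbf{\Theta}_Y(T)\le 2\mathbf{\Theta}_0$. Because $\|X(t)-X_0\|_{W^{1,\infty}}\le C\varepsilon$ and the data are Lipschitz-close, we get
\[
|X(\widehat{\boldsymbol{x}})-X(\widehat{\boldsymbol{y}})|\ge |X_0(\widehat{\boldsymbol{x}})-X_0(\widehat{\boldsymbol{y}})|-C\varepsilon|\widehat{\boldsymbol{x}}-\widehat{\boldsymbol{y}}|,
\]
and the bound follows provided $\mathfrak{M}(z_0)\ge \mathbf{\Theta}_0$.

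\textbf{Main obstacle.} I expect the hardest step to be verifying \eqref{condop} and \eqref{defnormA} for the frozen symbol, in particular ellipticity uniform in the chart. The matrix symbol couples the anisotropic stretching $\nabla X$ with the Stokes tensor, and one must check positive definiteness of the product. My first attempt would be an energy identity: the Stokes operator is positive definite, so pairing with the tension Hessian, which is itself positive definite by \eqref{conten}, should give coercivity of the composite symbol.
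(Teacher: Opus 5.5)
Your overall architecture matches the paper's. You freeze coefficients at the smooth $\Phi$, run a contraction in a $Z_T$-ball around $\Phi$ via Theorem \ref{thmmani} with $n=1$, and control the arc-chord constant by Lipschitz closeness. The gap is in how you split the nonlinearity: as written, the estimates do not close.

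You freeze the Stokeslet at the point, $G(\nabla V(\widehat{\boldsymbol{x}})(\widehat{\boldsymbol{x}}-\widehat{\boldsymbol{y}}))$, for the \emph{rough} unknown $V$, and send the Taylor and freezing errors to $\mathcal{N}$.
\begin{itemize}
\item Set $h=V-\Phi$. At linear order in $h$, $(\A_\Phi-\A_V)V$ contains $(\A_\Phi-\A_V)\Phi\approx -W_\Phi\cdot\nabla h$, where $W_\Phi=\partial_Z(\A_Z\Phi)|_{Z=\nabla\Phi}$ is a smooth field that is not small.
\item This is a first-order transport term, of the same order as the principal part.
\item In the forcing norm of Theorem \ref{thmmani} ($t^{\kappa}$ times $C^\kappa$), it costs about $\|W_\Phi\|\,\|h\|_{Z_T}$. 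That is linear in $\sigma$ with no power of $T$, because $t^\kappa\|\nabla h\|_{C^\kappa}$ is only $O(\|h\|_{Z_T})$.
\item The same term with the opposite sign sits in your freezing errors, since the $C^\kappa$ norm in $\widehat{\boldsymbol{x}}$ sees $\nabla V(\widehat{\boldsymbol{x}})$ through the frozen kernel.
\end{itemize}
Consequently your claim $\da_T(\mathcal{N})\lesssim\|X-\Phi\|_{T,*}^{1+\epsilon_0}+T^{\delta}C(\Phi)$ is false. Likewise, smallness of $\|\A_\Phi-\A_V\|_{B_T}$ does not control $\da_T((\A_\Phi-\A_V)V)$.

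This failure is exactly the endpoint $b=s=1$, which is also outside Theorem \ref{lemmain} and \eqref{ABT}. In Proposition \ref{prop111} ($s>1$) and in the 2D Peskin problem ($b=\frac12<s=1$), the spare weight $t^{(s-b)/s}$ makes $A(\nabla V,\nabla)\Phi$ small in $T$. Here nothing does, so the self-map inequality $C_0\mathfrak{M}_1(\varepsilon+\|W_\Phi\|\sigma+\cdots)\le\sigma$ fails.

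The missing idea is never to freeze the kernel pointwise on the rough unknown. The paper keeps the rough part in bilinear nonlocal form,
$N_1=-\int_{\mathbb{S}^2}\big(G(F(\widehat{\boldsymbol{x}})-F(\widehat{\boldsymbol{y}}))-G(\Phi(\widehat{\boldsymbol{x}})-\Phi(\widehat{\boldsymbol{y}}))\big)\widetilde\nabla\cdot(\mathbf{T}(|\widetilde\nabla F|)\widetilde\nabla F)\,d\mu_{\mathbb{S}^2}$.
\begin{itemize}
\item Derivatives of $G(F(\widehat{\boldsymbol{x}})-F(\widehat{\boldsymbol{y}}))$ in either variable involve only $\widetilde\nabla F$, never $\widetilde\nabla^2F$. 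So this kernel satisfies \eqref{conG} with constant $\lesssim\|\widetilde\nabla(F-\Phi)\|_{L^\infty}$.
\item Lemma \ref{lemG11} and Remark \ref{rmk3dpesk} then place the H\"older norm on the density, whose $\Phi$-part is $T^\kappa M_\Phi$-small.
\item The tension is linearized with $J(\widetilde\nabla\Phi)$ at the integration point, so $N_2$ is pointwise quadratic.
\item Pointwise freezing $G(\nabla\phi_i(\theta)(\theta-\eta))$ is used only for the smooth $\Phi$ inside $\mathcal{L}_\Phi$ (Lemma \ref{3dpesmlem}). There the errors have order $1-\zeta_0$ and are absorbed by the $T_0^{\zeta_0/s}$ gain in Theorem \ref{thmmani}.
\end{itemize}
Alternatively, you could keep your splitting but exhibit the cancellation of the two transport terms: the sum $\mathcal{N}(V)+(\A_\Phi-\A_V)V$ has a lower-order linearization at $\Phi$.

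A smaller point concerns coercivity. Your energy heuristic gives positivity of $\hat G\,\mathcal{J}$ only in the $\mathcal{J}$-weighted pairing, because a product of positive definite matrices need not satisfy \eqref{condop} as a quadratic form. Lemma \ref{lemadk} therefore needs a similarity-transform adaptation; the paper defers this point to \cite{3Dpeskin}.
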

		The H\"{o}lder norm on $\mathbb{S}^2$ is defined in \eqref{normsph}. Note that for the Peskin system defined on a manifold, the presence of lower-order terms is inevitable (see Proposition \ref{propb=0} and Theorem \ref{thmmani}). Consequently, the result cannot be directly extended to the Besov-type case in a parallel manner.
		
	We briefly overview our reformulation of the 2D/3D Peskin system into the form \eqref{eqpara}. In the 2D case, for small displacements ($|\alpha| \ll 1$), we approximate $\delta_\alpha X(s) \approx \alpha \cdot X'(s)$, which leads to the key asymptotic relation:
		\begin{equation*}
			\frac{\delta_\alpha X(s)\cdot X'(s-\alpha)}{|\delta_\alpha X(s)|^2} \sim \frac{1}{2}\cot\left(\frac{\alpha}{2}\right),
		\end{equation*}
		motivating the introduction of the Hilbert transform $\mathcal{H}$ as in \cite{KN}. This yields the operator formulation
		\begin{equation*}
			\partial_t\partial_x X + \frac{1}{4}\partial_x\mathcal{H}\left(\mathbf{T}(|\partial_x X|)\partial_x X\right) = \partial_x\mathcal{N},
		\end{equation*}
		where $\mathbf{T}(|b|) = \frac{\mathcal{T}(|b|)}{|b|},$ and $\mathcal{N}$ denote terms of lower differential order. Using the identity $\partial_x\mathcal{H} = \Lambda = \Lambda^\frac{1}{2}\Lambda^\frac{1}{2}$, we decompose the quasilinear operator as
		\begin{align*}
			\Lambda\left(\mathbf{T}(|\partial_x X|)\partial_x X\right) &= \Lambda^\frac{1}{2}\left(\Lambda^\frac{1}{2}(\mathbf{T}(|\partial_x X|))\partial_x X + \mathbf{T}(|\partial_x X|)\Lambda^\frac{1}{2}\partial_x X\right) + \mathcal{R}_1 \nonumber \\
			&= \Lambda^\frac{1}{2}\left(\mathsf{A}(\partial_x X)\Lambda^\frac{1}{2}\partial_x X\right) + \mathcal{R}_2,
		\end{align*}
		where $\mathcal{R}_1, \mathcal{R}_2$ are lower-order remainder terms, and the coefficient matrix $\mathsf{A}$ is defined by:
		\begin{align*}
			\A(b)=\frac{1}{4}\frac{\mathcal{T}(|b|)}{|b|}\left(\mathrm{Id}-\frac{b\otimes b}{|b|^2}\right)+\frac{1}{4}\mathcal{T}'(|b|)\frac{b\otimes b}{|b|^2}.
		\end{align*}
		Under the condition \eqref{conten}, the matrix $\mathsf{A}$ is positive definite for all $b$. Approximating $X$ by a smooth function $\Phi$, we obtain:
		\begin{equation*}
			\partial_t\partial_x X + \A(\partial_x\Phi)\Lambda\partial_x X = \mathcal{R},
		\end{equation*}
		where $\mathcal{R}$ collects lower-order terms. 
		
		From the above reformulation, we observe that the form of the 2D Peskin equation heavily relies on the explicit structure of the Hilbert transform $\mathcal{H}$. However, in the 3D case, such a representation is unavailable, necessitating a different approach to parametrization. The increased dimensionality introduces additional geometric complexity, requiring tools from differential geometry, such as surface gradients, curvature tensors, and divergence operators on manifolds, to properly describe elasticity forces and interfacial dynamics. In equation \eqref{eqpes3d}, we  choose a smooth function $\Phi$ in $\mathbb{S}^2$ which approximates $F_0$ in the $C^1$ norm. By approximating $F$ with $\Phi$, we can linearize the system \eqref{eqpes3d} near the smooth function $\Phi$ as
		\begin{equation*}
			\partial_t(F-\Phi)(t,\widehat{\boldsymbol{x}}) + \int_{\mathbb{S}^2} G(\Phi(\widehat{\boldsymbol{x}})-\Phi(\widehat{\boldsymbol{y}})) \nabla_{\mathbb{S}^2}\cdot\left(J(\nabla_{\mathbb{S}^2}\Phi) \nabla_{\mathbb{S}^2}(F-\Phi)\right)(t,\widehat{\boldsymbol{y}}) d\mu_{\mathbb{S}^2}(\widehat{\boldsymbol{y}}) = N(F,\Phi)(t,\widehat{\boldsymbol{x}}),
		\end{equation*}
		where $N(F,\Phi)$ represents lower-order terms, and $J$ defined by \eqref{3dpesdefJ}, is positive definite under the same condition \eqref{conten} used in 2D. Following \cite{3Dpeskin}, we employ stereographic projection for localization. As in the proof of Theorem \ref{thmmani}, we select smooth cut-off functions $\{\chi_i\}$ on $\mathbb{S}^2$ such that $F-\Phi = \sum_{i}\chi_i(F-\Phi)$. Each component can be mapped from $\mathbb{S}^2$ to $\mathbb{R}^2$ by stereographic projection, defined by $h_i$, and $\Phi$ is mapped to $\phi_i$. Applying change of variables yields
		\begin{equation*}
			\partial_th_i(t,\theta)+\int_{\mathbb{R}^2}G(\phi_i(\theta)-\phi_i(\eta))\nabla\cdot\tilde{J}(\eta,\nabla\phi_i)\nabla h_i(t,\eta)d\eta=\bar{N}_i(t,\theta),
		\end{equation*}
		where $\bar{N}_i$ is lower order term, and $\tilde{J}$ defined in \eqref{deftJ}, is coercive on $\mathbb{R}^2$. Furthermore, we can approximate $\phi_i(\theta)-\phi_i(\eta)$ by $(\theta-\eta)\cdot\nabla\phi_i$, and derive the formula
		\begin{equation*}
			\partial_t h_i + \mathcal{L}_{\mathbb{R}^2}^i h_i = \tilde{N}_i,
		\end{equation*}
		where $\mathcal{L}_{\mathbb{R}^2}^i$ (defined in \eqref{3dpesdefop}) satisfies the conditions of Theorem \ref{lemmain}, and $\tilde{N}_i$ represents lower-order term. 
		
		Through the above reformulation and by approximating the solution with $\Phi$, we observe that the quasilinear system becomes essentially semilinear. A key observation is that the dominant operator is a first-order pseudo-differential operator with a positive symbol. This structural property allows us to apply Theorem \ref{lemmain} to establish local well-posedness for both the 2D and 3D Peskin systems. Furthermore, the structure of steady solutions is explicitly characterized in the 2D case by \cite{LinTongSolvability2019,GHpeskin}, allowing us to establish global well-posedness for the 2D Peskin system through the kernel estimates derived in \cite{GHpeskin}. In contrast, the 3D Peskin problem presents significant challenges due to the inherent geometric complexity and the absence of analytical tools analogous to the Hilbert transform, making it difficult to construct steady solutions for general configurations.

		\subsection{Some extensions}\label{secfu}
		1. {\bf Initial-boundary value problem of parabolic equations.}  
		\vspace{0.2cm}\\
		The analysis of quasilinear parabolic systems in bounded domains presents substantial challenges, primarily arising from potential boundary singularities. A canonical example can be found in three-dimensional Navier-Stokes theory: as demonstrated in \cite{ChangKang,sesv}, the Dirichlet boundary value problem for 3D Navier-Stokes equations lacks $L^\infty_tC^\alpha$ solutions for certain $\alpha>0$. This fundamental limitation originates from the boundary irregularity of the Leray projection operator $\mathbb{P}=\mathrm{Id}-\nabla\Delta_{D}^{-1}\operatorname{div}$, where $\Delta_{D}$ denotes the Neumann Laplacian.
		
		Another paradigm illustrating these analytical difficulties is the surface quasi-geostrophic (SQG) equation in confined domains. This notoriously challenging problem has stimulated significant research activity \cite{CI17,CI16,CI20,Ignatova}, particularly regarding solution regularity. Although the initial finite-time blow-up conjecture proposed in \cite{CMT} was subsequently disproven through rigorous analysis in \cite{Cor98} and numerical simulations in \cite{CLSTW}, the question of global regularity for SQG in bounded domain remained open until the groundbreaking work of Constantin, Ignatova, and the third author \cite{Constantin2023}. Their seminal result established smooth solutions for the critical dissipative SQG equation by ingeniously exploiting inherent nonlinear structures.
		
		The Schauder-type estimates developed in this context have proven particularly effective for establishing local/global well-posedness of various quasilinear parabolic equations. Nevertheless, extending these methodologies to initial-boundary value problems for general quasilinear systems continues to pose significant challenges, underscoring the need for novel analytical approaches.	\vspace{0.2cm}\\
		2. {\bf The Muskat problem in general setting.}
	\vspace{0.2cm}\\		
		In this paper, we present a rigorous mathematical framework for analyzing the two-dimensional Muskat problem under the special scenario where the two immiscible fluids share identical viscosity coefficients. This particular configuration permits a remarkable structural reduction -- the governing equations collapse into a boundary integral formulation that completely characterizes the interface evolution. However, this structural simplification disappears when viscosity contrast is introduced, necessitating fundamentally different analytical approaches.
		
		The complexity inherent in viscosity-mismatched systems fundamentally stems from the intricate and inseparable nonlinear coupling between interfacial kinematics and the hydrodynamic behavior of bulk phases. At the heart of this coupling lies the Neumann--Dirichlet operator, which establishes the non-local relationship governing interfacial stress (Neumann boundary condition) and velocity continuity (Dirichlet boundary condition) across the dynamically evolving interface. For comprehensive treatments of this formalism, we direct readers to \cite{1HuyNguyen2020} for reformulation of the general Muskat problem through this operator framework, and to \cite{ABZ} demonstrating its successful implementation in water wave dynamics modeling. When the viscosity ratio deviates from unity, the mathematical description requires concurrent resolution of:
		(i) moving boundary conditions dictating interface motion, and
		(ii) Stokes flow constraints with viscosity-dependent stress coupling.
		
		The strong cross-coupling mechanism fundamentally alters the operator structure, precluding direct application of Schauder-type estimates to the general case. Crucially, the interface evolution equation contains implicit forcing terms dependent on Darcy-system solutions within the bulk domains, where the Neumann-Dirichlet operator plays a dual role. This interdependence requires establishing refined elliptic regularity estimates across the evolving interface, particularly regarding transmission conditions for pressure-velocity fields. The development of such estimates through customized potential theory approaches constitutes a primary objective of the ongoing research program.	\vspace{0.2cm}\\
		3. {\bf Kinetic theory}
			\vspace{0.2cm}\\
	The extension of Schauder-type estimates to kinetic equations, in particular to those involving the nonlocal collision dynamics of the Boltzmann and Landau equations \cite{CV}, opens a new analytical framework for the study of well-posedness. Adapting the Schauder methodology developed herein to the kinetic setting necessitates substantial refinements in order to simultaneously capture the anisotropic structure of the linearized operator and the influence of the free transport dynamics.

Moreover, the strategy introduced in Section \ref{secglo} for establishing global well-posedness and asymptotic behavior is sufficiently robust to apply to the Boltzmann equation. Building upon the core ideas of the present work, we establish the global well-posedness of the fractional Fokker–Planck equation (FFPE) in the largest critical Besov space; see \cite{KHN2025}. Further developments on regularity and hypoelliptic estimates for the FFPE are available in \cite{RAkin,FB2002,NL2012,YMCX2007}, while results concerning Besov initial data can be found in \cite{HZ2024}.
 The framework proposed in this paper admits further extensions to anisotropic hypoelliptic settings, yielding sharp well-posedness results for the Boltzmann and Landau equations with very soft potentials, see \cite{CNY}.
		
		\subsection{Organization of article}
		The remainder of this paper is organized as follows. In Section \ref{secproof}, we establish the main Schauder-type estimates for the linear system \eqref{eqpara}. These estimates are then applied to investigate the critical well-posedness theory for three distinct problems: the 2D Muskat equation with surface tension \eqref{eqmst} in Section \ref{intomus1}, the 2D Peskin problem \eqref{peskin} in Section \ref{secpeskin}, and the 3D Peskin problem \eqref{eqpes3d} in Section \ref{secpes3d}.
		\section{Schauder-type estimate}\label{secproof}
		
		\subsection{Notations}
		Throughout this paper, we use the following notations:
		\begin{itemize}
			\item \textit{Fourier Transform.} For a function $f \in L^1(\mathbb{R}^d)$, we define its Fourier transform and inverse Fourier transform as
			\begin{equation*}
				\begin{aligned}
					&\mathcal{F}(f)(\xi)=\frac{1}{(2\pi)^{\frac{d}{2}}}\int_{\mathbb{R}^d}f(x)e^{-ix\cdot \xi}dx,\\
					&\mathcal{F}^{-1}(g)(x)=\frac{1}{(2\pi)^{\frac{d}{2}}}\int_{\mathbb{R}^d}g(\xi)e^{ix\cdot \xi}d\xi.
				\end{aligned}
			\end{equation*}
			We also write $\hat f(\xi)=\mathcal{F}(f)(\xi)$ for short. 
			It is easy to check $\mathcal{F}\mathcal{F}^{-1}(f)=\mathcal{F}^{-1}\mathcal{F}(f)=f$.
			Moreover, 
			\begin{align*}
				\mathcal{F}(f\ast g)(\xi)=(2\pi)^\frac{d}{2}\hat f(\xi)\hat g(\xi).
			\end{align*}
			
			\item \textit{Finite Difference Operators.}
			\begin{align*}
				&\delta_\alpha f(x)=f(x)-f(x-\alpha),\\&
				\Delta_\alpha f(x)=\frac{\delta_\alpha f(x)}{\alpha}~~\text{in } ~\mathbb{R},\quad\quad\quad\Delta_\alpha f(x)=\frac{\delta_\alpha f(x)}{|\alpha|}~~\text{in} ~\mathbb{R}^l, ~l\geq 2.
			\end{align*}
			The finite difference operators appearing in this paper are all in the spatial variable, \textit{ i.e.} for $h(t,x)$ defined in $[0,T]\times \mathbb{R}^d$, we denote $\delta_\alpha h(t,x)=(\delta_\alpha h(t,\cdot))(x)$. Specifically, for domain $\Omega\subset \mathbb{R}^d$, we define the finite difference in $\Omega$ as
			\begin{equation*}
				\delta_\alpha f(x)=f(x)-f(x-\alpha), \quad \text{for}\ x,x-\alpha\in\Omega.
			\end{equation*}
			\item \textit{Fractional Laplacian operators.}
			\begin{equation}\label{deffracla}
				\Lambda=(-\Delta)^{\frac{1}{2}},\quad\quad\quad\Lambda^a=(-\Delta)^{\frac{a}{2}},\quad \text{in}\ \mathbb{R}^d\ \text{or}\ \mathbb{T}^d.
			\end{equation}
			\item \textit{Gradient Operator.} For a vector-valued function $f = (f^1, \dots, f^n) \in C^1(\mathbb{R}^d; \mathbb{R}^n)$, the gradient $ \nabla f $ is the $d \times n$ matrix given by
			$$
			(\nabla f)_{ij}=\partial_{i}f^j.
			$$
			\item \textit{Multi-Index Derivatives.}
			For any function $f$, and any $\beta=(\beta_1,...\beta_d)\in\mathbb{N}^d$, we denote $\partial_x^\beta f=\partial_1^{\beta_1}...\partial_d^{\beta_d}f$. With a slight abuse of notation, for $m\in\mathbb{N}$, we denote $\nabla^mf=(\partial^\beta_x f)_{|\beta|=m}$, where $|\beta|=\sum_{i=1}^d\beta_i$.
			\item \textit{Frobenius Inner Product and Matrix Norm.}  For any matrix $\M, \N\in \mathbb{R}^{d\times d}$, the Frobenius inner product of $
			\M:\N$ and the induced matrix norm are defined by
			\begin{align*}\label{matrxnorm}
				\M:\N=\sum_{i,j=1}^d\M_{ij}\N_{ij},\quad\quad\quad |\M|=\sqrt{\M:\M}.
			\end{align*}
			\item \textit{Friedrichs Mollifier.} We denote $\rho_\eps$ the standard Friedrichs mollifier with parameter $\eps>0$.
			\item \textit{Japanese Bracket.} For $r\in\mathbb{R}^d$, we denote the bracket $\langle r\rangle=\sqrt{1+|r|^2}$.
			\item \textit{Integer Part.} For $s\in\mathbb{R}^+$, denote $[s]=\max\{n:n\in\mathbb{N},n\leq s\}$ the integer part of $s$.
			\item \textit{Time-Space Norms.}  For $f:[0,T]\times \mathbb{R}^d\to \mathbb{R}^m$, we denote $\|f\|_{L^\infty(I;X)}=\sup_{t\in I}\|f(t)\|_X$, where $I\subset[0,T]$ and $X$ is a Banach space equipped with norm $\|\cdot\|_X$. Specifically, when $I=[0,T]$, we simply write $\|\cdot\|_{L^\infty_TX}$.
			\item \textit{Indicator Function.} For a measurable set $A \subset \mathbb{R}^d$, define  
			\begin{equation*}\label{indicaf}
				\begin{aligned}
					\mathbf{1}_A(x):=\begin{cases}
						1,\quad\quad x\in A,\\
						0,\quad\quad x\notin A.
					\end{cases}
				\end{aligned}
			\end{equation*}
		\end{itemize}

		\subsection{Proof of Theorem \ref{lemmain}}	
		When dealing with equations with constant coefficients, it is straightforward to derive the equation in terms of Fourier transform and find the fundamental solution. The proof of Theorem \ref{lemmain} is divided into three steps. First, we fix the coefficient, and transform the differential equation into an integral equation by Fourier method. Then we give some elementary estimates of the fundamental solution. Finally, we estimate the solution and show that the effect of  variable coefficients can be ignored provided $\|\A\|_{B_T}$ small enough.  \vspace{0.3cm}\\
		\textbf{Step 1: Transform the differential equation to an integral equation.}\\
		Fix $x_0\in\mathbb{R}^d$, we rewrite \eqref{eqpara} as 
		\begin{equation}\label{eqpafi}
			\begin{aligned}
				&\partial_tu(t,x)+(2\pi)^{-\frac{d}{2}}\int _{\mathbb{R}^d}\A(t,x_0,\xi)\hat{u}(t,\xi)e^{ix\cdot\xi}d\xi=f(t,x)+R_{x_0}[u](t,x),
			\end{aligned}
		\end{equation}
		with 
		\begin{align}\label{defRx0}
			R_{x_0}[u](t,x)=(2\pi)^{-\frac{d}{2}}\int _{\mathbb{R}^d}(\A(t,x_0,\xi)-\A(t,x,\xi))\hat{u}(t,\xi)e^{ix\cdot\xi} d\xi.
		\end{align}
		We remark that if the coefficient matrix $\A(t,x,\xi)$ is independent of $x$, then $R_{x_0}[u](t,x)\equiv 0$.\vspace{0.2cm}\\
		Let $\K_{
			x_0}(t,\tau,x)$ be the fundamental solution of the adjoint system, satisfying 
		\begin{equation}\label{defk0}
			\begin{aligned}
				&-\partial_\tau \K_{x_0}(t,\tau,x)+(2\pi)^{-\frac{d}{2}}\int _{\mathbb{R}^d}\hat \K_{x_0}(t,\tau,\xi)\A(\tau,x_0,\xi)e^{ix\cdot \xi}d\xi=0, \ \ (\tau,x) \in (0,t)\times \mathbb{R}^d,\\
				&\lim_{\tau\to t^-}\K_{x_0}(t,\tau,x)=\delta(x)\mathrm{Id},\ \ \ \ \ x\in\mathbb{R}^d.
			\end{aligned}
		\end{equation}
		Here $\mathrm{Id}$ is the $N\times N$ identity matrix,  $\hat \K_{x_0}(t,\tau,\xi)$ is the Fourier transform of  $\K_{
			x_0}(t,\tau,x)$ in $x$-variable, and $\delta(x)$ is the Dirac delta function.
		Then we transform the differential equation \eqref{eqpafi} to the following form,
		\begin{align}
			u(t,x)=&\int_{\mathbb{R}^d} \K_{x_0}(t,0,x-z)u_0(z)dz+\int_0^t \int _{\mathbb{R}^d}\K_{x_0}(t,\tau,x-z)f(\tau,z)dzd\tau\nonumber \\
			&+\int_0^t\int_{\mathbb{R}^d} \K_{x_0}(t,\tau,x-z)  R_{x_0}[u](\tau,z)dzd\tau\nonumber\\
			:=	&u_{L,x_0}(t,x)+u_{N,x_0}(t,x)+u_{R,{x_0}}(t,x),\label{uform}
		\end{align}
		which holds for any $x_0\in\mathbb{R}^d$. Note that when estimating the H\"{o}lder norm of $u$, we will first take derivatives in the formula \eqref{uform}, and then fix $x_0=x$ such that the remainder term $u_{R,x_0}$ can be absorbed by the principal terms, see \eqref{holu}. Hence our method does not require the regularity of $\K_{x_0}$ with respect to $x_0$. It is essential to note the non-commutativity:
		\begin{equation*}
			\big(\nabla^n_x(u_{L,x_0}(t,x)+u_{N,x_0}(t,x)+u_{R,{x_0}}(t,x))\big)|_{x_0=x}\neq \nabla^n_x\big((u_{L,x_0}(t,x)+u_{N,x_0}(t,x)+u_{R,{x_0}}(t,x))|_{x_0=x} \big).
		\end{equation*}
	~~\\
		{\bf Step 2: Properties of the fundamental solution.}\\
		To estimate the kernel $\K_{x_0}$ and its derivatives, we first prove the following lemma.
		\begin{lemma}\label{esthtker}
			Let $\lambda,\eps >0$.	Suppose  $\F(\xi)$ satisfies
			\begin{equation}\label{condb}
				|\nabla^n_\xi\F(\xi)|\leq C|\xi|^{\sigma-n}\exp(-\lambda^\eps|\xi|^\eps),\quad \forall n\leq d+[\sigma]+1,
			\end{equation}
			and define
			\begin{equation*}
				\G(x)=\int_{\mathbb{R}^d}\F(\xi)e^{ix\cdot\xi}d\xi.
			\end{equation*}
			If $\sigma>-d$, then it holds
			\begin{align}\label{leme}
				|\nabla^k\G(x)|\lesssim \lambda^{-d-k-\sigma}\left(1+\frac{|x|}{\lambda}
                \right)^{-(d+\sigma)},\ \ \ \forall\ k\geq 0.
			\end{align}
			Moreover, if $\sigma>0$, then for any $k\in\mathbb{N}$,  $0\leq\sigma'<\min\{\sigma,1\}$, and for any $\alpha\in\mathbb{R}^d$,
			\begin{align}\label{leme1}
				\int_{\mathbb{R}^d}|\delta_\alpha\nabla^k \G(x)||x|^{\sigma'}dx\lesssim \frac{|\alpha|^{\sigma'}}{\lambda^{k+\sigma}}\min\left\{1,\frac{|\alpha|}{\lambda}\right\}^{1-\sigma'}.
			\end{align}
		\end{lemma}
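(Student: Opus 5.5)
The plan is to reduce to the case $\lambda=1$ by scaling, prove the pointwise bound \eqref{leme} by splitting frequencies at $|\xi|\sim|x|^{-1}$, and then deduce \eqref{leme1} from \eqref{leme} at orders $k$ and $k+1$.

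\textbf{Scaling.} Set $\tilde\F(\eta)=\lambda^{\sigma}\F(\eta/\lambda)$. A direct check shows that $\tilde\F$ satisfies \eqref{condb} with $\lambda=1$ and the same constant $C$. Changing variables $\xi=\eta/\lambda$ gives
\begin{equation*}
\G(x)=\lambda^{-d-\sigma}\tilde\G(x/\lambda),\qquad \nabla^k\G(x)=\lambda^{-d-\sigma-k}(\nabla^k\tilde\G)(x/\lambda).
\end{equation*}
So \eqref{leme} for general $\lambda$ follows from the bound $|\nabla^k\tilde\G(y)|\lesssim(1+|y|)^{-(d+\sigma)}$. For \eqref{leme1}, the substitution $x=\lambda y$ gives
\begin{equation*}
\int|\delta_\alpha\nabla^k\G(x)||x|^{\sigma'}dx=\lambda^{-k-\sigma+\sigma'}\int|\delta_{\alpha/\lambda}\nabla^k\tilde\G(y)||y|^{\sigma'}dy.
\end{equation*}
Hence it suffices to prove $\int|\delta_{\beta}\nabla^k\tilde\G||y|^{\sigma'}dy\lesssim|\beta|^{\sigma'}\min\{1,|\beta|\}^{1-\sigma'}$. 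From now on I assume $\lambda=1$ and drop the tildes.

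\textbf{Pointwise bound.} We have $\nabla^k\G(x)=\int(i\xi)^{\otimes k}\F(\xi)e^{ix\cdot\xi}d\xi$.

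For $|x|\le1$, bound the integral directly by $\int|\xi|^{\sigma+k}e^{-|\xi|^\eps}d\xi<\infty$. This is finite because $\sigma+k>-d$ near $0$ and the exponential factor controls large $|\xi|$.

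For $|x|\ge1$, fix a smooth cutoff $\chi$ equal to $1$ on $B_1$ and supported in $B_2$, and split the integrand with $\chi(\xi|x|)$ and $1-\chi(\xi|x|)$.
\begin{itemize}
\item The low-frequency piece is bounded by $\int_{|\xi|\le2/|x|}|\xi|^{\sigma+k}d\xi\lesssim|x|^{-d-\sigma-k}$.
\item In the high-frequency piece I integrate by parts $N:=d+[\sigma]+1$ times, using $x^\beta e^{ix\cdot\xi}=(-i\partial_\xi)^\beta e^{ix\cdot\xi}$. The derivatives fall on $\xi^{\otimes k}\F(\xi)\bigl(1-\chi(\xi|x|)\bigr)$. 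By \eqref{condb}, and since each derivative of the cutoff costs $|x|\sim|\xi|^{-1}$ on its support, each term is bounded by $|\xi|^{\sigma+k-N}e^{-|\xi|^\eps}$ on $\{|\xi|\ge|x|^{-1}\}$. This gives $|x|^{-N}\int_{|\xi|\ge1/|x|}|\xi|^{\sigma+k-N}e^{-|\xi|^\eps}d\xi\lesssim|x|^{-N}\max\{1,|x|^{N-d-\sigma-k}\}$. Both candidates are $\le|x|^{-(d+\sigma)}$: the first because $N>d+\sigma$, the second because $k\ge0$.
\end{itemize}
This proves \eqref{leme}. The only delicate point is that \eqref{condb} supplies just $N=d+[\sigma]+1$ derivatives. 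I handle this by asking only for the decay rate $d+\sigma<N$, independently of $k$; the large-frequency side is always controlled by the factor $e^{-|\xi|^\eps}$.

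\textbf{Integral bound.} Assume $\sigma>0$ and $0\le\sigma'<\min\{\sigma,1\}$. By \eqref{leme}, $\int|\nabla^j\G(x)|(1+|x|)^{\sigma'}dx<\infty$ for every $j$, since $\sigma'<\sigma$.
\begin{itemize}
\item If $|\beta|\ge1$, bound $|\delta_\beta\nabla^k\G|\le|\nabla^k\G(x)|+|\nabla^k\G(x-\beta)|$. In the second term substitute $y=x-\beta$ and use $|y+\beta|^{\sigma'}\le|y|^{\sigma'}+|\beta|^{\sigma'}$. This gives the bound $\lesssim1+|\beta|^{\sigma'}\lesssim|\beta|^{\sigma'}$.
\item If $|\beta|<1$, write $\delta_\beta\nabla^k\G(x)=\int_0^1\beta\cdot\nabla^{k+1}\G(x-\theta\beta)d\theta$. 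Apply \eqref{leme} with $k+1$ and $|x|^{\sigma'}\lesssim|x-\theta\beta|^{\sigma'}+1$. This gives the bound $\lesssim|\beta|=|\beta|^{\sigma'}|\beta|^{1-\sigma'}$.
\end{itemize}
The two cases combine to $|\beta|^{\sigma'}\min\{1,|\beta|\}^{1-\sigma'}$. Undoing the scaling with $\beta=\alpha/\lambda$ yields \eqref{leme1}.

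I expect the main obstacle to be the integration-by-parts step in the pointwise bound, specifically checking that the limited number $N$ of symbol derivatives suffices for every $k$. The remaining steps are routine scaling and Taylor-expansion arguments.
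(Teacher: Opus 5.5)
Your proof is correct and follows essentially the same route as the paper: rescale to $\lambda=1$, establish the pointwise bound \eqref{leme} (which the paper leaves to ``standard techniques in Fourier analysis'' and you supply via the cutoff at $|\xi|\sim|x|^{-1}$ and $N=d+[\sigma]+1$ integrations by parts), then split \eqref{leme1} into $|\alpha|\ge1$ and $|\alpha|<1$ exactly as the paper does. One small slip: in the borderline case $\sigma+k+d=N$ the high-frequency integral is $\lesssim 1+\log|x|$ rather than $\max\{1,|x|^{N-d-\sigma-k}\}$, which is harmless because $N>d+\sigma$ strictly, so $|x|^{-N}\log|x|\lesssim|x|^{-(d+\sigma)}$ still holds for $|x|\ge1$.
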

		\begin{proof}
			By performing a dilation with $\lambda$, we can assume without loss of generality that $\lambda=1$. Using the condition \eqref{condb}, it is straightforward to obtain the estimate in \eqref{leme} for $\lambda=1$ by standard techniques in Fourier analysis.
		Then we estimate \eqref{leme1}. For any $\alpha\in\mathbb{R}^d$, if $|\alpha|\geq 1$, then
			\begin{equation}\label{d111}
				\begin{aligned}
					\int_{\mathbb{R}^d}  |\delta_\alpha \nabla^k\G(x)||x|^{\sigma'}dx
					&  \lesssim \int_{\mathbb{R}^d} \left(\frac{1}{(|x|+1)^{d+\sigma}}+\frac{1}{(|x-\alpha|+1)^{d+\sigma}}\right)|x|^{\sigma'}dx\\
					&\lesssim \int_{\mathbb{R}^d} \frac{|x|^{\sigma'}+|\alpha|^{\sigma'}}{(|x|+1)^{d+\sigma}}dx\\
					&\lesssim|\alpha|^{\sigma'},
				\end{aligned}
			\end{equation}
			provided $0\leq \sigma'<\sigma$. Moreover, if $|\alpha|<1$,	it holds
			\begin{align*}
				|\delta_\alpha \nabla^k\G(x)|\lesssim |\alpha|\int_0^1|\nabla^{k+1}\G(x-\tau\alpha)|d\tau.
			\end{align*}
			Then we obtain 
			\begin{align*}
				\int_{\mathbb{R}^d}  |\delta_\alpha \nabla^k\G(x)||x|^{\sigma'}dx 
				\lesssim&\ |\alpha|\int_0^1  \int_{\mathbb{R}^d} \frac{|x|^{\sigma'}}{(|x|+1)^{d+1+\sigma}}dxd\tau\lesssim |\alpha|.
			\end{align*}
			Combining this with \eqref{d111} yields \eqref{leme1}.
			This completes the proof of Lemma \ref{esthtker}.
		\end{proof}\\
		\begin{remark}
			Lemma \ref{esthtker} is available for a family of functions $\{F_{\lambda}(\xi)\}_{\lambda>0}$, where in some cases, the functions $F_{\lambda}(\xi)$ cannot be presented as $F(\lambda,\xi)$. For example, in Corollary \ref{lemfourierK}, we apply  Lemma \ref{esthtker} to a family of functions  $F_\lambda(\xi)=\hat \K_{x_0}(t_1,t_2,\xi)$ with $\lambda=(t_1-t_2)^\frac{1}{s}$.
		\end{remark}
		\begin{lemma}\label{lemadk}
			Let $\A(t,x,\xi)\in\mathbb{R}^{N\times N}$ satisfy \eqref{condop} with $0<c_0<1<c_1$, and let  $\K_{x_0}(t,\tau,x)$ be the kernel defined in \eqref{defk0}.
			Then for the Fourier transform $\hat{\K}_{x_0}(t,\tau,\xi)$, the following estimates hold:
			\begin{align}
				&\left|\hat{\K}_{x_0}(t,\tau,\xi) \right|\lesssim e^{-c_0(t-\tau)|\xi|^s},	 \label{expA}\\
				&\left|\nabla_{\xi}^l\hat{\K}_{x_0}(t,\tau,\xi) \right|\lesssim  c_0^{-l}c_1^l (t-\tau)|\xi|^{{s}-l}e^{-\frac{c_0}{2}(t-\tau)|\xi|^s},\quad\forall l\in\mathbb{N}_+,  l\leq d+s+2.\label{derA}
			\end{align}
		\end{lemma}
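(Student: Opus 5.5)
The plan is to treat $\hat{\K}_{x_0}(t,\tau,\xi)$, for fixed $t$, $x_0$ and $\xi\neq 0$, as the propagator of a matrix ODE in $\tau$. I first prove \eqref{expA} with the full rate $c_0$ by an energy argument. I then prove \eqref{derA} by induction on $l$ through a Duhamel formula, keeping the exponential rate $c_0$ exact throughout. Only at the very end do I trade half of the rate to absorb polynomial factors.

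\textbf{Step 1: the ODE and \eqref{expA}.} Taking the Fourier transform of \eqref{defk0} in $x$ gives
\begin{equation*}
\partial_\tau \hat{\K}_{x_0}(t,\tau,\xi)=\hat{\K}_{x_0}(t,\tau,\xi)\A(\tau,x_0,\xi),\qquad \hat{\K}_{x_0}(t,t,\xi)=(2\pi)^{-\frac d2}\mathrm{Id}.
\end{equation*}
I absorb the harmless constant $(2\pi)^{-d/2}$ and write $P(t,\tau):=\hat{\K}_{x_0}(t,\tau,\xi)$. This solution exists and is unique, since $\A$ is bounded for fixed $\xi$. It satisfies the composition rule $P(t,\tau)=P(t,r)P(r,\tau)$ for $\tau\le r\le t$, where $P(r,\tau)$ solves the same ODE on $[\tau,r]$ with $P(r,r)=\mathrm{Id}$.

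For a fixed vector $v$, set $w(\tau)=P(t,\tau)^{\top}v$. Then
\begin{equation*}
\frac{d}{d\tau}|w|^2=2\,w^{\top}\A(\tau,x_0,\xi)^{\top}w\ \ge\ 2c_0|\xi|^s|w|^2,
\end{equation*}
using the coercivity in \eqref{condop}. Integrating backward from $\tau=t$ gives $|w(\tau)|\le e^{-c_0(t-\tau)|\xi|^s}|v|$. Hence
\begin{equation*}
|P(t,\tau)|\le e^{-c_0(t-\tau)|\xi|^s},
\end{equation*}
which is \eqref{expA}, and the same bound holds for $P(r,\tau)$.

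\textbf{Step 2: Duhamel formula for $\xi$-derivatives.} Apply $\nabla_\xi^l$ to the ODE and use the Leibniz rule:
\begin{equation*}
\partial_\tau\nabla_\xi^lP=(\nabla_\xi^lP)\,\A+\sum_{j<l}c_{l,j}\,\nabla_\xi^jP\,\nabla_\xi^{l-j}\A .
\end{equation*}
The terminal value at $\tau=t$ is $0$, because $P(t,t)$ does not depend on $\xi$. Variation of constants with the propagator then gives
\begin{equation*}
\nabla_\xi^lP(t,\tau)=-\sum_{j<l}c_{l,j}\int_\tau^t\nabla_\xi^jP(t,r)\,\nabla_\xi^{l-j}\A(r,x_0,\xi)\,P(r,\tau)\,dr .
\end{equation*}
The order of the matrix factors matters here, since the matrices need not commute; the composition rule from Step 1 handles this. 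I verify the formula by differentiating its right-hand side in $\tau$.

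\textbf{Step 3: induction with exact rate.} The inductive claim, for $1\le l\le d+s+2$, is
\begin{equation*}
|\nabla_\xi^lP(t,\tau)|\lesssim\sum_{k=1}^{l}c_1^k(t-\tau)^k|\xi|^{ks-l}e^{-c_0(t-\tau)|\xi|^s}.
\end{equation*}
The $j=0$ case is Step 1. To advance the induction, insert the hypothesis for $j<l$ into the Duhamel formula, together with $|\nabla^{l-j}_\xi\A|\le c_1|\xi|^{s-l+j}$ from \eqref{condop}. The two exponentials multiply exactly, since
\begin{equation*}
e^{-c_0(t-r)|\xi|^s}\,e^{-c_0(r-\tau)|\xi|^s}=e^{-c_0(t-\tau)|\xi|^s}.
\end{equation*}
The remaining integral is
\begin{equation*}
\int_\tau^t(t-r)^k\,dr=\frac{(t-\tau)^{k+1}}{k+1},
\end{equation*}
so the term coming from index $(j,k)$ produces $c_1^{k+1}(t-\tau)^{k+1}|\xi|^{(k+1)s-l}$, with $k+1\le l$. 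This closes the induction with no loss in the exponential rate.

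\textbf{Step 4: conclusion.} For each $k\ge1$, pull out one factor $(t-\tau)|\xi|^{s-l}$. The leftover factor is bounded using
\begin{equation*}
\big((t-\tau)|\xi|^s\big)^{k-1}e^{-\frac{c_0}{2}(t-\tau)|\xi|^s}\lesssim c_0^{-(k-1)},
\end{equation*}
and this consumes half of the exponential rate. Finally, $c_0^{-(k-1)}c_1^k\le c_0^{-l}c_1^l$ because $c_0<1<c_1$ and $k\le l$. This gives \eqref{derA}.

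The main obstacle is the induction in Step 3. A naive version that uses the rate $c_0/2$ already in the hypothesis loses a constant in the exponent at every step, and the rate degrades with $l$. The remedy is to carry the full polynomial expansion in $(t-\tau)|\xi|^s$ together with the exact rate $c_0$, and to absorb the polynomial into half of the exponential only once, at the end.
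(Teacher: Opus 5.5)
Your proof is correct and follows essentially the paper's route: an energy estimate for the $\tau$-ODE satisfied by $\hat{\K}_{x_0}$ gives \eqref{expA}, and \eqref{derA} follows by induction on $l$ after differentiating that ODE in $\xi$, where the paper uses the scalar differential inequality $\partial_\tau|Y|+c_0|\xi|^s|Y|\le|R|$ in place of your matrix Duhamel formula. One correction to your closing paragraph: carrying the rate $c_0/2$ in the induction hypothesis, which is exactly what the paper does, causes no degradation, because the homogeneous part still decays at the full rate $c_0$, so $e^{-\frac{c_0}{2}(t-r)|\xi|^s}e^{-c_0(r-\tau)|\xi|^s}=e^{-\frac{c_0}{2}(t-\tau)|\xi|^s}e^{-\frac{c_0}{2}(r-\tau)|\xi|^s}$, and then $\int_\tau^t e^{-\frac{c_0}{2}(r-\tau)|\xi|^s}\,dr\le 2/(c_0|\xi|^s)$ absorbs the extra factor $|\xi|^s$ at each step.
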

		\begin{proof}
			Taking the Fourier transform of \eqref{defk0} yields
			\begin{equation*}
				\begin{aligned}
					&-\partial_\tau\hat{\K}_{x_0}(t,\tau,\xi)+\hat{\K}_{x_0}(t,\tau,\xi)\A(\tau,x_0,\xi)=0,\ \ \tau\in(0,t),\\
					&\lim_{\tau\rightarrow t^-}\hat{\K}_{x_0}(t,\tau,\xi)=\mathrm{Id}.
				\end{aligned}
			\end{equation*}
			Note that $\hat{\K}_{x_0}(t,\tau,\xi)$ is real-valued because $\A(t,x,\xi)\in\mathbb{R}^{N\times N}$.
			Performing the change of variable  $\hat {\mathcal{K}}_{x_0}(t,\tau,\xi)=\hat{\K}_{x_0}(t,t-\tau,\xi)$, we transform the backward evolution system to a forward one:
			\begin{equation}\label{forwK}
				\begin{aligned}
					&\partial_\tau \hat {\mathcal{K}}_{x_0}(t,\tau,\xi)+\hat {\mathcal{K}}_{x_0}(t,\tau,\xi)\A(\tau,x_0,\xi)=0,\ \ \tau\in(0,t),\\
					&\lim_{\tau\to 0^+}\hat {\mathcal{K}}_{x_0}(t,\tau,\xi)=\mathrm{Id}.
				\end{aligned}
			\end{equation}
			By the coercive condition \eqref{condop}, we obtain

            \begin{equation}\label{KKK}
                \begin{aligned}
				&\partial_\tau |\hat {\mathcal{K}}_{x_0}(t,\tau,\xi)|^2+2 c_0|\xi|^s|\hat {\mathcal{K}}_{x_0}(t,\tau,\xi)|^2\leq 0,\\
				&\lim_{\tau\to 0^+} |\hat {\mathcal{K}}_{x_0}(t,\tau,\xi)|^2=N.
			\end{aligned}
            \end{equation}
			Applying Gronwall's inequality yields
			\begin{align}\label{huaK}
				|\hat {\mathcal{K}}_{x_0}(t,\tau,\xi)|^2\leq N e^{-2c_0\tau|\xi|^s},\ \ \ \forall \tau\in(0,t).
			\end{align}
			This implies 
			\begin{align*}
				|\hat \K_{x_0}(t,\tau,\xi)|=|\hat {\mathcal{K}}_{x_0}(t,t-\tau,\xi)|\lesssim e^{-c_0(t-\tau)|\xi|^s},\ \ \ \forall \tau\in(0,t),
			\end{align*}
			completing the proof of \eqref{expA}.\vspace{0,3cm}\\
			For the estimates of derivatives, we proceed by induction. We first prove that \eqref{derA} holds for $l=1$. Differentiating \eqref{forwK} gives
			\begin{align*}
				&\partial_\tau \nabla_\xi\hat {\mathcal{K}}_{x_0}(t,\tau,\xi)+\nabla_\xi\hat {\mathcal{K}}_{x_0}(t,\tau,\xi)\A(\tau,x_0,\xi)=-\hat {\mathcal{K}}_{x_0}(t,\tau,\xi)\nabla_\xi\A(\tau,x_0,\xi),\ \ \tau\in(0,t),\\
				&\lim_{\tau\to 0^+}\nabla_\xi\hat {\mathcal{K}}_{x_0}(t,\tau,\xi)=0.
			\end{align*}
			Taking Frobenius inner product with $\nabla_\xi\hat {\mathcal{K}}_{x_0}(t,\tau,\xi)$, and using \eqref{huaK}, we deduce 
			\begin{align*}
				&\partial_\tau |\nabla_\xi\hat {\mathcal{K}}_{x_0}(t,\tau,\xi)|+ c_0|\xi|^s|\nabla_\xi\hat {\mathcal{K}}_{x_0}(t,\tau,\xi)|\leq |\nabla_\xi\A(\tau,x_0,\xi)||\hat {\mathcal{K}}_{x_0}(t,\tau,\xi)|\leq c_1|\xi|^{s-1}e^{-c_0\tau|\xi|^s}.
			\end{align*}
		Combining this with \eqref{KKK} implies 
			\begin{align*}
				|\nabla_\xi\hat {\mathcal{K}}_{x_0}(t,\tau,\xi)|\lesssim c_0^{-1}c_1|\xi|^{s-1}\tau e^{-c_0\tau|\xi|^s},\ \ \ \tau\in(0,t).
			\end{align*}
			From this we can recover the estimate of $\nabla_\xi\hat \K_{x_0}$, which leads to \eqref{derA} in the case $l=1$.
            
			For $1<l\leq d+s+2$, denote $\nabla_{\xi}^l\hat{\mathcal{K}}_{x_0}(t,\tau,\xi)=\hat{\mathcal{K}}_{x_0}^{l}(t,\tau,\xi)$. Assume that the cases for $1,2,\cdots,l-1$ have been proved. The equation of $\hat{\mathcal{K}}_{x_0}^{l}(t,\tau,\xi)$ reads
			\begin{equation*}
				\begin{aligned}
					&\partial_{\tau}\hat{\mathcal{K}}_{x_0}^{l}(t,\tau,\xi)+\hat{\mathcal{K}}_{x_0}^{l}(t,\tau,\xi)\A(\tau,x_0,\xi)=R^l(t,\tau,\xi),\\
					&\lim_{\tau\rightarrow 0_+}\hat{\mathcal{K}}_{x_0}^{l}(t,\tau,\xi)=0,
				\end{aligned}
			\end{equation*}
			with the remainder term satisfying
			\begin{equation*}
				\left|R^l(t,\tau,\xi)\right|\lesssim \sum_{i=1}^l\left|\nabla_\xi^i\A(\tau,\xi)\right|\left|\hat{\mathcal{K}}_{x_0}^{l-i}(t,\tau,\xi)\right|\lesssim c_0^{-(l-1)}c_1^l\tau|\xi|^{2s-l}e^{-\frac{c_0}{2}\tau|\xi|^s}.
			\end{equation*}
			Taking Frobenius inner product, and applying \eqref{condop} gives
			\begin{equation*}
				\begin{aligned}
					&\partial_\tau|\hat{\mathcal{K}}_{x_0}^{l}(t,\tau,\xi)|+c_0|\xi|^s|\hat{\mathcal{K}}_{x_0}^{l}(t,\tau,\xi)|\leq \left|R^l(t,\tau,\xi) \right|.
				\end{aligned}
			\end{equation*}
			This implies 
			\begin{align*}
				\partial_\tau\left(e^{c_0\tau|\xi|^s}|\hat{\mathcal{K}}_{x_0}^{l}(t,\tau,\xi)|\right)\lesssim e^{c_0\tau|\xi|^s}\left|R^l(t,\tau,\xi) \right|\lesssim c_0^{-(l-1)}c_1^l\tau|\xi|^{2s-l}e^{\frac{c_0}{2}\tau|\xi|^s}.
			\end{align*}
			Integrating this inequality in $\tau$, and using the fundamental inequality 
			\begin{align*}
				\int_0^a z e^{bz}dz\lesssim \frac{ae^{ba}}{b},\ \ \ \forall a,b>0,
			\end{align*}
			we deduce that
			\begin{equation*}
				|\hat{\mathcal{K}}_{x_0}^{l}(t,\tau,\xi)|\lesssim c_0^{-l}c_1^l \tau|\xi|^{s-l}e^{-\frac{c_0}{2}\tau|\xi|^s}.
			\end{equation*}
			Thus, we have proved
			\begin{equation*}
				\left|\nabla_{\xi}^l\hat{\mathcal{K}}_{x_0}(t,\tau,\xi) \right|\lesssim c_0^{-l}c_1^l \tau|\xi|^{{s}-l}e^{-\frac{c_0}{2}\tau|\xi|^s},\quad\forall l\in\mathbb{N}_+, l\leq d+s+2.
			\end{equation*}
			This completes the proof of the lemma.
		\end{proof}\vspace{0.2cm}\\
		Combining Lemma \ref{esthtker} with Lemma \ref{lemadk}, we obtain the following estimates of the kernel $\K_{x_0}$.
		\begin{corollary}\label{lemfourierK}It is straightforward to verify that for any $n\in\mathbb{N}_+, n\leq d+s+2$, 
			\begin{equation*}
				|\nabla_{\xi}^n\hat{\K}_{x_0}(t_1,t_2,\xi)|\lesssim c_0^{-n}c_1^n (t_1-t_2)|\xi|^{s-n} \exp\big(-\frac{c_0}{2}(t_1-t_2)|\xi|^s\big).
			\end{equation*}
			Applying Lemma \ref{esthtker} with $\lambda=(\frac{c_0}{2}(t_1-t_2))^\frac{1}{s}$, we obtain the point-wise estimates for $\K(t_1,t_2,x)$:
			\begin{align}
		&\sup_{x_0\in\mathbb{R}^d}|\nabla_x^l\K_{x_0}(t_1,t_2,x)|\lesssim  (t_1-t_2)^{-\frac{d+l}{s}}\left(1+\frac{|x|}{(t_1-t_2)^\frac{1}{s}}\right)^{-(d+l+s)},\ \ \forall\, l\in\mathbb{N}.\label{ptKd1}
			\end{align}
            And 	for any $\alpha\neq 0$, $0\leq \sigma<\min\{1,l+s\}$,
			\begin{align}\label{delKL1}
				\sup_{x_0\in\mathbb{R}^d}	\|\nabla _x^l\delta_{\alpha}\K_{x_0}(t_1,t_2)|\cdot|^{\sigma}\|_{L^1_x}\lesssim |\alpha|^{\sigma}\min\left\{1,\frac{|\alpha|}{(t_1-t_2)^{\frac{1}{s}}}\right\}^{1-\sigma} (t_1-t_2)^{-\frac{l}{s}},\ \ \ \ \forall \,l\in\mathbb{N}.
			\end{align}  
			As a direct result of \eqref{ptKd1} and \eqref{ptKd1}, we obtain 
			\begin{align}\label{kerL1}
				\sup_{x_0\in\mathbb{R}^d}\|\nabla _x^l\K_{x_0}(t_1,t_2)|\cdot|^{\sigma}\|_{L^1_x}\lesssim (t_1-t_2)^{-\frac{l-\sigma}{s}},\ \ \forall \,l\in\mathbb{N}.
			\end{align} 
            The implicit constants in \eqref{ptKd1}- \eqref{kerL1} depend on $c_0$ and $c_1$. Specifically, we have 
            \begin{align}\label{KL1}
            \sup_{x_0\in\mathbb{R}^d}\|\K_{x_0}(t_1,t_2)\|_{L^1}\leq C,
            \end{align}
            with $C>0$ that is independent of $c_0,c_1$.
		\end{corollary}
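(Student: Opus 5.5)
The plan is to obtain the Fourier-side bound from Lemma \ref{lemadk}, then pass to physical space with Lemma \ref{esthtker} applied to the family $\F_\lambda(\xi)=\hat\K_{x_0}(t_1,t_2,\xi)$, and finally integrate the pointwise bounds. I can reach \eqref{KL1} only with a constant depending on $c_1/c_0$; the independence of $c_0,c_1$ claimed in the statement is proved below only for nonnegative kernels and is otherwise left open.

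\textbf{Fourier-side bound.} The first displayed estimate is \eqref{derA} with $t=t_1$, $\tau=t_2$. Together with \eqref{expA}, and using $|\xi|^s(t_1-t_2)e^{-\frac{c_0}{2}(t_1-t_2)|\xi|^s}\lesssim c_0^{-1}$, it gives, for $\lambda=(\frac{c_0}{2}(t_1-t_2))^{1/s}$,
$$|\nabla^n_\xi\F_\lambda(\xi)|\lesssim |\xi|^{-n}\exp(-\lambda^s|\xi|^s)\qquad\text{for all } n\le d+s+2.$$
This is hypothesis \eqref{condb} with $\sigma=0$ and $\eps=s$, uniformly in $x_0$ and with constants depending on $c_1/c_0$. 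The $\sigma=0$ exponent is borderline for the decay $(1+|x|/\lambda)^{-(d+\sigma)}$ in \eqref{leme}, so I will use the extra factor $(t_1-t_2)|\xi|^{s}$ available in \eqref{derA} for $n\ge1$.

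\textbf{Pointwise estimate \eqref{ptKd1}.} Write $\hat\K=\mathrm{Id}+(\hat\K-\mathrm{Id})$ near $\xi=0$. Since $|\A(\tau,x_0,\xi)|\le c_1|\xi|^s$ by \eqref{condop}, integrating \eqref{forwK} gives $|\hat\K-\mathrm{Id}|\lesssim (t_1-t_2)|\xi|^s$, so the piece $\hat\K-\mathrm{Id}$ effectively has $\sigma=s$. I then apply Lemma \ref{esthtker} to $\xi^\beta\F_\lambda$ with $|\beta|=l$, which has $\sigma=l+s$ in the effective sense, and obtain
$$|\nabla^l_x\K_{x_0}(t_1,t_2,x)|\lesssim \lambda^{-d-l}(1+|x|/\lambda)^{-(d+l+s)}.$$
Concretely, this is done by integrating by parts $d+l+[s]+1$ times in $\xi$ and cutting at $|\xi|\sim|x|^{-1}$; the regularity of $\A$ up to order $d+s+2$ in \eqref{condop} is exactly what makes this possible. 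The bound is uniform in $x_0$ because all constants come only from \eqref{condop}.

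\textbf{The $L^1$ estimates \eqref{delKL1} and \eqref{kerL1}.} These follow from the argument of \eqref{leme1}, run verbatim with the decay exponent $d+l+s$ in place of $d+\sigma$. For $|\alpha|\ge\lambda$ I use the triangle inequality, and for $|\alpha|<\lambda$ the mean value theorem, which costs one extra derivative, i.e.\ a factor $\lambda^{-1}$. The weight $|x|^\sigma$ is integrable against $(1+|x|/\lambda)^{-(d+l+s)}$ precisely when $\sigma<l+s$, and it contributes $\lambda^\sigma$. Taking $\alpha=0$ in the same computation gives \eqref{kerL1}.

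\textbf{The $c_0,c_1$-independent bound \eqref{KL1}.} This is the main obstacle, and I have not overcome it in general. Integrating the pointwise bound yields only $C=C(c_1/c_0)$. For the stated independence, my approach is to observe that $\int_{\mathbb{R}^d}\K_{x_0}(t_1,t_2,x)\,dx=(2\pi)^{d/2}\hat\K_{x_0}(t_1,t_2,0)$. Since $\A(\tau,x_0,0)=0$, equation \eqref{forwK} forces $\hat\K_{x_0}(t_1,t_2,0)=\mathrm{Id}$. Hence, whenever the kernel is nonnegative (entrywise, or as a positive matrix measure), $\|\K_{x_0}\|_{L^1}$ equals a fixed dimensional multiple of $|\mathrm{Id}|=\sqrt N$, independent of $c_0,c_1$. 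Positivity holds for scalar Lévy-type symbols of order $s\le2$, by subordination or the positive maximum principle. I do not have a general argument: for $s>2$ or genuinely matrix-valued $\A$ positivity can fail, and the estimate above gives only $C=C(c_1/c_0)$. I therefore do not claim to prove \eqref{KL1} as worded. I would check whether the later applications use more than $C(c_1/c_0)$, which the preceding steps do provide; I have not verified this.
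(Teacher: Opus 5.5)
You are right not to claim \eqref{KL1} as worded. The $c_0,c_1$-independence it asserts is false, and the paper gives no proof of it: it is only asserted, then reused in Remarks \ref{rmkcons} and \ref{rmk1}.

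Here is a counterexample. Take $d=N=1$, any $s>0$, and the $t,x$-independent symbol $\A_c(\xi)=|\xi|^s\bigl(1+c\,\psi(\log|\xi|)\bigr)$, where $\psi\in C_c^\infty((-1,1))$, $0\le\psi\le1$, $\psi(0)=1$.
\begin{itemize}
\item It satisfies \eqref{condop} with $c_0=1/2$ and $c_1\lesssim 1+c$.
\item $\hat\K(t_2+1,t_2,\xi)=e^{-\A_c(\xi)}$ converges boundedly and pointwise, as $c\to\infty$, to $m(\xi)=e^{-|\xi|^s}\mathbf{1}_{\{\psi(\log|\xi|)=0\}}$. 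This $m$ is not a.e.\ equal to any continuous function.
\item If $\|\K\|_{L^1}$ were bounded uniformly in $c$, a weak-$*$ limit point $\mu$ of $\K\,dx$ would be a finite measure with $\hat\mu=m$ a.e. To see this, pair with Schwartz functions and use dominated convergence on the Fourier side. This contradicts the continuity of $\hat\mu$.
\end{itemize}
Hence the best constant in \eqref{KL1} is an unbounded function of $c_1/c_0$, which is exactly the dependence your argument produces. Since the example is scalar with arbitrary $s$, \eqref{condop} does not give positivity of the kernel even for $s\le2$.

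Two cases of independence do survive. The first is the one you identified: nonnegative kernels, via $\hat\K_{x_0}(t_1,t_2,0)=\mathrm{Id}$. The second is the separable case $\A(t,x,\xi)=a(t,x)p(\xi)$, with scalar $a>0$ and a fixed $p$ homogeneous of degree $s$. Then $\hat\K_{x_0}=e^{-\mu p(\xi)}$ with $\mu=\int_{t_2}^{t_1}a(r,x_0)\,dr$ is a dilate of $e^{-p}$, so $\|\K_{x_0}\|_{L^1}=\|\mathcal{F}^{-1}e^{-p}\|_{L^1}$. This is the structure of the frozen Muskat symbol $|\xi|^3/\langle\partial_x f\rangle^3$.

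For \eqref{ptKd1}--\eqref{kerL1} you follow the paper's route: Lemma \ref{lemadk}, then Lemma \ref{esthtker} at scale $\lambda$, then integration. Your splitting $\hat\K=\mathrm{Id}+(\hat\K-\mathrm{Id})$ near $\xi=0$ supplies a step the paper's one-line appeal omits. On its own, $\hat\K$ satisfies \eqref{condb} only with $\sigma=0$, which yields the non-integrable decay $(1+|x|/\lambda)^{-d}$.

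Your derivative count is wrong, however. Getting decay of order $d+l+s$ for $\nabla^l_x\K_{x_0}$ by integrating by parts in $\xi$ needs $d+l+[s]+1$ derivatives of $\hat\K$. Through Lemma \ref{lemadk}, \eqref{condop} controls only $[d+s+2]=d+[s]+2$ of them, which is too few once $l\ge2$. For $l\ge2$ the available derivatives only give $|\nabla^l_x\K_{x_0}|\lesssim\lambda^{-d-l}(1+|x|/\lambda)^{-(d+2)}$, so \eqref{ptKd1} is not established for all $l$. The same objection applies to the paper's use of Lemma \ref{esthtker}, which would need $d+[l+s]+1$ derivatives. Since $d+2>d+\sigma$, this weaker decay is still enough for your triangle-inequality and mean-value argument, so \eqref{delKL1} and \eqref{kerL1} stand.
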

      We proceed by returning to the proof of Theorem \ref{lemmain}.\\
		\textbf{Step 3: A priori estimate.}\\
		By \eqref{uform}, we have for any $\alpha \in \mathbb{R}^d\backslash\{0\}$, and any $n\in\mathbb{N}$,
		\begin{equation}\label{duform}
			\begin{aligned}
				& \nabla^n u(t,x)=\nabla ^nu_{L,x_0}(t,x)+\nabla ^nu_{N,x_0}(t,x)+\nabla ^nu_{R,x_0}(t,x),\\
				&  \delta_\alpha \nabla^n u(t,x)=\delta_\alpha \nabla ^nu_{L,x_0}(t,x)+\delta_\alpha \nabla ^nu_{N,x_0}(t,x)+\delta_\alpha \nabla ^nu_{R,x_0}(t,x),
			\end{aligned}
		\end{equation}
		hold for any fixed $x_0\in\mathbb{R}^d$. Fixing $x_0=x$  in \eqref{duform} yields
		\begin{align*}
			\|\nabla^nu(t)\|_{L^\infty}\leq \|\nabla^nu_{L,x_0}(t,x)|_{x_0=x}\|_{L^\infty_x}+\|\nabla^nu_{N,x_0}(t,x)|_{x_0=x}\|_{L^\infty_x}+\|\nabla^nu_{R,x_0}(t,x)|_{x_0=x}\|_{L^\infty_x},
		\end{align*}
		and 
		\begin{equation*}
			\begin{aligned}
				\|\delta_\alpha \nabla^n u(t)\|_{L^\infty}&\leq \|\big(\delta_\alpha \nabla^nu_{L,x_0}(t,x)\big)|_{x_0=x}\|_{L^\infty_x}+\|\big(\delta_\alpha \nabla^nu_{N,x_0}(t,x)\big)|_{x_0=x}\|_{L^\infty_x}\\
				&\quad\quad+\|\big(\delta_\alpha \nabla^nu_{R,x_0}(t,x)\big)|_{x_0=x}\|_{L^\infty_x}.
			\end{aligned}
		\end{equation*}
		Thus, for any $\vartheta\geq 0$, we obtain 
        \begin{equation}\label{holu}
        \begin{aligned}
			\|u(t)\|_{\dot C^\vartheta}\leq \sup_{x_0\in\mathbb{R}^d}&\|u_{L,x_0}(t)\|_{\dot C^\vartheta}+\sup_{x_0\in\mathbb{R}^d}\|u_{N,x_0}(t)\|_{\dot C^\vartheta}\\
			&+\begin{cases}
				&\|\nabla^\vartheta u_{R,x_0}(t,x)|_{x_0=x}\|_{L^\infty_x},\ \ \ \vartheta\in\mathbb{N},\\
				&\displaystyle\sup_{\alpha\neq 0} \frac{\|\big(\delta_\alpha \nabla^{[\vartheta]}u_{R,x_0}(t,x)\big)|_{x_0=x}\|_{L^\infty_x}}{|\alpha|^{\vartheta-[\vartheta]}},\ \ \ \vartheta\notin\mathbb{N}.
			\end{cases}
		\end{aligned}
        \end{equation}
		We first prove the following lemma to estimate $u_{L,x_0}$ and $u_{N,x_0}$.
		\begin{lemma}\label{lemuLN}
			Let $T>0$. For any  $g:\mathbb{R}^d\to \mathbb{R}$, $h:[0,T]\times\mathbb{R}^d\to \mathbb{R}$, and let  $\K_{x_0}(t,\tau,x)$ be the kernel defined in \eqref{defk0}. Define 
			\begin{align*}
				&\mathcal{Q}_{x_0}[g](t,x)=\int_{\mathbb{R}^d}\K_{x_0}(t,0,x-y)g(y)dy,\\
				&\mathcal{W}_{x_0}[h](t,x)=\int_0^t\int_{\mathbb{R}^d}\K_{x_0}(t,\tau,x-y)h(\tau,y)dyd\tau.
			\end{align*}
			Then the following estimates hold.\\
			i) For any $\sigma_1\geq 0$, $\sigma_2>0$, it holds
			\begin{align}\label{newestuL}
				&\sup_{x_0\in\mathbb{R}^d}\sup_{t>0} \|\mathcal{Q}_{x_0}[g](t)\|_{\dot C^{\sigma_1}}\lesssim \|g\|_{\dot C^{\sigma_1}},\\
				&\sup_{x_0\in\mathbb{R}^d}\sup_{t>0}t^{\frac{\sigma_2}{s}}\|\mathcal{Q}_{x_0}[g](t)\|_{\dot C^{\sigma_1+\sigma_2}}\lesssim _{\sigma_2}\|g\|_{\dot B^{\sigma_1}_{\infty,\infty}},\label{newestuL1}
			\end{align}
			ii) For any $\sigma_3,\sigma_4$ satisfying $0<\sigma_3<s$, $\sigma_3+\sigma_4,\sigma_3+\sigma_4-s\notin\mathbb{N}$  and $\sigma_4>s-\sigma_3$, and  any $n\in\mathbb{N}$, it holds
			\begin{equation}\label{wv}
				\begin{aligned}
					\sum_{a\in\{0,n+\sigma_3\}}\sup_{x_0\in\mathbb{R}^d}\sup_{t\in[0,T]}t^\frac{a}{s}&(\|\mathcal{W}_{x_0}[h](t)\|_{\dot C^{\sigma_4+a}}+\|\Lambda^{\sigma_4}\mathcal{W}_{x_0}[h](t)\|_{\dot C^{a}})\\
					&\lesssim \sup_{t\in[0,T]}\left(t^{\frac{\sigma_3}{s}}\|h(t)\|_{\dot C^{\sigma_3+\sigma_4-s}}+t^{\frac{n+\sigma_3}{s}}\|h(t)\|_{\dot C^{n+\sigma_3+\sigma_4-s}}\right).
				\end{aligned}
			\end{equation}
		\end{lemma}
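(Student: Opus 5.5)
The plan is to prove both parts directly from the kernel bounds of Corollary \ref{lemfourierK}, uniformly in $x_0$, since $\K_{x_0}$ is a convolution kernel in $x$ for each fixed $x_0$.

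\textbf{Part i), estimate \eqref{newestuL}.} Derivatives and finite differences commute with convolution. This gives $\|\mathcal{Q}_{x_0}[g](t)\|_{\dot C^{\sigma_1}}\le \|\K_{x_0}(t,0)\|_{L^1}\|g\|_{\dot C^{\sigma_1}}$, and \eqref{KL1} concludes.

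\textbf{Part i), estimate \eqref{newestuL1}.} Write $\sigma_1+\sigma_2=k+\theta$ with $k=[\sigma_1+\sigma_2]$. Decompose $g$ by Littlewood--Paley, $g=\sum_j\dot\Delta_j g$ (modulo polynomials), with $\|\dot\Delta_j g\|_{L^\infty}\lesssim 2^{-j\sigma_1}\|g\|_{\dot B^{\sigma_1}_{\infty,\infty}}$. Put $\lambda=t^{1/s}$.
\begin{itemize}
\item For high frequencies $2^j\ge \lambda^{-1}$, move all derivatives onto $g$ and use the bound \eqref{KL1}. Here one needs a slightly stronger fact: $\K_{x_0}(t,0)\ast\dot\Delta_j$ has $L^1$ norm $\lesssim e^{-c2^{js}t}$, which follows from \eqref{expA}--\eqref{derA} applied to $\hat\K_{x_0}\varphi(2^{-j}\cdot)$ via Lemma \ref{esthtker}. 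The exponential decay makes the sum converge.
\item For low frequencies $2^j<\lambda^{-1}$, move derivatives onto the dyadic block, gaining $2^{j(k+\theta)}$. This sum converges because $\sigma_2>0$ and gives $\lambda^{-\sigma_2}$.
\end{itemize}
If one prefers to avoid the exponential refinement, the high-frequency part can instead use \eqref{ptKd1} with more derivatives placed on the kernel.

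\textbf{Part ii).} This is the main step. Split the time integral into $\tau\in(0,t/2)$ and $\tau\in(t/2,t)$, and let $D$ denote the right-hand side of \eqref{wv}.
\begin{itemize}
\item On $(t/2,t)$, $h(\tau)$ carries weight $\tau\sim t$, so we use the higher norm $\|h(\tau)\|_{\dot C^{n+\sigma_3+\sigma_4-s}}\lesssim t^{-(n+\sigma_3)/s}D$. To reach regularity $\sigma_4+a$ we must gain $s-\sigma_3$ derivatives (for $a=n+\sigma_3$) or, for $a=0$, interpolate.
\item The gain comes from placing $\nabla^l$ on the kernel and a Hölder difference on $h$, using cancellation $\int\nabla^l\K=0$ together with the moment bound \eqref{kerL1}. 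This yields a kernel contribution of $(t-\tau)^{-(\text{gain})/s}$ with gain $<s$, which is integrable.
\item On $(0,t/2)$, $t-\tau\sim t$, so all excess derivatives go on the kernel at cost $t^{-\cdot/s}$. We then use the low norm $\tau^{-\sigma_3/s}D$, which is integrable since $\sigma_3<s$.
\end{itemize}
The non-integer conditions are what allow the Hölder seminorms to be interchanged with the mixed differences. The $\delta_\alpha$ structure is handled by \eqref{delKL1}, splitting $|\alpha|$ against $(t-\tau)^{1/s}$.

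For the $\Lambda^{\sigma_4}$ terms, write $\Lambda^{\sigma_4}$ applied to the kernel as a kernel whose symbol is $|\xi|^{\sigma_4}\hat\K_{x_0}$. Lemma \ref{esthtker} applies with $\sigma=\sigma_4$, and the same splitting then goes through.

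The main obstacle is the endpoint bookkeeping near $\tau=t$. There the fractional gain $s-\sigma_3$ must be realized exactly through kernel moments with $\sigma'<1$. If $s-\sigma_3>1$, this requires expanding $h$ to a Taylor polynomial of appropriate order, which the kernel annihilates since its derivatives have vanishing moments.
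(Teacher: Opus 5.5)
Part i) is fine. The first bound is the paper's Young-inequality argument with \eqref{KL1}, and your Littlewood--Paley argument for \eqref{newestuL1} is essentially the proof of Proposition~\ref{normequ}, to which the paper defers. For part ii) your architecture also matches the paper's: split at $t/2$; on $(0,t/2)$ put all excess derivatives on the kernel and use the low norm of $h$; on $(t/2,t)$ use the high norm of $h$; combine cancellation with moment bounds.

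\textbf{The gap.} The step that carries the content of a Schauder estimate is misanalysed. For $a=n+\sigma_3$ and $\tau\in(t/2,t)$ you must pass from $h(\tau)\in\dot C^{n+\sigma_3+\sigma_4-s}$ to $\mathcal W_{x_0}[h](t)\in\dot C^{n+\sigma_3+\sigma_4}$. The gain is therefore exactly $s$, not $s-\sigma_3$. Any bound of the form ``moment bound \eqref{kerL1} times a H\"older norm of $h$'' then produces $\int_{t/2}^t(t-\tau)^{-1}\,d\tau=\infty$. So your claim ``gain $<s$, which is integrable'' fails precisely at this step. The subcritical gain $s-\sigma_3$ belongs to the case $a=0$. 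There the paper uses only the low norm on all of $(0,t)$ and gets $\int_0^t(t-\tau)^{-\frac{s-\sigma_3}{s}}\tau^{-\frac{\sigma_3}{s}}d\tau\lesssim 1$; no interpolation is needed.

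\textbf{The missing idea.} At the endpoint you need the two-regime use of \eqref{delKL1}, which you mention only in passing. Write $n+\sigma_3+\sigma_4=\tilde n+\gamma$ with $\gamma\in(0,1)$, set $\tilde\sigma=\sigma_3+\sigma_4-s$, and pick $\tilde\gamma\in(\max\{\gamma-s,0\},\gamma)$. Pair the kernel $\delta_\alpha\nabla^{\tilde n-n}\Lambda^{-(\tilde\sigma-\tilde\gamma)}\K_{x_0}$, of order $s-\gamma+\tilde\gamma$, with the first-order difference of $\nabla^n\Lambda^{\tilde\sigma-\tilde\gamma}h\in\dot C^{\tilde\gamma}$. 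By \eqref{delKL1} the kernel contributes:
\begin{itemize}
\item $|\alpha|^{\tilde\gamma}(t-\tau)^{-\frac{s-\gamma+\tilde\gamma}{s}}$ when $(t-\tau)^{1/s}\le|\alpha|$, which is integrable near $\tau=t$ because $\tilde\gamma<\gamma$;
\item $|\alpha|(t-\tau)^{-\frac{1+s-\gamma}{s}}$ when $(t-\tau)^{1/s}>|\alpha|$, which is integrable at large $t-\tau$, uniformly in $t$, because $\gamma<1$.
\end{itemize}
Both regions give $\lesssim|\alpha|^\gamma$, with constant $\sim(\gamma-\tilde\gamma)^{-1}+(1-\gamma)^{-1}$.

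This is where $\sigma_3+\sigma_4\notin\mathbb N$ is really used: for $\gamma\in\{0,1\}$ the argument produces a logarithm. The condition is not there to ``interchange seminorms with mixed differences''.

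\textbf{Your proposed remedy.} Taylor-expanding $h$ when $s-\sigma_3>1$ addresses a non-issue. For $a=n+\sigma_3$, put $n+\tilde\sigma-\tilde\gamma$ derivatives on $h$. For $a=0$, put $[\tilde\sigma]$ derivatives on $h$ and use the kernel $\Lambda^{\sigma_4-[\tilde\sigma]}\K_{x_0}$. In both cases only a first-order difference with exponent below $1$ remains, whatever the size of $s-\sigma_3$.
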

		\begin{proof}
			By \eqref{delKL1}, \eqref{kerL1}, and  Young's inequality, we obtain 
			\begin{equation*}
				\begin{aligned}
					&\|\mathcal{Q}_{x_0}[g](t)\|_{\dot C^{\sigma_1}}\lesssim \|\K_{x_0}(t,0)\|_{L^1}\|g\|_{\dot C^{\sigma_1}}\lesssim \|g\|_{\dot C^{\sigma_1}}.
				\end{aligned}
			\end{equation*}
			This implies \eqref{newestuL}. The estimate \eqref{newestuL1} follows from Proposition \ref{normequ}.
			
			Then we prove \eqref{wv}. Denote $\tilde \sigma=\sigma_3+\sigma_4-s\notin\mathbb{N}$. Since $\sigma_4>\tilde \sigma$, 
			\begin{equation*}
				\begin{aligned}
					&\Lambda^{\sigma_4}\mathcal{W}_{x_0}[h](t,x)=	\int_0^t \int_{\mathbb{R}^{d}} \Lambda^{\sigma_4-[\tilde \sigma]}\K_{x_0}(t,\tau,x-z) (\Lambda^{[\tilde \sigma]}h(\tau,z)-\Lambda^{[\tilde \sigma]}h(\tau,x))dzd\tau.
				\end{aligned}
			\end{equation*}
			Applying Corollary \ref{lemfourierK}, we obtain 
			\begin{equation}\label{www}
				\begin{aligned}
					\left|\Lambda^{\sigma_4} \mathcal{W}_{x_0}[h](t,x)\right|
					&\lesssim \int_0^t\int_{\mathbb{R}^d}|\Lambda^{\sigma_4-[\tilde \sigma]} K_{x_0}(t,\tau,x-z)||x-z|^{\tilde \sigma-[\tilde \sigma]}dz\|h(\tau)\|_{\dot{C}^{\tilde \sigma}}d\tau\\
					&\lesssim \int_0^t(t-\tau)^{-\frac{s-\sigma_3}{s}}\tau^{-\frac{\sigma_3}{s}}d\tau \sup _{t \in[0, T]} t^{\frac{\sigma_3}{s}}\|h(t)\|_{\dot{C}^{\tilde \sigma}}\\
					&\lesssim \sup _{t \in[0, T]} t^{\frac{\sigma_3}{s}}\|h(t)\|_{\dot{C}^{\tilde \sigma}}.
				\end{aligned}
			\end{equation}
			Note that for $\sigma_4\notin\mathbb{N}$, there holds
            \begin{equation*}
                \|f\|_{\dot C^{\sigma_4}}=\|f\|_{\dot B_{\infty,\infty}^{\sigma_4}}\lesssim \|\Lambda^{\sigma_4}f\|_{L^\infty},
            \end{equation*}
            and the estimate \eqref{www} still holds if $\Lambda^{\sigma_4}$ is replaced by $\nabla^{\sigma_4}$ for $\sigma_4\in\mathbb{N}$. Hence, we obtain 
			\begin{align}\label{LuN1}
				\sup_{x_0\in\mathbb{R}^d}( \|\mathcal{W}_{x_0}[h](t)\|_{\dot C^{\sigma_4}}+\|\Lambda^{\sigma_4}\mathcal{W}_{x_0}[h](t)\|_{L^\infty})\lesssim \sup_{t\in[0,T]}t^{\frac{\sigma_3}{s}}\|h(t)\|_{\dot C^{\tilde \sigma}}.
			\end{align}
			Then we estimate the higher order norm. Assume $n+\sigma_3+\sigma_4=\tilde n+\gamma$, where $\tilde n\in\mathbb{N}$, $\gamma\in(0,1)$. Let $\tilde \gamma\in(\max\{\gamma-s,0\},\gamma)$. Using integration by parts, we obtain
            \begin{equation*}
				\begin{aligned}
					&   |\delta_\alpha \nabla^{\tilde n}\mathcal{W}_{x_0}[h](t,x)|\\
					&\lesssim 
					\int _0^\frac{t}{2} \int_{\mathbb{R}^d}|\delta_\alpha \nabla^{\tilde n}\Lambda^{-(\tilde\sigma-\tilde \gamma)}\K_{x_0}(t,\tau,x-z)||\Lambda^{\tilde\sigma-\tilde \gamma}h(\tau,z)-\Lambda^{\tilde\sigma-\tilde \gamma}h(\tau,x)|dzd\tau\\
					&\quad\quad  +\int _\frac{t}{2}^t\int_{\mathbb{R}^d} |\delta_\alpha \nabla^{\tilde n-n}\Lambda^{-(\tilde\sigma-\tilde \gamma)}\K_{x_0}(t,\tau,x-z)||\nabla^{n}\Lambda^{\tilde\sigma-\tilde \gamma} h(\tau,z)-\nabla^{n}\Lambda^{\tilde\sigma-\tilde \gamma}h(\tau,x)|dzd\tau\\
					&\lesssim \int _0^\frac{t}{2} \int_{\mathbb{R}^d}|\delta_\alpha \nabla^{\tilde n}\Lambda^{-(\tilde\sigma-\tilde \gamma)}\K_{x_0}(t,\tau,x-z)||x-z|^{\tilde \gamma}dz\,\|h(\tau)\|_{\dot C^{\tilde \sigma}}d\tau\\
					&\quad\quad  +\int _\frac{t}{2}^t \int_{\mathbb{R}^d}|\delta_\alpha \nabla^{\tilde n-n}\Lambda^{-(\tilde \sigma-\tilde \gamma)}\K_{x_0}(t,\tau,x-z)||x-z|^{\tilde \gamma}dz\,\|h(\tau)\|_{\dot C^{n+\tilde \sigma}}d\tau\\
                    &:=\mathcal{I}_1+\mathcal{I}_2.
				\end{aligned}
			\end{equation*}
	From Corollary \ref{lemfourierK}, we obtain 
    \begin{align*}
    &\int_{\mathbb{R}^d}|\delta_\alpha \nabla^{\tilde n}\Lambda^{-(\tilde\sigma-\tilde \gamma)}\K_{x_0}(t,\tau,x-z)||x-z|^{\tilde \gamma}dz \lesssim |\alpha|^\gamma (t-\tau)^{-\frac{n+s}{s}},\\
    &\int_{\mathbb{R}^d}|\delta_\alpha \nabla^{\tilde n-n}\Lambda^{-(\tilde\sigma-\tilde \gamma)}\K_{x_0}(t,\tau,x-z)||x-z|^{\tilde \gamma}dz \lesssim |\alpha|^{\tilde \gamma} \min\left\{1,\frac{|\alpha|}{(t-\tau)^{\frac{1}{s}}}\right\}^{1-\tilde \gamma}(t-\tau)^{-\frac{s-(\gamma-\tilde \gamma)}{s}}.
    \end{align*} 
    It follows that 
    \begin{align*}
    \mathcal{I}_1&\lesssim |\alpha|^\gamma t^{-\frac{n+s}{s}}\int _0^\frac{t}{2} \tau^{-\frac{\sigma_3}{s}} d\tau \,\Big(\sup_{t\in[0,T]}t^{\frac{\sigma_3}{s}}\|h(t)\|_{\dot C^{\tilde \sigma}} \Big)\\
    &\lesssim |\alpha|^\gamma t^{-\frac{n+\sigma_3}{s}} \,\Big(\sup_{t\in[0,T]}t^{\frac{\sigma_3}{s}}\|h(t)\|_{\dot C^{\tilde \sigma}} \Big).
    \end{align*}
    For $\mathcal{I}_2$, we have 
    \begin{align*}
    \mathcal{I}_2&\lesssim |\alpha|^{\tilde \gamma} t^{-\frac{n+\sigma_3}{s}}\int_{0}^\frac{t}{2} \min\left\{1,\frac{|\alpha|}{\tau^{\frac{1}{s}}}\right\}^{1-\tilde \gamma}\tau^{-\frac{s-(\gamma-\tilde \gamma)}{s}} d\tau\  \Big( \sup_{t\in[0,T]}t^{\frac{n+\sigma_3 }{s}}\|h(t)\|_{\dot C^{n+\tilde \sigma}}\Big)\\
    &\lesssim |\alpha|^{ \gamma} t^{-\frac{n+\sigma_3}{s}} \Big( \sup_{t\in[0,T]}t^{\frac{n+\sigma_3 }{s}}\|h(t)\|_{\dot C^{n+\tilde \sigma}}\Big).
\end{align*}
    Then we obtain 
    \begin{align*}
     |\delta_\alpha \nabla^{\tilde n}\mathcal{W}_{x_0}[h](t,x)|	& \lesssim |\alpha|^{\gamma} \sup_{t\in[0,T]}(t^{\frac{\sigma_3}{s}}\|h(t)\|_{\dot C^{\tilde \sigma}}+t^{\frac{n+\sigma_3 }{s}}\|h(t)\|_{\dot C^{n+\tilde \sigma}}).
    \end{align*}
    Hence, for any $t\in[0,T]$, 
			\begin{align*}
				\sup_{x_0\in\mathbb{R}^d}\sup_{t\in[0,T]}t^\frac{n+\sigma_3}{s}\|\mathcal{W}_{x_0}[h](t)\|_{\dot C^{n+\sigma_3+\sigma_4}}\lesssim \sup_{t\in[0,T]}\left(t^{\frac{\sigma_3}{s}}\|h(t)\|_{\dot C^{\tilde \sigma}}+t^{\frac{n+\sigma_3}{s}}\|h(t)\|_{\dot C^{n+\tilde \sigma}}\right).
			\end{align*}
            Combining this with \eqref{LuN1} yields \eqref{wv}.
			This completes the proof of the lemma.
		\end{proof}

		By Lemma \ref{lemuLN}, we obtain 
		\begin{equation}\label{uLB}
			\begin{aligned}
				& \sup_{x_0\in\mathbb{R}^d} \sup_{t\in[0,T]} (\|u_{L,x_0}(t)\|_{\dot C^b}+t^\frac{m+\kappa}{s}\|u_{L,x_0}(t)\|_{\dot C^{b+m+\kappa}})\lesssim  \|u_0\|_{\dot C^b},\\
				& \sup_{x_0\in\mathbb{R}^d} \sup_{t\in[0,T]} (t^\frac{\eta}{s}\|u_{L,x_0}(t)\|_{\dot C^{b+\eta}}+t^\frac{m+\kappa}{s}\|u_{L,x_0}(t)\|_{\dot C^{b+m+\kappa}})\lesssim \|u_0\|_{\dot B_{\infty,\infty}^b},
			\end{aligned}
		\end{equation}
		and 
		\begin{align}\label{mainestuN1B}
			\sup_{x_0\in\mathbb{R}^d} \sup_{t\in[0,T]} (\|u_{N,x_0}(t)\|_{\dot C^b}+t^\frac{m+\kappa}{s}\|u_{N,x_0}(t)\|_{\dot C^{b+m+\kappa}})\lesssim  \da_T(f).
		\end{align}
	It remains to estimate $$u_{R,{x_0}}(t,x)= -\int_0^t \int_{\mathbb{R}^d}\K_{x_0}(t,\tau,x-z)R_{x_0}[u](\tau,z)dzd\tau,$$
			where $R_{x_0}[u]$ is defined in \eqref{defRx0}.
            By Lemma \ref{intp} and \eqref{defnormA}, for any $x_0,x\in\mathbb{R}^d$, we have
		\begin{equation}\label{ptRx}
			\begin{aligned}
				\vert R_{x_0}[u](t,x)\vert\lesssim t^{-\frac{\eta}{s}}|x_0-x|^{\eta}\|\A\|_{B_T}\|u\|_{\dot C^{s-\eps}}^{\frac{1}{2}}\|u\|_{\dot C^{s+\eps}}^{\frac{1}{2}}\lesssim |x_0-x|^{\eta}t^{-\frac{\eta+s-b}{s}}\|\A\|_{B_T}\|u\|_{T,*},
			\end{aligned}
		\end{equation}
		for any $s-\eps\geq b+\eta$. 
		For the case $b\in\mathbb{N}_+$, by \eqref{ptRx} and Corollary \ref{lemfourierK}, we obtain 
       \begin{equation}\label{newestuRint}
			\begin{aligned}
        	\vert \nabla^bu_{R,x_0}(t,x)|_{x_0=x}\vert&\lesssim \int_0^t\int_{\mathbb{R}^d}|\nabla ^b\K_{x_0}(t,\tau,x-z)||R_{x_0}[u](\tau,z)|dzd\tau\Big|_{x_0=x}\\
            &\lesssim \int_0^t\tau^{-\frac{\eta+s-b}{s}}\int_{\mathbb{R}^d}|\nabla ^b\K_{x_0}(t,\tau,x-z)||x-z|^\eta dz d\tau\|\A\|_{B_T}\|u\|_{T,*}\\
            &\lesssim \int_0^t\tau^{-\frac{\eta+s-b}{s}}\tau^{-\frac{b-\eta}{s}}d\tau \|\A\|_{B_T}\|u\|_{T,*}\lesssim\|\A\|_{B_T}\|u\|_{T,*}.
			\end{aligned}
		\end{equation}
        If $b\notin\mathbb{N}$,  we have 
		\begin{equation}\label{uRBend}
			\begin{aligned}
				\vert \delta_\alpha \nabla^{[b]}u_{R,x_0}|_{x_0=x}\vert&\lesssim \int_0^t\int_{\mathbb{R}^d}|\delta_\alpha \nabla ^{[b]}\K_{x_0}(t,\tau,x-z)||R_{x_0}[u](\tau,z)|dzd\tau\Big|_{x_0=x}\\
                 &\lesssim \int_0^t\tau^{-\frac{\eta+s-b}{s}}\int_{\mathbb{R}^d}|\delta_\alpha\nabla ^{[b]}\K_{x_0}(t,\tau,x-z)||x-z|^\eta dz d\tau\,\|\A\|_{B_T}\,\|u\|_{T,*}\\
                &\lesssim |\alpha|^{b-[b]}\|\A\|_{B_T}\|u\|_{T,*},
			\end{aligned}
		\end{equation}
        provided $\eta\leq b-[b]$.
        
		To estimate higher-order derivatives, we split the time integral in $u_{R,x_0}$ over the intervals $[0, \frac{t}{2}]$ and $[\frac{t}{2}, t]$, denoting the resulting contributions as $u_{R,x_0}^1$ and $u_{R,x_0}^2$, respectively.  For $u_{R,x_0}^1$, by a similar proof as \eqref{uRBend}, we obtain 
		\begin{equation}\label{ur1}
			\begin{aligned}
			\vert \nabla^{m+[\kappa+b]}u_{R,x_0}^1(t,x)|_{x_0=x}\vert	\lesssim &\left| \int_0^{\frac{t}{2}}\int_{\mathbb{R}^d}\delta_\alpha\nabla^{m+[\kappa+b]} \K_{x_0}(t,\tau,x-z)R_{x_0}[u](\tau,z)dzd\tau\Big|_{x_0=x}\right|\\
            \lesssim& |\alpha|^{\kappa+b-[\kappa+b]}t^{-\frac{m+\kappa}{s}}\|\A\|_{B_T}\|u\|_{T,*},
			\end{aligned}
		\end{equation}
        provided $\eta\leq \kappa+b-[\kappa+b]$.

To estimate $u_{R,x_0}^2$, we need to deal with higher order derivative of $R_{x_0}[u]$. 
Let $n\in\mathbb{N}$, $\sigma\in(0,1)$ such that $n+\sigma\leq  b+m+\kappa-s$. 
Using the inequality 
\begin{align*}
|\delta_\alpha  (fg)(x)-f(x)\delta_\alpha  g(x)|\leq \|g\|_{L^\infty} \|\delta_{-\alpha} f\|_{L^\infty}+\|f\|_{L^\infty}\|  \delta_\alpha g\|_{L^\infty},
\end{align*}
we obtain 
\begin{equation*}
		\begin{aligned}
			\vert \delta_\alpha \nabla^nR_{x_0}[u](t,x)-R_{x_0}[\delta_\alpha \nabla^n u](t,x)\vert\lesssim   
	\min\{(|\alpha|t^{-\frac{1}{s}})^\eta, |\alpha|t^{-\frac{1}{s}}\}t^{-\frac{n+s-b} {s}}\|\A\|_{B_T}\|u\|_{T,*}.
		\end{aligned}
\end{equation*}
Combining this with \eqref{ptRx} yields
\begin{align*}
	\vert \delta_\alpha \nabla^nR_{x_0}[u](t,x)\vert\lesssim  &	\vert R_{x_0}[\delta_\alpha \nabla^n u](t,x)\vert
	+	\min\{(|\alpha|t^{-\frac{1}{s}})^\eta, |\alpha|t^{-\frac{1}{s}}\}t^{-\frac{n+s-b} {s}}\|\A\|_{B_T}\|u\|_{T,*}\\
    \lesssim &(|\alpha|^{\sigma} |x-x_0|^\eta t^{-\frac{\eta+\sigma}{s}}+\min\{(|\alpha|t^{-\frac{1}{s}})^\eta, |\alpha|t^{-\frac{1}{s}}\})t^{-\frac{n+s-b} {s}}\|\A\|_{B_T}\|u\|_{T,*}.
\end{align*}
The first term vanishes if we take $x_0=x$. This indicates that the derivatives are not carried solely by $u$, part of them are absorbed by the coefficient $\A$. This mechanism is essential for avoiding the time singularity as $\tau$ approaches $t$. Furthermore, for double finite difference operator $\delta_\alpha \delta_\beta$, and any $\mu_1,\mu_2\in[0,1]$ with $n+\max\{\mu_1,\mu_2\}\leq b+m+\kappa-s$ and $\min\{\mu_1,\mu_2\}\geq \eta$, it holds
\begin{align*}
|\delta_\alpha \delta_\beta\nabla^nR_{x_0}[u](t,x) ||_{x_0=x}\lesssim  t^{-\frac{n+s-b+\mu_1+\mu_2}{s}}  |\alpha|^{\mu_1}|\beta|^{\mu_2}\|\A\|_{B_T}\|u\|_{T,*}.
\end{align*}
Denote $\bar s=s\mathbf{1}_{s\in\mathbb{N}}+([s]+1)\mathbf{1}_{s\notin\mathbb{N}}$. From this, we obtain 
\begin{align*}
&|\delta_\alpha\nabla^{m+[\kappa+b]-\bar s}R_{x_0}(t,z)-\delta_\alpha\nabla^{m+[\kappa+b]-\bar s}R_{x_0}(t,x)| \Big|_{x_0=x}\\
&\quad\quad\quad\lesssim t^{-\frac{m+\kappa+s-\bar s+\tilde \eta}{s}} |\alpha|^{\kappa+b-[\kappa+b]}|x-z|^{\tilde \eta}\|\A\|_{B_T}\|u\|_{T,*},
\end{align*}
where $\tilde \eta=\eta\mathbf{1}_{s\in\mathbb{N}}+\min\{(\kappa+b-s-[\kappa+b]+\bar s),1\}\mathbf{1}_{s\notin\mathbb{N}}$.
Using integration by parts, we have 
    \begin{equation}\label{Rhi}
        \begin{aligned}
       & |\delta_\alpha \nabla^{m+[\kappa+b]} u_{R,x_0}^2(t,x)|_{x_0=x}|\\&\lesssim \int_{\frac{t}{2}}^t \int_{\mathbb{R}^d}|\nabla^{\bar s}\K_{x_0}(t,\tau,x-z)||\delta_\alpha \nabla^{m+[\kappa+b]-\bar s}R_{x_0}(\tau,z)-\delta_\alpha\nabla^{m+[\kappa+b]-\bar s}R_{x_0}(\tau,x)|dzd\tau\Big|_{x_0=x}\\
        &\lesssim [\alpha]^{\kappa+b-[\kappa+b]} t^{-\frac{m+\kappa+s-\bar s+\tilde\eta}{s}}\int_{\frac{t}{2}}^t \int_{\mathbb{R}^d}|\nabla^{\bar s}\K_{x_0}(t,\tau,z)| |z|^{\tilde \eta}dzd\tau\Big|_{x_0=x}\|\A\|_{B_T}\|u\|_{T,*}.
        \end{aligned}
        \end{equation}
         From Corollary  \ref{lemfourierK}, we obtain 
        \begin{align*}
        \int_{\mathbb{R}^d}|\nabla^{\bar s}\K_{x_0}(t,\tau,z)||z|^{\tilde\eta}dz\lesssim (t-\tau)^{-\frac{\bar s-\tilde \eta}{s}}.
        \end{align*}
        We have $\bar s-\tilde \eta<s$. 
      Integrating in time and 
      combining with \eqref{Rhi},  we deduce that 
        \begin{align}\label{newestuRd}
          & |\delta_\alpha \nabla^{m+[\kappa+b]} u_{R,x_0}^2(t,x)|_{x_0=x}|\lesssim |\alpha|^{\kappa+b-[\kappa+b]} t^{-\frac{m+\kappa}{s}}\|\A\|_{B_T}\|u\|_{T,*}.
    \end{align}
		Combining \eqref{newestuRint}, \eqref{uRBend}, \eqref{ur1} and \eqref{newestuRd}, we get
		\begin{equation}\label{mainestuR}
			\begin{aligned}
				&\sup_{t\in[0,T]}\left(\|\nabla^bu_{R,x_0}(t)|_{x_0=x}\|_{L^\infty}+t^{\frac{m+\kappa}{s}}\frac{\|\delta_\alpha \nabla^{m+[\kappa+b]}u_{R,x_0}(t)|_{x_0=x}\|_{L^\infty}}{|\alpha|^{\kappa+b-[\kappa+b]}}\right)\leq C\|\A\|_{B_T}\|u\|_{T,*},\quad b\in\mathbb{N}_+,\\
				&\sup_{t\in[0,T]}\left(\frac{\|\nabla^{[b]}u_{R,x_0}(t)|_{x_0=x}\|_{L^\infty}}{|\alpha|^{b-[b]}}+t^{\frac{m+\kappa}{s}}\frac{\|\delta_\alpha \nabla^{m+[\kappa+b]}u_{R,x_0}(t)|_{x_0=x}\|_{L^\infty}}{|\alpha|^{\kappa+b-[\kappa+b]}}\right)\leq C\|\A\|_{B_T}\|u\|_{T,*},\quad b\notin\mathbb{N}_+.
			\end{aligned}
		\end{equation}
	We conclude from \eqref{holu}, \eqref{uLB}, \eqref{mainestuN1B} and \eqref{mainestuR} that 
		\begin{equation*}\label{mainestuB}
			\|u\|_{T}\lesssim \|u_0\|_{\dot C^b}+\da_T(f)+\|\A\|_{B_T}\|u\|_{T,*},
		\end{equation*}
		and the non-endpoint result
		\begin{equation*}\label{mainestuBnonend}
			\|u\|_{T,*}\lesssim \|u_0\|_{\dot B_{\infty,\infty}^b}+\da_T(f)+\|\A\|_{B_T}\|u\|_{T,*}.
		\end{equation*}
	This completes the proof of Theorem \ref{lemmain}. \vspace{0.3cm}
		\begin{corollary}\label{remlinf}
		  In the same setting as Theorem \ref{lemmain}, the proof of Theorem \ref{lemmain} also yields the following estimates:
            	\begin{align}\label{main31}
				&\sup_{t\in[0,T]} (\|\Lambda^bu(t)\|_{L^\infty}+t^\frac{m+\kappa }{s}\|\Lambda^bu(t)\|_{\dot C^{m+\kappa }})\lesssim \|\Lambda^b u_0\|_{L^\infty}+  \da_T(f)+\|\A\|_{B_T}\|u\|_{T,*},\\
             &\sup_{t\in[0,T]} (t^{\frac{\eta}{s}}\|\Lambda^bu(t)\|_{\dot C^{\eta}}+t^\frac{m+\kappa}{s}\|\Lambda^bu(t)\|_{\dot C^{m+\kappa}})\lesssim \|u_0\|_{\dot B^b_{\infty,\infty}}+  \da_T(f)+\|\A\|_{B_T}\|u\|_{T,*}.   \label{main41}
			\end{align}
		\end{corollary}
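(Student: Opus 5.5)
The plan is to rerun the three-step argument of Theorem \ref{lemmain} with the semi-norms $\|\cdot\|_{\dot C^b}$ and $\|\cdot\|_{\dot C^{b+\eta}}$ replaced by $\|\Lambda^b\cdot\|_{L^\infty}$ and $\|\Lambda^b\cdot\|_{\dot C^\eta}$. The first observation is that $\Lambda^b$ is a Fourier multiplier in $x$ only, so it commutes with convolution by the frozen kernel $\K_{x_0}(t,\tau,\cdot)$. Applying $\Lambda^b$ to the representation \eqref{uform} therefore gives
\begin{equation*}
\Lambda^b u=\mathcal{Q}_{x_0}[\Lambda^b u_0]+\mathcal{W}_{x_0}[\Lambda^b f]+\mathcal{W}_{x_0}[\Lambda^b R_{x_0}[u]],
\end{equation*}
and this identity holds for every $x_0$. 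I then evaluate the $L^\infty$ and Hölder seminorms at $x_0=x$, exactly as in \eqref{holu}.

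For the linear term, I use \eqref{newestuL} with $\sigma_1=0$, i.e. Young's inequality together with \eqref{KL1}. This gives $\|\mathcal{Q}_{x_0}[\Lambda^bu_0](t)\|_{L^\infty}\lesssim\|\Lambda^bu_0\|_{L^\infty}$. The weighted higher norm $t^{\frac{m+\kappa}{s}}\|\cdot\|_{\dot C^{m+\kappa}}$ is handled by \eqref{newestuL1} with $\sigma_1=0$, since $\|\Lambda^bu_0\|_{\dot B^0_{\infty,\infty}}\lesssim\|\Lambda^bu_0\|_{L^\infty}$. For \eqref{main41} I apply \eqref{newestuL1} with $\sigma_1=0$, $\sigma_2\in\{\eta,m+\kappa\}$ and use $\|\Lambda^bu_0\|_{\dot B^0_{\infty,\infty}}\simeq\|u_0\|_{\dot B^b_{\infty,\infty}}$.

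For the forcing term, I do not move $\Lambda^b$ onto $f$. Instead I apply Lemma \ref{lemuLN} ii) with $\sigma_4=b$, $\sigma_3=\kappa$, $n=m$. This is legitimate because $\kappa>s-b$ and $\kappa+b,\kappa_0\notin\mathbb{N}$. The estimate \eqref{wv} already controls $\|\Lambda^{\sigma_4}\mathcal{W}_{x_0}[h]\|_{\dot C^a}$ for $a\in\{0,m+\kappa\}$, which gives $\sup_{x_0}$ bounds by $\da_T(f)$. The intermediate $\dot C^\eta$ weight needed for \eqref{main41} follows by interpolating between $a=0$ and $a=m+\kappa$ with the matching time weights.

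The remainder is the main obstacle, because $\Lambda^b$ is nonlocal: unlike $\nabla^b$, it does not simply differentiate $R_{x_0}[u]$ pointwise, so the freezing-at-$x_0=x$ cancellation must be rechecked. I would write $\Lambda^b=\Lambda^{b-[b]}\nabla^{[b]}$ (up to Riesz transforms). The part with integer derivatives then behaves as in \eqref{newestuRint}–\eqref{uRBend}. The fractional part $\Lambda^{b-[b]}$ I would place on the kernel, using the singular-integral form $\int\Lambda^{b-[b]}\K_{x_0}(t,\tau,x-z)(R(\tau,z)-R(\tau,x))\,dz$. The pointwise bound \eqref{ptRx}, $|R_{x_0}[u](\tau,z)|\lesssim|x_0-z|^\eta\tau^{-\frac{\eta+s-b}{s}}\|\A\|_{B_T}\|u\|_{T,*}$, vanishes at $z=x_0=x$. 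Together with the kernel moment bound \eqref{kerL1} for $\Lambda^{b}\K_{x_0}$, which gives $(t-\tau)^{-\frac{b-\eta}{s}}$, this yields a time integral $\int_0^t\tau^{-\frac{\eta+s-b}{s}}(t-\tau)^{-\frac{b-\eta}{s}}d\tau\lesssim1$. That is the analogue of \eqref{newestuRint} and needs $\eta<b$.

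The higher weighted norms of the remainder I would obtain by repeating the split $[0,t/2]\cup[t/2,t]$ from \eqref{ur1}–\eqref{newestuRd}. On $[0,t/2]$, all derivatives go on the kernel. On $[t/2,t]$, I absorb $\Lambda^b$ into the $\bar s$ derivatives placed on the kernel and use the double-difference bounds for $R_{x_0}$ at $x_0=x$. Summing the three contributions gives \eqref{main31} and \eqref{main41}.
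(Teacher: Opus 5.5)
Your overall strategy is the one the paper intends; it gives no argument beyond saying the proof of Theorem \ref{lemmain} carries over. Your treatment of the genuinely new quantity $\sup_t\|\Lambda^b u(t)\|_{L^\infty}$ is correct:
\begin{itemize}
\item \eqref{KL1} handles $u_{L,x_0}$;
\item \eqref{LuN1} with $(\sigma_3,\sigma_4)=(\kappa,b)$ handles $u_{N,x_0}$;
\item for $u_{R,x_0}$ you put all of $\Lambda^b$ on the kernel. Lemma \ref{esthtker}, applied to the symbol $|\xi|^b\hat{\K}_{x_0}$, gives decay $(1+|x|/\lambda)^{-(d+b)}$, so the $|x|^\eta$-moment is $(t-\tau)^{-\frac{b-\eta}{s}}$ because $\eta<b$, and the Beta integral then closes.
\end{itemize}

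\textbf{Where the plan fails.} The gap is the top-order remainder. Repeating \eqref{ur1}--\eqref{newestuRd} for $\delta_\alpha\nabla^{m+[\kappa]}\Lambda^b u_{R,x_0}(t,x)\big|_{x_0=x}$ replaces the H\"older exponent $\kappa+b-[\kappa+b]$ by $\kappa-[\kappa]$, and both halves of the split then need $\eta\le\kappa-[\kappa]$:
\begin{itemize}
\item On $[0,\frac t2]$, the kernel bound \eqref{leme1} with moment exponent $\eta$ gives only $|\alpha|^\eta$ for $|\alpha|\gtrsim t^{1/s}$. Matching $|\alpha|^{\kappa-[\kappa]}$ therefore requires $\eta\le\kappa-[\kappa]$; this is exactly why \eqref{ur1} carries the proviso $\eta\le\kappa+b-[\kappa+b]$.
\item On $[\frac t2,t]$, the double-difference bound for $R_{x_0}$ requires $\min\{\mu_1,\mu_2\}\ge\eta$ with $\mu_1=\kappa-[\kappa]$. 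For large $|\alpha|$ the commutator gains only $(|\alpha|t^{-1/s})^\eta$, and $\|\A\|_{B_T}$ controls no H\"older modulus of $\A$ below order $\eta$.
\end{itemize}
Hypothesis \eqref{defparab} does not provide $\eta\le\kappa-[\kappa]$. For instance, $s=3$, $b=\frac32$, $\kappa=2.05$, $\eta=\frac1{10}$ is admissible, yet $\kappa-[\kappa]=\frac1{20}$.

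Your plan also does not explicitly treat the $t^{\frac\eta s}\|\cdot\|_{\dot C^\eta}$ part of the remainder in \eqref{main41}. Interpolation is unavailable there: for fixed $x_0$, $\Lambda^b u_{R,x_0}(t,\cdot)$ has no $L^\infty$ bound uniform in $x_0$, since its smallness comes only from evaluating at $x_0=x$.

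\textbf{The fix.} The simplest repair is to redo no H\"older estimate at all. Since $\eta$ and $m+\kappa$ are not integers,
\begin{itemize}
\item $\|\Lambda^b g\|_{\dot C^\eta}\simeq\|g\|_{\dot B^{b+\eta}_{\infty,\infty}}\lesssim\|g\|_{\dot C^{b+\eta}}$, and
\item $\|\Lambda^b g\|_{\dot C^{m+\kappa}}\simeq\|g\|_{\dot B^{m+\kappa+b}_{\infty,\infty}}\lesssim\|g\|_{\dot C^{m+\kappa+b}}$.
\end{itemize}
Hence the entire left side of \eqref{main41}, and the H\"older part of the left side of \eqref{main31}, are $\lesssim\|u\|_{T,*}$. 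Then \eqref{main4}, together with $\|u_0\|_{\dot B^b_{\infty,\infty}}\simeq\|\Lambda^b u_0\|_{\dot B^0_{\infty,\infty}}\lesssim\|\Lambda^b u_0\|_{L^\infty}$, yields both estimates. Only the $L^\infty$ term in \eqref{main31} needs a new argument, and there yours works.

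Two minor points: the splitting $\Lambda^b=\Lambda^{b-[b]}\nabla^{[b]}$ is unnecessary, and subtracting $R(\tau,x)$ is vacuous because $R_x[u](\tau,x)=0$.
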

        \begin{remark}\label{rmkcons}
            The implicit constants in \eqref{main3} and \eqref{main31} can be controlled by  $\frac{(c_0^{-1}c_1)^{m+s+2}}{\kappa_0(s-\kappa)}$, the implicit constants in \eqref{main4} and \eqref{main41} can be controlled by  $\frac{(c_0^{-1}c_1)^{m+s+2}}{\eta\kappa_0(s-\kappa)}$,. Furthermore, when only $\|u\|_{\dot C^b}$ or $\|\Lambda^b u\|_{L^\infty}$ is considered (i.e., without higher regularity term), the estimates
\begin{align*}
    &\sup_{t \in [0,T]} \|u(t)\|_{\dot C^b} \lesssim \|u_0\|_{\dot C^b} + \da_T(f) +\|\A\|_{B_T}\|u\|_{T,*},\\
    &  \sup_{t \in [0,T]} \|\Lambda^b u(t)\|_{L^\infty} \lesssim \|\Lambda^bu_0\|_{L^\infty} + \da_T(f) +\|\A\|_{B_T}\|u\|_{T,*},
\end{align*}
hold with implicit constants independent of $c_0,c_1$, see \eqref{KL1}. This is natural since no smoothing effect is involved in this case, and the bound does not rely on the lower or upper bound of the coefficient $\A(t,x,\xi)$.
        \end{remark}
		\begin{remark}
			In contrast to conventional approaches in existing literature, our methodology avoids the introduction of spatial truncations during coefficient freezing. Instead, we fix the spatial coordinate $x_0$
			in the frozen coefficients and explicitly construct the fundamental solution (Green's function) of the linearized equation. This enables us to derive a closed-form representation formula for the solution. By differentiating this formula and subsequently setting 
			$x_0=x$, we rigorously demonstrate that the coefficient freezing procedure only generates residual terms of lower differential order. This approach eliminates artificial truncation errors while preserving the intrinsic regularity structure of the problem.
		\end{remark}
		
		Finally, as an extension to Theorem \ref{lemmain}, we establish the following proposition for the endpoint case $b=0$, provided that the coefficient $\A$ is regular in $x$  and the forcing term possesses a derivative structure. Specifically, consider the system
		\begin{equation}\label{eqpara1}
			\begin{aligned}
				&\partial_{t} u(t, x)+\mathcal{L}_{s} u(t, x)=\mathcal{P}_{\gamma} f(t, x)+g(t,x),\ \ \ (t,x)\in(0,T)\times \mathbb{R}^d,\\
				&u|_{t=0}=u_0,
			\end{aligned}    
		\end{equation}
		where  $\mathcal{P}_{\gamma}$ is a differential operator defined by 
		\begin{align}\label{defPg}
			\mathcal{P}_{\gamma} f(t,x)=\int_{\mathbb{R}^d} \B(\xi)\hat f(t,\xi)e^{ix\cdot\xi}d\xi,\ \ \ \ 0<\gamma\leq s,
		\end{align}
		where the symbol $\B(\xi)\in \mathbb{R}^{N\times N}$ satisfies the growth condition
		\begin{align*}
			\left|\nabla^{j}_\xi \B(\xi)\right| \lesssim |\xi|^{\gamma-j},\ \ \ \forall 0\leq j\leq d+\kappa+2.
		\end{align*}
		The operator $\mathcal{L}_s$ is defined by \eqref{defop} with $\A(t,x,\xi)$ satisfying \eqref{condop}. Moreover,  suppose that there exists $\M>1$ such that 
		\begin{align}\label{smA}
			&\sum_ {\substack{j\leq m+\kappa+2\\ l\leq d+s+2 }} {|\xi|^{l-s}}{\left|\nabla _x^j\nabla^{l}_\xi \A(t,x,\xi)\right| }\leq \M,
			\ \ \forall  \xi \neq 0, (t,x)\in[0,T]\times\mathbb{R}^d.
		\end{align}
		We define the following quantity for the force terms,
		\begin{align*}
			&\da_T^{m,\kappa,\gamma}(f,g)=\sup_{t\in[0,T]}\left(t^{\frac{\kappa}{s}}\|f(t)\|_{\dot C^{\kappa_{\gamma}}}+t^{\frac{m+\kappa}{s}}\|f(t)\|_{\dot C^{m+\kappa_{\gamma}}} \right)+\|g\|_{L_T^1L^\infty}+\sup_{t\in[0,T]}t^{\frac{m}{s}+1}\|g(t)\|_{\dot C^{m}},
		\end{align*}
		with $\kappa_{\gamma}=\kappa+\gamma-s$. We have the following Schauder-type estimate.
		\begin{proposition}\label{propb=0}
			For  any $T>0$, consider the Cauchy problem \eqref{eqpara1} with $\A(t,x,\xi)$ satisfying \eqref{condop} and \eqref{smA}, if $\|u_0\|_{ L^\infty}+\da_T^{m,\kappa,\gamma}(f,g)<\infty$,  then there exists a unique solution $u\in C((0,T],L^\infty_x(\mathbb{R}^d))\cap L^\infty_{loc}((0,T],C^{m}_x(\mathbb{R}^d))$, and the following estimates hold:\\
			i)
			\begin{equation}\label{main2}
				\begin{aligned}
					&\sup_{t\in[0,T]} (\|u(t)\|_{L^\infty}+t^\frac{m+\kappa}{s}\|u(t)\|_{\dot C^{m+\kappa}})\lesssim  e^{CT\log (T+2)}\left(\|u_0\|_{L^\infty}+  \da_T^{m,\kappa,\gamma}(f,g)\right),
				\end{aligned}
			\end{equation}
			where $C>0$ is a constant depending only on $\M,m, s,c_0$.\vspace{0.1cm}\\
			ii) If $\A(t,x,\xi)$ is independent of $x$, we have the improved estimate:
			\begin{equation}\label{main1}
				\begin{aligned}
					&\sup_{t\in[0,T]} (\|u(t)\|_{L^\infty}+t^\frac{m+\kappa }{s}\|u(t)\|_{\dot C^{m+\kappa}})\lesssim\|u_0\|_{L^\infty}+
					\da_T^{m,\kappa,\gamma}(f,g). 
				\end{aligned}
			\end{equation}
		\end{proposition}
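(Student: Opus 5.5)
We would prove Proposition \ref{propb=0} with the same frozen-coefficient representation \eqref{uform}, now at $b=0$, with $\mathcal{P}_\gamma f+g$ as forcing. The difference from Theorem \ref{lemmain} is that the remainder $R_{x_0}[u]$ is no longer small, since \eqref{smA} gives only boundedness and not smallness of the $x$-derivatives of $\A$. It is instead controlled through a Gronwall-type argument in time, which explains the factor $e^{CT\log(T+2)}$.

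\textbf{Linear and forcing parts.} Fix $x_0$ and write $u=u_{L,x_0}+u_{N,x_0}+u_{G,x_0}+u_{R,x_0}$, where $u_{N,x_0}=\mathcal{W}_{x_0}[\mathcal{P}_\gamma f]$ and $u_{G,x_0}=\mathcal{W}_{x_0}[g]$.
\begin{itemize}
\item By \eqref{KL1} and \eqref{newestuL1} (with $\sigma_1=0$), $u_{L,x_0}$ is bounded by $\|u_0\|_{L^\infty}$ in the weighted norm, uniformly in $x_0$.
\item For $u_{N,x_0}$, move $\mathcal{P}_\gamma$ onto the kernel. Since $\B$ is a homogeneous-type symbol of order $\gamma$, the symbol $\B(\xi)\hat\K_{x_0}$ satisfies \eqref{condb} with $\sigma=\gamma$. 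Lemma \ref{esthtker} then yields kernel bounds like those in Corollary \ref{lemfourierK}, but with an extra $(t-\tau)^{-\gamma/s}$. Repeating the proof of \eqref{wv} with $\sigma_4=0$, $\sigma_3=\kappa$ and $h=f$ gives the bound $t^{\kappa/s}\|f\|_{\dot C^{\kappa_\gamma}}+t^{(m+\kappa)/s}\|f\|_{\dot C^{m+\kappa_\gamma}}$. Here $\kappa_\gamma=\kappa+\gamma-s$ plays the role of $\tilde\sigma$, and the singularity $(t-\tau)^{-(s-\kappa)/s}$ is integrable.
\item For $u_{G,x_0}$, the $L^\infty$ bound is $\|g\|_{L^1_TL^\infty}$ by \eqref{KL1}. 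For the $\dot C^{m+\kappa}$ bound, split at $t/2$:
 \begin{itemize}
 \item on $[0,t/2]$, put all derivatives on $\K$, which gives $t^{-(m+\kappa)/s}\|g\|_{L^1L^\infty}$;
 \item on $[t/2,t]$, put $m$ derivatives on $g$ and $\kappa<s$ on $\K$, which gives $\int_{t/2}^t(t-\tau)^{-\kappa/s}\tau^{-m/s-1}d\tau\lesssim t^{-(m+\kappa)/s}\sup\tau^{m/s+1}\|g\|_{\dot C^m}$.
 \end{itemize}
\end{itemize}

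\textbf{Remainder.} Set $x_0=x$ after differentiating, as in \eqref{holu}. Using \eqref{smA} (Lipschitz in $x$) together with Lemma \ref{intp}, we get
\[
|R_{x_0}[u](\tau,z)|\lesssim \M\,|x_0-z|\,\|u(\tau)\|_{\dot C^{s-\epsilon}}^{1/2}\|u(\tau)\|_{\dot C^{s+\epsilon}}^{1/2}.
\]
Interpolating between $L^\infty$ and $\dot C^{m+\kappa}$, with $m+\kappa>s+1$, bounds this by $\M|x_0-z|\,\tau^{-s/s}\Theta(\tau)$, where
\[
\Theta(\tau)=\sup_{\tau'\le\tau}\big(\|u(\tau')\|_{L^\infty}+\tau'^{(m+\kappa)/s}\|u(\tau')\|_{\dot C^{m+\kappa}}\big).
\]
The weight $|x-z|$ against $\K(t,\tau,x-z)$ gains $(t-\tau)^{1/s}$. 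The pure $\tau^{-1}$ singularity near $\tau=0$ is the dangerous part. We would avoid it by using the weaker interpolation $\|u\|_{\dot C^{s\pm\epsilon}}\lesssim \tau^{-(s\pm\epsilon)/(m+\kappa)\cdot(m+\kappa)/s}$ only for $\tau\ge\delta$. For short times we would instead write $R_{x_0}=(\mathcal{L}_s^{x_0}-\mathcal{L}_s)u$ and treat it, via smoothness in $x$, as $\mathcal{L}_s$ applied to a term of size $|x-x_0|$ plus commutators of order $s-1$. This gives a kernel gain $(t-\tau)^{-1+1/s}$, integrable against $\Theta$.

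Higher derivatives are handled as in \eqref{ur1}--\eqref{newestuRd}. On $[t/2,t]$ the derivatives fall on $R_{x_0}$, and smoothness of $\A$ in $x$ up to order $m+\kappa+2$ keeps the losses lower order. The result is an inequality
\[
\Theta(t)\le C_0\big(\|u_0\|_{L^\infty}+\da_T^{m,\kappa,\gamma}\big)+C\M\int_0^t (t-\tau)^{-1+\frac1s}\langle\tau\rangle^{\cdot}\,\Theta(\tau)\,d\tau .
\]
A singular Gronwall lemma then gives \eqref{main2}, where the $\log(T+2)$ comes from the large-time part of the weights. In the $x$-independent case ii) the remainder vanishes identically, so \eqref{main1} follows directly from the first step.

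\textbf{Existence and uniqueness.} Mollify $u_0,f,g$ and the coefficients, solve the smooth problems classically, apply the uniform bounds, and pass to the limit by Arzel\`a--Ascoli compactness. Uniqueness follows by applying \eqref{main2} to the difference of two solutions, which has zero data.

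The main obstacle is the remainder estimate at $b=0$. The gain from $|x-x_0|$ is only one derivative, while $R$ costs $s$ derivatives of $u$, so we must make sure the resulting time singularity is integrable so that the Gronwall argument closes. If it is not, the fallback is a finite iteration over short time steps of length $\sim\M^{-s}$, which also yields the exponential factor.
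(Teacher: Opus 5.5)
Your proposal is correct in outline. It uses the same key mechanism as the paper: freeze at $x_0$, differentiate \eqref{uform}, set $x_0=x$, and remove the non-integrable $\tau^{-1}$ singularity of the remainder at $\tau=0$ by moving derivatives from $u$ onto the frozen kernel. The vanishing of $\A(\tau,x,\xi)-\A(\tau,z,\xi)$ at $z=x$ supplies the spatial weight. The organization, however, differs from the paper's.

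The paper uses one decomposition for all $\tau$, namely $R_{x_0}[u]=\Lambda^\sigma R^1_{x_0}[u]+R^2[u]$ with $0<\sigma<\min\{1,s\}$ (see \eqref{decomR}--\eqref{decomh}).
\begin{itemize}
\item Only $\sigma$ derivatives go onto the kernel. At $x_0=x$ this gives $\int_0^t(t-\tau)^{-(\sigma-\eta')/s}\tau^{-(s-\sigma)/s}d\tau\lesssim T^{\eta'/s}$, with both endpoint singularities integrable in one formula.
\item The explicit commutator $R^2$ has order $s-\sigma$; it is treated like the forcing $g$ and costs $\M T^{\sigma/s}$.
\item The small factor is absorbed on a short interval $[0,T_0]$, and the bound is propagated by restarting at $T_0/2,T_0,\dots$. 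The factor $(n!)^{(m+\kappa)/s}$ from re-weighting is where $\log(T+2)$ comes from.
\end{itemize}
You instead transfer the full order $s$ (right quantization plus an order-$(s-1)$ commutator), but only on the part of the Duhamel integral near $\tau=0$. Where $\tau$ is bounded below you keep the Theorem~\ref{lemmain}-type bound $\M|x-z|\tau^{-1}$, and you close with a singular Gronwall inequality, keeping the restart as a fallback.

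What each route buys:
\begin{itemize}
\item Yours makes the near-zero piece see only $\|u\|_{L^\infty}$ plus a lower-order commutator. If the Volterra inequality for $\Theta$ is actually derived (on $[t/2,t]$ this uses $\tau\sim t$ to convert the weights), it gives a bound of the form \eqref{main2}.
\item The paper's avoids a quantization-change calculus for symbols singular at $\xi=0$, since it needs only \eqref{decomh} and Lemma~\ref{intp}. It also avoids deriving a Volterra inequality for a time-weighted supremum, at the price of the restart bookkeeping.
\end{itemize}

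Three things need repair when you write it out.
\begin{itemize}
\item The ``weaker interpolation for $\tau\ge\delta$'' does nothing, since $\tau^{-(s\pm\epsilon)/(m+\kappa)\cdot(m+\kappa)/s}=\tau^{-(s\pm\epsilon)/s}$. Only the commutator rewrite removes the $\tau^{-1}$ singularity. It must be applied to the piece $\tau\in[0,t/2]$ (or $[0,\delta]$) of the Duhamel integral at every time $t$, not only for short $t$, because the singularity at $\tau=0$ enters $u_{R,x_0}(t)$ for all $t$.
\item Both gains you quote from the weight $|x-z|$ require $s>1$: $(t-\tau)^{1/s}$ against $\K$, and $(t-\tau)^{-1+1/s}$ against $\Lambda^s\K$. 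These kernels decay only like $|y|^{-d-s}$ (cf.\ \eqref{ptKd1}). For $s\le 1$, use $|x-z|^{\eta'}$ with $\eta'<s$. This costs nothing because the coefficient difference is also bounded, and it is why the paper works with $\eta'<\sigma<\min\{1,s\}$.
\item In the $u_{N,x_0}$ step, Lemma~\ref{lemuLN} should be applied with $(\sigma_3,\sigma_4,n)=(\kappa,\gamma,m)$ and $\Lambda^{\gamma}$ replaced by $\mathcal{P}_\gamma$. Then $\tilde\sigma=\kappa_\gamma$, and the hypothesis $\sigma_4>s-\sigma_3$ is exactly $\kappa_\gamma>0$. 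With $\sigma_4=0$ the lemma does not apply.
\end{itemize}
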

		\begin{remark}\label{rmk1} ~\\
			i) The implicit constants in \eqref{main2} and \eqref{main1} can be controlled by  $\frac{(c_0^{-1}c_1)^{m+s+2}}{\kappa_\gamma(s-\kappa)}$. Furthermore, when only the \(L^\infty\) norm is considered (i.e., without the Hölder regularity term), the estimate
$$
    \sup_{t \in [0,T]} \|u(t)\|_{L^\infty} \lesssim e^{C T \log (T+2)} \Big( \|u_0\|_{L^\infty} + \da_T^{m,\kappa,\gamma}(f,g) \Big)
$$
holds with implicit constants independent of $c_0,c_1$, see \eqref{KL1}. \vspace{0.1cm}\\
			ii) The estimate \eqref{main2} can be applied to obtain local well-posedness of some quasilinear equations with initial data satisfying some ``continuity condition" (which indicates that the variable coefficient can be approximated by smooth function). Moreover, if $\A(t,x,\xi)$ is independent of $x$, the estimate \eqref{main1} holds, where the right-hand side does not have an exponential growth in time. This will be applied to obtain global well-posedness of some quasilinear equations with small initial data (which indicates that the variable coefficient can be approximated by a constant).\vspace{0.1cm}\\
			iii) The estimates in Proposition \ref{propb=0} are optimal, 
			being scaling-invariant with respect to \eqref{eqpara1}.\vspace{0.1cm}\\
			iv) We can extend the estimates in Proposition \ref{propb=0} to ultra-parabolic equations, where the operator $\mathcal{L}_s$ is hypoelliptic and the system \eqref{eqpara1} also exhibits smoothing effect (see \cite{Hormand,wz1,wz2}).
		\end{remark}
		\begin{proof}
			The proof of Proposition \ref{propb=0} consists of the following two main parts. First we establish a priori estimate for \eqref{eqpara1}, where uniqueness follows, then we prove the existence  of solution via compactness method.\\
			\textbf{Step 1: A priori estimate.}\\
			Assume $u$ is a smooth solution to \eqref{eqpara1}. 
            Denote $$
            \|u\|_{X_T}=\sup_{t\in[0,T]} (\|u(t)\|_{L^\infty}+t^\frac{m+\kappa}{s}\|u(t)\|_{\dot C^{m+\kappa}}).
            $$
          We recall the decomposition of the solution $u=u_{L,x_0}+u_{N,x_0}+u_{R,x_0}$ as in \eqref{uform}, where $u_{N,x_0}=u_{N,x_0}^1+u_{N,x_0}^2$, with
			\begin{equation*}
			\begin{aligned}	&u_{N,x_0}^1=\int_0^t\int_{\mathbb{R}^d}\K_{x_0}(t,\tau,x-z)\,\mathcal{P}_\gamma f(\tau,z)\,dzd\tau,\\
            &u_{N,x_0}^2=\int_0^t\int_{\mathbb{R}^d}\K_{x_0}(t,\tau,x-z)\, g(\tau,z)\, dzd\tau.
            \end{aligned}
			\end{equation*}
			By Lemma \ref{lemuLN}, we  obtain
			\begin{equation}\label{uLuN1}
				\begin{aligned}
					&\|u_{L,x_0}(t,x)|_{x_0=x}\|_{L^\infty}+\sup_{\alpha} t^{\frac{m+\kappa }{s}}\frac{\|\delta_\alpha\nabla^{m+[\kappa]} u_{L,x_0}(t,x)|_{x_0=x}\|_{L^\infty}}{|\alpha|^{\kappa-[\kappa]}}\lesssim \|u_0\|_{L^\infty},\\
					&\|u_{N,x_0}^1(t,x)|_{x_0=x}\|_{L^\infty}+\sup_{\alpha} t^{\frac{m+\kappa }{s}}\frac{\|\delta_\alpha\nabla^{m+[\kappa]} u_{N,x_0}^1(t,x)|_{x_0=x}\|_{L^\infty}}{|\alpha|^{\kappa-[\kappa]}}\lesssim\da_T^{m,\kappa,\gamma}(f,0).\\
				\end{aligned}
			\end{equation}
			On the other hand, for $u_{N,x_0}^2$, by \eqref{kerL1}, it is easy to check that 
			\begin{align*}
				|u_{N,x_0}^2(t,x)|\lesssim \int_0^t \|\K_{x_0}(t,\tau)\|_{L^1}\|g(\tau)\|_{L^\infty}d\tau\lesssim \|g\|_{L^1_TL^\infty}.
			\end{align*}
			For H\"{o}lder norm, by \eqref{delKL1} we have
			\begin{equation*}
				\begin{aligned}
					&|\delta_\alpha\nabla^{m+[\kappa]} u_{N,x_0}^2(t,x)|\\
					&\lesssim\int_0^\frac{t}{2}\int _{\mathbb{R}^d} \left|\delta_\alpha\nabla^{m+[\kappa]}\K_{x_0}(t,\tau,x-z)\right| |g(\tau,z)|dzd\tau+\int_\frac{t}{2}^t\int _{\mathbb{R}^d} \left|\delta_\alpha\nabla^{[\kappa]} \K_{x_0}(t,\tau,x-z)\right| \left|\nabla^mg(\tau,z)\right|dzd\tau\\
					&\lesssim \int_0^\frac{t}{2}\frac{1}{(t-\tau)^{\frac{m+[\kappa]}{s}}}\min\left\{1,\frac{|\alpha|}{(t-\tau)^{\frac{1}{s}}}\right\}\|g(\tau)\|_{L^\infty}d\tau+\int_{\frac{t}{2}}^{t}\frac{1}{(t-\tau)^{\frac{[\kappa]}{s}}}\min\left\{1,\frac{|\alpha|}{(t-\tau)^{\frac{1}{s}}}\right\}\|g(\tau)\|_{\dot C^m}d\tau\\
					&\lesssim |\alpha|^{\kappa-[\kappa]} t^{-\frac{m+\kappa }{s}}\left(\|g\|_{L^1_TL^\infty}+\sup_{\tau'\in[0,T]}\tau'^{\frac{m}{s}+1}\|g(\tau')\|_{\dot C^m}\right).
				\end{aligned}
			\end{equation*}
			Hence, 
			\begin{equation}\label{uN2}
				\begin{aligned}
					\|u_{N,x_0}^2(t,x)|_{x_0=x}\|_{L^\infty}+\sup_{\alpha} t^{\frac{m+\kappa }{s}}\frac{\|\delta_\alpha\nabla^{m+[\kappa]} u_{N,x_0}^2(t,x)|_{x_0=x}\|_{L^\infty}}{|\alpha|^{\kappa-[\kappa]}}\lesssim \da_T^{m,\kappa,\gamma}(0,g).
				\end{aligned}
			\end{equation}
			If $\A(t,x,\xi)=\A(t,\xi)$, then $u_{R,x_0}\equiv 0$. The estimate \eqref{main1} follows from \eqref{uLuN1} and  \eqref{uN2}. 
\vspace{0.1cm}

            In the case where $\A(t,x,\xi)$ varies with $x$, it remains to consider $u_{R,x_0}$. Recall $R_{x_0}[u]$ defined in \eqref{defRx0}.
			Let $0<\sigma<\min\{1,s\}$.
            Note that
\begin{align}\label{decomR}
R_{x_0}[u](t,x)=\Lambda^\sigma R^1_{x_0}[u](t,x)+R^2[u](t,x),
\end{align}
where
\begin{align*}
R^1_{x_0}[u](t,x)
&=(2\pi)^{-\frac d2}\int_{\mathbb R^d}|\xi|^{-\sigma}
\bigl(\A(t,x_0,\xi)-\A(t,x,\xi)\bigr)\widehat u(t,\xi)e^{ix\cdot\xi}\,d\xi,\\
R^2[u](t,x)
&=C_{d,\sigma}\int_{\mathbb R^d}\int_{\mathbb R^d}
|\xi|^{-\sigma}\bigl(\A(t,x,\xi)-\A(t,x-z,\xi)\bigr)
\widehat{\tau_z u}(t,\xi)e^{ix\cdot\xi}\,
\frac{d\xi\,dz}{|z|^{d+\sigma}},
\end{align*}
with $\tau_z u(x)=u(x-z)$.
We briefly justify this decomposition. For a function
$h(x,y):\mathbb R^d\times\mathbb R^d\to\mathbb R$, denote by
$\Lambda_1$ and $\Lambda_2$ the fractional Laplacian acting on the
$x$- and $y$-variables, respectively. Let
$
h_1:=\Lambda_1^{-\sigma}h.
$
Then
$
h=\Lambda_1^\sigma h_1.$
Using the identity
\[
h_1(x,x)-h_1(x-z,x)
=
\bigl(h_1(x,x)-h_1(x-z,x-z)\bigr)
+\bigl(h_1(x-z,x-z)-h_1(x-z,x)\bigr),
\]
we obtain
\begin{align}\label{decomh}
h(x,x)=\Lambda_1^\sigma h_1(\cdot,x)(x)
=\Lambda_x^\sigma\bigl(h_1(\cdot,\cdot)\bigr)(x)+h_R(x),
\end{align}
where the commutator term $h_R$ is given by
\begin{align*}
h_R(x)
&:=\Lambda_1^\sigma h_1(\cdot,x)(x)-\Lambda_x^\sigma\bigl(h_1(\cdot,\cdot)\bigr)(x)\bigr)\\
&=C_{d,\sigma}\int_{\mathbb R^d}
\bigl(h_1(x-z,x-z)-h_1(x-z,x)\bigr)\frac{dz}{|z|^{d+\sigma}}.
\end{align*}
Now take
\[
h(x,y)
=(2\pi)^{-\frac d2}\int_{\mathbb R^d}
\bigl(\A(t,x_0,\xi)-\A(t,y,\xi)\bigr)\widehat u(t,\xi)e^{ix\cdot\xi}\,d\xi.
\]
Then
\[
R_{x_0}[u](t,x)=h(x,x),
\qquad
R^1_{x_0}[u](t,x)=h_1(x,x),
\qquad
R^2[u](t,x)=h_R(x),
\]
and thus \eqref{decomR} follows from \eqref{decomh}.

         From \eqref{decomR} and \eqref{uform}, we have 
			\begin{align*}
				u_{R,{x_0}}(t,x)&= -\int_0^{t}\int_{\mathbb{R}^d}\Lambda^\sigma\K_{x_0}(t,\tau,x-z)R_{x_0}^1[u](\tau,z)dzd\tau-\int_0^{t}\int_{\mathbb{R}^d}\K_{x_0}(t,\tau,x-z)R^2[u](\tau,z)dzd\tau\\
				&:=  u_{R,{x_0}}^1(t,x)+  u_{R,{x_0}}^2(t,x).
			\end{align*}
            Following \eqref{uN2}, we obtain 
            \begin{align*}
            \|u_{R,x_0}^2(t,x)|_{x_0=x}\|_{L^\infty}+&\sup_{\alpha} t^{\frac{m+\kappa }{s}}\frac{\|\delta_\alpha\nabla^{m+[\kappa]} u_{R,x_0}^2(t,x)|_{x_0=x}\|_{L^\infty}}{|\alpha|^{\kappa-[\kappa]}} \lesssim \da_T^{m,\kappa,\gamma}(0,R^2[u]).
            \end{align*}
            By Lemma \ref{intp}, we have
            \begin{align*}
            &\|\nabla^k_x R^2[u](t)\|_{L^\infty}\lesssim \M t^{-\frac{k+s-\sigma}{s}}\|u\|_{X_T},\ \ \ \forall \ 0\leq k\leq m.
            \end{align*}
            This implies $\da_T^{m,\kappa,\gamma}(0,R^2[u])\lesssim \M T^\frac{\sigma}{s}\|u\|_{X_T}$. Hence, we obtain 
            \begin{align}\label{ur22}
              \|u_{R,x_0}^2(t,x)|_{x_0=x}\|_{L^\infty}+&\sup_{\alpha} t^{\frac{m+\kappa }{s}}\frac{\|\delta_\alpha\nabla^{m+[\kappa]} u_{R,x_0}^2(t,x)|_{x_0=x}\|_{L^\infty}}{|\alpha|^{\kappa-[\kappa]}} \lesssim \M T^\frac{\sigma}{s}\|u\|_{X_T}.
            \end{align}
	Then we estimate $u_{R,x_0}^1$.		By the Lemma \ref{intp}, we have for any $\eta'\in[0,1]$,
			\begin{equation}\label{mainestRx1}
				\begin{aligned}
					&\vert R_{x_0}^1[u](t,x)\vert\lesssim  \M|x_0-x|^{\eta'}\|u(t)\|_{\dot C^{s-\sigma-\eps}}^{\frac{1}{2}}\|u(t)\|_{\dot C^{s-\sigma+\eps}}^{\frac{1}{2}}\lesssim \M |x_0-x|^{\eta'} t^{-\frac{s-\sigma}{s}}\|u\|_{X_T},
				\end{aligned}
			\end{equation}
			for some $\eps$ small enough. Similarly, for any $0\leq k\leq m$ and $\eta'\in[0,1]$, we obtain 
			\begin{equation*}\label{estRx0der}
				\begin{aligned}
					&|\nabla^{k}_x R_{x_0}^1[u](t,x)|\lesssim  \M(|x_0-x|^{\eta'}+t^{\frac{1}{s}})t^{-\frac{k+s-\sigma}{s}}\|u\|_{X_T}.
				\end{aligned}
			\end{equation*}
		By \eqref{mainestRx1} an Lemma \ref{esthtker}, 
			\begin{align*}
				\vert u_{R,x_0}^1(t,x)|_{x_0=x}\vert&\lesssim \M 
                \int_0^t \tau^{-\frac{s-\sigma}{s}}\int_{\mathbb{R}^d}|\Lambda^\sigma \K_{x_0}(t,\tau,x-z)||x-z|^{\eta'} dzd\tau \|u\|_{X_T}
                \\
                &\lesssim  \M\int_0^t(t-\tau)^{-\frac{\sigma-\eta'}{s}}\tau^{-\frac{s-\sigma}{s}}d\tau \|u\|_{X_T}\lesssim \M T^{\frac{\eta'}{s}}\|u\|_{X_T},
			\end{align*}
            for any $0<\eta'<\sigma$.
		Then we estimate higher order derivatives. 
		 By taking $\sigma\in(s-\kappa,s-[\kappa])$, and decomposing the integral interval into $[0,\frac{t}{2}]\cup[\frac{t}{2},t]$, we obtain 
			\begin{align*}
				\vert \delta_\alpha\nabla^{m+[\kappa]} u_{R,x_0}^{1}(t,x)\vert&\lesssim \left|\int_0^{\frac{t}{2}}\int_{\mathbb{R}^d}\delta_\alpha\nabla^{m+[\kappa]}\Lambda_z^\sigma\K_{x_0}(t,\tau,x-z)R_{x_0}^1(\tau,z)dzd\tau\right|\\
                &\quad+\left|\int_{\frac{t}{2}}^t\int_{\mathbb{R}^d}\delta_\alpha\nabla^{[\kappa]}\Lambda_z^\sigma\K_{x_0}(t,\tau,x-z)\nabla^mR_{x_0}^1(\tau,z)dzd\tau\right|\\
                &\lesssim (|\alpha|^{\kappa-[\kappa]}+|x_0-x|^{\kappa-[\kappa]})T^{\frac{\kappa-[\kappa]}{s}}t^{-\frac{m+\kappa}{s}}\|u\|_{X_T}.
			\end{align*}
			So we have proved
            \begin{align*}\label{ur11}
              \|u_{R,x_0}^1(t,x)|_{x_0=x}\|_{L^\infty}+&\sup_{\alpha} t^{\frac{m+\kappa }{s}}\frac{\|\delta_\alpha\nabla^{m+[\kappa]} u_{R,x_0}^1(t,x)|_{x_0=x}\|_{L^\infty}}{|\alpha|^{\kappa-[\kappa]}} \lesssim \M T^\frac{\eta'}{s}\|u\|_{X_T}.
            \end{align*}
            Combining this with \eqref{ur22} yields
			\begin{equation*}\label{uRf}
				\begin{aligned}
					&\sup_{t\in[0,T]}\left( \|u_{R,x_0}(t)|_{x_0=x}\|_{L^\infty}+ t^\frac{m+\kappa }{s}\sup_\alpha\frac{\|\left(\delta_\alpha\nabla^{m+[\kappa]}u_{R,x_0}(t)\right)|_{x_0=x}\|_{L^\infty}}{|\alpha|^{\kappa-[\kappa]}} \right) \leq C \M T^\frac{\eta'}{s}\|u\|_{X_T},
				\end{aligned}
			\end{equation*}
			provided $0<\eta'<\sigma<\kappa-[\kappa]$. By taking $T$ small enough such that $C \M T^{\frac{\sigma}{s}}<\frac{1}{100}$, from \eqref{holu}, \eqref{uLuN1} and \eqref{uN2}
			we conclude that \begin{equation}\label{mainestuL}
				\begin{aligned}
					&\sup_{t\in[0,T]}\left( \|u(t)\|_{L^\infty}+ t^\frac{m+\kappa }{s}\sup_\alpha\frac{\|\delta_\alpha\nabla^{m+[\kappa]}u(t)\|_{L^\infty}}{|\alpha|^{\kappa-[\kappa]}} \right) \lesssim  \M\left(\|u_0\|_{L^\infty}+\da_T^{m,\kappa,\gamma}(f,g)\right).
				\end{aligned}
			\end{equation}
			From \eqref{mainestuL}, if we start from $\frac{T_0}{2}$, it holds 
			\begin{align*}
				\sup_{t\in[\frac{T_0}{2},\frac{3T_0}{2}]}(\|u(t)\|_{L^\infty}+(t-T_0/2)^\frac{m+\kappa}{s}\|u(t)\|_{\dot C^{m+\kappa}})\leq& C \M \left(\|u(T_0/2)\|_{L^\infty}+\da_{3T_0/2}^{m,\kappa,\gamma}(f,g)\right).
			\end{align*}
			In particular, for $t\in[T_0,3T_0/2]$, one has $t-T_0/2\sim t$, then we obtain 
			\begin{align*}
				\|u\|_{X_{3T_0/2}}\leq C  \M\left(\|u(T_0/2)\|_{L^\infty}+\da_{3T_0/2}^{m,\kappa,\gamma}(f,g)\right)\leq C^2 \M^2(\|u_0\|_{L^\infty}+\da^{m,\kappa,\gamma}_{3T_0/2}(f,g)).
			\end{align*}
			Repeating the procedure $n-1$ times, it follows that 
			\begin{align*}
				\|u\|_{X_{(n+1)T_0/2}}\leq C^n \M^n(n!)^\frac{m+\kappa}{s}(\|u_0\|_{L^\infty}+\da^{m,\kappa,\gamma}_{(n+1)T_0/2}(f,g)).
			\end{align*}
			Take $n=[\frac{2T}{T_0}]+1$, then by the Stirling formula, we derive
			\begin{align*}
				\|u\|_{X_T}\lesssim e^{\tilde CT\log (T+2)}(\|u_0\|_{L^\infty}+\da_T^{m,\kappa,\gamma}(f,g)),\ \ \ \forall T>0,
			\end{align*}
			which implies \eqref{main2}. This a priori estimate also ensures uniqueness by taking $u_0=f=g=0$.\\
			\textbf{Step 2: Existence of solution.}\\
			To prove the existence, we use the compactness method. Consider a regularized approximation system, for which the Cauchy problem is easily studied, and whose solutions are expected to converge to solutions of the original system \eqref{eqpara1}. More precisely, let $\varepsilon_1,\varepsilon_2\in(0,1)$, we  consider for $u:(0,\infty)\times \mathbb R^d\rightarrow\mathbb R^N$, \begin{equation*}
				\begin{aligned}
					&  \partial_tu(t,x)+\mathcal{L}_su(t,x)=(1-\chi_{\varepsilon_1}(t))(\mathcal{P}_\gamma f(t,x)+g(t,x))\quad \text{in}\ (0,\infty)\times\mathbb{R}^d,\\
					& u|_{t=0}=u_0\ast \rho_{\varepsilon_2}.
				\end{aligned}
			\end{equation*}
			Here $\rho_{\eps_2}$ is the standard mollifier in $\mathbb{R}^d$, $\chi_{\eps_1}$ is a smooth temporal cutoff function that satisfies $\mathbf{1}_{[0,\eps_1]}\leq \chi_{\eps_1}\leq \mathbf{1}_{[0,2\eps_1]}$ and the force term is supported away from time $0$ and belongs to $C_c^\infty((0,\infty);C^\infty(\mathbb{R}^d))$. By the classical theory of parabolic equations, there exists a unique classical solution $u_\eps=u_{\eps_1,\eps_2}\in C([0,\infty),L^\infty(\mathbb{R}^d))\cap L^\infty_{loc}((0,\infty),C^{m+\kappa }(\mathbb{R}^d))$, see Remark \ref{semi}. Moreover, for any $T>0$, we have  
			\begin{align}\label{hox}
				&\sup_{t\in[0,T]} (\|u_\eps(t)\|_{L^\infty}+t^\frac{m+\kappa }{s}\|u_\eps(t)\|_{\dot C^{m+\kappa }})\leq Ce^{\tilde CT\log (T+2)}(\|u_0\|_{L^\infty}+ \da_T^{m,\kappa,\gamma}(f,g)).
			\end{align}
            By interpolation inequality, we obtain 
           \begin{equation}\label{hot}
                \begin{aligned}
            \sup_{t\in[0,T]}t\|\partial_t u_\eps(t)\|_{L^\infty}&\leq     \sup_{t\in[0,T]}t(\|\mathcal{L}_s u_\eps(t)\|_{L^\infty}+ \|\mathcal{P}_\gamma f(t)+g(t)\|_{L^\infty})\\
            &\leq
            Ce^{\tilde CT\log (T+2)}(\|u_0\|_{L^\infty}+ \da_T^{m,\kappa,\gamma}(f,g)).
            \end{aligned}
           \end{equation}
            The estimates \eqref{hox} and \eqref{hot} guarantee the compactness and convergence of the sequence $\{u_\eps\}$.
		 Taking $\varepsilon_1,\eps_2\to 0$, we obtain that $u_{\varepsilon}$ converges to $u\in C((0,T],L^\infty(\mathbb{R}^d))\cap L^\infty_{loc}((0,T],C^{m +\kappa-\varepsilon'}(\mathbb{R}^d))$ for any $\varepsilon'>0$, which is a solution to the original system \eqref{eqpara1} and satisfies the estimate \eqref{hox}. This completes the proof of the theorem.\vspace{0.2cm}
		\end{proof}
	
		We now prove Proposition \ref{prop111}, which develops a general framework for deriving a priori estimates by applying Theorem \ref{lemmain} and Proposition \ref{propb=0} to generic quasilinear systems. This methodology is subsequently applied to several distinct model equations in Sections \ref{intomus1}–\ref{secpes3d}.\vspace{0.2cm}\\
		\begin{proof}[Proof of Proposition \ref{prop111}] Recalling the norms $\|\cdot\|_T,\|\cdot\|_{T,*}$ defined in \eqref{musdefnorm}, we first give a priori estimates of $\|\nabla U\|_{L^\infty_TL^\infty}$ and $\|U-\Phi\|_{T,*}$.\\
			{\bf Step 1: Estimate of $\|\nabla U\|_{L^\infty_TL^\infty}$}.\\ 
			Here we assume $U(t,x)$ is a regular solution. From \eqref{besovgen}, we have  
			\begin{equation*}
				\partial_t  U(t,x) + A(\nabla \Phi(x), \nabla) U(t,x) = \mathcal{N}(U)(t,x)+\mathcal{R}(U,\Phi)(t,x),
			\end{equation*}
			where 
			\begin{align*}
				&\mathcal{R}(U,\Phi)(t,x)=A(\nabla \Phi(x), \nabla)U(t,x)-A(\nabla U(t,x), \nabla)U(t,x).
			\end{align*}
			Applying Theorem \ref{lemmain} and \eqref{besovno}, we obtain 
			\begin{equation}\label{UT}
				\begin{aligned}
					\|\nabla U\|_{L^\infty_TL^\infty}&\lesssim \|\nabla U_0\|_{L^\infty}+\da_T(\mathcal{N}(U)+\mathcal{R}(U,\Phi))+\|\A\|_{B_T}\|U\|_{T,*}\\
					&\lesssim \|\nabla U_0\|_{L^\infty}+\|U\|_{T,*}^{1+\epsilon_0} (1 + \|\nabla U\|_{L^\infty_T L^\infty_x} + \|U\|_{T,*})^{c_1 }\\
					&\quad\quad+(1 + \|\nabla U\|_{L^\infty_T L^\infty_x} + \|\nabla \Phi\|_{L^\infty_T L^\infty_x})^{c_2}(\|U-\Phi\|_T\|U\|_{T,*}+T^\frac{1}{s}\|\Phi\|_{C^{m+2}}\|U\|_{T,*}).
				\end{aligned}
			\end{equation}
			{\bf Step 2: Estimate of $\|U-\Phi\|_{T,*}$}.\\ 
			Rewrite \eqref{besovgen} as 
			\begin{align*}
				\partial_t (U-\Phi)+A(\nabla U,\nabla)(U-\Phi)=\mathcal{N}(U)-A(\nabla U,\nabla)\Phi.
			\end{align*}
			By Theorem \ref{lemmain} and the estimates \eqref{besovno}, \eqref{ABT}, we obtain 
			\begin{equation}\label{bes11}
				\begin{aligned}
					&||U- \Phi||_{T,*}\\
                    &\lesssim (1+\|\nabla U\|_{L^\infty_TL^\infty_x})^{c_2(m+s)}\left(\|U_0-\Phi\|_{\dot B^1_{\infty,\infty}}+\da_T(\mathcal{N}(U)-A(\nabla U,\nabla)\Phi)+\|\A\|_{B_T}\|U-\Phi\|_{T,*}\right)\\
                    &\lesssim
					(1+||\nabla U||_{L^\infty_TL^\infty_x}+||U||_{T,*})^{\frac{c}{2}} (\|U_0-\Phi\|_{\dot B^1_{\infty,\infty}}+||U||_{T,*}^{1+\epsilon_0}+T^\frac{1}{s}\|\Phi\|_{C^{m+s}}),
				\end{aligned}
			\end{equation}
			with $c=2(c_1+c_2(m+s)+c_3+1)$.
			Now we denote
			\begin{align*}
				B(T):=\|U-\Phi\|_{T,*},\ \ \ \ M(T):=\|\nabla U\|_{L^\infty}.
			\end{align*}
			Note that 
			\begin{align*}
				\| U\|_{T,*}\leq \| U-\Phi\|_{T,*}+\|\Phi\|_{T,*}\lesssim B(T)+T^\frac{\eta}{s}\|\Phi\|_{C^{m+s+1}}.
			\end{align*}
			Hence, we write \eqref{UT} and \eqref{bes11} as 
			\begin{align*}
				&M(T)\lesssim \|\nabla U_0\|_{L^\infty}+(B(T)+T^\frac{\eta}{s}\|\Phi\|_{C^{m+s+1}})(1+M(T)+B(T)+T^\frac{1}{s}\|\Phi\|_{C^{m+s+1}})^{\frac{c}{2}},\\
				&B(T)\lesssim (\|U_0-\Phi\|_{\dot B^1_{\infty,\infty}}+B(T)^{1+\eps_0}+T^\frac{1}{s}\|\Phi\|_{C^{m+s+1}})(1+M(T)+B(T)+T^\frac{1}{s}\|\Phi\|_{C^{m+s+1}})^{\frac{c}{2}}.
			\end{align*}
			It is easy to check that under the assumption $$(1+||\nabla U_0||_{L^\infty})^c\|\nabla(U_0-\Phi)\|_{\dot B_{\infty,\infty}^0}\leq \eps \ll 1,$$ one can take
			$T_0=T_0(\|\Phi\|_{C^{m+s+1}})$ small enough such that
			\begin{align*}
				&M(T_0)\leq C(\eps+ \|\nabla U_0\|_{L^\infty}),\\
				&B(T_0)\leq C\eps.
			\end{align*}
			Moreover, if $\Phi\equiv 0$, then 
			\begin{align*}
				&M(T)\lesssim \|\nabla U_0\|_{L^\infty}+B(T)(1+M(T)+B(T))^{\frac{c}{2}},\\
				&B(T)\lesssim (\|U_0\|_{\dot B^1_{\infty,\infty}}+B(T)^{1+\eps_0})(1+M(T)+B(T))^{\frac{c}{2}},\quad\quad\quad \forall T>0.
			\end{align*}
			By the bootstrap method, we obtain 
			\begin{align*}
				&M(T)\leq C(\eps+ \|\nabla U_0\|_{L^\infty}),\\
				&B(T)\leq C\eps,\ \ \ \forall\ T>0.
			\end{align*}
			This completes the proof of Proposition \ref{prop111}. 
		\end{proof}
		\subsection{Schauder type estimates for equations on manifolds}
		The extension of Theorem \ref{lemmain} to manifolds without boundary can be established as follows.  Let $\mathcal{M}$ be a smooth, closed manifold equipped with:
		\begin{itemize}
			\item A finite atlas $\{\mathcal{U}_i, \phi_i \}_{i=1}^n$ satisfying $\tilde{\mathcal{U}}_i\Subset\mathcal{U}_i$ for each $i$, where $\{ \tilde{\mathcal{U}}_i\}_{i=1}^n$ forms an open cover of $\mathcal{M}$.
			\item Local homeomorphisms $\phi_i:\mathcal{U}_i\rightarrow\mathbb{R}^d$.
			\item A partition of unity $\{\chi_i\}$ subordinate to $\mathcal{U}_i$ with $\text{Supp}(\chi_i)\subset\mathcal{U}_i$, $\chi_i|_{\tilde{\mathcal{U}}_i}=1$.
		\end{itemize} 
		Define the H\"{o}lder norm on $\mathcal{M}$ through localization:
		\begin{equation}\label{mfdhd}
			\|u\|_{ C^{m}(\mathcal{M})}:=\sum_{i=1}^n\|(u\chi_i)\circ\phi_i^{-1}\|_{ C^m(\mathbb{R}^d)}.
		\end{equation}
		We denote $\mathcal{L}_{\mathcal{M}}^s$ a Pseudo-differential operator on the manifold $\mathcal{M}$, which is defined via local coordinates:
		\begin{equation}\label{mfdcdl}
			(\chi_i\mathcal{L}_{\mathcal{M}}^su)\circ\phi_i^{-1}=\mathcal{L}_{\mathbb{R}^d}^{s,i}((\chi_iu)\circ\phi_i^{-1})+R_i(u),
		\end{equation}
		where $\mathcal{L}_{\mathbb{R}^d}^{s,i}$, $i=1,\cdots,n$ are Pseudo-differential operators in $\mathbb{R}^d$ as defined in \eqref{defop} and \eqref{condop}, and the remainder term $R_i$ satisfies
		\begin{equation}\label{mtmfcd1}
			\begin{aligned}
				\|R_i(u)\|_{L^\infty}\lesssim C_{\mathcal{M}}\|u\|_{C^{s-\zeta_0}(\mathcal{M})},\quad\|R_i(u)\|_{\dot C^{k+\alpha}}\lesssim C_{\mathcal{M}}\|u\|_{C^{s+k+\alpha-\zeta_0}(\mathcal{M})},\quad \forall k\leq m,\alpha\in[0,1),
			\end{aligned}
		\end{equation}
		for some $\zeta_0>0$. 
		Suppose that 
		\begin{equation}\label{gardineq}
			(\mathcal{L}_{\mathcal{M}}^su,u)\geq c_0\|u\|_{H^{s_0}(\mathcal{M})}^2-c_0^{-1}\|u\|_{L^2(\mathcal{M})}^2,\quad\forall u\in C^\infty(\mathcal{M}),
		\end{equation}
		for some $s_0\in (0,\frac{s}{2}]$ and $c_0\in (0,1).$ Here $(\cdot,\cdot)$ is the inner product on $\mathcal{M}$. 
		
		Consider the following system on $\mathcal{M}$,
		\begin{equation}\label{maineqM}
			\begin{aligned}
				&\partial_tu(t,x)+\mathcal{L}_{\mathcal{M}}^su(t,x)=G(t,x),\quad\text{in}\ (0,\infty)\times \mathcal{M},\\
				&u|_{t=0}=u_0,
			\end{aligned}
		\end{equation}
		with a known force term $G$.
		The existence and uniqueness of solutions to \eqref{maineqM} follow from standard parabolic theory combined with the G\aa{r}ding's inequality \eqref{gardineq}, adapting the techniques in \cite{JJS}. See also \cite{Polden,CM12,QTLG14}.
		\begin{remark}\label{semi}
		We discuss  an iterative method to solve \eqref{maineqM}. The solution is given by 
		\begin{align}\label{forsolma}
			u(t)=S(t,0)u_0+\int_0^t S(t,\tau)G(\tau)d\tau,
		\end{align}
		where 
		\begin{equation*}
			S(t,\tau)=\lim_{n\rightarrow \infty}\Pi_{k=1}^n\left(\mathrm{Id}+\frac{t-\tau}{n}\mathcal{L}^s_{\mathcal{M}}\left(\tau+\frac{k(t-\tau)}{n}\right)\right)^{-1}. 
		\end{equation*}
		To see this, we divide the time interval $[0,T]$ into $n$ parts, and denote $\Delta t=\frac{t-\tau}{n}$, $t_j=j\Delta t$, then consider the approximate system 
		\begin{equation*}
			\begin{aligned}
				\frac{u_{j+1}-u_j}{\Delta t}+\mathcal{L}^s_{\mathcal{M}}(t_j)u_{j+1}=G_j,\quad j=0,1,\cdots,n-1,
			\end{aligned}
		\end{equation*}
		which is equivalent to 
		$$           \frac{u_{j+1}}{\Delta t}+\mathcal{L}^s_{\mathcal{M}}(t_j)u_{j+1}=\frac{u_{j}}{\Delta t}+G_j,\quad j=0,1,\cdots,n-1.$$        Using $\frac{1}{\Delta t}\gg 1$ and   the G\aa{rding} inequality \eqref{gardineq}, we can solve inductively
		\begin{align*}
			u_{j+1}=S_ju_j+\Delta t S_j(G_j),
		\end{align*}
		where 
		\begin{align*}
			S_j=\left(\mathrm{Id}+\Delta t \mathcal{L}^s_{\mathcal{M}}(t_j)\right)^{-1}.
		\end{align*}
		This implies that 
		\begin{align*}
			u_n=\prod_{j=1}^nS_ju_0+\Delta t\sum_{j=0}^n\prod_{k=j}^nS_k(G_j).
		\end{align*}
		Taking $n\to\infty$ yields the formula \eqref{forsolma}.\vspace{0.3cm}
		\end{remark}
		
		We prove the following a priori estimate for the solution to the evolution system \eqref{maineqM}, establishing a counterpart of Theorem \ref{lemmain} on the manifold.
		\begin{theorem}\label{thmmani}
			Let $0<T<\infty$. Suppose that $u\in C([0,T],L^\infty(\mathcal{M}))\cap L_{loc}^\infty((0,T], C^{m+\kappa}(\mathcal{M}))$ is a solution to \eqref{maineqM} with $\mathcal{L}_\mathcal{M}^s$ satisfying \eqref{mfdcdl}.
			 If $u_0\in C^n(\mathcal{M})$ with $s-\kappa\leq n\leq s$, and $R_i$ satisfies \eqref{mtmfcd1} with $k\leq m+n$,
			then
			\begin{equation}\label{pehmfd}
				\sup_{t\in [0,T]}(\|u(t)\|_{C^n(\mathcal{M})}+t^{\frac{m+\kappa}{s}}\|u(t)\|_{C^{m+n+\kappa}(\mathcal{M})})\lesssim e^{CT\log(T+2)}( \|u_0\|_{C^n}+\da^{n,m,\kappa}_{T,\mathcal{M}}(G)),
			\end{equation}
			where  
			\begin{align*}
				\da^{n,m,\kappa}_{T,\mathcal{M}}(g):=  &\sup _{t \in[0,T]}\left(t^{\frac{\kappa}{s}}\|g(t)\|_{  C^{\kappa-s+n}(\mathcal{M})}
				+t^{\frac{m+\kappa}{s}}\|g(t)\|_{  C^{m+\kappa-s+n}(\mathcal{M})}\right).
			\end{align*}
		\end{theorem}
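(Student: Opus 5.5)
The plan is to localize with the partition of unity $\{\chi_i\}$ and reduce each chart to a problem on $\mathbb{R}^d$ of the type \eqref{eqpara1}. We then apply part i) of Proposition \ref{propb=0}, with derivatives shifted by $n$, and close with a short-time absorption followed by time iteration, exactly as in Step 1 of the proof of Proposition \ref{propb=0}.

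\textbf{Localization.} Set $u_i:=(\chi_i u)\circ\phi_i^{-1}$. Multiplying \eqref{maineqM} by $\chi_i$ and using \eqref{mfdcdl} gives
\begin{align*}
\partial_t u_i+\mathcal{L}^{s,i}_{\mathbb{R}^d}u_i=(\chi_i G)\circ\phi_i^{-1}-R_i(u)=:G_i .
\end{align*}
Since $\|u\|_{C^{k}(\mathcal{M})}=\sum_i\|u_i\|_{C^k}$ by \eqref{mfdhd}, it suffices to bound each $u_i$ in the weighted norm
\begin{align*}
X^n_T(u_i):=\sup_{t\le T}\Big(\|u_i(t)\|_{C^n}+t^{\frac{m+\kappa}{s}}\|u_i(t)\|_{C^{m+n+\kappa}}\Big).
\end{align*}
To reach level $n$ rather than $L^\infty$, apply $\nabla^j$ for $j\le n$. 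If $n\notin\mathbb{N}$, use $\Lambda^n$ instead, as in Corollary \ref{remlinf}. The commutator $[\nabla^j,\mathcal{L}^{s,i}]$ is an operator of order $s$ whose coefficients involve $x$-derivatives of the symbol, and these are bounded by \eqref{smA} since the charts and the symbol are smooth. We put this commutator into the force term. The key point is that it acts on $\nabla^{<j}u_i$, so it is genuinely lower order: it is controlled by $\|u\|_{C^{s+j-1}}$ rather than by $\|u\|_{C^{s+j}}$.

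\textbf{Estimating the forcing.} We apply \eqref{main2} to $\nabla^j u_i$ with $g$ the total forcing and $\gamma=s$, writing the $G$-part as $\mathcal{P}_s\Lambda^{-s}$ of a function. The $G$-contribution is then bounded by $\da^{n,m,\kappa}_{T,\mathcal{M}}(G)$. The remainder $R_i(u)$ and the commutators are handled with \eqref{mtmfcd1} and interpolation (Lemma \ref{intp}), which give
\begin{align*}
\|R_i(u)(t)\|_{C^{k+\alpha-n}}\lesssim C_{\mathcal{M}}\,\|u(t)\|_{C^{s+k+\alpha-\zeta_0}}\lesssim t^{-\frac{s+k+\alpha-n-\zeta_0}{s}}\,X^n_T(u).
\end{align*}
The target weight in the $\da$-norm is $t^{-\frac{s+k+\alpha-n}{s}}$, so we gain a factor $t^{\zeta_0/s}$. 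Placing $R_i$ in the $g$-slot, whose time-integrability requirement is satisfied thanks to this $\zeta_0$ gain, we get a contribution of at most $C\,C_{\mathcal{M}}\,T^{\zeta_0'/s}X^n_T(u)$ for some $\zeta_0'>0$. The commutators contribute $C\,T^{1/s}X^n_T(u)$ in the same way.

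\textbf{Closing.} Choose $T_0$ small, depending only on $C_{\mathcal{M}}$, $\M$, $m$, $s$ and the atlas, and absorb these terms. This yields \eqref{pehmfd} on $[0,T_0]$ with a constant factor. Restarting from $T_0/2$ and repeating $[2T/T_0]+1$ times, exactly as in the iteration in the proof of Proposition \ref{propb=0}, produces the factor $e^{CT\log(T+2)}$ by Stirling's formula.

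\textbf{Main obstacle.} The hard part is the bookkeeping of the time weights for $R_i(u)$ and the commutator terms near $t=0$. One has to show they fit in $\da$ (equivalently in the $g$-slot of $\da^{m,\kappa,\gamma}_T$) with a positive power of $T$ to spare. This needs exactly $\zeta_0>0$ and the range $s-\kappa\le n\le s$: the lower bound $n\ge s-\kappa$ keeps the index $\kappa-s+n\ge 0$, and the upper bound $n\le s$ makes the low-order terms controllable by $\|u\|_{C^n}$ without a weight. If the time weight on the $\dot C^{\kappa-s+n}$ component does not match directly, the fallback is to split the Duhamel integral at $t/2$, as in Lemma \ref{lemuLN}.
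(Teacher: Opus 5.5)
Your overall route is the paper's: localize with $\chi_i$, differentiate $n$ times, apply Proposition \ref{propb=0} i) to $\nabla^n u_i$ on a short interval $[0,T_0]$, absorb the $T_0^{\zeta_0/s}$-small terms, and restart from $T_0/2$ to get $e^{CT\log(T+2)}$.

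One step, however, fails as written. You place $R_i(u)$ in the $g$-slot of $\da^{m,\kappa,\gamma}_T(f,g)$. That slot contains $\sup_t t^{\frac{m}{s}+1}\|g(t)\|_{\dot C^m}$. For $g=\nabla^n R_i(u)$, the hypothesis \eqref{mtmfcd1} only gives $\|R_i(u)\|_{\dot C^{m+n}}\lesssim C_{\mathcal M}\|u\|_{C^{s+m+n-\zeta_0}}$. This lies above the top norm $C^{m+n+\kappa}$ you control unless $\zeta_0\ge s-\kappa$, and only $\zeta_0>0$ is assumed. The $t^{\zeta_0/s}$ gain you computed is a gain in the $\da$-norm, which is not what the $g$-slot measures; only its $L^1_TL^\infty$ part is fine.

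The paper instead keeps $R_j$ together with $G_j$ and puts $\nabla^n(G_j+R_j)$ in the $\mathcal{P}_\gamma f$ slot, with $\mathcal{P}_\gamma=\nabla^n$ and $\gamma=n$. Then $\kappa_\gamma=\kappa-s+n$ matches $\da^{n,m,\kappa}_{T,\mathcal M}$, which is what the range $s-\kappa\le n\le s$ is tailored to. Only the commutator $\mathcal{L}^{s,j}_{\mathbb{R}^d}\nabla^n v_j-\nabla^n\mathcal{L}^{s,j}_{\mathbb{R}^d}v_j$, which is one order lower, goes into the $g$-slot.

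Your display, with its left side read as $\|R_i(u)(t)\|_{C^{k+\alpha}}$, is exactly the bound $\da^{n,m,\kappa}_{T_0,\mathcal{M}}(R_i)\lesssim T_0^{\zeta_0/s}X^n_{T_0}(u)$ that this requires. So the repair is to keep $R_i$ inside $G_i$, as you in fact defined it, and handle all of $G_i$ in the $f$-slot. Your $\mathcal{P}_s\Lambda^{-s}$ device is an equivalent alternative to the paper's choice $\gamma=n$.

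A smaller point: for non-integer $n$, the $\Lambda^n$ substitute does not yield \eqref{pehmfd}. Proposition \ref{propb=0} applied to $\Lambda^n u_i$ puts $\|\Lambda^n u_i(0)\|_{L^\infty}$ on the right, as in Corollary \ref{remlinf}, and this is not bounded by $\|u_0\|_{C^n}$. The paper's argument uses $\nabla^n$, i.e. integer $n$, which covers the case $n=1$ used for the 3D Peskin problem.
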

		\begin{proof}
			Denote $v_j=(u\chi_j)\circ\phi_{j}^{-1}$. By \eqref{mfdcdl}, we write the equation of $v_j$ as 
			\begin{equation*}
				\begin{aligned}
					&\partial_tv_j(t,x)+\mathcal{L}_{\mathbb{R}^d}^{s,j}v_j(t,x)=G_j(t,x)+R_j(t,x),\quad\text{in}\ (0,\infty)\times\mathbb{R}^d,\\
					&v_j|_{t=0}=(u_0\chi_j)\circ\phi_j^{-1},
				\end{aligned}
			\end{equation*}
			where $G_j=(G\chi_j)\circ\phi_j^{-1}$, and
			\begin{align*}
				R_j(t,x)=\mathcal{L}_{\mathbb{R}^2}^{s,j}(v_j)-(\chi_j\mathcal{L}_{\mathcal{M}}^su)\circ\phi_j^{-1}.
			\end{align*}
            		Denote $v_j^{n}:=\nabla^n_xv_j$, the equation of $v_j^{n}$ can be written as
			\begin{equation*}
				\begin{aligned}
					&\partial_tv_j^{n}(t,x)+\mathcal{L}_{\mathbb{R}^d}^{s,j}v_j^{n}(t,x)=\nabla^n(G_j(t,x)+R_j(t,x))+R_j^n,\quad\text{in}\ (0,\infty)\times\mathbb{R}^d\\
					&v_j^{n}|_{t=0}=\nabla^n((u_0\chi_j)\circ\phi_j^{-1}),
				\end{aligned}
			\end{equation*}
			where
			\begin{equation*}
				R_j^n=\mathcal{L}_{\mathbb{R}^d}^{s,j}v_j^{n}(t,x)-\nabla^n(\mathcal{L}_{\mathbb{R}^d}^{s,j}v_j).
			\end{equation*}
			For simplicity, denote 
			\begin{align*}
				&\|f\|_{X_T}:=  \sup_{t\in[0,T]}\left(\|f(t)\|_{L^\infty}+t^{\frac{m+\kappa }{s}}\|f(t)\|_{C^{m+\kappa }}\right),\\
				&\|f\|_{X_{T,n}}:=  \sup_{t\in[0,T]}\left(\|f(t)\|_{C^n}+t^{\frac{m+\kappa }{s}}\|f(t)\|_{C^{m+n+\kappa }}\right),
			\end{align*}
			where we drop the domain of the function, which can be $\mathcal{M}$ or $\mathbb{R}^d$, whenever it is clear from context.\\
            Let $T_0\in (0,\min\{T,1\}]$. 		It follows from Proposition \ref{propb=0}  that 
			\begin{equation*}
				\begin{aligned}
					&\|v_j^{n}\|_{X_{T_0}}\lesssim\|v_j^{n}(0)\|_{L^\infty}+\da^{n,m,\kappa}_{T_0,\mathcal{M}}(G_j)+\da^{n,m,\kappa}_{T_0,\mathcal{M}}(R_j)+\|R_j^n\|_{L_{T_0}^1L^\infty}+\sup_{t\in[0,T_0]}t^{\frac{m}{s}+1}\|R_j^n(t)\|_{\dot C^{m}}.
				\end{aligned}
			\end{equation*}
            Note that by definition \eqref{mfdhd}, 
			\begin{equation*}\label{h1}
				\sum_{j=1}^\ell\|v_j^n\|_{X_{T}}\sim\sum_{j=1}^\ell\|v_j\|_{X_{T,n}}\sim \|u\|_{X_{T,n}},\quad\sum_{j=1}^\ell\|v_j^n(0)\|_{L^\infty}\sim\sum_{j=1}^\ell\|v_j(0)\|_{C^n}\sim \|u_0\|_{C^n(\mathcal{M})}.
			\end{equation*}
            		By the facts that
                    	\begin{equation*}\label{h2}
		\sum_{j=1}^\ell\da^{n,m,\kappa}_{T,\mathcal{M}}(G_j)\sim \da^{n,m,\kappa}_{T,\mathcal{M}}(G),
			\end{equation*}
            and \begin{equation*}\label{mfdrt2}
				\|R_j^n\|_{L_{T_0}^1L^\infty}+\sup_{t\in[0,T_0]}t^{\frac{m}{s}+1}\|R_j^n(t)\|_{\dot C^{m}}+	\da^{n,m,\kappa}_{T_0,\mathcal{M}}(R_j)\lesssim T_0^{\frac{\zeta_0}{s}}\|v_j\|_{X_{T_0,n}},
			\end{equation*}
	 where we use \eqref{mtmfcd1} and the constants depends on the domain. We obtain
			\begin{equation*}
				\|u\|_{X_{T_0,n}}\lesssim \|u_0\|_{C^n(\mathcal{M})}+\da^{n,m,\kappa}_{T,\mathcal{M}}(G)+T_0^{\frac{\zeta_0}{s}}\|u\|_{X_{T_0,n}}.
			\end{equation*}
			By a similar argument in the proof of Proposition \ref{propb=0}, we deduce 
			\begin{equation*}
				\|u\|_{X_{T,n}}\lesssim e^{C'T\log(T+2)}\left(\|u_0\|_{C^n(\mathcal{M})}+\da^{n,m,\kappa}_{T,\mathcal{M}}(G)\right),\ \ \ \forall T>0.
			\end{equation*}
			This completes the proof of the theorem.
		\end{proof}
		

		\section{The 2D Muskat equation with surface tension }
		\label{intomus1}
		In this section, we apply Theorem \ref{lemmain} to study the 2D Muskat equation with surface tension, given by \eqref{eqmst}. To do this, we first reformulate the nonlinear term.
        
		For simplicity, fix the constants $\mathbf{k}=2, \mu=1$ and $\sigma'=1$. 
		We start by noting that the following holds:
		\begin{align*}
			&	\frac{1}{\pi}\mathrm{P.V.}\int_{\mathbb{R}}\frac{1+\partial_x f(x)\Delta_\alpha f(x)}{\langle\Delta_\alpha f(x)\rangle^2}\partial_x\left(\kappa(f)- \varrho_0 f\right)(x-\alpha)\frac{d\alpha}{\alpha} = - \frac{\Lambda^3f(x)}{\langle\partial_x f(x)\rangle^3}+\N[f](x),
		\end{align*}
		where $\N[f]=\N_1[f]+\N_2[f]+\varrho_0\N_3[f]$, with the following definitions: 
		\begin{equation}\label{defNst}
			\begin{aligned}
				&\N_{1}[f](x)=\frac{1}{\pi} \int_{\mathbb{R}}\left(\frac{ \Delta_{\alpha} f(x)(\partial_{x} f(x)-\Delta_{\alpha} f(x))}{\left\langle\Delta_{\alpha} f(x)\right\rangle^{2}}\right) \partial_{x}\left(\frac{\partial_{x}^{2} f(x-\alpha)}{\left\langle\partial_{x} f(x-\alpha)\right\rangle^{3}}\right) \frac{d \alpha}{\alpha},\\
				&\N_2[f](x)=-\frac{1}{\pi} \int_{\mathbb{R}}\partial_x^2f(x-\alpha)\left(\frac{1}{\left\langle\partial_{x} f(x-\alpha)\right\rangle^{3}}-\frac{1}{\left\langle\partial_{x} f(x)\right\rangle^{3}}\right)\frac{d\alpha}{\alpha^2},\\
				&\N_3[f](x)=-\frac{1}{\pi} \int_{\mathbb{R}}\left(\frac{ \Delta_{\alpha} f(x)(\partial_{x} f(x)-\Delta_{\alpha} f(x))}{\left\langle\Delta_{\alpha} f(x)\right\rangle^{2}}\right) \partial_{x}f(x-\alpha)\frac{d \alpha}{\alpha}-\Lambda f.
			\end{aligned}
		\end{equation}
		Thus, the 2D Muskat equation with surface tension \eqref{eqmst} can be rewritten as:
		\begin{equation}\label{eqmusre}
			\begin{aligned}
				&\partial_t f(x)+\frac{\Lambda^3f(x)}{\langle \partial_x f(x)\rangle^3}=\N[f](x),\\
				&f|_{t=0}=f_0.
			\end{aligned}
		\end{equation}
		The proof of Theorem \ref{thmMusbes} is organized in three stages:
		\begin{enumerate}
			\item {\bf Relaxed version:} As a preliminary step, we establish Proposition \ref{propLip}, in which \textit{the Besov norms in \eqref{bound1} and \eqref{bound2} are replaced by Lipschitz norms}.

			\item {\bf Existence theory:} 
			For initial data satisfying \eqref{bound1} or \eqref{bound2}, we establish a priori estimates and construct solutions via compactness arguments.
			
			\item {\bf Stability analysis:} 
			We derive continuous dependence on initial data for the obtained solutions, which yields the uniqueness of the solution.
		\end{enumerate}
		We begin with the following relaxed version of Theorem \ref{thmMusbes}.
		\begin{proposition}\label{propLip}
			There exist $\varepsilon_0,C>0$ such that the following statements hold.	~\\
			i) ($ \varrho_0=0$) For any initial data $f_0 \in \dot W^{1,\infty}$ satisfying the smallness condition 
			\begin{align}\label{bound1L}
				\|f_0\|_{\dot W^{1,\infty}}\leq \varepsilon_0,
			\end{align} the equation \eqref{eqmst} admits a unique global solution $f$ in the class 
			$$\{f\in L^\infty_{loc}((0,\infty),C^{m+\kappa}(\mathbb{R})): \|f\|_{\infty}\leq C\|f_0\|_{W^{1,\infty}}, \|f\|_{\infty,*}\leq C \varepsilon_0\},$$
            with $\|\cdot\|_{T}$, $\|\cdot\|_{T,*}$ defined in \eqref{normmst}.\\
			ii) ($ \varrho_0\in\mathbb{R}$) For any initial data $f_0 \in W^{1,\infty}$, if
			\begin{align}\label{stcon}
				(1+\|f_0\|_{ W^{1,\infty}})^{2(m+5)}\| f_0-\phi\|_{\dot W^{1,\infty}}\leq \varepsilon_0,
			\end{align}
            holds for some $\phi \in C^{m+5}(\mathbb{R})$, 
			then   there exists $T>0$ such that \eqref{eqmst} admits a unique solution  $f$ in the class 
			$$\{f\in L^\infty_{loc}((0,T),C^{m+\kappa}(\mathbb{R})): \|f\|_{X_T}\leq C\|f_0\|_{W^{1,\infty}}, \|f\|_{T,*}\leq C \varepsilon_0\}.$$
		\end{proposition}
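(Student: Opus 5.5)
We will follow the strategy of Proposition \ref{prop111}, applied to the reformulation \eqref{eqmusre} with $s=3$, $b=1$, $d=1$, $N=1$, and close everything by a contraction argument in the class described in Proposition \ref{propLip}.

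\textbf{Step 1: nonlinear estimates.} The main work is to verify the hypotheses \eqref{besovno} and \eqref{ABT} for \eqref{eqmusre}. For the coefficient, the symbol is $\A(t,x,\xi)=\langle\partial_x f(t,x)\rangle^{-3}|\xi|^3$. Hence $\|\A\|_{B_T}$ is controlled by the time-weighted norms $t^{\eta/3}\|\langle\partial_xf\rangle^{-3}\|_{\dot C^\eta}$ and $t^{\frac{m+\kappa}{3}}\|\langle\partial_xf\rangle^{-3}\|_{\dot C^{m+\kappa}}$. By the composition (Fa\`a di Bruno) rule these are bounded by $(1+\|\partial_xf\|_{L^\infty})^{c}\|f\|_{T,*}$; here we take $\eta=1/5$, which is compatible with \eqref{defparab} since $0<3-\kappa\ll1$.

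For the remainders $\N_1,\N_2,\N_3$ we must show
\[
\da_T(\N[f])\lesssim \|f\|_{T,*}^{2}(1+\|\partial_xf\|_{L^\infty_TL^\infty}+\|f\|_{T,*})^{c}+|\varrho_0|\,T^{\frac23}\|f\|_T.
\]
Each integrand in $\N_1$ and $\N_2$ is at least quadratic in the solution and carries one factor of the form $\partial_xf(x)-\Delta_\alpha f(x)$ or $\langle\partial_xf(x-\alpha)\rangle^{-3}-\langle\partial_xf(x)\rangle^{-3}$. Such a factor is bounded by $\min\{|\alpha|^{\eta}t^{-\eta/3},\ \ldots\}\|f\|_{T,*}$. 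This gain absorbs the extra derivative of the cubic term, leaving a total loss of order $3-1+$ that matches the weights $t^{\kappa/3}$ in $\dot C^{\kappa-2}$ (recall $\kappa-2\in(0,1)$).

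We split each $\alpha$-integral into $|\alpha|<t^{1/3}$ and $|\alpha|>t^{1/3}$:
\begin{itemize}
\item for $|\alpha|<t^{1/3}$, we Taylor expand and use the high norm $t^{\frac{m+\kappa}{3}}\|\partial_xf\|_{\dot C^{m+\kappa}}$ together with interpolation;
\item for $|\alpha|>t^{1/3}$, we use only H\"older bounds.
\end{itemize}
The higher norm $\dot C^{m+\kappa-2}$ is handled by distributing derivatives via the Leibniz rule and applying the same splitting. This yields exponent $1+\epsilon_0=2$.

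The gravity term $\N_3$ is of order one with no smallness. Since $\Lambda$ is of lower order than $\Lambda^3$, we estimate it with a factor $T^{2/3}$. In case i) ($\varrho_0=0$) it is absent, which is why that case is global.

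\textbf{Step 2: a priori bounds and contraction.} In case i) we take $\Phi\equiv0$, and Proposition \ref{prop111} yields global bounds $\|f\|_\infty\le C\|f_0\|_{W^{1,\infty}}$ and $\|f\|_{\infty,*}\le C\varepsilon_0$. The Lipschitz smallness \eqref{bound1L} is stronger than what \eqref{bdini} requires.

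In case ii) we freeze the coefficient at $\phi$ and use Theorem \ref{lemmain} on short time intervals. The constants $(c_0^{-1}c_1)^{m+s+2}$ from Remark \ref{rmkcons} produce the factor $(1+\|f_0\|_{W^{1,\infty}})^{2(m+5)}$ in \eqref{stcon}. The $L^\infty$ part of the $X_T$ norm is handled by Remark \ref{rmk1} i), giving a local time $T$.

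For existence we define the map $\mathcal S: g\mapsto f$, where $f$ solves
\[
\partial_t f+\langle\partial_x\phi\rangle^{-3}\Lambda^3f=\N[g]+\bigl(\langle\partial_x\phi\rangle^{-3}-\langle\partial_xg\rangle^{-3}\bigr)\Lambda^3g .
\]
Solvability of this linear problem comes from Proposition \ref{propb=0} and Remark \ref{extmthm}. We then show that $\mathcal S$ is a contraction in the weaker norm $\sup_t\|\partial_x(f_1-f_2)\|_{L^\infty}+\|f_1-f_2\|_{T,*}$. To do so, we repeat the Step 1 estimates for differences; the nonlinear terms are multilinear, so every difference bound carries a small factor $\varepsilon_0$.

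Uniqueness in the stated class follows from the same difference estimate.

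\textbf{Main obstacle.} The hardest part will be the commutator-type estimate for $\N_1$ and $\N_2$ at the top regularity level, i.e. bounding $t^{\frac{m+\kappa}{3}}\|\N[f]\|_{\dot C^{m+\kappa-2}}$. Derivatives falling on the kernel factor $\Delta_\alpha f$ must be exchanged for powers of $\alpha$ without losing time integrability as $\alpha\to0$. My first attempt is the $|\alpha|\lessgtr t^{1/3}$ split combined with the second-order difference identity $\partial_xf(x)-\Delta_\alpha f(x)=\frac1\alpha\int_0^\alpha(\partial_xf(x)-\partial_xf(x-\beta))\,d\beta$. If the exponents fail to close, the fallback is to use a slightly smaller $\eta$.
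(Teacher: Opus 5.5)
Your architecture matches the paper's: freeze the top-order coefficient (at $1$ in case i), at $\phi$ in case ii)), apply Theorem \ref{lemmain} or Proposition \ref{propb=0}, and close by contraction. Case i) goes through as you describe, because the natural ball $\{\|g\|_\infty\le\sigma\}$ with $\sigma\sim\varepsilon_0$ is small when $\|f_0\|_{\dot W^{1,\infty}}\le\varepsilon_0$; the detour through Proposition \ref{prop111} is redundant but harmless.

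\textbf{The gap in case ii).} You run the fixed point in the class of the statement and claim every difference bound carries a factor $\varepsilon_0$. That is true for $\N[g_1]-\N[g_2]$, where the small factor is $\|(g_1,g_2)\|_{T,*}$. It is false for the freezing error $\G[g,\phi]=\bigl(\langle\partial_x\phi\rangle^{-3}-\langle\partial_x g\rangle^{-3}\bigr)\Lambda^3 g$, whose difference is
\[
\G[g_1,\phi]-\G[g_2,\phi]=\G[g_1,g_2]+\bigl(\langle\partial_x\phi\rangle^{-3}-\langle\partial_x g_2\rangle^{-3}\bigr)\Lambda^3(g_1-g_2).
\]
The last term is top order in $g_1-g_2$ and gains nothing from small $T$. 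Lemma \ref{GGG} bounds it only by $\|g_2-\phi\|_T\,\|g_1-g_2\|_{T,*}$. In your class $\|g_2\|_{X_T}\le C\|f_0\|_{W^{1,\infty}}$ is merely bounded, so $\sup_t\|\partial_x(g_2-\phi)\|_{L^\infty}$ is not small and the map need not contract.

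The missing idea is to work in the ball $\mathcal{X}^{\sigma}_{T,\phi}=\{g:\ g|_{t=0}=f_0,\ \|g-\phi\|_{X_T}\le\sigma\}$ with $\sigma\sim\varepsilon_0$. Invariance of this ball follows from applying Theorem \ref{lemmain} to $\tilde f=f-\phi$, which gives $\|\tilde f\|_T\lesssim\|\partial_x(f_0-\phi)\|_{L^\infty}+\dots$. This is exactly where the Lipschitz (rather than Besov) closeness in \eqref{stcon} is used, and your proposal never uses it. The smallness of $\|g\|_{T,*}$ then comes from $\|g-\phi\|_{T,*}\le\sigma$ together with $\|\phi\|_{T,*}\lesssim T^{1/15}\|\phi\|_{C^{m+5}}$, not from $\varepsilon_0$ alone.

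\textbf{Two further points.} First, in case ii) your contraction norm must include $\|f_1-f_2\|_{L^\infty_TL^\infty}$. Lemma \ref{nonst} controls the gravity contribution only through $(T^{2/3}+T^{1/10})\|f_1-f_2\|_{X_T}$. The reason is that after integrating by parts in $\alpha$, the far field of the $l=0$ term needs $|\delta_\alpha f|\lesssim|\alpha|^{1/2}\|f\|_{\dot C^{1/2}}$, i.e.\ low-frequency control. Your homogeneous bound $|\varrho_0|T^{2/3}\|f\|_T$ is asserted without handling this. The paper closes the $L^\infty$ part by integrating the equation in time with the $t^{-2/3}$ bounds of \eqref{eee1} ($n=0$) and Remark \ref{musLinfty}. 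Your route via Remark \ref{rmk1} i), taking the whole right-hand side as $g$, can also work, but it needs those same $L^\infty$ bounds, not just $\da_T$.

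Second, if you rederive the nonlinear estimates instead of citing Lemmas \ref{nonst} and \ref{GGG}, your far-field step ``only H\"older bounds for $|\alpha|>t^{1/3}$'' fails for $\N_1$. Under Lipschitz control $\B[f](\alpha,x)=O(|\alpha|^{-1})$, while $(\partial_x\M[f])(x-\alpha)$ does not decay, so the integral diverges logarithmically. You must write $(\partial_x\M)(x-\alpha)=\partial_\alpha\delta_\alpha\M(x)$ and integrate by parts. The derivative then falls on the kernel, and $|\partial_\alpha\partial_x^k\B|\lesssim\alpha^{-2}\min\{(|\alpha|t^{-1/3})^{3/4},(|\alpha|t^{-1/3})^{1/4}\}$ (times norms), paired with $\delta_\alpha\M$, is integrable at both ends.
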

		\begin{remark}
			For conciseness, we term Theorem \ref{thmMusbes} the \textit{Besov-type result} and Proposition \ref{propLip} the \textit{Lipschitz-type result}. Notably, the Lipschitz-type result holds independent ofterest for studying well-posedness in certain quasilinear parabolic equations where the Besov-type approach is inapplicable due to structural constraints in nonlinear terms. 
		\end{remark}

		Now we start the proof of Proposition \ref{propLip} which relies on the contraction mapping theorem.\\
		\begin{proof}
			We first prove \textit{i)}. Suppose $\varrho_0=0$ and $\|f_0\|_{\dot W^{1,\infty}}\leq \varepsilon_0$. Consider the set 
			\begin{align*}
				\mathcal{X}^\sigma=\left\{g\in L^\infty_t\dot W^{1,\infty}:g|_{t=0}=f_0,\|g\|_{\infty}\leq \sigma\right\},
			\end{align*}
			where $\sigma>0$ is a constant that will be fixed later. For $g\in \mathcal{X}^\sigma$, define a map $\mathcal{S}g=f$, where $f$ is the solution to the equation 
			\begin{equation*}
				\begin{aligned}
					&\partial_t f+{\Lambda^3f(x)}=\N[g](x)+G[g_1,0],\\
					&f|_{t=0}=f_0,
				\end{aligned}
			\end{equation*}
			where
			\begin{align}\label{defffg}
				\G[g_1,g_2]=	\left(\frac{1}{\langle\partial_xg_2\rangle^3}-\frac{1}{\langle\partial_xg_1\rangle^3}\right)\Lambda^3 g_1.
			\end{align}
			In the following, we prove that $\mathcal{S}$ is a contraction map from $	\mathcal{X}^\sigma$ to itself. Therefore, there exists a unique fixed point in $	\mathcal{X}^\sigma$, which is a global solution to  \eqref{eqmst}.
            
			First, we prove $\mathcal{S}:\mathcal{X}^\sigma\rightarrow \mathcal{X}^\sigma$. Indeed, we apply Proposition \ref{propb=0} to obtain that 
			\begin{align*}
				\|f\|_\infty\lesssim &\|\partial_x f_0\|_{L^\infty}+\da_\infty(\N[g])+\da_\infty(\G[g,0]).
			\end{align*}
			where 
\begin{align}\label{da}\da_T(F):=\sup_{t\in(0,T)}\left(t^\frac{\kappa}{3}\|F(t)\|_{\dot C^{\kappa-2}}+t^\frac{m+\kappa }{3}\|F(t)\|_{\dot C^{m+\kappa -2}}\right).\end{align}
			It follows from Lemma \ref{nonst} that 
			\begin{align*}
				\da_\infty(\N[g])\lesssim \|g\|_\infty^2(1+\|g\|_\infty)^{2m+5}.
			\end{align*}
			Applying Lemma \ref{GGG} with $f_1=g$, $f_2\equiv 0$, and $\phi\equiv 0$ gives
			\begin{align*}
				\da_\infty(\G[g,0])\lesssim \|g\|_\infty^2(1+\|g\|_\infty)^{2m+5}.
			\end{align*}
			Hence,
			\begin{align*}
				\|f\|_\infty\leq C_0\|\partial_x f_0\|_{L^\infty}+C_0\|g\|_\infty^2(1+\|g\|_\infty)^{2m+5},
			\end{align*}
			where the constant $C_0$ depends only on $m$. 	Take $\varepsilon_0\leq\frac{1}{100(2C_0+1)^{2(m+5)}}$ and $\sigma=2C_0 \varepsilon_0$. Then for any $g\in \mathcal{X}^{\sigma}$, we obtain
			\begin{align*}
				\|f\|_{\infty}\leq C_0\varepsilon_0+4C_0^3\varepsilon_0^2(2C_0 \varepsilon_0+1)^{2m+5}\leq \sigma,
			\end{align*}
			provided \eqref{bound1L} holds. This implies $f=\mathcal{S}g\in \mathcal{X}^{\sigma}$.
            
			Next, we prove that $\|\mathcal{S}g_1-\mathcal{S}g_2\|_\infty\leq\frac{1}{2}\|g_1-g_2\|_\infty$ for $g_1,g_2\in \mathcal{X}^\sigma$ with $\sigma$ small enough. 
			Let $f_1=\mathcal{S}g_1, f_2=\mathcal{S}g_2$. Then 
			\begin{align*}
				&\partial_t (f_1-f_2)(x)+{\Lambda^3(f_1-f_2)(x)}=(\N[g_1]-\N[g_2])(x)+(\G[g_1,0]-\G[g_2,0])(x),\\
				&(f_1-f_2)|_{t=0}=0.
			\end{align*}
			Applying Theorem \ref{lemmain} yields
			\begin{align*}
				\|f_1-f_2\|_\infty\lesssim &\da_\infty(\N[g_1]-\N[g_2])+\da_\infty
				(\G[g_1,0]-\G[g_2,0]).
			\end{align*}
			Lemma \ref{nonst} and Lemma \ref{GGG} imply
			\begin{align*}
				&   \da_\infty(\N[g_1]-\N[g_2])+\da_\infty
				(\G[g_1,0]-\G[g_2,0])\\
				&\quad\quad\quad\lesssim \|g_1-g_2\|_\infty\|(g_1,g_2)\|_{\infty}(1+\|(g_1,g_2)\|_{\infty})^{2m+5}.
			\end{align*}
			Thus, we obtain 
			\begin{align*}
				\|f_1-f_2\|_\infty&\leq C_1\|g_1-g_2\|_\infty\|(g_1,g_2)\|_{\infty}(1+\|(g_1,g_2)\|_{\infty})^{2m+5}\\
                &\leq C_1\sigma(1+2\sigma)^{2m+5}\|g_1-g_2\|_\infty\leq \frac{1}{2}\|g_1-g_2\|_\infty,
			\end{align*}
			provided $C_1\sigma(1+2\sigma)^{2m+5}\leq \frac{1}{2}$.
			Thus, there exists a unique $f\in \mathcal{X}^\sigma$ such that $\mathcal{S}f=f$. This completes the proof of \textit{i)}.
			\vspace{0.3cm}\\
			Now we prove \textit{ii)}.            When $\varrho_0 \neq 0$, the equation \eqref{eqmst} is not homogeneous. We need to control the lower-order norm $\|h\|_{L^\infty_TL^\infty}$.  
			For this, we define
			\begin{align*}\label{musnol}
				&\|h\|_{X_T}:=\|h\|_{L^\infty_TL^\infty}+\|h\|_T.
			\end{align*}
			Consider the set
			$$
			\mathcal{X}_{T,\phi}^\sigma=\{g\in L^\infty_T\dot W^{1,\infty}:g|_{t=0}=f_0,\|g-\phi\|_{X_T}\leq \sigma\}.
			$$ 
			For any $g\in	\mathcal{X}_{T,\phi}^\sigma$, where $T,\sigma$ will be fixed later, define a map $\tilde {\mathcal{S}}g=f$, where $f$ is the solution to the equation 
			\begin{equation}\label{mapstt}
				\begin{aligned}
					&\partial_t f+\frac{\Lambda^3f}{\langle\partial_x\phi\rangle^3}=\N[g]+\G[g,\phi],\\
					&f|_{t=0}=f_0,
				\end{aligned}
			\end{equation}
			where $\G[g,\phi]$ is defined in \eqref{defffg}.
            
			In the following, we prove that $\tilde{\mathcal{S}}$ is a contraction map from $	\mathcal{X}_{T,\phi}^\sigma$ to itself. Hence, there exists a unique fixed point in $	\mathcal{X}_{T,\phi}^\sigma$, which is a solution to \eqref{eqmst}. Indeed, denoting $\tilde f(t,x)=f(t,x)-\phi(x)$, $\tilde g(t,x)=g(t,x)-\phi(x)$, we obtain the equation
			\begin{align*}
				&	\partial_t \tilde f+\frac{\Lambda^3\tilde f}{\langle\partial_x\phi\rangle^3}= \N[g]+
				\G[g,\phi]-\frac{\Lambda^3\phi}{\langle\partial_x\phi\rangle^3},\\
				&\tilde f|_{t=0}=f_0-\phi.
			\end{align*}
			Applying Theorem \ref{lemmain}, we obtain for any $T>0$, 
			\begin{equation}\label{stm}
				\begin{aligned}
					\|\tilde f\|_{T}
					\lesssim&(1+\|\partial_x\phi\|_{L^\infty})^{m+5}\left(\|\partial_x(f_0-\phi)\|_{L^\infty}+	\da_T(\N[g]+\G[g,\phi]-\frac{\Lambda^3\phi}{\langle\partial_x\phi\rangle^3})+\|\tilde f\|_T\|\phi\|_{T,*}\right),
				\end{aligned}
			\end{equation}
          where the additional factor $(1+\|\partial_x \phi\|_{L^\infty})^{m+5}$ comes from the underlying lower bound of the coefficient $\frac{1}{\langle\partial_x \phi\rangle^3}$ in \eqref{eqmusre}, see Remark \ref{rmkcons}.
			Lemma \ref{nonst} yields
			\begin{equation}\label{st1}
				\begin{aligned}
					&	\da_T(\N[g])\lesssim (\| g\|_{T,*}^2+T^\frac{2}{3}\|g\|_{X_T})(1+\| g\|_{X_T})^{2m+5}.
				\end{aligned}
			\end{equation}
			Note that $\|\partial_x\phi\|_{L^\infty}\lesssim \|\partial_x f_0\|_{L^\infty}$, and 
			\begin{align}\label{esTsm}
			\|\phi\|_{T,*}\lesssim T^\frac{1}{15}\|\phi\|_{C^{m+5}},\quad\quad\quad 	\| g\|_{T,*}\lesssim \|\tilde g\|_{X_T}+T^\frac{1}{15}\|\phi\|_{C^{m+5}}.
			\end{align}
			Applying Lemma \ref{GGG} with $f_1=g$, $f_2=\phi$ yields
			\begin{align}
				\da_T(\G[g,\phi])
				\lesssim \|\tilde g\|_{T}(\|g\|_{T,*}+\|\tilde g\|_T)&(1+\|(g,\phi)\|_{T})^{m+5}.\label{st2}
			\end{align}
            Finally, by the smoothness of $\phi$, it is easy to check that 
            \begin{align}\label{re}
            \da_T\left(\frac{\Lambda^3\phi}{\langle\partial_x\phi\rangle^3}\right)\lesssim T^\frac{1}{15}\|\phi\|_{C^{m+4}}(1+\|\phi\|_{T})^{m+4}.
            \end{align}
			We conclude from \eqref{stm}, \eqref{st1}, \eqref{st2} and \eqref{re} that 
			\begin{equation}\label{stfin1}
				\begin{aligned}
					\|\tilde f\|_{T}\lesssim  
					&(1+\|\partial_xf_0\|_{L^\infty})^{m+5}\\
                    &\times\left(\|\partial_x (f_0-\phi)\|_{L^\infty}+(\| g\|_{T,*}^2+T^\frac{1}{15}\|\phi\|_{C^{m+3}}+\|\tilde g\|_{X_T}^2)(1+\| (f,g,\phi)\|_{X_T})^{2(m+5)}\right).
				\end{aligned}
			\end{equation}
			On the other hand, integrating \eqref{mapstt} in time, thanks to \eqref{eee1} and Remark \ref{musLinfty}, we obtain
			\begin{align*}
				|f(t,x)-f_0(x)|&\leq \int_0^t (\|\Lambda^3f(\tau)\|_{L^\infty}+\|\N[g](\tau)\|_{L^\infty}+\|\G[g,\phi](\tau)\|_{L^\infty})d\tau\\
				&\leq \tilde C_0t^\frac{1}{3} (1+\|f\|_{X_T}+\|(g,\phi)\|_{X_T})^5,\ \ \ \forall t\in[0,T].
			\end{align*}
			From this we obtain
			\begin{align*}
				\|\tilde f\|_{L^\infty_TL^\infty}\leq \|f_0-\phi\|_{L^\infty}+T^\frac{1}{3}(1+\|f\|_{X_T}+\|(g,\phi)\|_{X_T})^5.
			\end{align*}
			Combining this with \eqref{stfin1} and \eqref{esTsm},
			we derive 
			\begin{align*}
				\|\tilde f\|_{X_T}\leq &\tilde C_0(1+\|\partial_xf_0\|_{L^\infty})^{m+5}\\
                &\times\left(\|\partial_x( f_0-\phi)\|_{L^\infty}+\left(\|\tilde g\|_{X_T}+T^\frac{1}{20}(1+\|(\tilde f,\tilde g,\phi)\|_{X_T})\right)^2\left(1+\|(\tilde f,\tilde g,\phi)\|_{X_T}\right)^{2(m+5)}\right).
			\end{align*}
			Here the constant $\tilde C_0$ depends only on $m$ and $\varrho_0$. 
			Let  
			\begin{align}\label{sma}
				\eps_0\leq(10
            +\tilde{C}_0)^{-10(m+d+5)} ,\quad \sigma'=2\tilde C_0\varepsilon_0, \quad\quad\text{and}\quad 0<T<\left(\frac{\varepsilon_0}{1+\tilde C_0+\|\phi\|_{C^{m+5}}}\right)^{(m+10)^2}.
			\end{align} If \eqref{stcon} holds, then for any $g\in  \mathcal{X}_{T,\phi}^{\sigma'}$, we have
			\begin{align*}
				\|\tilde f\|_{X_T}&\leq \tilde C_0\varepsilon_0+\tilde C_0(2\tilde C_0\varepsilon_0+T^\frac{1}{20})\|f_0\|_{W^{1,\infty}})^2(1+\|f_0\|_{W^{1,\infty}})^{3(m+5)}\\
				&\leq 2\tilde C_0\varepsilon_0,
			\end{align*}
			hence $f\in  \mathcal{X}_{T,\phi}^{\sigma'}$.\vspace{0.1cm}
			
			In the following, we do the contraction estimates. Consider $g_1,g_2\in \mathcal{X}^{\sigma'}_{T,\phi}$. Denote $\mathbf{g}=g_1-g_2$, $\mathbf{f}=f_1-f_2=\tilde {\mathcal{S}}g_1-\tilde {\mathcal{S}}g_2$. We have 
			\begin{equation}\label{mapst1}
				\begin{aligned}
					&\partial_t \mathbf{f}+\frac{\Lambda^3\mathbf{f}}{\langle\partial_x\phi\rangle^3}=\N[g_1]-\N[g_2]+\G[g_1,\phi]-\G[g_2,\phi],\\
					&\mathbf{f}|_{t=0}=0.
				\end{aligned}
			\end{equation}
			Applying Theorem \ref{lemmain} yields
			\begin{equation}\label{ft1}
				\begin{aligned}
					\|\mathbf{f}\|_T\lesssim (1+\|\partial_xf_0\|_{L
                    ^\infty})^{m+5}\left(\da_T(\N[g_1]-\N[g_2])+\da_T(\G[g_1,\phi]-\G[g_2,\phi])\right).
				\end{aligned}
			\end{equation}
			From Lemma \ref{nonst}, we obtain 
			\begin{align*}
				\da_T(\N[g_1]-\N[g_2])\lesssim (1+\|\partial_xf_0\|_{L
                    ^\infty})^{m+5}\|\mathbf{g}\|_{X_T}(\|(g_1,g_2)\|_{T,*}+T^\frac{1}{10})(1+\|(g_1,g_2)\|_{X_T})^{2(m+5)}.
			\end{align*}
			Moreover, by Lemma \ref{GGG},
			\begin{align*}
				&\da_T(\G[g_1,\phi]-\G[g_2,\phi])\lesssim \|\mathbf{g}\|_{T}(\|g_1\|_{T,*}+\|g_2-\phi\|_T)(1+\|(g_1,g_2,\phi)\|_{T})^{m+5}.
			\end{align*}
			Combining this with \eqref{ft1}, we obtain
			\begin{equation}   \label{haha1}
				\begin{aligned}
					\|\mathbf{f}\|_T\lesssim & \|\mathbf{g}\|_{X_T} (\|(g_1-\phi,g_2-\phi)\|_T+T^\frac{1}{15}\|\phi\|_{C^{m+5}})(1+\|(g_1,g_2,\phi)\|_{X_T})^{2(m+5)}.
				\end{aligned}
			\end{equation}
			Finally, to control $\|\mathbf{f}\|_{X_T}$, it remains to estimate $\|\mathbf{f}\|_{L^\infty_TL^\infty}$. Integrating \eqref{mapst1} in time, and applying \eqref{eee1} and Remark \ref{musLinfty}, we obtain 
			\begin{align*}
				\|\mathbf{f}\|_{L^\infty_TL^\infty} &\leq \int_0^T \left(\|\Lambda^3 \mathbf{f}(\tau)\|_{L^\infty}+\|(\N[g_1]-\N[g_2])(\tau)\|_{L^\infty}+\|(\G[g_1,\phi]-
				\G[g_2,\phi])(\tau)\|_{L^\infty}\right)d\tau\\
				&\lesssim T^\frac{1}{3}(\|\mathbf{f}\|_T+\|\mathbf{g}\|_T)(1+\|(g_1,g_2,\phi)\|_{X_T})^{2(m+5)}.
			\end{align*}
			Combining this with \eqref{haha1} yields
			\begin{align*}
				\|\mathbf{f}\|_{X_T}\lesssim & \|\mathbf{g}\|_{X_T} (\|(g_1-\phi,g_2-\phi)\|_T+T^\frac{1}{15}(1+\|\phi\|_{C^{m+5}}))(1+\|(g_1,g_2,\phi)\|_{X_T}+\|\partial_x f_0\|_{L^\infty})^{3(m+5)}\\
				&+T^\frac{1}{3}\|\mathbf{f}\|_{T}(1+\|\phi\|_{C^{m+2}}+\|(g_1,g_2)\|_{X_T}+\|\partial_x f_0\|_{L^\infty})^{3(m+5)}.
			\end{align*}
			If $g_1,g_2\in  \mathcal{X}_{T,\phi}^{\sigma'}$, one has $\|(g_1-\phi,g_2-\phi)\|_{X_T}\leq 2 \sigma'$ and  $\|(f_1-\phi,f_2-\phi)\|_{X_T}\leq 2 \sigma'$. Combining this with \eqref{stcon}  yields
			\begin{align*}
				\|\mathbf{f}\|_{X_T}\leq & \tilde C_1(\|\mathbf{f}\|_{X_T}+\|\mathbf{g}\|_{X_T}) (2\sigma'+{T}^\frac{1}{100}\|\phi\|_{C^{m+5}})(1+2\sigma'+{T}^\frac{1}{100}\|\phi\|_{C^{m+5}}+T^{\frac{1}{3}})^{3(m+5)}\\
				:=&\Theta(T,\sigma',\|\phi\|_{C^{m+5}})(\|\mathbf{f}\|_{X_T}+\|\mathbf{g}\|_{X_T}).
			\end{align*}
			Taking $\varepsilon_0,\sigma', T$ defined in \eqref{sma} such that $\Theta(T,\sigma',\|\phi\|_{C^{m+5}})\leq \frac{1}{100}$, then
			\begin{align*}
				\|\mathbf{f}\|_{T}\leq \frac{1}{2}\|\mathbf{g}\|_{X_T}.
			\end{align*}
			Hence $\tilde{\mathcal{S}}:\mathcal{X}_{T,\phi}^{\sigma'}\to \mathcal{X}_{T,\phi}^{\sigma'}$ is a contraction map. This completes the proof of the proposition.
		\end{proof}
            \begin{remark}
            The non-endpoint norm $\|f\|_{T,*}$ provides the desired smallness to proceed the fixed point argument and obtain the local solution under the condition \eqref{stcon}. More precisely, as we can see in the proof of Proposition \ref{propLip} \text{ii)},  the contraction mapping theorem is performed in the set with center $\phi$. In this case, $\|f\|_T$ is merely bounded, but the non-endpoint norm
		\begin{align*}
			\|f\|_{T,*}\leq \|f-\phi\|_{T,*}+\|\phi\|_{T,*}\lesssim \|f-\phi\|_{T,*}+T^\frac{1}{15}\|
			\phi\|_{C^{m+3}},
		\end{align*}
		is small because of the smallness of $\|f-\phi\|_{T,*}$ and $T$. 
        \end{remark}
		\begin{proof}[Proof of Theorem \ref{thmMusbes}]
			We first prove \textit{i)} under the condition that $\varrho_0=0$ and \eqref{bound1} holds.\\ 
            {\bf 1: Construction of approximating sequence.}\\
           For $\vartheta\in(0,1)$,  define $f_{0,\vartheta}=f_0\ast \rho_\vartheta$, then by Proposition \ref{propLip}, there exists $T_{\vartheta}=T(\|f_0\|_{\dot W^{1,\infty}},\vartheta)$, such that \eqref{eqmst} admits a unique solution denoted by $f_\vartheta$ in $[0,T_\vartheta]$ with initial data $f_{0,\vartheta}$. Note that if \eqref{bound1} holds for $f_0$, it also holds for $f_{0,\vartheta}$ for any $\vartheta\in(0,1)$.\\
			{\bf 2: Control of $\|f_{\vartheta}\|_{\dot W^{1,\infty}}$.} \\   
			Applying Theorem \ref{lemmain} to \eqref{eqmusre} with $(s,b,\eta)=(3,1,\frac{1}{5})$ and $\A(t,x,\xi)=\frac{|\xi|^3}{\langle \partial_x f_\vartheta(t,x)\rangle^3}$, we have 
			\begin{align*}
				\|f_{\vartheta}\|_{L_{T_{\vartheta}}^\infty\dot W^{1,\infty}}\lesssim &\|f_{0,\vartheta}\|_{\dot W^{1,\infty}}+\da_{T_{\vartheta}}(\N[f_{\vartheta}])+\|f_\vartheta\|_{T,*}\|\A\|_{B_T},
			\end{align*}
			where $\da_{T_{\vartheta}}$, $\|\cdot \|_{B_T}$ are defined in \eqref{da} and \eqref{defparab}, respectively. 
			It follows from Lemma \ref{nonst} that 
			\begin{align}\label{esNf}
				\da_{T_{\vartheta}}(\N[f_{\vartheta}])\lesssim \|f_{\vartheta}\|_{T_{\vartheta},*}^2(1+\|f_{\vartheta}\|_{T_{\vartheta}})^{2m+5}.
			\end{align}
			Moreover, by definition, it is easy to obtain 
            \begin{align*}
            \|\A\|_{B_T}\lesssim \|f_{\vartheta}\|_{T_{\vartheta},*}(1+\|f\|_{T_{\vartheta}})^{2m+5}.
            \end{align*}
			Hence,
			\begin{align*}
				\|f_{\vartheta}\|_{L_{T_{\vartheta}}^\infty\dot W^{1,\infty}}\leq C_0\|f_{0,\vartheta}\|_{\dot W^{1,\infty}}+C_0\|f_{\vartheta}\|_{T_{\vartheta},*}^2(1+\|f\|_{T_{\vartheta}})^{2m+5},
			\end{align*}
			where the constant $C_0$ depends only on $m$. \\
			{\bf 3: Control of $\|f_{\vartheta}\|_{T_{\vartheta},*}$.}\\
			Applying Theorem \ref{lemmain} to \eqref{eqmusre}, we obtain 
			\begin{align}\label{fstar}
				\|f_{\vartheta}\|_{T_{\vartheta},*}\lesssim &(1+\|\partial_x f_{\vartheta}\|_{L^\infty_{T_{\vartheta}}L^\infty})^{m+5}\left(\|f_{0,{\vartheta}}\|_{\dot B^1_{\infty,\infty}}+\da_{T_{\vartheta}}(\N[f_{\vartheta}])+\|f_{\vartheta}\|_{T_{\vartheta},*}^2\right).
			\end{align}
            Remark that the additional factor $(1+\|\partial_x f_{\vartheta}\|_{L^\infty_{T_{\vartheta}}L^\infty})^{m+5}$ comes from the underlying lower bound of the coefficient $\frac{1}{\langle\partial_x f\rangle^3}$ in \eqref{eqmusre}, see Remark \ref{rmkcons}.
			Combining \eqref{fstar} with \eqref{esNf}, we obtain 
			\begin{align*}
				&    \|f_{\vartheta}\|_{T_{\vartheta},*}\leq C_0(1+\|\partial_x f_{\vartheta}\|_{L^\infty_{T_{\vartheta}}L^\infty})^{m+5}\left(\|f_{0,\vartheta}\|_{\dot B^1_{\infty,\infty}}+\|f_{\vartheta}\|_{T_{\vartheta},*}^2(1+\|f_{\vartheta}\|_{T_{\vartheta}})^{2m+5}\right),
			\end{align*}
			where the constant $C_0$ depends only on $m,\kappa,\eta$. \\
			{\bf 4: Existence.}\\
			Take $\eps_0:=\frac{1}{(10+C_0)^{10(m+5)}}$ and $\eps\in(0,\eps_0).$ Suppose that the initial data satisfies the quantitative bound 
			\begin{align*}
				(1+\|f_{0,\vartheta}\|_{\dot W^{1,\infty}})^{4(m+5)}\|f_{0,\vartheta}\|_{\dot B^1_{\infty,\infty}}\leq \eps.
			\end{align*}
	There exist $T_\vartheta>0$ and a unique solution $f_\vartheta$ in $[0,T_\vartheta]$		which satisfies 
			\begin{align}\label{uniformestmus}
				\|f_{\vartheta}\|_{T_{\vartheta}}\leq 4C_0(\eps+\|f_{0,\vartheta}\|_{\dot W^{1,\infty}}),\ \ \ \ \|f_{\vartheta}\|_{T_{\vartheta},*}\leq 10 C_0(1+10C_0\|f_{0,\vartheta}\|_{\dot W^{1,\infty}})^{2m+5}\|f_{0,\vartheta}\|_{\dot B^1_{\infty,\infty}}.
			\end{align}
		Furthermore, for the time derivatives, by \eqref{eqmusre} and \eqref{uniformestmus}, we obtain 
            \begin{equation*}
                \sup_{t\in[0,T_\vartheta]}t^{\frac{2}{3}}\|\partial_tf_{\vartheta}(t)\|_{L^\infty}\leq C\|f_0\|_{\dot W^{1,\infty}}<\infty.
            \end{equation*}
           By a bootstrap argument and the existence argument in Proposition \ref{propb=0}, we can pass to the limit $\vartheta\rightarrow 0$, and the sequence will converge to $f$, which is a solution to \eqref{eqmusre} in $[0,\infty)$. By the a priori estimates above, $f$ satisfies
           \begin{equation}\label{bdmus}
            \begin{aligned}
            &\|f\|_{\infty}\leq C(\eps+\|f_0\|_{\dot W^{1,\infty}}),\\
            &\|f\|_{\infty,*}\leq 10 C_0(1+10C_0\|f_0\|_{\dot W^{1,\infty}})^{2m+5}\|f_0\|_{\dot B^1_{\infty,\infty}}\leq C\eps.
        \end{aligned}
           \end{equation}
			{\bf 5: Stability and uniqueness.}\\
			Suppose $f,g$ are solutions to \eqref{eqmusre} in $[0,T]$ with initial data $f_0,g_0\in\dot W^{1,\infty}$ which satisfy the quantitative bound \eqref{bound1}. 
			By making a difference of the equations, we obtain 
			\begin{equation}\label{eqdif}
				\begin{aligned}
					&\partial_t (f-g)+\frac{\Lambda^3(f-g)}{\langle\partial_x f\rangle^3}=\N[f](x)-\N[g](x)-\G[g,f],\\
					&(f-g)|_{t=0}=f_0-g_0.
				\end{aligned}
			\end{equation}
			Applying Theorem \ref{lemmain}, we have 
			\begin{align*}
				\|f-g\|_T\lesssim (1+\|f\|_{L^\infty_T\dot W^{1,\infty}})^{m+5}(\|f_0-g_0\|_{\dot W^{1,\infty}}+\da_T(\N[f]-\N[g]-\G[g,f])+\|f-g\|_{T,*}\|f\|_{T,*}).
			\end{align*}
			Lemma \ref{nonst} and Lemma \ref{GGG} yield
			\begin{align*}
				&\da_T(\N[f]-\N[g])+\da_T(\G[g,f])\lesssim \|f-g\|_T \|(f,g)\|_{T,*}(1+\|(f,g)\|_{T})^{2m+5}.
			\end{align*}
			Thus, we obtain 
			\begin{align*}
				\|f-g\|_T
				&\leq C(1+\|f\|_{L^\infty_T\dot W^{1,\infty}})^{m+5}(\|f_0-g_0\|_{\dot W^{1,\infty}}+\|f-g\|_T \|(f,g)\|_{T,*}(1+\|(f,g)\|_{T})^{2m+5}).
			\end{align*}
			The condition \eqref{bound1} with $\eps\leq (10+C)^{-100}$, together with  \eqref{bdmus} yields $$C(1+\|f\|_{L^\infty_T\dot W^{1,\infty}})^{m+5} \|(f,g)\|_{T,*}(1+\|(f,g)\|_{T})^{2m+5}\leq \frac{1}{10}.$$ This implies 
			\begin{align}\label{stabglomus}
				\|f-g\|_T
				\lesssim \|f_0-g_0\|_{\dot W^{1,\infty}}.
			\end{align}
			Moreover, let $f,g$ to be two solutions to \eqref{eqmst} with the same initial data $f_0$, then \eqref{stabglomus} infers $f=g$, which ensures the uniqueness of the solution.\vspace{0.2cm}
			
			Now we prove \textit{ii)}. When $\varrho_0 \neq 0$, the equation \eqref{eqmst} is not homogeneous. We need to control the lower-order norm $\|h\|_{L^\infty_TL^\infty}$. 
			So we work on $\|\cdot\|_{X_T}$ in this setting. \\
            {\bf 1: Construction of approximating sequence.}\\
            To show the existence of solution, we use the standard compactness argument. We will construct the approximating solutions as follows. Denote $f_{0,\vartheta}=f_0\ast \rho_\vartheta$, then there exist $T_{0,\vartheta}=T_0(\|f_0\|_{\dot W^{1,\infty}},\vartheta)>0$, and a unique solution $f_\vartheta$ to \eqref{eqmst} in $[0,T_{0,\vartheta}]$ with initial data $f_{0,\vartheta}$. Note that \eqref{bound2} is equivalent to 
            \begin{equation*}
                (1+\|f_0\|_{\dot W^{1,\infty}})^{4(m+5)}\liminf_{\ell\rightarrow 0}\| f_0-f_{0,\ell}\|_{\dot B^{1}_{\infty,\infty}}\leq C\varepsilon,
            \end{equation*}
            for some universal $C$, so we can take a subsequence $\{f_{0,\vartheta}\}$ such that $\phi=f_{0,\ell}$ for some fixed $\ell$, and for any $\vartheta<\ell$ small, it holds
            \begin{equation*}
                \begin{aligned}
                &\| f_{0,\vartheta}\|_{\dot W^{1,\infty}}\leq C\|f_0\|_{\dot W^{1,\infty}},\\
                    &(1+\|f_0\|_{\dot W^{1,\infty}})^{4(m+5)}\| f_{0,\vartheta}-f_{0,\ell}\|_{\dot B^{1}_{\infty,\infty}}\leq 2C\varepsilon.
                \end{aligned}
            \end{equation*}
            {\bf 2: Control of $\|f_\vartheta\|_{W^{1,\infty}}$.}\\
			Rewrite \eqref{eqmusre} as 
			\begin{align*}
				&	\partial_t  f_\vartheta+\frac{\Lambda^3 f_\vartheta}{\langle\partial_x\phi\rangle^3}= \N[f_\vartheta]+
				\G[f_\vartheta,\phi],\\
				&f_\vartheta|_{t=0}= f_{0,\vartheta},
			\end{align*}
			with $\phi=f_{0,\ell}$ fixed in step 1. Applying Theorem \ref{lemmain}, we obtain the following a priori estimates. 
			\begin{equation*}\label{stm1}
				\begin{aligned}
					\| f_\vartheta\|_{L_{T_{0,\vartheta}}^\infty\dot W^{1,\infty}}
					\lesssim&\|f_{0,\vartheta}\|_{\dot W^{1,\infty}}+	\da_{T_{0,\vartheta}}(\N[f_\vartheta]+\G[f_\vartheta,\phi]).
				\end{aligned}
			\end{equation*}
			Lemma \ref{nonst} and Lemma \ref{GGG} yield
			\begin{equation*}\label{st11}
				\begin{aligned}
					&	\da_{T_{0,\vartheta}}( \N[f_\vartheta])+\da_{T_{0,\vartheta}}(\G[f_\vartheta,\phi])\\
                    &\quad\quad\lesssim (\| f_\vartheta\|_{{T_{0,\vartheta}},*}^2+T_{0,\vartheta}^\frac{1}{10}\|f_\vartheta\|_{X_{T_{0,\vartheta}}}+\|f_\vartheta-\phi\|_{T_{0,\vartheta}}\|f_\vartheta\|_{{T_{0,\vartheta}},*})(1+\| f_\vartheta\|_{X_{T_{0,\vartheta}}}+\|\phi\|_{T_{0,\vartheta}})^{2m+5}.
				\end{aligned}
			\end{equation*}
			Then we obtain
			\begin{equation}\label{stfin}
				\begin{aligned}
					&\|f_\vartheta\|_{L_{T_{0,\vartheta}}^\infty\dot W^{1,\infty}}\lesssim  
					\|\partial_xf_{0,\vartheta}\|_{\dot W^{1,\infty}}\\
                    &\quad\quad\quad\quad+(\| f_\vartheta\|_{{T_{0,\vartheta}},*}^2+T_{0,\vartheta}^\frac{1}{10}\|f_\vartheta\|_{X_{T_{0,\vartheta}}}+\|f_\vartheta-\phi\|_{T_{0,\vartheta}}\|f_\vartheta\|_{{T_{0,\vartheta}},*})(1+\| f_\vartheta\|_{X_{T_{0,\vartheta}}}+\|\phi\|_{T_{0,\vartheta}})^{2m+5}.
				\end{aligned}
			\end{equation}
			On the other hand, integrating \eqref{mapstt} in time, together with \eqref{eee1} and Remark \ref{musLinfty}, we can get
			\begin{align*}
				|f_\vartheta(t,x)-f_{0,\vartheta}(x)|&\lesssim \int_0^t (\|\Lambda^3f_\vartheta(\tau)\|_{L^\infty}+\|\N[f_\vartheta](\tau)\|_{L^\infty}+\|\G[f_\vartheta,\phi](\tau)\|_{L^\infty})d\tau\\
				&\leq \tilde C_0t^\frac{1}{3} (1+\|f_\vartheta\|_{X_{T_{0,\vartheta}}}+\|(f_\vartheta,\phi)\|_{X_{T_{0,\vartheta}}})^5,\ \ \ \forall t\in[0,T_{0,\vartheta}].
			\end{align*}
			From this we obtain
			\begin{align*}
				\|f_\vartheta\|_{L^\infty_{T_{0,\vartheta}}L_x^\infty}\leq \|f_{0,\vartheta}\|_{L^\infty}+\tilde C_0T_{0,\vartheta}^\frac{1}{3}(1+\|(f_\vartheta,\phi)\|_{X_{T_{0,\vartheta}}})^5.
			\end{align*}
			Combining this with \eqref{stfin}, 
			we derive 
			\begin{equation}\label{tFXt}
            \begin{aligned}
				&\| f_\vartheta\|_{L^\infty_{T_{0,\vartheta}}W^{1,\infty}}\leq \tilde C_0\|f_{0,\vartheta}\|_{W^{1,\infty}}\\
                &\quad+\tilde C_0\left(\|f_\vartheta\|_{{T_{0,\vartheta}},*}(\|f_\vartheta\|_{X_{T_{0,\vartheta}}}+\|\phi\|_{X_{T_{0,\vartheta}}})+T_{0,\vartheta}^\frac{1}{10}\right)\left(1+\|f_\vartheta\|_{X_{T_{0,\vartheta}}}+\|\phi\|_{X_{T_{0,\vartheta}}}\right)^{2m+5}.
			\end{aligned}
            \end{equation}
			Here the constant $\tilde C_0$ depends only on $d$, $\kappa$, $m$ and $\varrho_0$. \\
			{\bf 3: Control of $\|f-\phi\|_{{T_{0,\vartheta}},*}$.}\\  
			Applying Theorem \ref{lemmain} to the equation 
			\begin{equation*}
				\begin{aligned}
					&\partial_t (f_\vartheta-\phi)(x)+\frac{\Lambda^3(f_\vartheta-\phi)(x)}{\langle \partial_x f_\vartheta(x)\rangle^3}=\N[f_\vartheta](x)-\frac{\Lambda^3\phi(x)}{\langle \partial_x f_\vartheta(x)\rangle^3},\\
					&(f_\vartheta-\phi)|_{t=0}=f_{0,\vartheta}-\phi,
				\end{aligned}
			\end{equation*}
			we obtain 
			\begin{align*}
				&\|f_\vartheta-\phi\|_{{T_{0,\vartheta}},*}\lesssim (1+\|\partial_x f_\vartheta\|_{L^\infty_{T_{0,\vartheta}}L^\infty})^{m+5}\\
                &\quad\quad\times\left(\|f_{0,\vartheta}-\phi\|_{\dot B^1_{\infty,\infty}}+\da_{T_{0,\vartheta}}(\N[f_\vartheta]-\frac{\Lambda^3\phi}{\langle\partial_x f_\vartheta\rangle^3})+\|f_\vartheta\|_{{T_{0,\vartheta}},*}\|f_\vartheta-\phi\|_{{T_{0,\vartheta}},*}\right),
			\end{align*}
			Combining this with Lemma \ref{nonst} yields
			\begin{align}
				\|f_\vartheta-\phi\|_{{T_{0,\vartheta}},*}\leq &\tilde C_0(1+\|f_\vartheta\|_{X_{T_{0,\vartheta}}})^{m+5}\|f_{0,\vartheta}-\phi\|_{\dot B^1_{\infty,\infty}}\nonumber\\
				&+\tilde C_0(1+\|f_\vartheta\|_{X_{T_{0,\vartheta}}})^{3(m+5)}\left(\|f_\vartheta-\phi\|_{{T_{0,\vartheta}},*}+T_{0,\vartheta}^\frac{1}{20}(1+\|(f_\vartheta,\phi)\|_{X_{T_{0,\vartheta}}})\right)^2.\label{st22}
			\end{align}
			{\bf 4: Existence.}\\
			Denote 
			\begin{align*}
			\varepsilon_1=10\tilde C_0(1+10\tilde C_0\|f_0\|_{W^{1,\infty}})^{m+5}\|f_0-\phi\|_{\dot B^1_{\infty,\infty}}, \quad\quad\quad	M_1=4\tilde C_0(\eps_1+\|f_0\|_{W^{1,\infty}}).\end{align*}
			Take $\eps_0=(10+\tilde C_0)^{-10(m+5)}$. For any $\eps\in(0,\eps_0)$, if the initial data satisfy the quantitative bound 
			\begin{align*}
				(1+\|f_0\|_{W^{1,\infty}})^{4(m+5)}\|f_0-\phi\|_{\dot B^1_{\infty,\infty}}\leq \eps, 
			\end{align*} then $(1+M_1)^{3(m+5)}\varepsilon_1\leq \frac{M_1}{10}$.
			Define
			\begin{align*}
				T_{1,\vartheta}:=\sup\{T: \|f_\vartheta\|_{X_{T}}\leq M_1,   \|f_\vartheta-\phi\|_{T,*}\leq \varepsilon_1\}.
			\end{align*}
			We claim that 
			\begin{align}\label{lowbdmus}
				T_{1,\vartheta}\geq T_2:=\left(\frac{\varepsilon_1}{1+M_1+\|\phi\|_{C^{m+5}}}\right)^{1000(m+5)},\quad\forall\vartheta\in(0,1).
			\end{align}
			Note that since $\phi=f_{0,\ell}$ is independent of $\vartheta$, \eqref{lowbdmus} gives a uniform bound of $T_{1,\vartheta}$ for all $\vartheta$. We prove \eqref{lowbdmus} by contradiction. If the claim is not true, then $T_{1,\vartheta}<T_2$ for some $\vartheta$. We have $$
			\|\phi\|_{X_{T_1}}\leq M_1,\ \ \|\phi\|_{T_1,*}\leq \varepsilon_1.
			$$
			It follows from \eqref{tFXt} and \eqref{st22} that 
			\begin{equation*}\label{priestmus}
				\begin{aligned}
					& \|f_\vartheta\|_{L^\infty_{T_{0,\vartheta}}W^{1,\infty}}\leq \frac{M_1}{4}+\tilde C_0\left(\varepsilon_1+T_{1,\vartheta}^\frac{1}{10}\right)\left(1+  2M_1\right)^{2(m+5)}\leq \frac{M_1}{2},\\
					&     \|f_\vartheta-\phi\|_{T_{1,\vartheta},*}\leq \frac{\varepsilon_1}{10}
					+\tilde C_0\left(\varepsilon_1+T_{1,\vartheta}^\frac{1}{20}(1+2M_1)\right)^2\left(1+  2M_1\right)^{3(m+5)}\leq \frac{\varepsilon_1}{2}.
				\end{aligned}
			\end{equation*}
			This contradicts the definition of $T_{1,\vartheta}$. Thus, we have proved the following uniform local-in-time a priori estimates
			$$\|f_\vartheta\|_{X_{T_2}}\leq M_1, \quad\quad  \|f_\vartheta-\phi\|_{T_2,*}\leq \varepsilon_1.$$
			Recall that $f_{0,\vartheta}=f_0\ast \rho_{\vartheta}$, $f_\vartheta$ is the unique solution to \eqref{eqmst} in $[0,T_{0,\vartheta}]$ with initial data $f_{0,\vartheta}$, and 
			\begin{equation*}
				\sup_{t\in[0,T_{0,\vartheta}]}\left(\|f_{\vartheta}(t)\|_{W^{1,\infty}}+t^{\frac{m+\kappa}{s}}\|\partial_xf_\vartheta(t)\|_{\dot C^{m+\kappa}} \right)\leq C\|f_0\|_{ W^{1,\infty}}.
			\end{equation*}
  Using the equation \eqref{eqmusre} and by interpolation inequality, we obtain 
\begin{align*}
\sup_{t\in[0,T_{0,\vartheta}]}t^{\frac{2}{3}}\|\partial_t f_\vartheta
(t)\|_{L^\infty}\lesssim C\|f_0\|_{ W^{1,\infty}}.
\end{align*}
           Applying the existence argument in Proposition \ref{propb=0}, we can see that $\{f_\vartheta\}$  has a subsequence that converges to a function $f$ on $[0,T_2]$, which is a solution to \eqref{eqmst} with initial data $f_0$, and satisfying the a priori estimate
			\begin{equation*}
				\|f\|_{T_2}\leq 4\tilde{C}_0(\eps+\|f_0\|_{W^{1,\infty}}),\quad \|f-\phi\|_{T_2,*}\leq  10\tilde C_0(1+10\tilde C_0\|f_0\|_{W^{1,\infty}})^{m+5}\|f_0-\phi\|_{\dot B^1_{\infty,\infty}}.
			\end{equation*}
			{\bf 5: Stability and uniqueness.} \\
			Suppose $f,g$ are solutions to \eqref{eqmst} in $[0,T]$ with initial data $f_0,g_0\in\dot W^{1,\infty}$ that satisfy the quantitative bound \eqref{bound2}. Applying Theorem \ref{lemmain} to the equation \eqref{eqdif}, we obtain 
			\begin{align*}
				\|f-g\|_{T}\lesssim C(1+\|f\|_{L^\infty_T\dot W^{1,\infty}})^{m+5}\left(\|f_0-g_0\|_{\dot W^{1,\infty}}+\da_{T}(\N[f]-\N[g]-\G[g,f])+\|f-g\|_{T,*}\|f\|_{T,*}\right).
			\end{align*}
			Applying Lemma \ref{nonst} and \ref{GGG} again, we obtain
			\begin{align*}
				\da_{T}(\N[f]-\N[g])&+\da_{T}(\G[g,f])\\
				&\lesssim \|f-g\|_{X_{T}}(\|(f,g)\|_{T,*}+T^\frac{1}{10})(1+\|(f,g)\|_{T})^{2m+5}.
			\end{align*}
			This yields 
            \begin{equation}\label{sta1}
            		\begin{aligned}
				\|f-g\|_{T}\lesssim &C(1+\|f\|_{L^\infty_T\dot W^{1,\infty}})^{m+5}\\
                &\quad\quad\times\left(\|f_0-g_0\|_{\dot W^{1,\infty}}+\|f-g\|_{X_{T}}(\|(f,g)\|_{T,*}+T^\frac{1}{10})(1+\|(f,g)\|_{T})^{2m+5}\right).
			\end{aligned}
            \end{equation}
			To estimate the $L^\infty$ norm, we integrate \eqref{eqdif} in time, which yields
			\begin{align*}
				&\sup_{t\in[0,T]}\sup_x|(f-g)(t,x)-(f_0-g_0)(x)|\\
				&\quad\quad\lesssim \int_0^{T}(\|\Lambda^3(f-g)(\tau)\|_{L^\infty}+\|(\N[f]-\N[g])(\tau)\|_{L^\infty}+\|\G[f,\phi](\tau)\|_{L^\infty} )d\tau\\
				&\quad\quad\lesssim T^\frac{1}{3}\|f-g\|_{X_{T}}(1+\|(f,g)\|_{X_{T}})^{2m+5}.
			\end{align*}
			This implies 
			\begin{align*}
				\|f-g\|_{L^\infty_{T}L^\infty}\lesssim \|f_0-g_0\|_{L^\infty}+T^\frac{1}{3}\|f-g\|_{X_{T}}(1+\|(f,g)\|_{X_{T}})^{2m+5}.
			\end{align*}
			Combining this with \eqref{sta1}, we obtain 
			\begin{align*}
				\|f-g\|_{X_{T}}\leq C(1+\|f\|_{L^\infty_T\dot W^{1,\infty}})^{m+5}\left(\|f_0-g_0\|_{W^{1,\infty}}+\|f-g\|_{X_{T}}(\|(f,g)\|_{T,*}+T^\frac{1}{10})(1+\|(f,g)\|_{T})^{2m+5}\right).
			\end{align*}
			By taking $T,\varepsilon_0$ small enough, from \eqref{bound2} we have 
			\begin{align*}
				C(\|(f,g)\|_{T,*}+T^\frac{1}{10})(1+\|(f,g)\|_{T})^{3m+5}\leq \frac{1}{10}.
			\end{align*}
			Thus, we obtain 
			\begin{equation}\label{stabmus}
				\|f-g\|_{X_{T}}\leq 2C\|f_0-g_0\|_{W^{1,\infty}}.
			\end{equation}
			Moreover, if $f,g$ are two solutions to \eqref{eqmst} with the same initial data $f_0$, then \eqref{stabmus} gives $f=g$, which infers the uniqueness of the solution.
		\end{proof}

		\section{The Peskin problem in 2D}\label{secpeskin}
		In this section, we study the 2D Peskin problem \eqref{eqpes} and prove Theorem \ref{thmPesB}. We begin by reformulating \eqref{eqpes} in Section \ref{secre}, distinguishing between the dominant quasilinear part and the lower-order nonlinear terms. Next, in Section \ref{secpp}, we apply Theorem \ref{lemmain} and a contraction mapping argument to the reformulated equation to establish local well-posedness. Finally, in Section \ref{secglo}, we provide a brief discussion on the global well-posedness and asymptotic behavior of solutions for initial data close to equilibrium circle configurations.
		\subsection{Reformulation}\label{secre}
		
		For simplicity, denote $\mathbf{T}(\lambda)=\frac{\mathcal{T}(\lambda)}{\lambda}$.		The Peskin problem \eqref{peskin} can be written as 
		\begin{equation}\label{eqpesk}
			\begin{aligned}
				\partial_t X(x)&=\int _{\mathbb{S}} G(\delta_\alpha X(x))\partial_x\left({\mathbf{T}(|\partial_x X|)}\partial_x X\right)(x-\alpha)d\alpha\\
				&=-\int _{\mathbb{S}} \partial_\alpha G(\delta_\alpha X(x))\delta_\alpha\left({\mathbf{T}(|\partial_x X|)}\partial_xX\right)d\alpha:=\N(X(x)),
			\end{aligned}
		\end{equation}
		where the second line follows from integration by parts. We denote $\tilde\alpha=(\frac{1}{2}\cot(\frac{\alpha}{2}))^{-1}$. Note that
        \begin{equation*}
            \frac{1}{2}\cot(\frac{\alpha}{2})=\frac{1}{\alpha}+\sum_{n=1}^{\infty}\left(\frac{1}{\alpha+2n\pi}+\frac{1}{\alpha-2n\pi} \right).
        \end{equation*}
        Hence, for any $h(\alpha):\mathbb{S}\to\mathbb{R}$, by using the periodicity  $h(\alpha)=h(\alpha+2n\pi)$ for any $n\in\mathbb{Z}$, one has
		\begin{align}\label{1perio}
        \int_{\mathbb{R}}h(\alpha)\frac{d\alpha}{\alpha}=\int_{-\pi}^\pi h(\alpha)\frac{d\alpha}{\tilde\alpha}.
		\end{align}
		Following \cite{KN}, we can reformulate the equation as 
		\begin{align}\label{eqpes}
			\partial_t X(x)+\frac{1}{4}\mathcal{H}({\mathbf{T}(|\partial_x X|)}\partial_x X)(x)=\mathcal{N}(X(x)),
		\end{align}
		where $\mathcal{H}$ is the Hilbert transform on the torus, which is defined by
		$$
		\mathcal{H}f(x)=\frac{1}{2\pi}\int_{\mathbb{S}}f(x-\alpha)\cot\left(\frac{\alpha}{2}\right)d\alpha,
		$$
		and the nonlinear term reads
		\begin{equation}\label{defnonpes}
			\begin{aligned}
				\mathcal{N}(X)
				&=\frac{1}{4\pi}\int_{\mathbb{R}}  \frac{\tilde\Delta_\alpha X\cdot \tilde{\E}^\alpha X}{|\tilde\Delta_\alpha X|^2}\delta_\alpha ({\mathbf{T}(|\partial_x X|)}\partial_x X)\frac{d\alpha}{\alpha}\\
				&\quad\quad\quad\quad-\frac{1}{4\pi}\int_{\mathbb{R}}\frac{ \tilde{\E}^\alpha X\otimes \tilde\Delta_\alpha X+\tilde\Delta_\alpha X\otimes \tilde{\E}^\alpha X}{|\tilde\Delta_\alpha X|^2} \delta_\alpha ({\mathbf{T}(|\partial_x X|)}\partial_x X)\frac{d\alpha}{\alpha}\\
				&\quad\quad\quad\quad+\frac{1}{2\pi}\int_{\mathbb{R}} \frac{\tilde\Delta_\alpha X\otimes \tilde\Delta_\alpha X}{|\tilde\Delta_\alpha X|^4}\left(\tilde\Delta_\alpha X\cdot \tilde{\E}^\alpha X\right) \delta_\alpha ({\mathbf{T}(|\partial_x X|)}\partial_x X)\frac{d\alpha}{\alpha}.
			\end{aligned}
		\end{equation}
		Here we denote 
		\begin{align}\label{notapes}
			\tilde\Delta_\alpha X(x)=\frac{\delta_\alpha X(x)}{\tilde \alpha},\quad \tilde{\E}^\alpha X(x)=X'(x-\alpha)-\tilde\Delta_\alpha X(x),\quad X'=\partial_xX, \quad \tilde \alpha =\left(\frac{1}{2}\cot \left(\frac{\alpha}{2}\right)\right)^{-1}. 
		\end{align}
        Taking $\Lambda^\frac{1}{2}\mathcal{H}$ to \eqref{eqpes}, we obtain 
       \begin{align}\label{eqonede}
        \partial_t \Lambda^\frac{1}{2}\mathcal{H}X(x)+\Lambda^\frac{1}{2}({\mathbf{T}(|\partial_x X|)}\partial_x X)=\Lambda^\frac{1}{2}\mathcal{H}\mathcal{N}(X).
        \end{align}
		It is easy to check that 
		\begin{equation}\label{pesopdecop}
			\begin{aligned}
				\Lambda^\frac{1}{2}({\mathbf{T}(|\partial_x X|)}\partial_x X)&=\Lambda^\frac{1}{2}(\mathbf{T}(|\partial_x X|))\partial_x X+\mathbf{T}(|\partial_x X|)\Lambda^\frac{1}{2}\partial_x X-\frac{1}{\pi}\int_{\mathbb{R}} \delta_\alpha (\mathbf{T}(|\partial_x X|))\delta_\alpha \partial_x X\frac{d\alpha}{|\alpha|^{\frac{3}{2}}}\\
				&=\A(\partial_x X)\Lambda^\frac{1}{2} \partial_x X+ \mathsf{M}(\partial_xX),
			\end{aligned}
		\end{equation}
		where we denote 
		\begin{align}
			&\A(\partial_x X)=\frac{1}{4}\mathbf{T}(|\partial_x X|)\mathrm{Id}+\frac{\mathbf{T}'(|\partial_x X|)}{4}\frac{\partial_x X\otimes \partial_x X}{|\partial_x X|},\nonumber\\
			&\mathsf{M}(\partial_xX)=-\frac{1}{\pi}\int_{\mathbb{R}} \delta_\alpha (\mathbf{T}(|\partial_x X|))\delta_\alpha \partial_x X\frac{d\alpha}{|\alpha|^{\frac{3}{2}}}+\frac{1}{\pi} \partial_x X\int_{\mathbb{R}} \delta_\alpha (\mathbf{T}(|\partial_x X|))-\delta_\alpha \partial_xX\cdot \nabla \big(\mathbf{T}(|\partial_x X|)\big)\frac{d\alpha}{|\alpha|^{\frac{3}{2}}}\label{defpeR}.
		\end{align}
        	Hence, we rewrite \eqref{eqonede} as
		\begin{align}\label{pesre}
		 \partial_t \Lambda^\frac{1}{2}\mathcal{H}X+\A(\partial_x X)\Lambda^\frac{1}{2} \partial_x X=\Lambda^\frac{1}{2}\mathcal{H}\mathcal{N}(X)-\mathsf{M}(\partial_xX).
		\end{align}
		Note that 
		\begin{align*}
			\A(b)=\frac{1}{4}\frac{\mathcal{T}(|b|)}{|b|}\left(\mathrm{Id}-\frac{b\otimes b}{|b|^2}\right)+\frac{1}{4}\mathcal{T}'(|b|)\frac{b\otimes b}{|b|^2}.
		\end{align*}
	From \eqref{conten}, we obtain
        \begin{align}\label{Auplo}
    \A(b)\geq \frac{1}{4} \mathfrak{c}(|b|)\mathrm{Id},\ \ \ \ |\nabla_b^n\A(b)|\lesssim_n \mathfrak{C}(|b|)(1+|b|^{-n-1}).
        \end{align}
	
		\subsection{
			Proof of Theorem \ref{thmPesB}}\label{secpp}
		The proof of Theorem \ref{thmPesB} is organized in the following stages.
		\begin{enumerate}
			\item {\bf Relaxed version:} 
			We prove Proposition \ref{thmPesLip} as a preliminary case, replacing the Besov norms in \eqref{pesconbe} by Lipschitz norms.
			
			\item {\bf Existence theory:} 
			For initial data satisfying \eqref{pesconbe}, we establish a priori estimates and construct solutions via compactness arguments.
			
			\item {\bf Stability analysis:} 
			We derive continuous dependence on initial data for the obtained solutions.
		\end{enumerate}
		First, we will prove a relaxed version of Theorem \ref{thmPesB}.
		\begin{proposition}\label{thmPesLip}
			Fix  $m\in\mathbb{N}^+$ and $\kappa\in (0,1)$.
			For any  initial data $X_0\in \dot W^{1,\infty}(\mathbb{S})$ with $\mathbf{\Theta}_0<\infty$, there exists $\varepsilon_0=\varepsilon_0(\mathbf{\Theta}_0,\|\partial_xX_0\|_{L
				^\infty})>0$  such that if  
			\begin{align}\label{pescon}
				\| X_0-\Phi\|_{\dot W^{1,\infty}}\leq \varepsilon_0, 
			\end{align} 
			holds for some smooth function  $\Phi\in C^\infty(\mathbb{S})$, then there exists $T=T(\varepsilon_0,\|\Phi\|_{C^{m+3}})>0$ such that the 2D Peskin problem \eqref{peskin} admits a unique solution  $X$ in 
            \begin{equation*}\label{defnorpes}
            		\begin{aligned}
				\mathcal{X}_T:&=\big\{ Y(t,x): (0,T)\times \mathbb{S}\to \mathbb{R}^2:\mathbf{\Theta}_Y(T)\leq 2\mathbf{\Theta}_0,,\\&\quad\quad\quad\|Y\|_{T}:=\sup_{t\in(0,T)}	(\|\partial_xY(t)\|_{L^\infty}+t^{m+\kappa}\|\partial_x Y(t)\|_{\dot C^{m+\kappa}})\leq 2 \|\partial_x X_0\|_{L^\infty}\big\}.
			\end{aligned}
            \end{equation*}
		\end{proposition}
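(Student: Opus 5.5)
I would prove Proposition \ref{thmPesLip} by the contraction argument already used for Proposition \ref{propLip} ii), run on the differentiated form \eqref{pesre}. Set $Y=\partial_x X$. Since $\Lambda^{\frac12}\mathcal{H}X=-\Lambda^{-\frac12}\partial_x X$ up to a sign convention, \eqref{pesre} becomes a first-order parabolic system for $Y$ (so $s=1$, $b=0$, $N=2$). Freezing the coefficient at the smooth profile gives
\begin{align*}
\partial_t Y+\A(\partial_x\Phi)\Lambda Y=\Lambda^{\frac12}\partial_x\mathcal{N}(X)-\Lambda^{\frac12}\mathsf{M}(\partial_xX)+\Lambda^{\frac12}\big[(\A(\partial_x\Phi)-\A(\partial_xX))\Lambda^{\frac12}\partial_xX\big]+\text{commutators}.
\end{align*}
By \eqref{Auplo}, the symbol $\A(\partial_x\Phi(x))|\xi|$ is coercive with constants $\mathfrak c,\mathfrak C$ depending on $\|\partial_xX_0\|_{L^\infty}$. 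Because the coefficient is smooth in $x$, the endpoint estimate Proposition \ref{propb=0}, with $\mathcal{P}_\gamma=\Lambda^{1/2}$ or $\Lambda$, applies on $\mathbb{S}$. This is the periodic analogue via Theorem \ref{thmmani} or periodization of the kernel. The exponential factor in \eqref{main2} is harmless for $T\le1$.

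\textbf{The map.} Define $\mathcal{S}V=X$, where $\partial_xX$ solves the linear problem above with all right-hand terms evaluated at $V\in\mathcal{X}_T$, and $X(t)$ is recovered from $\partial_x X$ plus its mean, which is obtained by integrating \eqref{eqpesk} in time. For the self-map bound I would split
\[
\partial_xV=\partial_x\Phi+(\partial_xV-\partial_x\Phi).
\]
Every forcing term is then either at least quadratic with one small factor $\|V-\Phi\|_T\lesssim\varepsilon_0$, or linear with a gain $T^{\theta}\|\Phi\|_{C^{m+3}}$ coming from smoothness of $\Phi$. The frozen-coefficient error $(\A(\partial_x\Phi)-\A(\partial_xV))\Lambda^{\frac12}\partial_xV$ is of the first kind. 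The terms $\mathsf{M}$ and $\mathcal N$ are genuinely lower order. For $\mathcal N$, the factor $\tilde{\E}^\alpha V=V'(x-\alpha)-\tilde\Delta_\alpha V$ vanishes to order $|\alpha|^{\kappa}$ against the weighted Hölder norm, while the arc-chord bound $\mathbf\Theta_V\le2\mathbf\Theta_0$ controls $|\tilde\Delta_\alpha V|^{-1}$. This yields the required $\da$-type bounds, with constants depending on $\mathbf\Theta_0$ and $\|\partial_xX_0\|_{L^\infty}$, in the spirit of Lemmas \ref{nonst}, \ref{GGG}. The contraction estimate follows the same splitting applied to differences.

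\textbf{Arc-chord propagation.} Here I use the Lipschitz smallness. For $t\le T$,
\[
\|\partial_xX(t)-\partial_xX_0\|_{L^\infty}\le\|\partial_x(X-\Phi)(t)\|_{L^\infty}+\|\partial_x(\Phi-X_0)\|_{L^\infty}\lesssim\varepsilon_0 .
\]
From $\delta_\alpha X(t)=\delta_\alpha X_0+\int_0^1\alpha\,\partial_x(X-X_0)(x-\theta\alpha)\,d\theta$ we get $|\delta_\alpha X(t)|\ge|\alpha|(\mathbf\Theta_0^{-1}-C\varepsilon_0)$. Choosing $\varepsilon_0\le(2C\mathbf\Theta_0)^{-1}$ gives $\mathbf\Theta_X(T)\le2\mathbf\Theta_0$. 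This explains the dependence $\varepsilon_0=\varepsilon_0(\mathbf\Theta_0,\|\partial_xX_0\|_{L^\infty})$.

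\textbf{Main obstacle.} I expect the hardest step to be the high-order weighted bound $t^{m+\kappa}\|\cdot\|_{\dot C^{m+\kappa}}$ for $\Lambda^{\frac12}\partial_x\mathcal N(V)$ and its Lipschitz-in-$V$ difference. This requires differentiating the singular kernels in \eqref{defnonpes} $m$ times and distributing derivatives so that no factor carries more than $m+1+\kappa$ derivatives of $V$ near $\alpha\to0$. My first attempt would split $|\alpha|<t$ versus $|\alpha|>t$: use the $\dot C^{m+\kappa}$ weight in the small-$\alpha$ region, and only $\|\partial_xV\|_{L^\infty}$ together with $\mathbf\Theta_V$ for large $\alpha$, as in \cite{KN}. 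With these bounds in hand, choosing $\varepsilon_0$ and then $T$ small closes the contraction in $\mathcal X_T$ and gives uniqueness there.
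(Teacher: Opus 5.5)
Your overall scheme matches the paper's: freeze the coefficient at $\Phi$, contract in the ball $\|V-\Phi\|_T\le\sigma$, obtain the arc-chord bound from Lipschitz smallness exactly as in \eqref{wes}, and choose $\varepsilon_0$ before $T$. What is missing is the smallness accounting that closes the self-map, and your plan for the main obstacle would not supply it.

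Since $\sigma\sim C_0\mathfrak{M}_0\varepsilon_0$, every forcing term must be bounded by a product of \emph{two} small quantities. A bound $\sigma\cdot K$ with $K$ depending only on $\mathbf\Theta_0$ and $\|\partial_xX_0\|_{L^\infty}$ (``quadratic with one small factor'') does not close for large data. Likewise, in your splitting $\partial_xV=\partial_x\Phi+(\partial_xV-\partial_x\Phi)$, the cross terms are small only if $\Phi$ enters through a positive-index H\"older seminorm, never through $\|\partial_x\Phi\|_{L^\infty}$.

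The paper gets the second small factor from the non-endpoint norm \eqref{normstar}. The quantity $\|V\|_{T,*}\lesssim\|V-\Phi\|_T+T^{1/10}\|\Phi\|_{C^{m+2}}$ is small even though $\|V\|_T$ is not. Lemmas \ref{lemnonpes} and \ref{lempesR} bound $\mathcal N(V)$ and $\mathsf M(\partial_xV)$ by $\mathfrak M_T\|V\|_{T,*}^2$ times a power of $1+\|V\|_T$. In your frozen-coefficient error, too, the second factor is $t^{\kappa}\|\partial_xV\|_{\dot C^{\kappa}}\lesssim\|V\|_{T,*}$, not merely a bounded quantity.

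Your proposed split at $|\alpha|\sim t$ does the opposite. On $t<|\alpha|<\pi$, suppose $\partial_x^m\delta_\beta$ falls on $f_3=\mathbf T(|\partial_xV|)\partial_xV$ while $H(\tilde\Delta_\alpha V)$ and $\tilde{\E}^\alpha V$ are bounded by $\mathbf\Theta_V$ and $\|\partial_xV\|_{L^\infty}$. Then you get $|\beta|^\kappa t^{-m-\kappa}\log(\pi/t)$ times a single small factor. Measuring $\tilde{\E}^\alpha V$ in $\dot C^\theta$ instead only trades the logarithm for a loss of $t^{-\theta}$.

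What is needed is the structure of Lemma \ref{lemeee}:
\begin{itemize}
\item When the top-order difference lands on $f_3$, the change of variables $\alpha\mapsto\alpha-\beta$ moves $\delta_\beta$ onto the kernel $H(\tilde\Delta_\alpha V)\tilde{\E}^\alpha V/\alpha$. This is why the $\alpha$-H\"older seminorms \eqref{defnorab} and Lemma \ref{dede} appear, and it leaves $\delta_\alpha\partial_x^mf_3$ to be measured below top order.
\item When it lands on $V$, one uses the identity $\tilde{\E}^\alpha f=\alpha\partial_\alpha\Delta_\alpha f+\delta_\alpha f\,(\frac1\alpha-\frac1{\tilde\alpha})$ together with the integration by parts of Lemma \ref{lempesibp}.
\item The cancellation $\int_{\mathbb S}H(\tilde\Delta_\alpha V)\tilde{\E}^\alpha V\,\frac{d\alpha}{\tilde\alpha}=0$ turns $f_3(x-\alpha)$ into $-\delta_\alpha f_3(x)$.
\end{itemize}
With these, every factor carries a positive-index H\"older seminorm, and the crossover at $|\alpha|\sim t$ balances the time weights.

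On the linear step, your route genuinely differs from the paper's and is viable. You differentiate and apply Proposition \ref{propb=0} (with $b=0$) to $\partial_xX$. This costs you the commutator $[\Lambda^{1/2},\A(\partial_x\Phi)]\Lambda^{1/2}\partial_xX$, which is harmless for small $T$ since $\Phi$ is smooth, plus a splitting of the forcing into $\gamma=1$ and $\gamma=\frac12$ pieces. The paper instead keeps $\Lambda^{1/2}\mathcal H X$ as the unknown, so that in \eqref{pesre} the coefficient already multiplies $\Lambda$ from outside. It then applies Corollary \ref{remlinf} with $b=\frac12$ and absorbs the $x$-dependence through $\|\A(\partial_x\Phi)\|_{B_T}\lesssim T^{\eta}\|\Phi\|_{C^{m+3}}$. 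This is the same tool later used in Theorem \ref{thmPesB} with the rough coefficient $\A(\partial_xX_\vartheta)$, where Proposition \ref{propb=0} is unavailable.

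Two corrections to your version are needed.
\begin{itemize}
\item Differentiating \eqref{eqpes} gives the forcing $\partial_x\mathcal N(X)$, not $\Lambda^{1/2}\partial_x\mathcal N(X)$. The latter has order $\frac32>s$ and falls outside Proposition \ref{propb=0}.
\item The factor $e^{CT\log(T+2)}$ in \eqref{main2} is not harmless for $T\le1$, because $C$ depends on the bound in \eqref{smA}, i.e.\ on $\|\Phi\|_{C^{m+3}}$. You must take $T$ small depending on $\|\Phi\|_{C^{m+3}}$ and absorb the remainder $u_{R,x_0}$ directly, as in the absorption step of the proof of Proposition \ref{propb=0}. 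Then the constant depends only on $c_0,c_1$, hence on $\mathbf\Theta_0$ (through $|\partial_x\Phi|\ge(2\mathbf\Theta_0)^{-1}$, see \eqref{esphi} and \eqref{Auplo}) and on $\|\partial_xX_0\|_{L^\infty}$. Otherwise $\varepsilon_0$ would depend on $\Phi$, which is circular, since $\Phi$ is chosen only after $\varepsilon_0$ is fixed.
\end{itemize}
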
 
		\begin{proof}
			We apply Theorem \ref{lemmain} along with the nonlinear estimates from Lemmas \ref{lemnonpes} and \ref{lempesR} to \eqref{pesre} to establish well-posedness.
			
			Rewrite \eqref{pesre} as  
			\begin{align}\label{reeqPes2d}
				\partial_t\Lambda^\frac{1}{2}\mathcal{H} X+\A(\partial_x \Phi) \Lambda(\Lambda^\frac{1}{2}\mathcal{H} X)=\Lambda^\frac{1}{2}\mathcal{H}\mathcal{N}(X)+ F(\Phi,X),
			\end{align}
			where 
			\begin{align*}
				&F(\Phi,X)=\mathsf{M}(\partial_xX)+(\A(\partial_x \Phi)-\A(\partial_x X)) \Lambda(\Lambda^\frac{1}{2}\mathcal{H} X).
			\end{align*}
			Define   
			\begin{equation}\label{normstar}
				\begin{aligned}
					\|h\|_{T,*}=\sup_{t\in[0,T]}(t^\frac{1}{10}\|\partial_xh(t)\|_{\dot C^\frac{1}{10}}+t^{m+\kappa}\|\partial_xh(t)\|_{\dot C^{m+\kappa}}).
				\end{aligned}
			\end{equation}
			Let  $\sigma,T>0$. Consider the set 
			\begin{align}\label{defset}
				\mathcal{X}_{T,\Phi}^\sigma=\left\{Y\in L_T^\infty\dot W^{1,\infty}: Y|_{t=0}=X_0, \ \|Y-\Phi\|_{T}\leq \sigma\right\}.
			\end{align}
			By definition and \eqref{pescon}, it is easy to check that $|\partial_x\Phi(x)|$ has the following upper and lower bound
			\begin{equation}
				\label{esphi}
				\begin{aligned}
					&|\partial_x\Phi(x)|\leq \|X_0-\Phi\|_{\dot W^{1,\infty}}+\|X_0\|_{{\dot W^{1,\infty}}}\leq \varepsilon_0+\|X_0\|_{\dot W^{1,\infty}}\leq \frac{3}{2}\|X_0\|_{\dot W^{1,\infty}},\\
					&|\partial_x\Phi(x)|\geq |\partial_x X_0(x)|-\|X_0-\Phi\|_{\dot W^{1,\infty}}\geq \mathbf{\Theta}_0^{-1}-\varepsilon_0\geq  \frac{1}{2\mathbf{\Theta}_0},\quad\quad \forall\ x\in\mathbb{S},
				\end{aligned}
			\end{equation}
			provided $\varepsilon_0\leq \min\{\frac{1}{100\mathbf{\Theta}_0},\frac{1}{2}\|X_0\|_{\dot W^{1,\infty}}\}$. 
           Taking $$\sigma\leq \min\{\frac{1}{100\mathbf{\Theta}_0},\frac{1}{2}\|X_0\|_{\dot W^{1,\infty}}\},$$  it is straightforward to check that 
			for any  $Y\in\mathcal{X}^\sigma_{T,\Phi}$, it holds $\|Y-\Phi\|_{T}\leq \sigma$ and
			\begin{align}\label{wes}
				\inf_{\alpha,x}|\Delta_\alpha Y(x)|&\geq 	\inf_{\alpha,x}|\Delta_\alpha X_0(x)|-\|\partial_x(X_0-\Phi)\|_{L^\infty}-\|\partial_x (\Phi-Y)\|_{L^\infty}\nonumber\\
				&\geq \frac{1}{\mathbf{\Theta}_0}-\frac{5}{4}\varepsilon_0-\sigma\geq  \frac{1}{2\mathbf{\Theta}_0},
			\end{align}
		Hence, we obtain  
			\begin{align}\label{arc}
			\mathbf{\Theta}_Y(T)\leq2\mathbf{\Theta}_0,\ \ \  	\|\partial_x Y\|_{L^\infty_TL^\infty}\leq 2\|\partial_x X_0\|_{L^\infty},\ \ \forall Y\in\mathcal{X}^\sigma_{T,\Phi}.
			\end{align}
			Combining this with \eqref{Auplo}, we obtain 
			\begin{align*}
            &\inf_{\tau\in[0,1]}\A(\tau\partial_x Y+(1-\tau)\partial_x\Phi)\geq \frac{1}{4}\mathfrak{c}(2\|\partial_x X_0\|_{L^\infty})\mathrm{Id},\\
			&	\sup_{\tau\in[0,1]}|(\nabla^n\A)(\tau\partial_x Y+(1-\tau)\partial_x\Phi)|\lesssim \mathfrak{C}(2\|\partial_x X_0\|_{L^\infty})(1+10\mathbf{\Theta}_0)^{n+1},\ \ \forall n\in\mathbb{N},
			\end{align*}
		where $\mathfrak{c}(\lambda),\mathfrak{C}(\lambda)$ are defined in \eqref{conten}. Denote 
        \begin{equation}\label{defM0}
              \begin{aligned}
             & \mathfrak{M}_T(Y):=\left(1+ (\mathfrak{c}(\|\partial_x Y\|_{L^\infty_TL^\infty}))^{-1}+\mathfrak{C}(\|\partial_x Y\|_{L^\infty_TL^\infty})(1+\mathbf{\Theta}_Y(T))^{m+1}\right)^{m+2},\\
      &  \mathfrak{M}_0:=\left(1+ (\mathfrak{c}(1+4\|\partial_x X_0\|_{L^\infty}))^{-1}+\mathfrak{C}(1+4\|\partial_x X_0\|_{L^\infty})(1+10\mathbf{\Theta}_0)^{m+1}\right)^{m+2}.
        \end{aligned}
        \end{equation}
        Then we have 
        \begin{align}\label{controlM}
         \mathfrak{M}_T(Y)\lesssim  \mathfrak{M}_0,\ \ \ \quad\quad\forall Y\in \mathcal{X}_{T,\Phi}^\sigma.
        \end{align}
			For any $Y\in \mathcal{X}_{T,\Phi}^\sigma$, where $T, \sigma$ will be fixed later, we define a map $\mathcal{S}Y=X$, where $X$ solves the Cauchy problem
			\begin{align*}
				&	\partial_t\Lambda^\frac{1}{2}\mathcal{H} X+\A(\partial_x \Phi) \Lambda(\Lambda^\frac{1}{2}\mathcal{H} X)=\Lambda^\frac{1}{2}\mathcal{H}\mathcal{N}(Y)+ F(\Phi,Y),\\
				&\Lambda^\frac{1}{2}\mathcal{H} X|_{t=0}=\Lambda^\frac{1}{2}\mathcal{H} X_0.
			\end{align*}
			\textbf{Step 1:} We first prove that there exist $\sigma,T>0$ such that $\mathcal{S}$ maps $\mathcal{X}_{T,\Phi}^\sigma$ to itself.
			Denote $\tilde X=X-\Phi$, 	  one has 
			\begin{align}\label{peseqr}
				\partial_t \Lambda^\frac{1}{2}\mathcal{H} \tilde  X+\A(\partial_x\Phi) \Lambda (\Lambda^\frac{1}{2}\mathcal{H} \tilde  X)=\Lambda^\frac{1}{2}\mathcal{H} \mathcal{N}(Y)+ F(\Phi,Y)-\A(\partial_x\Phi) \Lambda^\frac{1}{2}\partial_x\Phi.
			\end{align}
			Applying Corollary \ref{remlinf} to \eqref{peseqr} with $b=\frac{1}{2}$, we obtain that  there exists $T_0=T_0(\|\Phi\|_{C^{m+3}})>0$ such that  for any $0<T\leq T_0$,
			\begin{equation}
		\begin{aligned}\label{pesma}
					\|\tilde X\|_T&\lesssim \mathfrak{M}_0\left(\|\tilde X_0\|_{\dot W^{1,\infty}}+\da_{T}(\Lambda^\frac{1}{2}\mathcal{H} \mathcal{N}(Y)+F(\Phi,Y)+\A(\partial_x\Phi) \Lambda^\frac{1}{2} \partial_x\Phi)\right),
				\end{aligned}
			\end{equation}
            where 
            \begin{align*}
            \da_T(f)= \sup_{t\in[0,T]}(t^\kappa\|f(t)\|_{\dot C^{\kappa-\frac{1}{2}}}+t^{m+\kappa}\|f(t)\|_{\dot C^{m+\kappa-\frac{1}{2}}}).
            \end{align*}
            By Lemma \ref{lemnonpes} and \eqref{controlM}, we have
			\begin{equation}	\label{pesN}
				\begin{aligned}
				\da_{T}(\Lambda^\frac{1}{2}\mathcal{H}\mathcal{N}(Y))
					\lesssim &\mathfrak{M}_0(\|\tilde Y\|_{T}+T^\frac{1}{10}\|\Phi\|_{C^{m+2}})^2(1+\|Y\|_T)^{2(m+5)},
				\end{aligned}
			\end{equation}
			where $\tilde Y=Y-\Phi$. Here we also used the fact that $\| Y\|_{T,*}\leq \|\tilde Y\|_{T,*}+\|\Phi\|_{T,*}\lesssim \|\tilde Y\|_{T}+T^\frac{1}{10}\|\Phi\|_{C^{m+2}}$.\vspace{0.1cm}\\
			Then we estimate $F(\Phi,Y)$. We first deal with $\mathsf{M}(\partial_xY)$.
			By Lemma \ref{lempesR} and \eqref{controlM},  we have for any $0<T<1$,
			\begin{align*}
				\da_{T}(\M(\partial_x Y))
				\lesssim \mathfrak{M}_0\|\tilde Y\|_T(\|\tilde Y\|_{T}+T^\frac{1}{10}\|\Phi\|_{C^{m+2}})(1+\|\tilde Y\|_T+\|\Phi\|_T)^{m+5}.
			\end{align*}
			From Lemma \ref{maininterpo}, it is straightforward to obtain 
			\begin{equation*}\label{FPY}
				\begin{aligned}
				\da_T((\A(\partial_x \Phi)-\A(\partial_x Y)) \Lambda(\Lambda^\frac{1}{2}\mathcal{H} Y))\lesssim \mathfrak{M}_0(\|\tilde Y\|_{T}+T^\frac{1}{10}\|\Phi\|_{C^{m+3}})^2(1+\|\tilde Y\|_{T}+\|\Phi\|_{T})^{m+5}.
				\end{aligned}
			\end{equation*}
			Hence, we deduce that   
			\begin{align*}
			\da_T(F(\Phi,Y))\lesssim \mathfrak{M}_0(\|\tilde Y\|_{T}+T^\frac{1}{10}\|\Phi\|_{C^{m+3}})^2(1+\|\tilde Y\|_{T}+\|\Phi\|_{T})^{m+5}.
			\end{align*}
			It remains to estimate the lower order term $\A(\partial_x\Phi) \Lambda^\frac{1}{2}\partial_x\Phi $.
			It is easy to check that 
			\begin{equation}\label{peslow}
				\begin{aligned}
					\da_T(\A(\partial_x\Phi) \Lambda^\frac{1}{2}\partial_x\Phi )\lesssim \mathfrak{M}_0T^{\kappa}(1+\|\Phi\|_{C^{m+3}})^{m+3}.
				\end{aligned}
			\end{equation}
			Combining \eqref{pesN}-\eqref{peslow} with \eqref{pesma},  we obtain for any $0<T\leq T_0$,
			\begin{equation}\label{maespes}
				\begin{aligned}
					\|\tilde X\|_T\leq& C_0\mathfrak{M}_0\|\tilde X_0\|_{\dot W^{1,\infty}}+C_0\mathfrak{M}_0^2(\|\tilde Y\|_{T}+T^\frac{1}{10}\|\Phi\|_{C^{m+3}})^2(1+\|\tilde Y\|_{T}+\|\partial_x\Phi\|_{T})^{2(m+5)}\\
					&+C_0\mathfrak{M}_0^2T^\kappa(1+\|\Phi\|_{C^{m+3}})^{2(m+5)}.
				\end{aligned}
			\end{equation}
			Note that by \eqref{esphi},
			\begin{align}\label{phiT}
				\|\Phi \|_T\leq \|\partial_x \Phi\|_{L^\infty}+T^{m+\kappa}\|\partial_x \Phi\|_{\dot C^{m+\kappa}}\leq 1+\frac{3}{2}\|X_0\|_{\dot W^{1,\infty}}+T^{m+\kappa}\|\Phi\|_{\dot C^{m+2}}.
			\end{align}
			Take $0<\varepsilon_0<\frac{1}{100}(C_0\mathfrak{M}_0^2+\|X_0\|_{\dot W^{1,\infty}}+1)^{-m-5}$, $\sigma=2C_0\mathfrak{M}_0\varepsilon_0$, and $$T_1=\min\left\{T_0,\left(\frac{\varepsilon_0}{10+C_0+\|\Phi\|_{C^{m+3}}}\right)^{100m}\right\}.$$  Then \eqref{maespes} leads to 
			\begin{align*}
				\|\tilde X\|_{T_1}&\leq C_0\mathfrak{M}_0\varepsilon_0+2C_0^2\mathfrak{M}_0^2\varepsilon_0 (2C_0\mathfrak{M}_0\varepsilon_0+T_1^\frac{1}{10}\|\Phi\|_{C^{m+3}})(1+\|X_0\|_{\dot W^{1,\infty}}+T^\frac{1}{10}\|\Phi\|_{C^{m+3}})^{2(m+5)}\\
				&\quad+C_0\mathfrak{M}_0^2T_1^\kappa(1+\|\Phi\|_{C^{m+3}})^{2(m+5)}\leq \sigma.
			\end{align*}
			Hence $\mathcal{S}$ maps $\mathcal{X}_{T_1,\Phi}^\sigma$ to itself. \vspace{0.1cm}\\
			\textbf{Step 2:} We prove that $\mathcal{S}$ is a contraction map. Consider $Y_1,Y_2\in \mathcal{X}_{T_1,\Phi}^\sigma$. Denote $\mathbf{Y}=Y_1-Y_2$, and $\mathbf{X}=X_1-X_2=\mathcal{S}Y_1-\mathcal{S}Y_2$. Then we have 
			\begin{align*}
				\partial_t \Lambda^\frac{1}{2}\mathcal{H}\mathbf{X}+\A(\partial_x\Phi) \Lambda (\Lambda^\frac{1}{2}\mathcal{H}\mathbf{X})=\Lambda^\frac{1}{2}\mathcal{H}(\mathcal{N}(Y_1)-\mathcal{N}(Y_2))+ F(\Phi,Y_1)-F(\Phi,Y_2).
			\end{align*}
			Applying Theorem \ref{lemmain}, we have for any $0<T\leq T_0$,
			\begin{align*}
				\|\mathbf{X}\|_{T}\lesssim& \mathfrak{M}_0\left(\da_T(\Lambda^\frac{1}{2}\mathcal{H}(\mathcal{N}(Y_1)-\mathcal{N}(Y_2)))+\da_T(F(\Phi,Y_1)-F(\Phi,Y_2))\right).
			\end{align*}
			From Lemma \ref{lemnonpes} and \ref{lempesR}, we obtain 
            \begin{equation}\label{diffor}
            \begin{aligned}
				\da_T(\Lambda^\frac{1}{2}\mathcal{H}(\mathcal{N}(Y_1)-\mathcal{N}(Y_2)))&+\da_T(F(\Phi,Y_1)-F(\Phi,Y_2))\\
				&\lesssim \mathfrak{M}_0 \|\mathbf{Y}\|_{T}\|({Y}_1, {Y}_2)\|_{T,*}(1+\|(Y_1,Y_2)\|_T)^{2(m+1)}.
			\end{aligned}
            \end{equation}
			Hence, by \eqref{phiT}, 
			\begin{align*}
				\|\mathbf{X}\|_{T_1}\leq& C_1 \mathfrak{M}_0 ^2\|\mathbf{Y}\|_{T_1}(\|(\tilde Y_1,\tilde Y_2)\|_{T_1}+T_1^\frac{1}{10}\|\Phi\|_{C^{m+3}})\\
                &\times(1+\| (\tilde Y_1,\tilde Y_2)\|_{T_1}+\|X_0\|_{\dot W^{1,\infty}}+T^\frac{1}{2}\|\Phi\|_{C^{m+3}})^{2(m+5)}\\
				\leq& 4C_1\mathfrak{M}_0 ^2\sigma(1+4\sigma+T_1^\frac{1}{10}\|\Phi\|_{C^{m+3}})^{2(m+5)}\|\mathbf{Y}\|_{T_1},
			\end{align*}
			where $\tilde{Y}_i=Y_i-\Phi$, and  we used the fact that $\|\tilde  Y_1\|_{T_1},\|\tilde  Y_2\|_{T_1}\leq \sigma$ and  $\mathbf{\Theta}_{Y_1}(T_1),\mathbf{\Theta}_{Y_2}(T_1)\leq 2\mathbf{\Theta}_0$. We have 
			\begin{align*}
				4C_1\mathfrak{M}_0 ^2\sigma(1+4\sigma+T_1^\frac{1}{10}\|\Phi\|_{C^{m+3}})^{2(m+5)}\leq \frac{1}{2}
			\end{align*}
			by taking $\varepsilon_0=\frac{1}{100}(C_0\mathfrak{M}_0 ^2+C_1\mathfrak{M}_0 ^2+\|X_0\|_{\dot W^{1,\infty}}+1)^{-2(m+10)}$ and the corresponding $T$ small enough. This implies $\mathcal{S}:\mathcal{X}_{T_1,\Phi}^\sigma \to \mathcal{X}_{T_1,\Phi}^\sigma$ is a contraction map. Hence, there exists a unique $X\in \mathcal{X}_{T_1,\Phi}^\sigma$ such that $X=\mathcal{S}X$, which indicates $X$ is a solution to \eqref{eqpes}. Moreover, by \eqref{arc} we have 
			$\mathbf{\Theta}_X(T_1)\leq 2\mathbf{\Theta}_0$. This completes the proof of the Proposition \ref{thmPesLip}.
		\end{proof}\vspace{0.3cm}\\
		\begin{proof}[Proof of Theorem \ref{thmPesB}] \\
            \textbf{Step 1: Construction of approximating sequence.}\\
            To show the existence of solution, we use the standard compactness argument. We will construct the approximating solutions as follows. Denote $X_{0,\vartheta}=X_0\ast \rho_\vartheta$, and denote $X_{\vartheta}$ to be the solution of \eqref{peskin} with initial data $X_{0,\vartheta}$ on $[0,T_\vartheta]$ with $T_{\vartheta}=T(\|X_0\|_{\dot W^{1,\infty}},\vartheta)$. Note that \eqref{pesconbe} implies 
            \begin{equation*}
                \mathfrak{M}(z_0)\liminf_{\ell\rightarrow 0}\| X_0-X_{0,\ell}\|_{\dot B^{1}_{\infty,\infty}}\leq \frac{3}{2}\varepsilon.
            \end{equation*}
        We fix an $\ell\ll 1$ and let $\Phi=X_{0,\ell}$. There exists a subsequence $\{X_{0,\vartheta}\}$,  $\vartheta<\ell$,  such that 
            \begin{align*}
               & \| X_{0,\vartheta}\|_{\dot W^{1,\infty}}\leq 2\|X_0\|_{\dot W^{1,\infty}},\\
                & \mathfrak{M}(z_0)\| X_{0,\vartheta}-X_{0,\ell}\|_{\dot B^{1}_{\infty,\infty}}\leq 2\varepsilon.
            \end{align*}
            Furthermore, we can take $\vartheta$ small enough such that for any $\alpha,x\in \mathbb{S}$, $\alpha\neq 0$,
			\begin{align*}
				\frac{|\delta_\alpha X_{0,\vartheta}(x)|}{|\alpha|}\geq \frac{1}{2\mathbf{\Theta}_0}.
			\end{align*}
			This implies that 
			\begin{equation}\label{acmod}
			\mathbf{\Theta}(X_{0,\vartheta})\leq 2 \mathbf{\Theta}_0.
			\end{equation}
            \textbf{Step 2: Control of $\|X_\vartheta\|_{L^\infty_{T_{\vartheta}}\dot W^{1,\infty}}$.}\\
			Applying Corollary \ref{remlinf} to \eqref{pesre} with $b=\frac{1}{2}$, we obtain 
			\begin{equation}\label{pes1.1}
				\begin{aligned}
					\|X_\vartheta\|_{\dot W^{1,\infty}}\lesssim \| X_{0,\vartheta}\|_{\dot W^{1,\infty}}+\da_{T_\vartheta}(\Lambda^\frac{1}{2}\mathcal{H}\mathcal{N}(X_\vartheta)-\M(\partial_xX_\vartheta))+\|X_\vartheta\|_{T,*}^2,
				\end{aligned}
			\end{equation}
			where we denote $\Phi=X_{0,\ell}$ as fixed in step 1. It follows from Lemma \ref{lemnonpes} and Lemma \ref{lempesR} that
			\begin{equation}\label{pes1.2}
				\begin{aligned}
					&\da_{T_\vartheta}(\Lambda^\frac{1}{2}\mathcal{H}\mathcal{N}(X_\vartheta)-\M(\partial_xX_\vartheta))\lesssim\mathfrak{M}_0\| X_\vartheta\|_{T_\vartheta,*}^2 (1+\| X_\vartheta\|_{T_\vartheta})^{2(m+5)}.
				\end{aligned}
			\end{equation}
		 The estimates \eqref{pes1.1} and \eqref{pes1.2} give that
			\begin{equation}\label{estpesL}
				\begin{aligned}
					\|X_\vartheta\|_{L^\infty_{T_{\vartheta}}\dot W^{1,\infty}}&\leq C_0 \left(\|X_{0,\vartheta}\|_{\dot W^{1,\infty}}+\mathfrak{M}_0( \|  \tilde {X}_\vartheta\|_{T_\vartheta,*}+T_\vartheta^\frac{1}{10}\| \Phi\|_{C^{m+3}})^2 (1+\| X_\vartheta\|_{T_\vartheta})^{2(m+5)}\right).
				\end{aligned}
			\end{equation}
			\textbf{Step 3: Control of $\|X_\vartheta\|_{T_\vartheta,*}$.}\\
		From \eqref{pesre}, we have for $\tilde X_\vartheta=X_\vartheta-\Phi$,
        \begin{align*}
         \partial_t \Lambda^\frac{1}{2}\mathcal{H}\tilde X_\vartheta(x)+\A(\partial_x X_\vartheta)\Lambda^\frac{1}{2} \partial_x \tilde  X_\vartheta=\Lambda^\frac{1}{2}\mathcal{H}\mathcal{N}(X_\vartheta)-\mathsf{M}(\partial_xX_\vartheta)-\A(\partial_x X_\vartheta)\Lambda^\frac{1}{2} \partial_x \Phi.
        \end{align*}
It follows from	 Corollary \ref{remlinf} and \eqref{pes1.2} that 
			\begin{equation}\label{esTs}
				\begin{aligned}
					\|\tilde {X}_\vartheta\|_{T_\vartheta,*}&\lesssim\mathfrak{M}_0\left(\|\tilde X_{0,\vartheta}\|_{\dot B_{\infty,\infty}^{1}}+\da_{T_\vartheta}(\Lambda^\frac{1}{2}\mathcal{H}\mathcal{N}(X_\vartheta)-\mathsf{M}(\partial_xX_\vartheta))+\|X_\vartheta\|_{T_\vartheta,*}\|\tilde{X}_\vartheta\|_{T_\vartheta,*}\right)\\
                    &\leq C_1\mathfrak{M}_0(\|\tilde X_{0,\vartheta}\|_{\dot B_{\infty,\infty}^{1}}+\mathfrak{M}_0\| (\tilde X_\vartheta,\Phi)\|_{T_\vartheta,*}^2 (1+\| X_\vartheta\|_{T_\vartheta})^{2(m+5)}+\|X_\vartheta\|_{T_\vartheta,*}\|\tilde{X}_\vartheta\|_{T_\vartheta,*}).
				\end{aligned}
			\end{equation}
            	\textbf{Step 4: Control of $\mathbf{\Theta}_{X_\vartheta}(T_\vartheta)$.}\\
                Now we control $\mathbf{\Theta}_{X_\vartheta}(T_{\vartheta})$ for $\dot B^1_{\infty,\infty}$ data satisfying \eqref{pesconbe}, which differs from the case of Lipschitz data addressed in estimate \eqref{wes}, due to the absence of smallness in the Lipschitz norm. We therefore employ the approach outlined in \cite{KN}. Define
			\begin{align*}
				Q_h(T)=\sup_{t\in[0,T]}\sup_{\alpha,s}\frac{|\alpha|^{\tilde\varepsilon}}{t^{\tilde\varepsilon}}\left|\frac{1}{|\Delta_\alpha h(t,x)|}-\frac{1}{|\Delta_\alpha h(0,x)|}\right|,
			\end{align*}
			where $0<\tilde \varepsilon<\frac{1}{10}$ is a fixed parameter. 
			By \eqref{eqpes}, we have 
			\begin{align*}
				&\left|\frac{1}{|\Delta_\alpha X_\vartheta(t,\cdot)(x)|}-\frac{1}{|\Delta_\alpha X_{0,\vartheta}(x)|}\right|=\int_0^t \left|\partial_t\left(\frac{1}{|\Delta_\alpha X_\vartheta(\tau,\cdot)(x)|}\right)\right|d\tau\\
				&\quad\quad\leq \mathbf{\Theta}^2_{X_\vartheta}(t) \int_0^t |\Delta_\alpha \mathcal{H} (\mathbf{T}(|\partial_x X_\vartheta|)\partial_xX_\vartheta)(\tau,\cdot)(x)|+|\Delta_\alpha \mathcal{N}(X_\vartheta(\tau,\cdot))(x)|d\tau.
			\end{align*}
			The integral on the right hand side can be controlled by $\| X_\vartheta\|_{T_\vartheta,*}$.
			It follows that 
            \begin{equation}\label{esQX}
			\begin{aligned}
				&Q_{X_\vartheta}(T_{\vartheta})\leq \mathbf{\Theta}_{X_\vartheta}^2 (T_{\vartheta})\sup_{t\in[0,T_{0,\vartheta}]} t^{1-\tilde\varepsilon}(\|\mathbf{T}(|\partial_x X_\vartheta|)\partial_xX_\vartheta(t)\|_{\dot C^{1-\tilde\varepsilon}}+\|\mathcal{N}(X_\vartheta(t))\|_{\dot C^{1-\tilde\varepsilon}})\\
				&\quad\quad\lesssim (1+\mathbf{\Theta}_{X_\vartheta}(T_{\vartheta}))^{10} (1+\|X_\vartheta\|_{T_{\vartheta}})^{10}\| X_\vartheta\|_{T_{\vartheta},*},
			\end{aligned}
             \end{equation}
        By \cite[Lemma 2.8]{KN}, we know that the smallness of $Q_{X_\vartheta}(T_{\vartheta})$ implies the boundedness of $\mathbf{\Theta}_{X_\vartheta}(T_{0,\vartheta})$.
			\textbf{Step 5: Existence.}\\
            We define the increasing function $\mathfrak{M}:[0,\infty)\to [1,\infty)$ by 
            \begin{align}\label{defMlam}
          \mathfrak{M}(\lambda)=\tilde C\left((\mathfrak{c}(\tilde C\lambda))^{-1}+\mathfrak{C}(\tilde C\lambda)+\lambda+1\right)^{10(m+1)^3}, 
            \end{align}
            where $\mathfrak{c},\mathfrak{C}$ are defined in  \eqref{conten}, and $\tilde C$ is a fixed constant which is large enough to dominate all absolute constants appeared in this proof. Take $\eps_0=(10+C_0+C_1)^{-10(m+5)}$ and $\eps\in(0,\eps_0)$.
			Denote
			\begin{equation*}
				M_1=4C_0(\eps+\|X_{0}\|_{\dot W^{1,\infty}}),\quad \eps_1=4C_1\mathfrak{M}_0\|\tilde X_{0}\|_{\dot B_{\infty,\infty}^1}.
			\end{equation*}
            With the definition of $\mathfrak{M}(\lambda)$ in \eqref{defMlam}, the condition \eqref{pesconbe} implies 
            \begin{align}\label{Meps}
            (1+M_1) \eps_1\leq \varepsilon.
            \end{align}
    Define
			\begin{equation*}
				T_{0,\vartheta}=\sup\{T:\|X_\vartheta\|_{L^\infty_{T}\dot W^{1,\infty}}\leq M_1,\ \|\tilde{X}_\vartheta\|_{T,*}\leq \eps_1,\ \mathbf{\Theta}_{X_\vartheta}(T)\leq 10\mathbf{\Theta}_0\}.
			\end{equation*}
			We claim that 
			\begin{equation}\label{lowbdPesT}
				T_{0,\vartheta}\geq T_1:=\left(\frac{\eps_1}{10+\|X_0\|_{\dot W^{1,\infty}}+\|\Phi\|_{C^{m+5}}}\right)^{1000(m+5)}.
			\end{equation}
			Note that since we fix $\Phi=X_{0,\ell}$ for some $\ell$, \eqref{lowbdPesT} gives a uniform lower bound of $T_{0,\vartheta}$ for any $\vartheta$. In fact, by contradiction, if $T_{0,\vartheta}<T_1$, then by \eqref{estpesL}, \eqref{esTs} and \eqref{Meps}, we have
			\begin{equation*}
				\begin{aligned}
					&\|X_\vartheta\|_{L^\infty_{T_{0,\vartheta}}\dot W^{1,\infty}}\leq \frac{M_1}{4}+\mathfrak{M}_0(\eps_1^2+T_{0,\vartheta}^{\frac{1}{10}}\|\Phi\|_{\dot C^{m+5}})\leq\frac{M_1}{2},\\
					&\|\tilde{X}_\vartheta\|_{T_{0,\vartheta},*}\leq \frac{\eps_1}{4}+\mathfrak{M}_0(\eps_1^2+T_{0,\vartheta}^{\frac{1}{10}}\|\Phi\|_{\dot C^{m+5}})\leq\frac{\eps_1}{2}.
				\end{aligned}
			\end{equation*}
          By \eqref{acmod}, \eqref{esQX} and \cite[Lemma 2.8]{KN}, we obtain 
            \begin{align*}\label{pesunibdgap}
				\mathbf{\Theta}_{X_\vartheta}(T_{0,\vartheta})\leq 2\mathbf{\Theta}_{0,\vartheta}\leq 4\mathbf{\Theta}_0.
			\end{align*}
			which contradicts the choice of $T_{0,\vartheta}$. Thus, the sequence $\{X_\vartheta\}_\vartheta$ have the following  uniform a priori bounds:
		\begin{equation*}\label{pesT1bound}
				\|X_\vartheta\|_{T_1}\leq M_1,\quad \|\tilde{X}_\vartheta\|_{T_1,*}\leq \eps_1,\quad \mathbf{\Theta}_{X_\vartheta}(T_1)\leq 10\mathbf{\Theta}_0.
			\end{equation*}
            Then following the existence argument in Proposition \ref{propb=0}, we can pass to the limit $\vartheta\to 0$ and the sequence $\{X_\vartheta\}_\vartheta$ will converge to a solution $X$ on $[0,T_1]$, which is a solution to \eqref{eqpes} with initial data $X_0$, and 
		\begin{align}\label{pesunibound}
			\|X\|_{T_1}\leq M_1,\quad \|\tilde{X}\|_{T_1,*}\leq \eps_1,\quad \mathbf{\Theta}_X(T_1)\leq 10\mathbf{\Theta}_0.
		\end{align}
			\textbf{Step 6: Stability and uniqueness.}\\
			establish a stability result in the Lipschitz space. Assume $X,Y$ are two solutions of \eqref{reeqPes2d} in $[0,T]$ with initial data $X_0,Y_0$ respectively, both satisfying \eqref{pesconbe}. Subtracting the corresponding equations, we obtain
			\begin{equation*}
				\begin{aligned}
					\partial_t\Lambda^\frac{1}{2}\mathcal{H}(X-Y)+\A(\partial_x\Phi)\Lambda\Lambda^\frac{1}{2}\mathcal{H}(X-Y)&=\Lambda^\frac{1}{2}\mathcal{H}(\mathcal{N}(X)-\mathcal{N}(Y))+F(\Phi,X)-F(\Phi,Y),\\
					\Lambda^\frac{1}{2}\mathcal{H}(X-Y)|_{t=0}&=\Lambda^\frac{1}{2}\mathcal{H}(X_0-Y_0).
				\end{aligned}
			\end{equation*}
			By  \eqref{main31}, \eqref{diffor} and \eqref{pesunibound}, we have
			\begin{equation*}
				\begin{aligned}
					\|X-Y\|_{T}&\lesssim \mathfrak{M}_0\|X_0-Y_0\|_{\dot W^{1,\infty}}+\da_T(\Lambda^\frac{1}{2}\mathcal{H}(\mathcal{N}(X)-\mathcal{N}(Y))+F(\Phi,X)-F(\Phi,Y))\\
                    &\lesssim \mathfrak{M}_0\|X_0-Y_0\|_{\dot W^{1,\infty}}+\mathfrak{M}_0^2\|X-Y\|_T\|(X,Y)\|_{T,*}(1+\|(X,T)\|_T)^{2(m+1)}\\
                    &\leq C_3\mathfrak{M}_0\|X_0-Y_0\|_{\dot W^{1,\infty}}+C_3\mathfrak{M}_0^2\eps_1(1+M_1)^{2(m+1)}\|X-Y\|_T.
				\end{aligned}
			\end{equation*}
 The condition \eqref{pesconbe} with $\eps<(1+C_3)^{-10}$ implies $C_3\mathfrak{M}_0^2\eps_1(1+M_1)^{2(m+1)}\leq \frac{1}{10}$.
         This yields
			\begin{equation}\label{stabpes}
				\|X-Y\|_{T}\lesssim \|X_0-Y_0\|_{\dot W^{1,\infty}}.
			\end{equation}
			Moreover, if $X,Y$ are two solutions of \eqref{reeqPes2d} with the same initial data $X_0$, then \eqref{stabpes} gives that $X=Y$, which ensures the uniqueness of the solution.
		\end{proof}\\
		\subsection{Global existence and asymptotic behavior}\label{secglo}
		Theorem \ref{thmPesB} establishes the local well-posedness for the 2D Peskin problem \eqref{eqpes}. We now outline how this result can be extended to a global existence framework.\vspace{0.3cm}\\
		It is known that the only stationary solutions to the 2D Peskin system \eqref{eqpesk} are uniformly parameterized circles forming a four-dimensional vector space (see \cite{LinTongSolvability2019}). When interpreting $X(x)$ as a complex-valued function $X=X_1+iX_2$, the stationary solution set can be expressed as $$\{X(x)=a_1+a_2e^{ix}:a_1,a_2\in\mathbb{C},a_2\neq 0\}:=\mathcal{V}\backslash\{0\}.$$
		Let $\mathcal{P}$ be the $L^2$ projection onto the space $\mathcal{V}$:
		\begin{align*}
			\mathcal{P}{Z}(t,x)=z_0(t)+z_1(t)e^{ix},\ \ \ \text{where}\ z_0(t)=\langle Z(t),1\rangle_{L^2(\mathbb{S})}, z_1(t)=\left\langle Z(t),e^{ix}\right\rangle_{L^2(\mathbb{S})}.
		\end{align*}
		Recently, a remarkable work by Garc\'{\i}a-Ju\'{a}rez and Haziot \cite{GHpeskin} (see also \cite{Rodenberg2018}) demonstrates that the linearized operator around the unit circle $Z=e^{ix}$
		\begin{align*}
			\mathcal{L}X=\left.\frac{d\N(Z+\eps X)}{d\eps}\right|_{\eps=0}
		\end{align*}
		satisfies the dissipative estimate
		\begin{equation*}
			\langle\mathcal{L}Y,Y\rangle_{L^2(\mathbb{S})} \leq - 2c_0\|Y\|_{L^2(\mathbb{S})}^2,\quad\quad~~\forall\ Y\in \mathcal{V}^\perp.
		\end{equation*}
		As a consequence, the Green's function $K(t,x,y)$ associated with $(\partial_t -\mathcal{L})$  satisfies the pointwise estimate:
		\begin{align}\label{esgre}
			|\partial_x^{l_1}\partial_y^{l_2}(\mathrm{Id}-\mathcal{P})K(t,x,y)|\lesssim _{l_1,l_2}\frac{e^{-\frac{3}{2}c_0t}}{t^{1+l_1+l_2}}\left\langle\frac{|x-y|}{t}\right\rangle^{-(l_1+l_2+3)},\ \ \quad\quad \forall\ l_1,l_2\in\mathbb{N}.
		\end{align}
		We denote by $e^{t\mathcal{L}}$ the semigroup generated by $\mathcal{L}$ and apply the freezing coefficient method to obtain short-time Schauder estimates:
		\begin{align}\label{sho}
			\|e^{t\mathcal{L}}\partial_xX(\tau)\|_{\dot C^{m}}\lesssim t^{-(m+1-\kappa)}\|X(\tau)\|_{\dot C^{\kappa}},\ \ \ \quad 0<t<T_0,
		\end{align}
		and 
		\begin{align}\label{exp2}
			\left\|\int_0^t e^{\tau \mathcal{L}}\partial_xX(\tau)d\tau\right\|_{\dot C^{m+\kappa}}\lesssim \sup_{\tau\in(0,t)}\|X(\tau)\|_{\dot C^{m+\kappa}},\ \ \ \quad 0<t<T_0,
		\end{align}
		for some $\kappa\in(0,1)$, with $T_0\in(0,\frac{1}{2})$ depending on $\mathbf{\Theta}_0$ and $\|X_0\|_{\dot W^{1,\infty}}$.
		Combining \eqref{sho} with \eqref{esgre}, we have 
		\begin{equation}\label{esexp}
			\begin{aligned}
				&\|e^{t\mathcal{L}}\partial_xX(\tau)\|_{\dot C^{m}}\lesssim e^{-c_0t}t^{-(m+1-\kappa)}\|X(\tau)\|_{\dot C^{\kappa}},\ \ \ \forall t>0,\ \ X\in\mathcal{V}^\perp.
			\end{aligned}
		\end{equation}
		Adjusted to the above estimates, we have the following Schauder-type estimate, which is an analog of Proposition \ref{propb=0}.
		\begin{lemma}\label{lemY}
			Consider the linear equation 
			\begin{align*}
				&\partial_t Y-\mathcal {L}Y=\partial_xF,\quad\quad \text{in}\ (0,+\infty)\times\mathbb{S},\\
				&Y|_{t=0}:=Y_0,
			\end{align*}
			with $Y_0, F\in \mathcal{V}^\perp$. 
			Let $\kappa\in(0,1)$. For any $T>0$, the following estimate holds:
			\begin{align*}
				\sup_{t\in[0,T]} e^{c_0t}(\|Y(t)\|_{L^\infty}+t^{m+\kappa}\|Y(t)\|_{\dot C^{m+\kappa}})\lesssim \|Y_0\|_{L^\infty}+\sup_{t\in[0,T]}e^{c_0t}(t^{\kappa}\|F(t)\|_{\dot C^{\kappa}}+ t^{m+\kappa}\|F(t)\|_{\dot C^{m+\kappa}}).
			\end{align*}
		\end{lemma}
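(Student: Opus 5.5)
The plan is to start from Duhamel's formula,
\[
Y(t)=e^{t\mathcal{L}}Y_0+\int_0^t e^{(t-\tau)\mathcal{L}}\partial_xF(\tau)\,d\tau,
\]
and to treat each piece with the kernel bounds \eqref{esgre}, \eqref{sho}, \eqref{exp2} and \eqref{esexp}. We follow the proof of Lemma \ref{lemuLN} ii) and Proposition \ref{propb=0}, but insert exponential weights. Since $Y_0,F\in\mathcal{V}^\perp$ and $\mathcal{V}^\perp$ is invariant under the linearized flow, only $(\mathrm{Id}-\mathcal{P})K$ enters. That part carries the factor $e^{-\frac32 c_0 t}$ from \eqref{esgre}, which leaves a margin $e^{-\frac12 c_0(t-\tau)}$ after we multiply by $e^{c_0 t}$ and use $e^{c_0t}=e^{c_0(t-\tau)}e^{c_0\tau}$.

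\textbf{Free part.} By \eqref{esgre} with $l_1=l_2=0$, the kernel $(\mathrm{Id}-\mathcal P)K(t,\cdot,y)$ has $L^1$ norm $\lesssim e^{-\frac32c_0t}$. This gives $e^{c_0t}\|e^{t\mathcal L}Y_0\|_{L^\infty}\lesssim\|Y_0\|_{L^\infty}$. For the Hölder part we take $\delta_\alpha\partial_x^m$ of the kernel and use the $l_1=m,m+1$ bounds. Exactly as in \eqref{delKL1}, this yields
\[
\|e^{t\mathcal L}Y_0\|_{\dot C^{m+\kappa}}\lesssim t^{-(m+\kappa)}e^{-\frac32c_0t}\|Y_0\|_{L^\infty}.
\]
Here the scaling is $s=1$, since the kernel profile is $t^{-1}\langle x/t\rangle^{-3}$.

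\textbf{Forced part.} We move the derivative onto the kernel, writing $\int K\partial_yF=-\int\partial_yK\,F$, and subtract $F(\tau,x)$. This subtraction is allowed because $\int\partial_y(\mathrm{Id}-\mathcal P)K\,dy$ is controlled (or vanishes) by periodicity. For the $L^\infty$ bound we then get
\[
\int_0^t\int|\partial_yK(t-\tau)|\,|x-y|^\kappa\,dy\,\|F(\tau)\|_{\dot C^\kappa}\,d\tau
\lesssim\int_0^t e^{-\frac32c_0(t-\tau)}(t-\tau)^{-1+\kappa}\tau^{-\kappa}e^{-c_0\tau}\,d\tau\cdot\sup_\tau e^{c_0\tau}\tau^\kappa\|F\|_{\dot C^\kappa}.
\]
Multiplying by $e^{c_0t}$, the remaining integral $\int_0^t e^{-\frac12c_0(t-\tau)}(t-\tau)^{\kappa-1}\tau^{-\kappa}\,d\tau$ is uniformly bounded in $t$. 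For $t\le1$ this is the Beta integral. For $t\ge1$ we split at $t/2$: on $[0,t/2]$ the integrand is at most $e^{-c_0t/4}t^{\kappa-1}\int_0^{t/2}\tau^{-\kappa}\,d\tau\lesssim e^{-c_0t/4}$, and on $[t/2,t]$ it is at most $t^{-\kappa}\int e^{-\frac12c_0r}r^{\kappa-1}\,dr$.

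For the $\dot C^{m+\kappa}$ seminorm we split into $[0,t/2]$ and $[t/2,t]$ as in $\mathcal I_1,\mathcal I_2$ of Lemma \ref{lemuLN}. On $[0,t/2]$ all derivatives go onto the kernel, using $\|F\|_{\dot C^\kappa}$. On $[t/2,t]$, $m$ derivatives go onto $F$, using $\|F\|_{\dot C^{m+\kappa}}$, and the kernel carries one derivative plus $\delta_\alpha$. In each region the exponential margin absorbs the polynomial factors in $t$, so the weights $t^{m+\kappa}$ and $e^{c_0t}$ combine to give a constant independent of $T$.

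\textbf{Main obstacle.} The hardest step is justifying the pointwise kernel bounds for all $t>0$ rather than for $t<T_0$ only. Estimates \eqref{sho} and \eqref{exp2} hold only for short times, via freezing coefficients (Theorem \ref{lemmain} for the operator $\mathcal L$, whose principal part is $\A(\partial_xZ)\Lambda$ with smooth coefficients). For large times we would write $e^{t\mathcal L}=e^{T_0\mathcal L}e^{(t-T_0)\mathcal L}$. The short-time factor provides the smoothing, and the long-time factor gives $L^\infty$ decay from the spectral gap. That decay requires upgrading $L^2$ decay on $\mathcal V^\perp$ to $L^\infty$, which we get from $L^\infty\to L^2\to L^2\to L^\infty$ using short-time smoothing on both ends together with Sobolev embedding on $\mathbb S$. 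This yields \eqref{esexp} and its $\delta_\alpha$ variant. Once those are in hand, we obtain the short-range shape $\langle |x-y|/t\rangle^{-3}$ for $t<T_0$ and pure exponential decay for $t\ge T_0$, and the integrals above close.
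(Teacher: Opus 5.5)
There is a genuine gap in your $\dot C^{m+\kappa}$ estimate of the forced part on $[t/2,t]$. There you move $m$ derivatives from the kernel onto $F$ ``as in $\mathcal I_2$ of Lemma \ref{lemuLN}''. That argument relies on $\K_{x_0}(t,\tau,x-z)$ being a convolution kernel, so that $\nabla_x^m$ can be traded for $\nabla_z^m$. The Green's function $K(t,x,y)$ of $\partial_t-\mathcal L$ is not of convolution type: the principal part $\A(\partial_xZ)\Lambda$ has $x$-dependent coefficients, as you note yourself. Consequently $\partial_x^m\int K(r,x,y)\,\partial_yF(\tau,y)\,dy$ cannot be rewritten as $\int K(r,x,y)\,\partial_y^{m+1}F(\tau,y)\,dy$, unless you exploit extra structure of $\mathcal L$, which you do not invoke. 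The bound \eqref{esgre} controls $\partial_x^{l_1}\partial_y^{l_2}K$ only separately, losing $t^{-1}$ per derivative. It gives no gain on $(\partial_x+\partial_y)K$, so it cannot supply the cancellation you need.

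Without that cancellation, putting all derivatives on the kernel only gives $\|e^{r\mathcal L}\partial_xF(\tau)\|_{\dot C^{m+\kappa}}\lesssim r^{-(m+1)}\|F(\tau)\|_{\dot C^{\kappa}}$, which is not integrable as the elapsed time $r=t-\tau\to0$. Some Schauder input near the time diagonal is therefore indispensable. The freezing-coefficient estimate \eqref{exp2} provides exactly this, but only for $r<T_0$, not on all of $[t/2,t]$ when $t$ is large. Relatedly, the ``main obstacle'' you identify is misplaced. The all-time bounds \eqref{esgre} and \eqref{esexp} are taken as given in this section; the delicate point is this short-time transfer of derivatives.

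The paper avoids the issue by splitting the elapsed time at the fixed $T_0$ rather than at $t/2$.
\begin{itemize}
\item For $r<T_0$ it uses \eqref{exp2}. This bounds the contribution by $\sup_{\sigma\in(t-T_0,t)}\|F(\sigma)\|_{\dot C^{m+\kappa}}$, which for $t>1$ is $\lesssim e^{-c_0 t}t^{-(m+\kappa)}$ up to a factor $e^{c_0T_0}$.
\item For $r\ge T_0$ it puts all derivatives on $e^{r\mathcal L}$ via \eqref{esexp}, paying only a constant depending on $T_0$.
\end{itemize}
In the second range the factor $t^{-(m+\kappa)}$ must come from the forcing weight, and there are two sub-cases.
\begin{itemize}
\item Where $\sigma=t-r\le t/2$, the bound $r^{-(m+1)}\lesssim t^{-(m+1)}$ together with $\int_0^{t/2}\sigma^{-\kappa}\,d\sigma$ suffices.
\item Where $\sigma\ge t/2$ and $r\in[T_0,t/2]$, the exponential margin $e^{-\frac12 c_0 r}$ does not help, since $r$ may be close to $T_0$. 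Here you need $\|F(\sigma)\|_{\dot C^{\kappa}}\lesssim\|F(\sigma)\|_{\dot C^{m+\kappa}}$. This holds on $\mathbb S$ because derivatives of periodic functions have mean zero, which is why the paper's last line carries the inhomogeneous norm $C^{m+\kappa}$.
\end{itemize}

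The rest of your argument is fine as written, because no derivative has to be transferred there. This covers the free part, the $L^\infty$ bound of the forced part, and the piece $\tau\in[0,t/2]$.
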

		\begin{proof}
			The proof basically follows the proof of Proposition \ref{propb=0}, the only difference is the appearance of the exponential temporal weight for large time. It suffices to consider $t>1$. Firstly, we have the Duhamel principle:
			\begin{align*}
				Y(t)&=e^{t\mathcal{L}}Y_0+\int_0^t e^{(t-\tau)\mathcal{L}}\partial_x F(\tau)d\tau:=Y_L(t)+Y_F(t).
			\end{align*}
			Note that $Y=(\mathrm{Id}-\mathcal{P})Y$ provided $Y_0,F\in\mathcal{V}^\perp$. 
			From the estimate of \eqref{esgre}, it is straightforward to obtain 
			\begin{align*}
				&\sup_{t\in[0,T]} e^{c_0t}(\|Y_L(t)\|_{L^\infty}+t^{m+\kappa}\|Y_L(t)\|_{\dot C^{m+\kappa}
				})\lesssim \|Y_0\|_{L^\infty}.
			\end{align*}
			For the contribution of force term,  we have 
			\begin{align*}
				\|Y_F(t)\|_{L^\infty}	&\overset{\eqref{esexp}}\lesssim e^{-c_0t} \int_0^t (t-\tau)^{-(1-\kappa)}\tau^{-\kappa} d\tau \sup_{\sigma\in [0,T]}e^{c_0\sigma}\sigma^\kappa\|F(\sigma)\|_{\dot C^\kappa}\\
				&\lesssim e^{-c_0t} \sup_{\sigma\in [0,T]}e^{c_0\sigma}\sigma^\kappa\|F(\sigma)\|_{\dot C^\kappa}.
			\end{align*}
			Then we consider the higher order H\"{o}lder norm. By \eqref{exp2} and \eqref{esexp}, we obtain 
			\begin{align*}
				\|Y_F(t)\|_{\dot C^{m+\kappa}}&\lesssim \int_0^{T_0} \|e^{\tau\mathcal{L}}\partial_x F(t-\tau)\|_{\dot C^{m+\kappa}}d\tau+\int_{T_0}^t \|e^{\tau\mathcal{L}}\partial_x F(t-\tau)\|_{\dot C^{m+\kappa}}d\tau\\
				&\lesssim \sup_{\tau\in(0,T_0)}\|F(t-\tau)\|_{\dot C^{m+\kappa}}+\int_{T_0}^te^{-c_0\tau}\tau^{-(m+1)}\|F(t-\tau)\|_{\dot C^\kappa}d\tau\\ 
				&\lesssim e^{-c_0t }  t^{-(m+\kappa)}\sup_{\sigma\in(0,t)}e^{c_0\sigma } \sigma^{m+\kappa}\|F(\sigma)\|_{C^{m+\kappa}},
			\end{align*}
			where the implicit constant depends on $T_0$. 
			This completes the proof of the lemma.
		\end{proof}\vspace{0.3cm}\\
		We now consider the reformulated system for $(Y,Z)=((\mathrm{Id}-\mathcal{P})X, \mathcal{P}X)$:
		\begin{equation}\label{eqYZ}
			\begin{aligned}
				\partial_t Y -\mathcal{L}Y&=(\mathrm{Id}-\mathcal{P})\mathfrak{N}(X),\\
				\partial_t Z&
				=\mathcal{P}\mathfrak{N}(X).
			\end{aligned}
		\end{equation}
		where $\mathfrak{N}(X)=\N(X)-\mathcal{L}X$.\\
		For the nonlinear term $\mathfrak{N}$, we have 
		\begin{equation}\label{esNpes}
			\begin{aligned}
				\sup_{t\in[0,T]}e^{c_0t}&(t^\kappa\|\mathfrak{N}(X)(t)\|_{\dot C^\kappa}+t^{m+\kappa}\|\mathfrak{N}(X)(t)\|_{\dot C^{m+\kappa}})\\
				&\lesssim \|X\|_{\mathcal{G}_T}^2(1+\|X\|_{\mathcal{G}_T}+\mathbf{\Theta}_X(T)+\|\partial_xZ\|_{L^\infty_TL^\infty})^{m+3},
			\end{aligned}
		\end{equation}
		with the time-weighted norm $\|\cdot\|_{\mathcal{G}_T}$ defined as
		\begin{align*}
			\|X\|_{\mathcal{G}_T}:=\sup_{t\in [0,T]}e^\frac{c_0t}{4}(\|\partial_t\mathcal{P}X\|_{L^\infty}+\|(\mathrm{Id}-\mathcal{P})X\|_{\dot W^{1,\infty}}+t^{m+\kappa}\|(\mathrm{Id}-\mathcal{P})X\|_{\dot C^{m+1+\kappa}}).
		\end{align*}
		The proof follows similarly to Lemma \ref{lemnonpes} and Lemma \ref{lempesR}, we omit details here and refer to \cite{KN} for similar estimates. Remark that the solution (as well as the nonlinear term) generates the exponential decay property from the Green function \eqref{esgre}.\vspace{0.3cm}\\
		Consider the Cauchy problem \eqref{eqpesk} with initial data $X_0$ satisfying
		\begin{equation}\label{smbes}
			(1+\|\partial_xX_0\|_{L^\infty})^{10m}\|(\mathrm{Id}-\mathcal{P})X_0\|_{\dot B^1_{\infty,\infty}}\leq \varepsilon_0.
		\end{equation}
		Theorem \ref{thmPesB} guarantees the local existence of solution $X$ in $[0,T'_0]$ with the estimate 
		\begin{align*}
			&\|X\|_{T'_0}\lesssim (1+\|\partial_xX_0\|_{L^\infty})^{10m},\\
			&\|X-\mathcal{P}X_0\|_{T'_0,*}\leq C(1+\|\partial_xX_0\|_{L^\infty})^{10m}\|(\mathrm{Id}-\mathcal{P})X_0\|_{\dot B^1_{\infty,\infty}}.
		\end{align*}
		Furthermore, from \eqref{eqYZ} we obtain 
		\begin{align*}
			\|\mathcal{P}X(T'_0)-\mathcal{P}X_0\|_{L^\infty}\leq \int_0^{T'_0}\|\mathcal{P}\mathfrak{N}(X)(\tau)\|_{L^\infty}d\tau\lesssim T'_0(1+\|X\|_{T_0'})^{10m}.
		\end{align*}
	It then follows that
		\begin{align*}
			\|(\mathrm{Id}-\mathcal{P})X\|_{T'_0,*}\lesssim C(1+\|\partial_xX_0\|_{L^\infty})^{10m}\|(\mathrm{Id}-\mathcal{P})X_0\|_{\dot B^1_{\infty,\infty}},
		\end{align*}
		which implies 
		\begin{align*}
			\|(\mathrm{Id}-\mathcal{P})X(T'_0)\|_{\dot W^{1,\infty}}&\lesssim \|(\mathrm{Id}-\mathcal{P})X(T'_0)\|_{\dot C^{1+\eta}}\\
			&\lesssim (T'_0)^{-\eta}(1+\|\partial_xX_0\|_{L^\infty})^{10m}\|(\mathrm{Id}-\mathcal{P})X_0\|_{\dot B^1_{\infty,\infty}}.
		\end{align*}
		Note that $T'_0$ is independent of $\|(\mathrm{Id}-\mathcal{P})X_0\|_{\dot B^{1}_{\infty,\infty}}$. By \eqref{smbes}, we obtain \begin{align}\label{lipbes}
		\|(\mathrm{Id}-\mathcal{P})X(T'_0)\|_{\dot W^{1,\infty}}\leq C(T_0')^{-\eta}\varepsilon_0.
		\end{align} Thus, to obtain global well-posedness of solution with initial data satisfying \eqref{smbes}, it suffices to extend the solution globally starting at time $T'_0$, where the data already has a small Lipschitz norm.
		
		To obtain the global existence and asymptotic behavior of the solution, we separately estimate $Y$ and $Z$ in the system
		\eqref{eqYZ} with initial data $(Y_0,Z_0)=((\mathrm{Id}-\mathcal{P})X(T'_0),\mathcal{P}X(T_0))$. Denote $\tilde t=t-T'_0$. Applying Lemma \ref{lemY} for $\partial_xY$, by \eqref{esNpes} we derive for any $T>0$,
		\begin{align*}
			\sup_{\tilde t\in[0,T]} e^{c_0\tilde t}&(\|\partial_xY(t)\|_{L^\infty}+\tilde t^{m+\kappa}\|\partial_x^{m+1} Y(t)\|_{\dot C^\kappa})\\
			&\lesssim \|\partial_xY_0\|_{L^\infty}+\sup_{\tilde t\in[0,T]}e^{c_0\tilde t} (\tilde t^\kappa\|(\mathrm{Id}-\mathcal{P})\mathfrak{N}(X)(t)\|_{\dot C^\kappa}+\tilde t^{m+\kappa}\|(\mathrm{Id}-\mathcal{P})\mathfrak{N}(X)( t)\|_{\dot C^{m+\kappa}})\\
			&\lesssim \|\partial_xY_0\|_{L^\infty}+\|X\|_{\mathcal{G}_T}^2(1+\|X\|_{\mathcal{G}_T}+\mathbf{\Theta}_X(T)+\|\partial_xZ\|_{L^\infty_TL^\infty})^{m+3}.
		\end{align*}
		For $Z$, we directly obtain
		\begin{align*}
			\|\partial_t Z(t)\|_{L^\infty}=\|\mathcal{P}\mathfrak{N}(X)\|_{L^\infty}\lesssim e^{-c_0\tilde t}\|X\|_{\mathcal{G}_T}^2(1+\|X\|_{\mathcal{G}_T}+\mathbf{\Theta}_X(T)+\|\partial_xZ\|_{L^\infty_TL^\infty})^{m+3}. 
		\end{align*}
		Hence, we deduce that 
		\begin{align*}
			\|X\|_{\mathcal{G}_T} \leq C \|(\mathrm{Id}-\mathcal{P})X(T'_0)\|_{\dot W^{1,\infty}}+C\|X\|_{\mathcal{G}_T}^2(1+\|X\|_{\mathcal{G}_T}+\mathbf{\Theta}_X(T)+\|\partial_xZ\|_{L^\infty_TL^\infty})^{m+3}.
		\end{align*}
        Denote 
        \begin{align*}
        T_1=&\sup\left\{T: \|X\|_{\mathcal{G}_T}\leq 10C \|(\mathrm{Id}-\mathcal{P})X(T'_0)\|_{\dot W^{1,\infty}}, \mathbf{\Theta}_X(T)\leq 10C\mathbf{\Theta}_X(T'_0),\right. \\
        &\quad\quad\quad\quad\left.\|\partial_xZ\|_{L^\infty_TL^\infty}\leq 10 C\|\partial_xZ(T'_0)\|_{L^\infty}\right\}.
        \end{align*}
		By standard bootstrap argument, there exists $\varepsilon_1>0$ such that if $\|(\mathrm{Id}-\mathcal{P})X(T'_0)\|_{\dot W^{1,\infty}}<\varepsilon_1$, then $T_1=\infty$.
		Combining this with \eqref{lipbes}, we obtain the global existence of solution with initial data satisfying \eqref{smbes} with $\varepsilon_0\leq (1+C)^{-1}(T'_0)^\eta \varepsilon_1$. Moreover, the global solution converges
exponentially in time to a stationary circle solution. Specifically, the deviation $Y(t,x)=(\mathrm{Id}-\mathcal{P})X(t,x)$
		has the following decay estimate:  
		\begin{align*}
			&\|Y(t)\|_{L^\infty}+\|Y(t)\|_{\dot W^{1,\infty}}+t^{m+\kappa}\|Y(t)\|_{\dot C^{m+1+\kappa}}\\&\quad\quad\quad\leq C'(1+\|\partial_xX_0\|_{L^\infty})^{10m}\|(\mathrm{Id}-\mathcal{P})X_0\|_{\dot B^1_{\infty,\infty}} e^{-\frac{c_0t}{8}},
		\end{align*}
		for all $t>0.$
		\section{The Peskin problem in 3D}\label{secpes3d}
		Due to the non-trivial geometry in the 3D setting, we need to work in local charts. This makes the problem more complicated in view of the non-local character of the equation. However, we mention that the essential structure of the 3D problem is the same as that of the 2D problem.
		
		\subsection{Reformulation}
		In  this section, we briefly denote 
		$$\widetilde\nabla=\nabla_{\mathbb{S}^2},\quad \mathbf{T}(\lambda)= \frac{\mathcal{T}\left(\lambda\right)}{\lambda}.$$  
		
		By the standard stereographic projection $\mathcal{X}:\mathbb{R}^2\rightarrow\mathbb{S}^2$, see \cite[Definition 3.9]{3Dpeskin}, we can  
		transform the equation from $\mathbb{S}^2$ to $\mathbb{R}^2$. Define
		\begin{equation*}
			\mathcal{X}(\theta)=\left(\frac{2\theta_1}{1+|\theta|^2},\frac{2\theta_2}{1+|\theta|^2},\frac{-1+|\theta|^2}{1+|\theta|^2} \right).
		\end{equation*}
		We can see that for any compact set $K\Subset\mathbb{S}^2$ with $(0,0,1)\notin K$, $\mathcal{X}$ is a homeomorphism from $\mathcal{X}^{-1}(K)$ to $K$. By local chart, for $F\in C^1(\mathbb{S}^2)$ with $ (0,0,1)\notin \text{Supp}(F)$ and $\widehat{\mathbf{\boldsymbol{x}}}=\mathcal{X}(\theta)$, we can define $\tilde\nabla F$ as 
		\begin{equation*}
			\widetilde\nabla F(\widehat{\mathbf{\boldsymbol{x}}})=\left(\frac{1+|\theta|^2}{2}\right)^2\sum_i\frac{\partial(F\circ\mathcal{X})}{\partial\theta_i}\frac{\partial\mathcal{X}}{\partial\theta_i}(\theta),
		\end{equation*}
		which naturally gives
		\begin{equation*}
		|\widetilde\nabla F|(\widehat{\mathbf{\boldsymbol{x}}})=\frac{1+|\theta|^2}{2}\left|\nabla(F\circ\mathcal{X})\right|(\theta).
		\end{equation*}
		Throughout this section, we denote variables on $\mathbb{S}^2$ by $\widehat{\boldsymbol{x}},\widehat{\boldsymbol{y}}$, and their counterparts on $\mathbb{R}^2$ by $\theta,\eta$.  With a slight abuse of notation, $L^\infty$ will refer to either $L^\infty(\mathbb{S}^2)$ or $L^\infty(\mathbb{R}^2)$, depending on the domain of the variables under consideration. The same convention applies to $C^{\kappa}$ spaces.

		Denote $\widehat{\boldsymbol{x}}_*=(0,0,-1)$. For any $\widehat{\boldsymbol{x}}_0\in\mathbb{S}^2$, we define the rotation $\mathcal R_{\widehat{\boldsymbol{x}}_0}:\mathbb{S}^2\rightarrow\mathbb{S}^2$ such that $\mathcal R_{\widehat{\boldsymbol{x}}_0}(\widehat{\boldsymbol{x}}_*)=\widehat{\boldsymbol{x}}_0$, and define the smooth cut-off functions $0\leq\chi_{\widehat{\boldsymbol{x}}_0},\tilde{\chi}_{\widehat{\boldsymbol{x}}_0}\leq 1$ such that $\mathrm{Supp}(\chi_{\widehat{\boldsymbol{x}}_0})\subset \tilde\chi_{\widehat{\boldsymbol{x}}_0}^{-1}(1)$, and
		\begin{equation}\label{cutoffpara}
			\begin{aligned}
				&\text{Supp}(\chi_{\widehat{\boldsymbol{x}}_0}\circ\mathcal{R}_{\widehat{\boldsymbol{x}}_0})\subset B(\widehat{\boldsymbol{x}}_*,a_2)\cap\mathbb{S}^2,\quad \chi_{\widehat{\boldsymbol{x}}_0}\circ\mathcal{R}_{\widehat{\boldsymbol{x}}_0}(B(\widehat{\boldsymbol{x}}_*,a_1)\cap\mathbb{S}^2)=1,\\
				&\text{Supp}(\tilde\chi_{\widehat{\boldsymbol{x}}_0}\circ\mathcal{R}_{\widehat{\boldsymbol{x}}_0})\subset B(\widehat{\boldsymbol{x}}_*,a_4)\cap\mathbb{S}^2,\quad \tilde \chi_{\widehat{\boldsymbol{x}}_0}\circ\mathcal{R}_{x_0}(B(\widehat{\boldsymbol{x}}_*,a_3)\cap\mathbb{S}^2)=1,\\
				&B(\widehat{\boldsymbol{x}}_*,a_1)\cap\mathbb{S}^2\subset \mathcal{X}(B(0,R_0))\subset \mathcal{X}(B(0,2R_0))\subset B(\widehat{\boldsymbol{x}}_*,a_2)\cap\mathbb{S}^2,\\
				&B(\widehat{\boldsymbol{x}}_*,a_3)\cap\mathbb{S}^2\subset \mathcal{X}(B(0,3R_0))\subset \mathcal{X}(B(0,4R_0))\subset B(\widehat{\boldsymbol{x}}_*,a_4)\cap\mathbb{S}^2,
			\end{aligned}
		\end{equation}
		where $0<a_1<a_2<a_3<a_4$, $B(a,r)$ denotes the ball with center $a$ and radius $r$ on $\mathbb{R}^2$ or $\mathbb{R}^3$, and $R_0$ is a fixed constant in this section. We take finite $\{\widehat{\boldsymbol{x}}_i\}_{i=1}^n\subset \mathbb{S}^2$ and $\{\chi_{i}\}_{i=1}^n=\{\chi_{\widehat{\boldsymbol{x}}_i}\}_{i=1}^n$ such that $\cup_{i=1}^n\chi_{i}^{-1}(1)$ is a cover of $\mathbb{S}^2$, and denote $\mathcal{R}_i=\mathcal{R}_{\widehat{\boldsymbol{x}}_i}$. 
		
		Define the H\"{o}lder norm on $\mathbb{S}^2$ as 
		\begin{equation}\label{normsph}
			\begin{aligned}
				\|f\|_{ C^\alpha(\mathbb{S}^2)}:=\sum_{i=1}^n\|(f\chi_{i})\circ\mathcal{R}_{i}\circ\mathcal{X}\|_{ C^{\alpha}(\mathbb{R}^2)}.
			\end{aligned}
		\end{equation}
		Note that the definition is equivalent to the normal definition of inhomogeneous H\"{o}lder norm on sphere.

		Let $\Phi\in C^\infty(\mathbb{S}^2)$ close to $F_0$ in the sense of $C^1$ that will be fixed later, we rewrite the equation of $(F-\Phi)$ as
        \begin{equation}\label{3dpesb}
		\begin{aligned}
			&\partial_t(F-\Phi)(t,\widehat{\boldsymbol{x}})+\mathcal{L}_{\Phi}(F-\Phi)(t,\widehat{\boldsymbol{x}})=N(F,\Phi)(t,\widehat{\boldsymbol{x}}),\\
            &(F-\Phi)|_{t=0}=X_0-\Phi
		\end{aligned}
        \end{equation}
        We give the formula of the pseudo-differential operator $\mathcal{L}_{\Phi}$ and the nonlinear terms $N(F,\Phi)$ as the following.\\
		{\bf The Pseudo-differential operator.} The operator $\mathcal{L}_{\Phi}$  is defined by 
		\begin{equation}\label{3dpesdefl}
			\mathcal{L}_{\Phi}H(\hat{x})=\int_{\mathbb{S}^2}G(\Phi(\widehat{\boldsymbol{x}})-\Phi(\widehat{\boldsymbol{y}}))\widetilde\nabla_{\widehat{\boldsymbol{y}}}\cdot(J(\widetilde\nabla\Phi) \widetilde\nabla H)(\widehat{\boldsymbol{y}})d \mu_{\mathbb{S}^2}(\widehat{\boldsymbol{y}}),
		\end{equation}
		where $G$ is defined in \eqref{3dpesdefG}, and $J=J_1+J_2$ is a $3^{\otimes 4}$ tensor, which is defined by Einstein notation as 
		\begin{equation}\label{3dpesdefJ}
			\begin{aligned}
				&(J_1(\widetilde{\nabla} \Phi))^{ij}_{kl}=\mathbf{T}(|\widetilde{\nabla} \Phi|) \delta_{ki}\delta_{lj},\\
				&(J_2(\widetilde{\nabla} \Phi))^{ij}_{kl}=\mathbf{T}'(|\widetilde{\nabla} \Phi|) \frac{(\widetilde{\nabla} \Phi^k)_{l} (\widetilde{\nabla} \Phi^i)_{j}}{|\widetilde{\nabla} \Phi|},
			\end{aligned}
		\end{equation}
		where $\delta_{ij}=1$ if $i=j$ and $\delta_{ij}=0$ otherwise. Note that $(J_\sigma(\widetilde\nabla\Phi)\widetilde\nabla(F-\Phi))_{ij}=\sum_{k,l}J_\sigma(\widetilde\nabla\Phi)^{ij}_{kl}(\widetilde\nabla(F-\Phi)^k)_l$ for $\sigma=1,2$.\\
		{\bf Nonlinear terms.} The nonlinear term $N(F,\Phi)$ can be decomposed as $$N(F,\Phi)=(N_1+N_2+N_3)(F,\Phi)$$ with the following definitions
		\begin{equation}\label{3dpesnlt}
			\begin{aligned}
				&N_1(F,\Phi)(t,\widehat{\boldsymbol{x}})=-\int_{\mathbb{S}^2}\big(G(F(t,\widehat{\boldsymbol{x}})-F(t,\widehat{\boldsymbol{y}}))-G(\Phi(\widehat{\boldsymbol{x}})-\Phi(\widehat{\boldsymbol{y}}))\big)\widetilde\nabla\cdot(\mathbf{T}(|\widetilde\nabla F|) \widetilde\nabla F)(t,\widehat{\boldsymbol{y}})d \mu_{\mathbb{S}^2}(\widehat{\boldsymbol{y}}),\\
				&N_2(F,\Phi)(t,\widehat{\boldsymbol{x}})=-\int_{\mathbb{S}^2}G(\Phi(\widehat{\boldsymbol{x}})-\Phi(\widehat{\boldsymbol{y}}))\widetilde\nabla\cdot\big(\mathbf{T}(|\widetilde\nabla F|) \widetilde\nabla F-\mathbf{T}(|\widetilde\nabla\Phi|) \widetilde\nabla\Phi-J(\widetilde\nabla\Phi) \widetilde\nabla(F-\Phi)\big)(t,\widehat{\boldsymbol{y}})d \mu_{\mathbb{S}^2}(\widehat{\boldsymbol{y}}),\\
				&N_3(\Phi)(\widehat{\boldsymbol{x}})=-\int_{\mathbb{S}^2}G(\Phi(\widehat{\boldsymbol{x}})-\Phi(\widehat{\boldsymbol{y}}))\widetilde\nabla\cdot(\mathbf{T}(|\widetilde\nabla\Phi|) \widetilde\nabla\Phi)(\widehat{\boldsymbol{y}})d \mu_{\mathbb{S}^2}(\widehat{\boldsymbol{y}}).
			\end{aligned}
		\end{equation}
        Then \eqref{3dpesb} follows from definition above.
		Now we verify conditions \eqref{mfdcdl} and \eqref{mtmfcd1} for the operator $\mathcal{L}_{\Phi}$.
		\begin{lemma}\label{3dpesmlem}
			Let $\mathcal{L}_\Phi$ be the operator defined by \eqref{3dpesdefl}, with $\Phi$ satisfying \eqref{con3dpes}. There exists $\mathcal{L}_{\mathbb{R}^2}^i$ satisfying \eqref{defop} and \eqref{condop}, such that for any $l\in\mathbb{N}$,
			\begin{equation}\label{3dpesml}
				\|(\chi_i\mathcal{L}_{\Phi}H)\circ\mathcal{R}_i\circ\mathcal{X}-\mathcal{L}^i_{\mathbb{R}^2}((\chi_iH)\circ\mathcal{R}_i\circ\mathcal{X})\|_{\dot C^{l+\kappa}}\lesssim \|H\|_{C^{l+\kappa+1-\zeta_0}},
			\end{equation}
			for some $\zeta_0\in(0,1)$. 
		\end{lemma}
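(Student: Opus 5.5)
We work on a fixed chart $i$ and write everything in stereographic coordinates. Set $\phi_i=\Phi\circ\mathcal{R}_i\circ\mathcal{X}$ and $h_i=(\chi_iH)\circ\mathcal{R}_i\circ\mathcal{X}$. Let $w(\eta)$ be the area density of $\mu_{\mathbb{S}^2}$ in these coordinates, i.e. $d\mu_{\mathbb{S}^2}=w(\eta)\,d\eta$ with $w(\eta)=4(1+|\eta|^2)^{-2}$. We also write the surface divergence of $J\widetilde\nabla H$ in coordinates as $\nabla_\eta\cdot(\tilde J(\eta,\nabla\phi_i)\nabla(\cdot))$ plus lower-order terms, where $\tilde J$ is the pulled-back tensor (as in \eqref{deftJ}).

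\textbf{Step 1: insert the cutoffs.} Decompose $\chi_i\mathcal{L}_\Phi H=\chi_i\mathcal{L}_\Phi(\tilde\chi_iH)+\chi_i\mathcal{L}_\Phi((1-\tilde\chi_i)H)$. In the second piece the supports of $\chi_i$ and $1-\tilde\chi_i$ are separated by a fixed distance. By the arc-chord condition, $|\Phi(\widehat{\boldsymbol{x}})-\Phi(\widehat{\boldsymbol{y}})|\gtrsim|\widehat{\boldsymbol{x}}-\widehat{\boldsymbol{y}}|$, so $G(\Phi(\widehat{\boldsymbol{x}})-\Phi(\widehat{\boldsymbol{y}}))$ is smooth in $\widehat{\boldsymbol{x}}$ on that region. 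Integrating the divergence by parts once, this piece is bounded in every $C^{l+\kappa}$ by $\|H\|_{C^1}$, which is acceptable. The commutator between $\chi_i$ and the kernel acting on $\tilde\chi_iH$ is handled in the same way: replacing $\chi_i(\widehat{\boldsymbol{x}})$ by $\chi_i(\widehat{\boldsymbol{y}})$ costs a factor $|\widehat{\boldsymbol{x}}-\widehat{\boldsymbol{y}}|$. This turns the order-one operator into one of order $0$. Commuting $\chi_i$ past the divergence likewise yields only terms of order at most $0$.

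\textbf{Step 2: freeze the kernel.} In coordinates the principal part reads
\[
\int_{\mathbb{R}^2}G(\phi_i(\theta)-\phi_i(\eta))\,\nabla_\eta\cdot\big(\tilde J(\eta,\nabla\phi_i)\nabla h_i\big)(\eta)\,w(\eta)\,d\eta .
\]
Write $\phi_i(\theta)-\phi_i(\eta)=\nabla\phi_i(\theta)(\theta-\eta)+O(|\theta-\eta|^2)$. Since $G$ is homogeneous of degree $-1$, the difference
\[
G(\phi_i(\theta)-\phi_i(\eta))-G(\nabla\phi_i(\theta)(\theta-\eta))
\]
is a kernel of size $O(1)$, i.e. homogeneous of degree $0$ up to smooth errors. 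Composed with a second-order derivative, it gives an operator of order $1-1+\ldots$, effectively order $\le 1-\zeta_0$; in fact it is smooth enough to gain a full derivative after integrating by parts once. We also replace $w(\eta)$ by $w(\theta)$ and $\tilde J(\eta,\cdot)$ by $\tilde J(\theta,\cdot)$; each replacement gains $|\theta-\eta|$ and hence one order. The model operator is then
\[
\mathcal{L}^i_{\mathbb{R}^2}h(\theta)=w(\theta)\int G\big(\nabla\phi_i(\theta)(\theta-\eta)\big)\,\nabla\cdot\big(\tilde J(\theta)\nabla h\big)(\eta)\,d\eta .
\]
Its symbol is $\A(\theta,\xi)=w(\theta)\,\widehat{G_{\nabla\phi_i(\theta)}}(\xi)\,\xi^{\top}\tilde J(\theta)\xi$. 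The Fourier transform of the pulled-back Stokeslet is homogeneous of degree $-1$, so $\A$ is homogeneous of degree $1$ and smooth on $\xi\neq0$, giving the upper bounds in \eqref{condop}. Outside $B(0,4R_0)$ we extend $\phi_i$ and $\tilde J$ to constants, which does not affect \eqref{3dpesml}.

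\textbf{Step 3 (main obstacle): coercivity.} The lower bound $\A\ge c_0|\xi|\mathrm{Id}$ is the hard part. The Fourier transform of the Stokeslet restricted to the tangent plane $\nabla\phi_i(\theta)\mathbb{R}^2$, which is nondegenerate by the arc-chord bound $|\nabla\phi_i|\gtrsim\mathbf{\Theta}_0^{-1}$, is a positive-definite matrix of order $|\xi|^{-1}$. One sees this through the explicit formula for a single-layer Stokes operator on a plane, as in \cite{3Dpeskin}. The tensor $\tilde J$ is coercive by \eqref{conten}, since its eigenvalues are $\mathbf{T}$ and $\mathcal{T}'$. The product of two positive matrices is not symmetric, however. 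The plan is to verify $\A\geq c_0|\xi|$ in the sense used in the proof of Lemma~\ref{lemadk}, namely that $\mathrm{Re}\langle \A v,v\rangle$ is positive in a suitable inner product. This can be arranged by conjugating with $\widehat{G}^{1/2}$, or by symmetrizing through a change of unknown; if needed, the Gårding-type inequality \eqref{gardineq} can be used instead.

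The Hölder estimates for all error kernels above follow the pattern of Lemma~\ref{esthtker}. The error kernels are homogeneous of degree $\geq -1+\zeta_0$, with any $\zeta_0<1$ allowed, and smooth in $\theta$. This yields \eqref{3dpesml} with the stated loss of $1-\zeta_0$ derivatives.
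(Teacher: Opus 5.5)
Your Steps 1--2 coincide with the paper's decomposition: the localization error $\tilde N_4$, the linearization of the Stokeslet argument $\tilde N_5$, and the freezing of $\tilde J$ together with the cutoff tail $\tilde N_6$. The substantive comparison is therefore Step 3.

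The paper asserts \eqref{condop} for the model symbol $\hat G\mathcal{J}$, with $\hat G=\frac{2\pi(\mathrm{Id}_3+v\otimes v)}{|\det B||B^{-1}\xi|}$ as in \eqref{3dpesopG}, simply by citing \cite{3Dpeskin}. You instead flag that a product of two symmetric positive matrices need not be coercive, and this is a real issue. Write $A=\tilde\phi_i(\theta)$ and $B=(A^{\top}A)^{1/2}$. From \eqref{deftJ},
\[
\mathcal{J}=a\,\mathrm{Id}_3+b\,w\otimes w,\qquad w=\frac{A\xi}{|A\xi|},\quad a=\frac{\mathcal{T}(r)}{r}|\xi|^2,\quad b=\Big(\mathcal{T}'(r)-\frac{\mathcal{T}(r)}{r}\Big)\frac{|A\xi|^2}{|A|^2},\quad r=\rho|A|.
\]
For generic $\xi$ one has $v\cdot w\neq0$ unless $B$ is a multiple of the identity. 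In that case the symmetric part of $\hat G\mathcal{J}$ restricted to $\mathrm{span}\{v,w\}$ has negative determinant once $b/a$ is large, i.e.\ once $r\mathcal{T}'(r)/\mathcal{T}(r)$ is large, which \eqref{conten} allows; already $b=100a$ with $(v\cdot w)^2=\tfrac12$ suffices. The two matrices commute, and the problem disappears, only when $\nabla\phi_i$ is conformal or $\mathcal{T}$ is Hookean. So \eqref{condop} in its literal sense is not available, and your weighted-norm route is the right replacement.

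Commit to that route rather than listing options. Run Lemma~\ref{lemadk} with the energy $\mathrm{tr}(\hat{\mathcal K}\hat G\hat{\mathcal K}^{\top})$. Since $\A\hat G+\hat G\A^{\top}=2\hat G\mathcal{J}\hat G\geq 2c_*|\xi|\hat G$, and $\hat G$ is independent of $\tau$, homogeneous of degree $-1$ in $\xi$, and has condition number $2$, you recover \eqref{expA}--\eqref{derA} up to a bounded factor. That is all that Corollary~\ref{lemfourierK}, Proposition~\ref{propb=0} and Theorem~\ref{thmmani} use. State explicitly that you are proving this modified hypothesis rather than \eqref{condop} itself. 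Also drop the G\aa{}rding fallback: \eqref{gardineq} gives $L^2$ control, not the pointwise kernel bounds the Schauder argument needs.

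The second point needing repair is the H\"older control of the error terms, which you only gesture at. Lemma~\ref{esthtker} concerns convolution kernels given by exponentially damped Fourier multipliers. The errors here have non-convolution kernels $K(\theta,\eta)$ that are singular on the diagonal and not homogeneous; the $\tilde N_5$ kernel, for instance, is $O(1)$ with an expansion in terms homogeneous of degree $0,1,\dots$ in $\theta-\eta$. Taking $l$ derivatives in $\theta$ naively costs $l$ orders of singularity.

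The paper's device is \eqref{3dpeskerd}, $\nabla_\theta^k=\sum_l C(k,l)(\nabla_\theta+\nabla_\eta)^l\nabla_\eta^{k-l}$. The diagonal derivative $\nabla_\theta+\nabla_\eta$ preserves the kernel bounds. The $\nabla_\eta$'s are moved onto $h_i$ by integration by parts after subtracting values at $\theta$. The resulting integrals give $\|\tilde N_j\|_{C^k}\lesssim\|H\|_{C^{k+\varepsilon}}$, and \eqref{3dpesml} follows by interpolation.

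Equivalently, write $K(\theta,\eta)=k(\theta,\theta-\eta)$ with $k$ smooth in its first slot. You can then treat each error as a classical pseudodifferential operator of order at most $0$ and use boundedness on H\"older--Zygmund spaces. One of these two arguments has to be written out, since this is where the order gain $\zeta_0$ is actually established.
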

		\begin{proof}
       We divide the proof into two steps. In the first step, we reformulate $((\chi_i \mathcal{L}_{\Phi} H)\circ \mathcal{R}_i \circ \mathcal{X})$ and express it as the sum of the principal part $\mathcal{L}^i_{\mathbb{R}^2}((\chi_i H)\circ \mathcal{R}_i \circ \mathcal{X})$ and several error terms, where the principal operator satisfies \eqref{defop} and \eqref{condop}. In the second step, we show that the error terms satisfy \eqref{3dpesml}.\\
        \textbf{Step 1: Reformulation.}\\
			Recall that the initial data $X_0$ satisfies
		\begin{equation}\label{3dpesnc}
			\mathbf \Theta_0:=\sup_{\widehat{\boldsymbol{x}},\widehat{\boldsymbol{y}}\in\mathbb{S}^2}\frac{|\widehat{\boldsymbol{x}}-\widehat{\boldsymbol{y}}|}{|X_0(\widehat{\boldsymbol{x}})-X_0(\widehat{\boldsymbol{y}})|}<\infty.
		\end{equation}
		Since we choose $\|X_0-\Phi\|_{C^1(\mathbb{S}^2)}\leq \eps_0$ by \eqref{con3dpes}, by \eqref{3dpesnc} we obtain
			\begin{equation}\label{3dpesPc}
				\inf_{\widehat{\boldsymbol{x}},\widehat{\boldsymbol{y}}\in\mathbb{S}^2}\frac{|\Phi(\widehat{\boldsymbol{x}})-\Phi(\widehat{\boldsymbol{y}})|}{|\widehat{\boldsymbol{x}}-\widehat{\boldsymbol{y}}|}\geq \mathbf{\Theta}_0^{-1}-2C\|X_0-\Phi\|_{C^1(\mathbb{S}^2)}\geq\frac{4\mathbf{\Theta}_0^{-1}}{5},
			\end{equation}
			provided $\eps_0<\frac{1}{100(C+10)\mathbf{\Theta}_0}$. 
			Recall that we can split the integration region of \eqref{3dpesdefl} into the union of finite balls $\cup_{i=1}^nB(\widehat{\boldsymbol{x}}_i,a_1)$. Consider a small neighborhood of $\widehat{\boldsymbol{x}}_i$, and define
			\begin{equation*}
				\mathcal{L}_{\Phi,i}H=\int_{\mathbb{S}^2\cap B(\hat{\boldsymbol{x}}_i,a_2)}\widetilde\nabla_{\widehat{\boldsymbol{y}}}G(\Phi(\widehat{\boldsymbol{x}})-\Phi(\widehat{\boldsymbol{y}}))\cdot(J(\widetilde\nabla\Phi) \widetilde\nabla H)(t,\widehat{\boldsymbol{y}})d \mu_{\mathbb{S}^2}(\widehat{\boldsymbol{y}}).
			\end{equation*}
			Denote $H_i=H\chi_i$, $h_i=H_i\circ\mathcal{R}_i\circ\mathcal{X}$, $\phi_i=(\tilde{\chi}_i\Phi)\circ\mathcal{R}_i\circ\mathcal{X}$. Now we approximate $\chi_i(\mathcal{L}_\Phi H)$ by $\tilde\chi_i\mathcal{L}_{\Phi,i}H_i$, and denote the error term as
			\begin{equation}\label{apN4}
				N_4=\chi_i(\mathcal{L}_\Phi H)-\tilde\chi_i\mathcal{L}_{\Phi,i}H_i,\quad \tilde{N}_4=N_4\circ\mathcal{R}_i\circ\mathcal{X}.
			\end{equation}
			We consider $\tilde\chi_i\mathcal{L}_{\Phi,i}H_i$ in the following. We consider $\left(\widetilde\nabla G\cdot(J(\widetilde\nabla\Phi)\widetilde\nabla(H\chi_i))\right)\circ\mathcal{R}_i\circ\mathcal{X}$. By changing variable formula, generally for any $F:\mathbb{S}^2\rightarrow \mathbb{R}^3$, and $f=F\circ\mathcal{X}:\mathbb{R}^2\rightarrow\mathbb{R}^3$, we have
			\begin{equation}\label{3dpescvar}
				\int_{\mathbb{S}^2}F(\widehat{\boldsymbol{y}})d \mu_{\mathbb{S}^2}(\widehat{\boldsymbol{y}})=\int_{\mathbb{R}^2}\left(\frac{2}{1+|\eta|^2}\right)^2f(\eta)d\eta.
			\end{equation}
			And by classical calculus, for any $F,G:\mathbb{S}^2\rightarrow\mathbb{R}$,
			\begin{equation}\label{3dpescvar2}
				\begin{aligned}
					(\tilde{\nabla}F\cdot\tilde{\nabla }G)\circ \mathcal{R}_i\circ\mathcal{X}&=\left(\frac{1+|\theta|^2}{2}\right)^4\sum_{j,l,m}\frac{\partial(F\circ\mathcal{R}_i\circ\mathcal{X})}{\partial\theta_l}\frac{\partial\mathcal{X}^j}{\partial\theta_l}\frac{\partial(G\circ\mathcal{R}_i\circ\mathcal{X})}{\partial\theta_m}\frac{\partial\mathcal{X}^j}{\partial\theta_m}\\
					&=\left(\frac{1+|\theta|^2}{2}\right)^2\nabla(F\circ\mathcal{R}_i\circ\mathcal{X})\cdot\nabla(G\circ\mathcal{R}_i\circ\mathcal{X}).
				\end{aligned}
			\end{equation}
			Define 
			\begin{equation*}\label{def3dpestilJ}
				\begin{aligned}
					\tilde{\mathbf{T}}(\theta,\beta)=\mathbf{T}(\rho(\theta)\beta)=\frac{\mathcal{T}(\rho(\theta)\beta)}{\rho(\theta)\beta},\ \ \ \theta \in\mathbb{R}^2, \beta\in\mathbb{R}.
				\end{aligned}
			\end{equation*}
			for some smooth $\rho$ bounded from below and above, such that $\rho(\theta)=\frac{1+|\theta|^2}{2}$ for $|\theta|\leq 4R_0$ with $R_0$ in \eqref{cutoffpara}. Moreover, corresponding to the tensor $J$ in \eqref{3dpesdefJ}, we define the $3\times3\times 2\times 2$ tensor $\tilde{J}(\eta,\nabla\phi_i)$ as 
			\begin{equation*}
				\begin{aligned}
					&\left(\tilde{J}(\eta,\nabla\phi_i)\right)_{kn}^{jl}=\sum_{\sigma=1,2}\left(\tilde{J}_\sigma(\eta,\nabla\phi_i(\eta))\right)_{kn}^{jl},\quad l,n=1,2,\quad j,k=1,2,3,\\
					&\left(\tilde{J}_1(\eta,\nabla\phi_i(\eta))\right)_{kn}^{jl}=\tilde{\mathbf{T}}(\eta,\nabla\phi_i(\eta))\delta_{ln}\delta_{jk},\\
                    &\left(\tilde{J}_2(\eta,\nabla\phi_i(\eta))\right)_{kn}^{jl}=(\partial_\beta\tilde{\mathbf{T}}(\eta,\beta))\Big|_{\beta=|\nabla\phi_i(\eta)|}\frac{(\nabla\phi_i(\eta))_{jl}(\nabla\phi_i(\eta))_{kn}}{|\nabla\phi_i(\eta)|}.
				\end{aligned}
			\end{equation*}
			By \eqref{3dpescvar} and \eqref{3dpescvar2} we can write 
			\begin{equation*}
				\begin{aligned}
					&\left(\int_{\mathbb{S}^2}\widetilde\nabla_{\widehat{\boldsymbol{y}}} G(\Phi_i(\widehat{\boldsymbol{x}})-\Phi_i(\widehat{\boldsymbol{y}}))\cdot(J((\widetilde\nabla\Phi))\widetilde\nabla H_i)(t,\widehat{\boldsymbol{y}})d\mu_{\mathbb{S}^2}(\widehat{\boldsymbol{y}})\right)\circ\mathcal{R}_{i}\circ\mathcal{X}\\
                    &\quad\quad\quad\quad=\int_{\mathbb{R}^2} G(\phi_i(\theta)-\phi_i(\eta))\nabla\cdot \left(\tilde{J}(\eta,\nabla\phi_i)\nabla h_i(t,\eta)\right)d\eta.
				\end{aligned}
			\end{equation*}
			We approximate $G(\phi_i(\theta)-\phi_i(\eta))$ by $G(\nabla\phi_i(\theta)(\theta-\eta))$, so we have
			\begin{equation}\label{3dpesdefn5}
				\begin{aligned}
					&\left(\tilde\chi_i\mathcal{L}_{\Phi,i}H_i\right)\circ\mathcal{R}_i\circ\mathcal{X}(t,\theta)=\left(\tilde\chi_i\circ\mathcal{R}_i\circ\mathcal{X}\right)(\theta)\int_{\mathbb{R}^2} G(\phi_i(\theta)-\phi_i(\eta))\nabla\cdot \left(\tilde{J}(\eta,\nabla\phi_i)\nabla h_i(t,\eta)\right)d\eta\\
					&\quad=\left(\tilde\chi_i\circ\mathcal{R}_i\circ\mathcal{X}\right)(\theta)\int_{\mathbb{R}^2} G(\nabla\phi_i(\theta)(\theta-\eta))\nabla\cdot \left(\tilde{J}(\eta,\nabla\phi_i)\nabla h_i(t,\eta)\right)d\eta+\tilde{N}_5(t,\theta),
				\end{aligned}
			\end{equation}
			with error term $\tilde{N}_5$ defined as
			\begin{equation*}
				\tilde{N}_5(t,\theta)=(\tilde\chi_i\circ\mathcal{R}_i\circ\mathcal{X})(\theta)\int_{\mathbb{R}^2}\left(G(\nabla\phi_i(\theta)(\theta-\eta))-G(\phi_i(\theta)-\phi_i(\eta))\right)\nabla\cdot(\tilde{J}(\eta,\nabla\phi_i)\nabla h_i)(t,\eta)d\eta.
			\end{equation*}
			We proceed by analyzing the first term on the right-hand side of equation \eqref{3dpesdefn5} using the methodology established in \cite{3Dpeskin}. For $G(\nabla\phi_i(\theta)(\theta-\eta))$, denote
			\begin{align*}
				A(\theta)=\nabla\phi_i(\theta)\in \mathbb{M}_{3\times 2}(\mathbb{R}^2),\quad B(\theta)=\sqrt{A^\top(\theta)A(\theta)}\in \mathbb{M}_{2\times 2}(\mathbb{R}^2),\quad Q(\theta)=A(\theta)B^{-1}(\theta)\in \mathbb{M}_{3\times 2}(\mathbb{R}^2).
			\end{align*}
			where $\mathbb{M}_{m\times n}$ denotes the set of $m\times n$ matrices. By fundamental calculus, we know that 
			\begin{equation*}
				\begin{aligned}
					&\left|A\eta \right|^2=(A\eta)^\top A\eta=\eta^\top (A^\top A)\eta=\left|B\eta \right|^2,\\
					&(A\eta)\otimes (A\eta)=(A\eta)(A\eta)^\top=A\eta\eta^\top A^\top =QB\eta\eta^\top B^\top Q^\top =Q(B\eta)\otimes(B\eta)Q^\top,
				\end{aligned}
			\end{equation*}
			and
			\begin{equation*}
				\mathcal{F}\left(\frac{1}{|\theta|}\right)(\xi)=\frac{2\pi}{|\xi|},\quad\mathcal{F}\left(\frac{\theta_i\theta_j}{|\theta|^3}\right)(\xi)=2\pi\left(\frac{\delta_{ij}}{|\xi|}-\frac{\xi_i\xi_j}{|\xi|^3}\right).
			\end{equation*}
			By changing variable, for any fixed $\theta$, we write $\mathcal{F}(G(\nabla\phi_i(\theta)\cdot))(\xi)$ as
			\begin{equation}\label{3dpesopG}
				\begin{aligned}
					\frac{2\pi\mathrm{Id}_3}{|\det B(\theta)||B^{-1}\xi|}+\frac{2\pi}{|\det B(\theta)|} Q\left(\frac{\mathrm{Id}_2}{|B^{-1}\xi|}-\frac{(B^{-1}\xi)\otimes( B^{-1}\xi)}{|B^{-1}\xi|^3}\right) Q^\top=2\pi\frac{\mathrm{Id}_3+v(\theta,\xi)\otimes v(\theta,\xi)}{|\det B(\theta)||B^{-1}\xi|}
				\end{aligned}
			\end{equation}
			with $v(\theta,\xi)=Q\begin{pmatrix}
				0&1\\
				-1&0
			\end{pmatrix}\frac{B^{-1}\xi}{|B^{-1}\xi|}$. By \eqref{3dpesPc}, $A$ is non-degenerate and $B$ is positive definite, with $$0<(1+{\mathbf{\Theta}_0})C^{-1}\leq|\det B|^{\frac{1}{2}}\leq C\|\Phi\|_{W^{1,\infty}},$$ where $C$ is a universal constant. Consequently, the matrix defined in \eqref{3dpesopG} is positive definite. More precisely, denote $C_{B,\xi}=\frac{1}{\det(B)|B^{-1}\xi|}$, then $C_{B,\xi}|\xi|$ is uniformly bounded from above and below, and the eigenvalues of the matrix in \eqref{3dpesopG} are positive.\\
            We define $\tilde \phi_i\in C^\infty(\mathbb{R}^2;\mathbb{R}^{3\times 2})$ as an extension of $\nabla\phi_i$ bounded up and below. Precisely, it satisfies
			\begin{align*}
				&\tilde \phi_i(\theta)\equiv \nabla\phi_i(\theta),\quad \text{for}\ |\theta|\leq 3R_0,\\
				&0<C_1<|\tilde \phi_i|<C_2<\infty,\\
				&\| {\tilde{\phi}_i}\|_{C^{m+2}}\leq C_3\|\Phi\|_{C^{m+2}}<\infty.
			\end{align*}
			Based on \eqref{3dpesopG} and following the calculations in Section 4 of \cite{3Dpeskin}, we define
			\begin{equation}
            \label{3dpesdefop}
		\begin{aligned}
		    \mathcal{L}_{\mathbb{R}^2}^if(t,\theta)
            &=\int_{\mathbb{R}^2} G(\tilde\phi_i(\theta)(\theta-\eta))\nabla\cdot \left(\tilde{J}(\theta,\tilde \phi_i)\nabla f(t,\eta)\right)d\eta
           \\&=\int_{\mathbb{R}^2}\frac{(\mathrm{Id}_3+\tilde v(\theta,\xi)\otimes \tilde v(\theta,\xi))\big(\mathcal{ J}(\tilde\phi_i)(\theta,\xi)\big)}{|\det B(\theta)||B^{-1}\xi|}\hat{f}(t,\xi)e^{i\theta\cdot\xi}d\xi,
		\end{aligned}		
			\end{equation}
			with
			\begin{equation}\label{deftJ}
				\begin{aligned}
					&\mathcal{J}(\tilde\phi_i)(\theta,\xi)=\frac{\mathcal{T}(\rho(\theta)|\tilde A|)}{\rho(\theta)|\tilde A|}\left(|\xi|^2\mathrm{Id}_3-\frac{(\tilde A\cdot\xi)\otimes (\tilde A\cdot\xi)}{|\tilde A|^2}\right) +\mathcal{T}'(\rho(\theta)|\tilde A|)\frac{(\tilde A\cdot\xi)\otimes (\tilde A\cdot\xi)}{|\tilde A|^2},\\     
					&\tilde{A}=\tilde A(\theta)=\tilde{\phi}_i(\theta),\quad \tilde B(\theta)=\sqrt{\tilde A^\top(\theta)\tilde A(\theta)},\quad \tilde Q(\theta)=\tilde A(\theta)\tilde B^{-1}(\theta), \quad \tilde v(\theta,\xi)=\tilde Q(\theta)\begin{pmatrix}
						0&1\\
						-1&0
					\end{pmatrix}\frac{\tilde B^{-1}\xi}{|\tilde B^{-1}\xi|}.
				\end{aligned}
			\end{equation}
			By invoking the result from \cite{3Dpeskin} and using the fact that $\tilde A$ has full rank while $\tilde B$ is positive definite, along with condition \eqref{conten} and positivity of \eqref{3dpesopG}, we deduce that the operator in \eqref{3dpesdefop} satisfies the lower bound $C\mathfrak{c}(R)|\xi|$ where $C>0$, $R = 2\|X_0\|_{W^{1,\infty}}$ and $\mathfrak{c}_R$ is defined in \eqref{conten}. Furthermore, by the smoothness of $\tilde\phi_i$, an upper bound for the derivatives is easily verified, revealing \eqref{defop} holds. We refer to Section 4, \cite{3Dpeskin} for the proof of coercive condition \eqref{condop} of the operator $\mathcal{L}_{\mathbb{R}^2}^i$. So we have
			\begin{equation}\label{ap3}
				\left(\tilde\chi_i\circ\mathcal{R}_i\circ\mathcal{X}\right)(\theta)\int_{\mathbb{R}^2} G(\nabla\phi_i(\theta)(\theta-\eta))\nabla\cdot \left(\tilde{J}(\eta,\nabla\phi_i)\nabla h_i(t,\eta)\right)d\eta=\mathcal{L}_{\mathbb{R}^2}^if(t,\theta)+\tilde{N}_6(t,\theta),
			\end{equation}
			with the dominate operator $\mathcal{L}_{\mathbb{R}^2}^i$ satisfying \eqref{defop} and \eqref{condop}, and the error term $\tilde N_6$
			\begin{equation*}
				\tilde{N}_6(t,\theta)=(\tilde\chi_i\circ\mathcal{R}_i\circ\mathcal{X})(\theta)\int_{\mathbb{R}^2}G(\nabla\phi_i(\theta)(\theta-\eta))\nabla\cdot(\tilde{J}(\eta,\nabla\phi_i)\nabla h_i)(t,\eta)d\eta-\mathcal{L}_{\mathbb{R}^2}^ih_i(t,\theta).
			\end{equation*}
			Combining \eqref{ap3} with \eqref{3dpesdefn5} and \eqref{apN4}, we obtain 
			\begin{align*}
				(\chi_i\mathcal{L}_{\Phi}H)\circ\mathcal{R}_i\circ\mathcal{X}-\mathcal{L}^i_{\mathbb{R}^2}((\chi_iH)\circ\mathcal{R}_i\circ\mathcal{X})=\tilde N_4+\tilde N_5+\tilde N_6.
			\end{align*}
            \textbf{Step 2: Error estimates.}\\
			To prove  \eqref{3dpesml}, it suffices to prove 
			\begin{equation}\label{NH}
				\sum_{j=4,5,6}\|\tilde{N}_j(t)\|_{ C^{l+\kappa}}\lesssim\|H(t)\|_{C^{l+\kappa+1-\zeta_0}},
			\end{equation}
			for some small $\zeta_0\in(0,1)$ and any $l\in\mathbb{N}$. In the following of the proof, we will shortly denote 
			\begin{equation}\label{Pesconphi}
				M_{\Phi}:=(1+\mathbf{\Theta}_0)\|\Phi\|_{C^{10m}},\quad m_{\Phi}:=\|\Phi\|_{C^1}.
			\end{equation}
			By Lemma \ref{maininterpo}, we only need to estimate $\|\tilde N_j\|_{C^{k}}$ for any $k\leq m+1$.\\
            For $\tilde{N}_4$, by definition of H\"{o}lder norm on $\mathbb{S}^2$ and interpolation, we can write
			\begin{equation*}
				\begin{aligned}
					N_4(t,\widehat{\boldsymbol{x}})&=\tilde{\chi}_i(\widehat{\boldsymbol{x}})\int_{\mathbb{S}^2}(\chi_i(\widehat{\boldsymbol{x}})-\chi_i(\widehat{\boldsymbol{y}}))\widetilde\nabla_{\widehat{\boldsymbol{y}}} (G(\Phi(\widehat{\boldsymbol{x}})-\Phi(\widehat{\boldsymbol{y}})))\cdot(J(\widetilde\nabla\Phi) \widetilde\nabla H)(t,\widehat{\boldsymbol{y}})d \mu_{\mathbb{S}^2}(\widehat{\boldsymbol{y}})\\
					&\quad\quad\quad\quad\quad-\tilde{\chi}_i(\widehat{\boldsymbol{x}})\int_{\mathbb{S}^2}\widetilde\nabla_{\widehat{\boldsymbol{y}}}  G(\Phi(\widehat{\boldsymbol{x}})-\Phi(\widehat{\boldsymbol{y}}))\cdot(J(\widetilde\nabla \Phi) (\widetilde\nabla\chi_i\otimes H))(t,\widehat{\boldsymbol{y}})d \mu_{\mathbb{S}^2}(\widehat{\boldsymbol{y}})\\
					&:=N_{41}(t,\widehat{\boldsymbol{x}})+N_{42}(t,\widehat{\boldsymbol{x}}).
				\end{aligned}
			\end{equation*}
			By \eqref{3dpesPc}, if we define $r=|\widehat{\boldsymbol{x}}-\widehat{\boldsymbol{y}}|$ and the kernel $	\mathbf{G}_{\Phi}(\widehat{\boldsymbol{x}}, \widehat{\boldsymbol{y}})=G(\Phi(\widehat{\boldsymbol{x}})-\Phi(\widehat{\boldsymbol{y}}))$, then
			\begin{equation*}
				r |(\widetilde\nabla_{\widehat{\boldsymbol{x}}}+\widetilde\nabla_{\widehat{\boldsymbol{y}}}) \mathbf{G}_{\Phi}(\widehat{\boldsymbol{x}}, \widehat{\boldsymbol{y}})|+r^2|\widetilde\nabla_{\widehat{\boldsymbol{x}},\widehat{\boldsymbol{y}}} \mathbf{G}_{\Phi}(\widehat{\boldsymbol{x}}, \widehat{\boldsymbol{y}})|+r^3|\widetilde\nabla_{\widehat{\boldsymbol{x}}}\widetilde\nabla_{\widehat{\boldsymbol{y}}} \mathbf{G}_{\Phi}(\widehat{\boldsymbol{x}}, \widehat{\boldsymbol{y}})|\lesssim M_{\Phi}(1+M_{\Phi})^2.
			\end{equation*}
			Integrating by parts, we obtain that
			\begin{equation*}
				\|N_4(t)\|_{L^\infty}\lesssim M_{\Phi}(1+M_{\Phi})\|H(t)\|_{C^1}.
			\end{equation*}
			To estimate the H\"{o}lder norms, we employ the binomial identity 
            \begin{equation*}
                a^k=(a+b-b)^k=\sum_{l=0}^k C(k,l)(a+b)^lb^{k-l},
            \end{equation*}
            where $C(k,l)=(-1)^{k-l}\frac{k!}{l!(k-l)!}$ is the refined binomial coefficient. This identity allows us to systematically transform derivatives from $\widehat{\boldsymbol{x}}$-coordinates to the $\widehat{\boldsymbol{y}}$-coordinates. Specifically, we apply this decomposition on both $\mathbb{S}^2$ and $\mathbb{R}^2$ to facilitate the norm estimates.
			\begin{equation}\label{3dpeskerd}
				\begin{aligned}
					&\widetilde\nabla_{\widehat{\boldsymbol{x}}}^k=\sum_{l=0}^kC(k,l)(\widetilde\nabla_{\widehat{\boldsymbol{x}}}+\widetilde\nabla_{\widehat{\boldsymbol{y}}})^l\tilde\nabla_{\widehat{\boldsymbol{y}}}^{k-l},\\
					&\nabla_{\theta}^k=\sum_{l=0}^kC(k,l)(\nabla_{\theta}+\nabla_{\eta})^l\nabla_{\eta}^{k-l}.
				\end{aligned}
			\end{equation}
			Then by using \eqref{3dpesPc}, one can see that for any $l\in\mathbb{N}$ and  $r=|\widehat{\boldsymbol{x}}-\widehat{\boldsymbol{y}}|$,
			\begin{align*}
				r|(\widetilde\nabla_{\widehat{\boldsymbol{x}}}+\widetilde\nabla_{\widehat{\boldsymbol{y}}})^l \mathbf{G}_{\Phi}(\widehat{\boldsymbol{x}}, \widehat{\boldsymbol{y}})|&+r^2|\widetilde\nabla_{\widehat{\boldsymbol{y}}}(\widetilde\nabla_{\widehat{\boldsymbol{x}}}+\widetilde\nabla_{\widehat{\boldsymbol{y}}})^l \mathbf{G}_{\Phi}(\widehat{\boldsymbol{x}}, \widehat{\boldsymbol{y}})|\\&+r^3|\widetilde\nabla_{\widehat{\boldsymbol{x}}}\widetilde\nabla_{\widehat{\boldsymbol{y}}}(\widetilde\nabla_{\widehat{\boldsymbol{x}}}+\widetilde\nabla_{\widehat{\boldsymbol{y}}})^l \mathbf{G}_{\Phi}(\widehat{\boldsymbol{x}}, \widehat{\boldsymbol{y}})|\lesssim M_{\Phi}(1+M_{\Phi})^{l+2}.
			\end{align*}
			Denote $\tilde{G}(\widehat{\boldsymbol{x}},\widehat{\boldsymbol{y}})=\left(\chi_i(\widehat{\boldsymbol{x}})-\chi_i(\widehat{\boldsymbol{y}})\right)\widetilde{\nabla}_{\widehat{\boldsymbol{y}}}G(\Phi(\widehat{\boldsymbol{x}})-\Phi(\widehat{\boldsymbol{y}}))$. For any $l\in\mathbb{N}$, we have 
			\begin{equation*}
				\begin{aligned}
					& r|(\widetilde\nabla_{\widehat{\boldsymbol{x}}}+\widetilde\nabla_{\widehat{\boldsymbol{y}}})^l\tilde{G}(\widehat{\boldsymbol{x}},\widehat{\boldsymbol{y}})|+r^2|\widetilde\nabla_{\widehat{\boldsymbol{y}}}(\widetilde\nabla_{\widehat{\boldsymbol{x}}}+\widetilde\nabla_{\widehat{\boldsymbol{y}}})^l\tilde{G}(\widehat{\boldsymbol{x}},\widehat{\boldsymbol{y}})|\lesssim M_{\Phi}(1+M_{\Phi})^{l+2},
				\end{aligned}
			\end{equation*}
			which together with \eqref{3dpeskerd} implies
			\begin{equation*}
				\begin{aligned}
					\|\widetilde\nabla^{k}N_{41}(t)\|_{L^\infty}
					&\lesssim \sum_{m_1\leq k}\Big\|\widetilde\nabla^{m_1}\tilde\chi_i(\widehat{\boldsymbol{x}})\int_{\mathbb{S}^2}\widetilde\nabla_{\widehat{\boldsymbol{x}}}^{k-m_1}(\tilde{G}(\widehat{\boldsymbol{x}},\widehat{\boldsymbol{y}}))\cdot(J(\widetilde\nabla\Phi) \widetilde\nabla H)(t,\widehat{\boldsymbol{y}})d \mu_{\mathbb{S}^2}(\widehat{\boldsymbol{y}})\Big\|_{L^\infty_{\widehat{\boldsymbol{x}}}}\\
					&\lesssim \sum_{m_1\leq k}\Big\|\widetilde\nabla^{m_1}\tilde\chi_i(\widehat{\boldsymbol{x}})\int_{\mathbb{S}^2}(\widetilde\nabla_{\widehat{\boldsymbol{x}}}+\widetilde\nabla_{\widehat{\boldsymbol{y}}})^{k-m_1}(\tilde{G}(\widehat{\boldsymbol{x}},\widehat{\boldsymbol{y}}))\cdot(J(\widetilde\nabla\Phi) \widetilde\nabla H)(t,\widehat{\boldsymbol{y}})d \mu_{\mathbb{S}^2}(\widehat{\boldsymbol{y}})\Big\|_{L^\infty_{\widehat{\boldsymbol{x}}}}\\
					&\quad+\sum_{m_1\leq k-1}\Big\|\widetilde\nabla^{m_1}\tilde\chi_i(\widehat{\boldsymbol{x}})\sum_{1\leq m_2\leq k-m_1}\int_{\mathbb{S}^2}\widetilde\nabla_{\widehat{\boldsymbol{y}}}(\widetilde\nabla_{\widehat{\boldsymbol{x}}}+\widetilde\nabla_{\widehat{\boldsymbol{y}}})^{k-m_1-m_2}(\tilde{G}(\widehat{\boldsymbol{x}},\widehat{\boldsymbol{y}}))\\
					&\quad\quad\times\left(\widetilde{\nabla}^{m_2-1}(J(\widetilde\nabla\Phi) \widetilde\nabla H)(t,\widehat{\boldsymbol{y}})-\widetilde{\nabla}^{m_2-1}(J(\widetilde\nabla\Phi) \widetilde\nabla H)(t,\widehat{\boldsymbol{x}})\right)d \mu_{\mathbb{S}^2}(\widehat{\boldsymbol{y}})\Big\|_{L^\infty_{\widehat{\boldsymbol{x}}}}\\
					&\lesssim\|\tilde\chi_i\|_{C^{k}}(1+\|\tilde\chi_i\|_{C^{k}})M_{\Phi}(1+M_{\Phi})^{k}\|H(t)\|_{C^{k+\eps}},
				\end{aligned}
			\end{equation*}
			for some $\eps\in(0,1)$. Note that the critical norm here should be $\|H(t)\|_{C^{k+1}}$, which implies that $N_{41}$ is a lower order term. For $\widetilde\nabla^kN_{42}$, we have
			\begin{equation*}
				\begin{aligned}
					\|\widetilde\nabla^kN_{42}(t)\|_{L^\infty}&\lesssim
					\sum_{\substack{m_1\leq k\\
							m_2\leq k-m_1}}\left\|\widetilde\nabla^{m_1}\tilde{\chi}_i(\widehat{\boldsymbol{x}})\int_{\mathbb{S}^2}(\widetilde\nabla_{ \widehat{\boldsymbol{x}}}+\widetilde\nabla_{ \widehat{\boldsymbol{y}}})^{k-m_1-m_2}\widetilde\nabla_{ \widehat{\boldsymbol{y}}}G(\Phi(\widehat{\boldsymbol{x}})-\Phi(\widehat{\boldsymbol{y}}))\right.\\
					&\quad\quad \times\left(\widetilde\nabla^{m_2}((J(\widetilde\nabla\Phi)( \widetilde\nabla\chi_i\otimes H)))(t,\widehat{\boldsymbol{y}})-\widetilde\nabla^{m_2}((J(\widetilde\nabla\Phi) (\widetilde\nabla\chi_i\otimes H)))(t,\widehat{\boldsymbol{x}})\right)d \mu_{\mathbb{S}^2}(\widehat{\boldsymbol{y}})\Big\|_{L^\infty}\\
					&\lesssim \|\tilde\chi_i\|_{C^{k+1}}(1+\|\tilde\chi_i\|_{C^{k+1}})M_{\Phi}(1+M_{\Phi})^{k+1}\|H(t)\|_{C^{k+\eps}}.
				\end{aligned}
			\end{equation*}
			From the above estimates, for any $0\leq k\leq m+1$, we obtain
			\begin{equation}\label{3dpesn4h}
				\begin{aligned}
					\|N_4(t)\|_{C^{k}}\lesssim M_{\Phi}(1+M_{\Phi})^{k+1}\|\tilde\chi_i\|_{C^{k+1}}(1+\|\tilde\chi_i\|_{C^{k+1}})\|H(t)\|_{C^{k+\eps}}.
				\end{aligned}
			\end{equation}
			Now we estimate $\tilde{N}_5$. For $L^\infty$ norm, it is straightforward to check that
			\begin{equation*}
				\|\tilde{N}_5(t)\|_{L^\infty}\lesssim M_{\Phi}\|\nabla h_i(t)\|_{L^\infty}.
			\end{equation*}
			For H\"{o}lder norms $\dot C^{k+1}$, we shortly denote $\chi_i^R(\theta)=(\chi_i\circ\mathcal{R}_i\circ\mathcal{X})(\theta)$, $\tilde\chi_i^R(\theta)=(\tilde\chi_i\circ\mathcal{R}_i\circ\mathcal{X})(\theta)$ and $\tilde{G}(\theta,\eta)=G(\phi_i(\theta)-\phi_i(\eta))-G(\nabla\phi_i(\theta)(\theta-\eta))$. By \eqref{3dpesPc}, we can see that 
			\begin{align*}
				|\tilde\chi_i^R(\theta)\tilde\chi_i^R(\eta)\nabla_{\theta}\nabla_{\eta}\tilde{G}(\theta,\eta)|\lesssim \frac{M_{\Phi}}{|\theta-\eta|^2}.
			\end{align*}
			Furthermore, we notice that for any $l\in\mathbb{N}$, 
			\begin{align*}
				|\tilde\chi_i^R(\theta)\tilde\chi_i^R(\eta)\nabla_{\theta}\nabla_{\eta}(\nabla_{\theta}+\nabla_{\eta})^l\tilde{G}(\theta,\eta)|\lesssim \frac{M_{\Phi}(1+M_{\Phi})^{l+1}}{|\theta-\eta|^2}.
			\end{align*}
			Applying \eqref{3dpeskerd} and integration by parts, similar to \eqref{3dpesn4h}, we deduce
			\begin{equation*}
				\begin{aligned}
					&\|\nabla^k\tilde{N}_5(t)\|_{L^\infty}\lesssim \|\tilde\chi_i\|_{C^{k+1}}(1+\|\tilde\chi_i\|_{C^{k+1}})^{k+1}M_{\Phi}\|h_i(t)\|_{C^1}\\
					&\quad+\sum_{\substack{2\leq l\leq k\\
							m\leq l-1}}\|\tilde\chi_i\|_{C^{k-l}}\Big\|\int_{\mathbb{R}^2}\nabla_{\theta}\nabla_\eta(\nabla_\theta+\nabla_\eta)^{l-m-1}\tilde{G}(\theta,\eta)\\
                            &\quad\quad\quad\quad\quad\quad\times\left(\nabla^{m}(\tilde{J}(\eta,\nabla\phi_i)\nabla h_i)(t,\theta)-\nabla^{m}(\tilde{J}(\eta,\nabla\phi_i)\nabla h_i)(t,\eta)\right)d\eta\Big\|_{L^\infty_\theta}\\
					&\quad+\Big\|\int_{\mathbb{R}^2}\nabla_{\theta}\nabla_\eta\tilde{G}(\theta,\eta)\cdot\left(\nabla^{k-1}(\tilde{J}(\eta,\nabla\phi_i)\nabla h_i)(t,\theta)-\nabla^{k-1}(\tilde{J}(\eta,\nabla\phi_i)\nabla h_i)(t,\eta)\right)\tilde\chi_i(\eta)d\eta\Big\|_{L^\infty_\theta}\\
					&\quad+\Big\|\int_{\mathbb{R}^2}\nabla_{\theta}\tilde{G}(\theta,\eta)\cdot\nabla_\eta\tilde\chi_i(\eta)\nabla^{k-1}(\tilde{J}(\eta,\nabla\phi_i)\nabla h_i)(t,\theta)d\eta\Big\|_{L^\infty_\theta}\\
					&\lesssim \|\tilde\chi_i\|_{C^{k+1}}(1+\|\tilde\chi_i\|_{C^{k+1}})^{k+1}M_{\Phi}(1+M_{\Phi})^{k+2}\|h_i(t)\|_{C^{k+\eps}}.
				\end{aligned}
			\end{equation*}
			The estimate above gives for any $0\leq k\leq m+1$, 
			\begin{equation}\label{3dpesn5h}
				\begin{aligned}
					\|\tilde{N}_5(t)\|_{ C^{k}}\lesssim \|\tilde\chi_i\|_{C^{k+1}}M_{\Phi}(1+M_{\Phi})^{k+1}\|h_i(t)\|_{C^{k+\eps}}.
				\end{aligned}
			\end{equation}
			We write $\tilde N_6$ as
			\begin{equation*}
				\begin{aligned}
					\tilde{N}_6(t,\theta)=&- \int_{\mathbb{R}^2}(G(\tilde \phi_i(\theta)(\theta-\eta)))\nabla\cdot\left(\big(\tilde{J}(\eta,\tilde\phi_i)-\tilde{J}(\theta,\tilde\phi_i)\big)\nabla h_i(t,\eta)\right)d\eta\\
					&-(1-\tilde{\chi}_i\circ\mathcal{R}_i\circ\mathcal{X})(\theta)\int_{\mathbb{R}^2}G(\tilde \phi_i(\theta)(\theta-\eta))\nabla\cdot\left(\tilde{J}(\eta,\tilde\phi_i)\nabla h_i(t,\eta)\right)d\eta\\
					=&\tilde{N}_{61}(t,\theta)+\tilde{N}_{62}(t,\theta).
				\end{aligned}
			\end{equation*}
			For $\tilde{N}_{61}$, we use similar methods as $\tilde{N}_4$ and $\tilde{N}_5$
			to get
			\begin{equation*}
				\begin{aligned}
					\|\tilde{N}_{61}(t)\|_{C^k}\lesssim\|\tilde\chi_i\|_{C^{k+1}}(1+\|\tilde\chi_i\|_{C^{k+1}})^{k+1}M_{\Phi}(1+M_{\Phi})^{k+2}\|h_i(t)\|_{C^{k+\eps}}.
				\end{aligned}
			\end{equation*}
			For $\tilde{N}_{62}$, we notice that $\text{Supp}(1-\tilde{\chi}_i\circ\mathcal{R}_i\circ\mathcal{X})\subset \{|\theta|\geq 3R_0\}$, and $\text{Supp}(\nabla h_i)\subset \{|\eta|\leq 2R_0\}$, so $\tilde{N}_{62}$ has no singularity. Precisely, we have
			\begin{equation*}
				\left|\nabla_{\theta}^l\left(G(\tilde \phi_i(\theta)(\theta-\eta))\right)\right|\lesssim(1+M_{\Phi})^{l+2} R_0^{-l},\quad \forall |\theta|\geq 3R_0,\ |\eta|\leq 2R_0.
			\end{equation*}
			Then it follows
			\begin{equation*}
				\|\tilde{N}_{62}(t)\|_{C^k}\lesssim \|\tilde\chi_i\|_{C^{k+1}}(1+\|\tilde\chi_i\|_{C^{k+1}})^{k+1}M_{\Phi}(1+M_{\Phi})^{k+2}R_0^{-l}\|h_i(t)\|_{C^1}.
			\end{equation*}
			We conclude that for any $0\leq k\leq m+1$, 
			\begin{equation}\label{3dpesn6}
				\begin{aligned}
					\|\tilde N_6(t)\|_{C^{k}}\lesssim M_{\Phi}(1+M_{\Phi})^{k+2}\|h_i(t)\|_{C^{k+\eps}}.
				\end{aligned}
			\end{equation}
			Combining \eqref{3dpesn4h}, \eqref{3dpesn5h} and \eqref{3dpesn6}, we obtain \eqref{NH}. This completes the proof of the lemma.
		\end{proof}
		\subsection{Proof of Theorem \ref{thmPes3d}}
		In this section, we apply Theorem \ref{thmmani} (Schauder-type estimates on manifolds) to the reformulated system \eqref{3dpesb}, along with the nonlinear estimates (Lemmas \ref{3dpeslemn1}–\ref{3dpeslemn3}) and the error estimate (Lemma \ref{3dpesmlem}) to establish well-posedness results.
		
		Let $m\in\mathbb{N}, \kappa\in(0,1)$ and $1-\kappa \ll 1$. Define the following norm and set,
		\begin{equation}\label{def3dpeszt}
			\begin{aligned}
				&\|X\|_{Z_T}=\sup_{t\in[0,T]}\left(\|X(t)\|_{C^1}+t^{m+\kappa}\|\tilde\nabla X(t)\|_{ C^{m+\kappa}}\right),\\
				&\mathcal{Z}_{T,\Phi}^\sigma=\left\{X:\mathbf\Theta_X(T)<2\mathbf\Theta_0,\ \|X-\Phi\|_{Z_T}\leq\sigma \right\}.
			\end{aligned}
		\end{equation}
		Similarly to \eqref{defM0}, we define 
        \begin{equation}\label{defM1}
            \begin{aligned}
                &\mathfrak{M}'_T(X)=\left(C(1+\mathbf{\Theta}_0)^{m+1}(\mathfrak{c}(\|X\|_{L_T^\infty W^{1,\infty}})^{-1}+\mathfrak{C}(\|X_0\|_{L_T^\infty W^{1,\infty}}))\right)^{m+2},\\
                &\mathfrak{M}_1=\left(C(1+\mathbf{\Theta}_0)^{m+1}(\mathfrak{c}(4\|X_0\|_{W^{1,\infty}})^{-1}+\mathfrak{C}(4\|X_0\|_{W^{1,\infty}}))\right)^{m+2},
            \end{aligned}
        \end{equation}
       where $C$ is a universal constant and $\mathfrak{c},\mathfrak{C}$ defined in \eqref{conten}. Similar to \eqref{controlM}, we obtain 
       \begin{equation*}
           \mathfrak{M}'_T(X)\leq \mathfrak{M}_1,\quad \forall X\in\mathcal{Z}_{T,\Phi}^\sigma,
       \end{equation*}
       provided $\sigma,\varepsilon_0<\min\{\frac{1}{100\mathbf{\Theta}_0},\frac{1}{2}\|X_0\|_{W^{1,\infty}}\}$.
       Define the map $\mathcal{S}(Q)=F$ to be the solution of 
		\begin{equation}\label{3dpesm}
			\begin{aligned}
				&\partial_t(F-\Phi)(t,\widehat{\boldsymbol{x}})+\mathcal{L}_{\Phi}(F-\Phi)(t,\widehat{\boldsymbol{x}})=N(Q,\Phi)(t,\widehat{\boldsymbol{x}}),\\
				&F(0,\widehat{\boldsymbol{x}})=F_0(\widehat{\boldsymbol{x}}),
			\end{aligned}
		\end{equation}
		with $C$ a universal constant and $F_0=X_0$. We prove $\mathcal{S}$ is a contraction map on $\mathcal{Z}_{T,\Phi}^\sigma$ for some small $\sigma$ and $T$.
		For brevity, denote  $H=F-\Phi$ and $H_0=F_0-\Phi$. 
		With Lemma \ref{3dpesmlem}, we are able to apply \eqref{pehmfd} in Theorem \ref{thmmani} to \eqref{3dpesm} with $\mathcal{L}_{\mathcal{M}}^{s}=\mathcal{L}_\Phi$, $\mathcal{L}_{\mathbb{R}^d}^{s,j}$ as \eqref{3dpesdefop}, $G=N(Q,\Phi)$, $n=1$. There exists $T>0$ such that 
		\begin{equation*}
			\|H\|_{Z_T}\leq C_0\mathfrak{M}_1\left(\|H_0\|_{W^{1,\infty}}+\da^{1,m,\kappa}_{T,\mathbb{S}^2}(N(Q,\Phi))\right),
		\end{equation*}
		where $\da^{1,m,\kappa}_{T,\mathbb{S}^2}(N(Q,\Phi))$ is defined in \eqref{pehmfd}. Applying Lemma \ref{3dpeslemn1}, Lemma \ref{3dpeslemn2} and Lemma \ref{3dpeslemn3}, we obtain
		\begin{equation*}\label{3dpeshn}
			\begin{aligned}
				&\da^{1,m,\kappa}_{T,\mathbb{S}^2}(N(Q,\Phi))\leq C_1\mathfrak{M}_1(\|Q-\Phi\|_{Z_T}+T^{\kappa}M_{\Phi})^2(1+\|Q-\Phi\|_{Z_T}+TM_{\Phi})^{m+1}(1+\|Q-\Phi\|_{Z_T}+m_{\Phi})^3,
			\end{aligned}
		\end{equation*}
        where $M_{\Phi}, m_{\Phi}$ are defined in \eqref{Pesconphi}.
		Furthermore, by \eqref{3dpesPc} one has
		\begin{equation*}
			\begin{aligned}
				\inf_{\substack{t<T\\ \widehat{\boldsymbol{x}},\widehat{\boldsymbol{y}}\in\mathbb{S}^2}}\frac{|F(t,\widehat{\boldsymbol{x}})-F(t,\widehat{\boldsymbol{y}})|}{|\widehat{\boldsymbol{x}}-\widehat{\boldsymbol{y}}|}&\geq \inf_{ \widehat{\boldsymbol{x}},\widehat{\boldsymbol{y}}\in\mathbb{S}^2}\frac{|\Phi(\widehat{\boldsymbol{x}})-\Phi(\widehat{\boldsymbol{y}})|}{|\widehat{\boldsymbol{x}}-\widehat{\boldsymbol{y}}|}-2\sup_{t\in[0,T]}\|\nabla_{\mathbb{S}^2}(F-\Phi)(t)\|_{L^\infty}\\
				&\geq \frac{4}{3\mathbf{\Theta}_0}-2C_2\sigma.
			\end{aligned}
		\end{equation*}
		Hence, for any $Q\in\mathcal{Z}_{T,\Phi}^\sigma$, we can take $\eps_0<2^{-m-5}\mathfrak{M}_1^{-2}(1+C_0C_1+C_2\mathbf{\Theta}_0+m_{\Phi})^{-3}$, $\sigma=2C_0\mathfrak{M}_1\eps_0$ and $T<{2^{-m-5}(C_1+M_{\Phi})^{-m-1}}\sigma$ small enough such that 
		\begin{align*}
			\|H\|_{Z_T}\leq C_0\mathfrak{M}_1\eps_0+ \frac{\sigma}{4}\leq\sigma,
		\end{align*}
		and
		\begin{align*}
			\sup_{\substack{t<T\\ \widehat{\boldsymbol{x}},\widehat{\boldsymbol{y}}\in\mathbb{S}^2}}\frac{|\widehat{\boldsymbol{x}}-\widehat{\boldsymbol{y}}|}{|F(t,\widehat{\boldsymbol{x}})-F(t,\widehat{\boldsymbol{y}})|} &\leq \left(\frac{4}{3\mathbf{\Theta}_0}-2C_2\sigma\right)^{-1}\leq 2\mathbf{\Theta}_0.
		\end{align*}
		So we have proved that $\mathcal{S}:\mathcal{Z}_{T,\Phi}^\sigma\rightarrow\mathcal{Z}_{T,\Phi}^\sigma$. Now we only need to prove that $\mathcal{S}$ has the contraction property. For $Q_1,Q_2\in\mathcal Z_{T,\Phi}^\sigma$,  denote $H_i=\mathcal{S}Q_i$, and $\mathbf{H}=H_1-H_2$, $\mathbf{Q}=Q_1-Q_2$, $\mathbf{N}=N(Q_1,\Phi)-N(Q_2,\Phi)$ and $\vec Q=(Q_1,Q_2)$. We write the equation of $\mathbf{H}$ as 
		\begin{equation*}
			\begin{aligned}
				&\partial_t\mathbf{H}(t,\widehat{\boldsymbol{x}})+\mathcal{L}_{\Phi}\mathbf{H}=\mathbf{N}(t,\widehat{\boldsymbol{x}}),\\
				&\mathbf{H}(0,\widehat{\boldsymbol{x}})=0.
			\end{aligned}
		\end{equation*}
		Similarly, we have
		\begin{equation*}
			\begin{aligned}
				\sup_{t\in[0,T]}(\|\mathbf{H}(t)\|_{C^1}+t^{m+\kappa}\|\mathbf{H}(t)\|_{C^{m+1+\kappa}})\lesssim \mathfrak{M}_1\da^{1,m,\kappa}_{T,\mathbb{S}^2}(\mathbf{N}).
			\end{aligned}
		\end{equation*}
		By Lemma \ref{3dpeslemn1} and Lemma \ref{3dpeslemn2}, we have
		\begin{equation*}\label{3dpesnh}
			\begin{aligned}
				&\da^{1,m,\kappa}_{T,\mathbb{S}^2}(\mathbf{N})\\
				&\leq  C_3\mathfrak{M}_1\|\mathbf{Q}\|_{Z_T}(\|\vec Q-\Phi\|_{Z_T}+T^{\kappa}M_{\Phi})(1+\|\vec Q-\Phi\|_{Z_T}+T^{\kappa}M_{\Phi})^{m}(1+\|\vec Q-\Phi\|_{Z_T}+m_{\Phi})^3.
			\end{aligned}
		\end{equation*}
	Taking $\varepsilon_0<2^{-10m}\mathfrak{M}_1^{-2}(1+C_2\mathbf{\Theta}_0+C_3+m_{\Phi})^{-3}$ and $T<\frac{1}{100}(1+M_{\Phi})^{-10m}\sigma$ be small enough, we obtain
		\begin{equation*}
			\|\mathcal{S}(Q_1)-\mathcal{S}(Q_2)\|_{Z_T}\leq\frac{1}{2}\|\mathbf{Q}\|_{Z_T}.
		\end{equation*}
	By contraction mapping theorem, there exists a unique $X\in \mathcal{Z}_{T,\Phi}^\sigma$ such that $\mathcal{S}X=X$, which is a solution to \eqref{eqpes3d}. This completes the proof of Theorem \ref{thmPes3d}.
    
   \section*{Acknowledgement} 
       Q.-H. Nguyen is supported by  CAS Project for Young Scientists in Basic Research, Grant No. YSBR-031; and the National Natural Science Foundation of China (No. 12288201). K. Chen gratefully acknowledges the hospitality of AMSS, CAS during her visit, when this work was completed.
		\section{Appendix}
		\subsection{Besov space and some interpolation inequalities}
		We first recall the definition of homogeneous Besov spaces.
		\begin{definition}
			Let $\varkappa$ be a real number. The homogeneous Besov space $\dot B^{\varkappa}_{\infty,\infty}$ consists of those distributions $u$  such that 
			\begin{align*}
				\|u\|_{\dot B^\varkappa_{\infty,\infty}}:=\sup_{j\in\mathbb{Z}}2^{j\varkappa}\|\dot \Delta_ju\|_{L^\infty}<\infty.
			\end{align*}
			Here $\{\dot \Delta_j\}_{j\in\mathbb{Z}}$ are the standard Littlewood-Paley decomposition blocks: $\dot \Delta_j f=\mathcal{F}^{-1}(\phi_j(\xi)\hat f(\xi))$, where \begin{align*}\label{phidecom}
				\phi_j(\xi)=\phi(2^{-j}\xi),\ \  \operatorname{supp}(\phi)\subset \{\xi: \frac{3}{4}\leq|\xi|\leq \frac{8}{3}\}, \ \ \sum_{j\in\mathbb{Z}}\phi(2^{-j}\xi)\equiv 1.
			\end{align*}
		\end{definition}
		
		\begin{remark}
			\begin{itemize}
				\item For any $s\in \mathbb{R}$,  we have the equivalence: $\|\nabla u\|_{\dot B^{s}_{\infty,\infty}}\sim \|u\|_{\dot B^{s+1}_{\infty,\infty}}$.
				\item  For \( s \in \mathbb{R}^+ \backslash \mathbb{N} \), the Besov space \( \dot{B}^s_{\infty,\infty} \) coincides with the homogeneous Hölder space \( \dot{C}^s \). This equivalence fails when \( s \in \mathbb{N} \).
				
				\item The \( \dot{B}^1_{\infty,\infty} \) norm admits the following equivalent characterization (see \cite{Triebel}):
				\[
				\|u\|_{\dot{B}^1_{\infty,\infty}} \sim \sup_{x,y} \frac{|2u(x) - u(x+y) - u(x-y)|}{|y|}.
				\]\end{itemize}
		\end{remark}
		\begin{remark}\cite{Triebel}
			Let  $s,\alpha \in\mathbb{R}$. Then $(-\Delta)^\frac{\alpha}{2}:\dot B^s_{\infty,\infty}\to \dot B^{s-\alpha}_{\infty,\infty}$ is an
			isomorphism.
		\end{remark}
		In the following proposition, we present a characterization of Besov space $\dot B^0_{\infty,\infty}$ that relies on the the fundamental solution defined by \eqref{defk0}.
		\begin{proposition}\label{normequ}
			For any $a>0$, $x_0\in\mathbb{R}^d$,  denote $\mathcal{K}(f)(t,x)=\K_{x_0}(t,0)\ast f$, where $\K_{x_0}(t,0,x)$ is defined by \eqref{defk0}.
			Then there holds 
			\begin{align*}\label{rrr}
				\sup_{t>0} t^\frac{a}{s}\|\mathcal{K}(f)(t)\|_{\dot C^a}\lesssim \|f\|_{\dot B^0_{\infty,\infty}}.
			\end{align*}
		\end{proposition}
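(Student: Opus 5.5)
We prove the bound by Littlewood--Paley decomposition of $f$ together with the kernel estimates of Corollary \ref{lemfourierK}. Since $\dot C^a$ norms of $\mathcal{K}(f)(t)$ are controlled by $\sup_j 2^{ja}\|\dot\Delta_j \mathcal{K}(f)(t)\|_{L^\infty}$ when $a\notin\mathbb{N}$, and by $\|\nabla^a\mathcal{K}(f)(t)\|_{L^\infty}$ when $a\in\mathbb{N}$, we work with the sum $\mathcal{K}(f)(t)=\sum_{j}\K_{x_0}(t,0)\ast\dot\Delta_j f$. The kernel $\K_{x_0}(t,0,\cdot)$ is a Fourier multiplier in $x$, because the coefficient is frozen at $x_0$. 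Hence convolution commutes with $\dot\Delta_j$, and we may use the frequency localization freely.

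The key step is a frequency-localized bound. Put $\lambda=t^{1/s}$ and $k=\lceil a\rceil$. We claim
\[
\|\nabla^k(\K_{x_0}(t,0)\ast\dot\Delta_j f)\|_{L^\infty}\lesssim \min\{2^{jk},\,2^{jk}(2^j\lambda)^{-M}\}\,\|\dot\Delta_j f\|_{L^\infty},
\]
for some $M>k$. The low-frequency part is immediate from \eqref{KL1} and Bernstein. For the high-frequency part, write $\dot\Delta_j f=\Lambda^{M}\Lambda^{-M}\tilde{\dot\Delta}_j\dot\Delta_j f$, where $\tilde{\dot\Delta}_j$ is a fattened block. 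Then move $\nabla^k\Lambda^{M}$ onto the kernel. By Lemma \ref{esthtker}, applied with $\sigma=0$ (or $\sigma=M$ using the symbol bounds \eqref{expA}--\eqref{derA}, which give $|\nabla^n_\xi(|\xi|^{M+k}\hat\K)|\lesssim|\xi|^{M+k-n}e^{-c\lambda^s|\xi|^s}$), the $L^1$ norm of $\nabla^k\Lambda^M\K_{x_0}(t,0)$ is bounded by $\lambda^{-k-M}$. The main technical point is to check that the number of $\xi$-derivatives required in Lemma \ref{esthtker} (at most $d+[\sigma]+1$) is covered by the range $l\le d+s+2$ available in Lemma \ref{lemadk}. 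If $M$ must be large this could fail. In that case I would instead take $M$ only slightly larger than $k$; since $k\le a+1$, no large $M$ is needed. Choosing $M=k+\epsilon$ with small $\epsilon$ keeps the required derivative count at $d+k+1$. This is acceptable if $k\lesssim s+1$, and otherwise we iterate the semigroup property, writing $\K(t,0)$ as a composition over the subintervals $[0,t/2]$ and $[t/2,t]$ and distributing derivatives between the two factors.

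Summing over $j$ gives
\[
\|\nabla^k\mathcal{K}(f)(t)\|_{L^\infty}\lesssim\sum_j \min\{2^{jk},2^{j(k-M)}\lambda^{-M}\}\|f\|_{\dot B^0_{\infty,\infty}}\lesssim \lambda^{-k}\|f\|_{\dot B^0_{\infty,\infty}}.
\]
The geometric series splits at $2^j\sim\lambda^{-1}$, which uses $k>0$ and $M>k$. The same computation with $k$ replaced by $[a]$, $[a]+1$ and $\delta_\alpha$ interpolation yields the Hölder seminorm bound $t^{a/s}\|\mathcal{K}(f)(t)\|_{\dot C^a}\lesssim\|f\|_{\dot B^0_{\infty,\infty}}$ for $a\notin\mathbb{N}$. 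Concretely, we bound $|\delta_\alpha\nabla^{[a]}\cdot|$ by the minimum of the $k=[a]$ estimate and $|\alpha|$ times the $k=[a]+1$ estimate, applied block by block, and then sum. All constants are uniform in $x_0$, since \eqref{expA}--\eqref{derA} are.
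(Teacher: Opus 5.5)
Your argument is correct and is essentially the Littlewood--Paley proof the paper has in mind (the paper only cites Bahouri--Chemin--Danchin): a dyadic-block bound for the frozen-coefficient kernel that decays like $(2^j t^{1/s})^{-M}$ at high frequency, followed by summation over $j$ (or a supremum, when $a\notin\mathbb{N}$). Your derivative-count worry is harmless: if $|\nabla_\xi^n F|\lesssim|\xi|^{\sigma-n}e^{-c\lambda^s|\xi|^s}$ with $\sigma>0$, the bound $\|\mathcal{F}^{-1}F\|_{L^1}\lesssim\lambda^{-\sigma}$ needs only $n\le d+1$, which Lemma \ref{lemadk} always provides, so the semigroup-splitting fallback is never needed.
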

			The proof of Proposition \ref{normequ} follows from Littlewood-Paley theory, see \cite{Fourierbook} for detail.

		We list some elementary inequalities that used frequently throughout the paper. 
		\begin{lemma}\label{maininterpo}~~\\
			1. For any $0<\gamma_1<\gamma<\gamma_2$, there holds
			\begin{align*}
				\|f\|_{\dot C^\gamma}\lesssim \|f\|_{\dot C^{\gamma_1}}^\frac{\gamma_2-\gamma}{\gamma_2-\gamma_1} \|f\|_{\dot C^{\gamma_2}}^\frac{\gamma-\gamma_1}{\gamma_2-\gamma_1}.
			\end{align*}
			2. For any $j,j_1,j_2\in\mathbb{N}$ with $0\leq j_1\leq j\leq j_2$, $p\in[1,\infty]$, there holds
			\begin{align*}
				\|\nabla^jf\|_{L^p}\lesssim\|\nabla^{j_1}f\|_{L^p}^\frac{j_2-j}{j_2-j_1}\|\nabla^{j_2}f\|_{L^p}^{\frac{j-j_1}{j_2-j_1}}.
			\end{align*}
			3. Let $a\in(0,1)$, for any $f,g\in C^a$, there holds
			\begin{align*}
				\|fg\|_{\dot C^a}\lesssim \|f\|_{L^\infty}\|g\|_{\dot C^a}+\|f\|_{\dot C^a}\|g\|_{L^\infty}.
			\end{align*}
			4. (\cite[Lemma 2.2]{KN}) Let $\theta_1\in(0,1)$. For any function $f$ and any $0<\varepsilon_0<\frac{1}{2}\min\{\theta_1,1-\theta_1\}$, there hold
			\begin{equation*}\label{interpfrac}
				\|\Lambda^{\theta_1}f\|_{L^\infty}\lesssim (\|f\|_{\dot C^{\theta_1-\varepsilon_0}}\|f\|_{\dot C^{\theta_1+\varepsilon_0}})^{\frac{1}{2}},
			\end{equation*}	
			\begin{equation*}\label{interpfrac2}
				\|f\|_{\dot C^{\theta_1}}\lesssim \|\Lambda^{\theta_1}f\|_{L^\infty}.
			\end{equation*}	
		\end{lemma}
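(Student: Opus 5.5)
The four items are classical, so the plan is to reduce each to a Littlewood--Paley or Taylor-expansion argument rather than to anything specific to this paper. For item 1 we work with the characterization $\|f\|_{\dot C^{\gamma}}\sim\|f\|_{\dot B^{\gamma}_{\infty,\infty}}$ for $\gamma\notin\mathbb{N}$ (and, when some exponent is an integer, with the finite-difference definition of $\dot C^{\gamma}$, using higher-order differences). We split $f=\sum_{j\le J}\dot\Delta_j f+\sum_{j>J}\dot\Delta_j f$ and bound the low-frequency part by $2^{J(\gamma-\gamma_1)}\|f\|_{\dot C^{\gamma_1}}$ and the high-frequency part by $2^{-J(\gamma_2-\gamma)}\|f\|_{\dot C^{\gamma_2}}$, each measured in the $\dot C^\gamma$ seminorm via Bernstein's inequality. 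We then optimize in $J$, choosing $2^{J(\gamma_2-\gamma_1)}\sim\|f\|_{\dot C^{\gamma_2}}/\|f\|_{\dot C^{\gamma_1}}$, which yields the geometric-mean bound with the stated exponents. At an integer endpoint the dyadic sums only give logarithmic losses. We avoid these by interpolating between two non-integer exponents on either side of it and using the monotonicity $\|\nabla^k f\|_{L^\infty}\lesssim\|f\|_{\dot C^{k\pm\epsilon}}$ type bounds only in the direction that is actually true, namely through differences.

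For item 2 (the Gagliardo--Nirenberg inequality between integer derivatives in $L^p$), we first prove the three-term case $j_1=j-1$, $j_2=j+1$. Writing $g=\nabla^{j-1}f$, it suffices to show $\|\nabla g\|_{L^p}^2\lesssim\|g\|_{L^p}\|\nabla^2 g\|_{L^p}$. We obtain this from the Taylor identity $g(x+h)-g(x)-h\cdot\nabla g(x)=\int_0^1(1-\tau)\,h^{\top}\nabla^2g(x+\tau h)h\,d\tau$. This gives $|h|\,\|\nabla g\|_{L^p}\lesssim\|g\|_{L^p}+|h|^2\|\nabla^2 g\|_{L^p}$, after averaging over directions $h$ of a fixed length so that the full gradient is recovered. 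We then optimize $|h|$. The general case follows by the standard log-convexity argument: the sequence $a_k=\|\nabla^k f\|_{L^p}$ satisfies $a_k^2\lesssim a_{k-1}a_{k+1}$, and an induction on $j_2-j_1$ gives the claimed exponents. If any intermediate norm is infinite, we first mollify $f$ and pass to the limit at the end.

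Item 3 is the usual Leibniz rule for differences. We write
\[
\delta_\alpha(fg)(x)=f(x)\,\delta_\alpha g(x)+g(x-\alpha)\,\delta_\alpha f(x),
\]
divide by $|\alpha|^a$ and take suprema. For item 4 we may simply quote \cite[Lemma 2.2]{KN}, but we would also sketch the argument to keep the appendix self-contained.

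For the first bound in item 4, we use the representation $\Lambda^{\theta_1}f(x)=c\int(f(x)-f(x-y))|y|^{-d-\theta_1}dy$, which is valid since $\theta_1\in(0,1)$. We split at $|y|=r$: the inner part is bounded by $r^{\varepsilon_0}\|f\|_{\dot C^{\theta_1+\varepsilon_0}}$ and the outer part by $r^{-\varepsilon_0}\|f\|_{\dot C^{\theta_1-\varepsilon_0}}$. The condition $\varepsilon_0<\frac12\min\{\theta_1,1-\theta_1\}$ keeps both exponents in $(0,1)$, so both integrals converge. Choosing $r$ to balance the two terms gives the square root.

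The second bound in item 4, $\|f\|_{\dot C^{\theta_1}}\lesssim\|\Lambda^{\theta_1}f\|_{L^\infty}$, is the step we expect to be the most delicate, since $\Lambda^{-\theta_1}$ is not bounded on $L^\infty$ in general. The plan is to avoid that issue by passing through Besov spaces. We use $\dot C^{\theta_1}=\dot B^{\theta_1}_{\infty,\infty}$ together with the estimate $\|\dot\Delta_j f\|_{L^\infty}\lesssim 2^{-j\theta_1}\|\dot\Delta_j\Lambda^{\theta_1}f\|_{L^\infty}\lesssim 2^{-j\theta_1}\|\Lambda^{\theta_1}f\|_{L^\infty}$. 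Here the multiplier $|\xi|^{-\theta_1}\phi_j(\xi)$ has an $L^1$ kernel of norm $\lesssim 2^{-j\theta_1}$ by scaling, which is exactly the isomorphism statement recalled in the remark above. This holds modulo polynomials, which is harmless for homogeneous seminorms.
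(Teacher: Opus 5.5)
The paper gives no proof of this lemma: items 1--3 are listed as elementary facts, and item 4 is quoted from \cite[Lemma 2.2]{KN}. Your sketch is therefore a self-contained replacement for those citations rather than a competing route, and most of it is correct as written:
\begin{itemize}
\item item 2: the Taylor-remainder bound $r\|\nabla g\|_{L^p}\lesssim\|g\|_{L^p}+r^2\|\nabla^2 g\|_{L^p}$ followed by discrete log-convexity;
\item item 3: the identity $\delta_\alpha(fg)=f\,\delta_\alpha g+g(\cdot-\alpha)\,\delta_\alpha f$;
\item item 4, first bound: the splitting of $\int(f(x)-f(x-y))|y|^{-d-\theta_1}\,dy$ at $|y|=r$, where only $\theta_1\pm\varepsilon_0\in(0,1)$ is used;
\item item 4, second bound: the scaling estimate $\|\dot\Delta_j f\|_{L^\infty}\lesssim 2^{-j\theta_1}\|\Lambda^{\theta_1}f\|_{L^\infty}$ combined with $\dot C^{\theta_1}=\dot B^{\theta_1}_{\infty,\infty}$.
\end{itemize}
What this buys over the paper is transparency about where each identification enters, in particular that $\dot C^\gamma=\dot B^\gamma_{\infty,\infty}$ is only invoked for $\gamma\notin\mathbb{N}$.

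The one thing to repair is the integer case of item 1. In this paper $\dot C^k$ with $k\in\mathbb{N}$ means $\|\nabla^k f\|_{L^\infty}$ (compare the case $\vartheta\in\mathbb{N}$ in \eqref{holu}). An integer \emph{middle} exponent is genuinely needed, e.g.\ to interpolate $\|\nabla^k g\|_{L^\infty}$ between $\dot C^\alpha$ and $\dot C^{m+\alpha}$ behind \eqref{mainint12}.

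Your two devices for integer exponents both need correcting:
\begin{itemize}
\item Measuring $\dot C^\gamma$ by higher-order differences when $\gamma\in\mathbb{N}$ only controls the Zygmund seminorm $\dot B^\gamma_{\infty,\infty}$, which is strictly weaker than $\|\nabla^\gamma f\|_{L^\infty}$. On the left-hand side this proves a weaker statement; for an integer $\gamma_1$ or $\gamma_2$ on the right-hand side it is harmless.
\item The bound $\|\nabla^k f\|_{L^\infty}\lesssim\|f\|_{\dot C^{k\pm\epsilon}}$ you appeal to is false, since the two sides scale differently under $f\mapsto f(\lambda\,\cdot)$.
\end{itemize}

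Neither device is needed, because no logarithmic loss occurs when $\gamma_1<\gamma<\gamma_2$ strictly. If $\gamma=k\in\mathbb{N}$, sum the dyadic blocks in $\ell^1$:
\[
\begin{aligned}
\|\nabla^k f\|_{L^\infty}&\le\sum_{j\in\mathbb{Z}}\|\nabla^k\dot\Delta_j f\|_{L^\infty}\lesssim\sum_{j\le J}2^{j(k-\gamma_1)}\|f\|_{\dot C^{\gamma_1}}+\sum_{j>J}2^{-j(\gamma_2-k)}\|f\|_{\dot C^{\gamma_2}}\\
&\lesssim 2^{J(k-\gamma_1)}\|f\|_{\dot C^{\gamma_1}}+2^{-J(\gamma_2-k)}\|f\|_{\dot C^{\gamma_2}}.
\end{aligned}
\]
Both geometric series converge exactly because $\gamma_1<k<\gamma_2$. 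Choosing $2^{J(\gamma_2-\gamma_1)}\sim\|f\|_{\dot C^{\gamma_2}}/\|f\|_{\dot C^{\gamma_1}}$ gives the stated exponents. A polynomial discrepancy between $\nabla^k f$ and $\sum_j\nabla^k\dot\Delta_j f$ is ruled out by $\|f\|_{\dot C^{\gamma_1}}<\infty$ with $\gamma_1<k$.

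For $\gamma\notin\mathbb{N}$ no splitting is needed at all: $2^{j\gamma}\|\dot\Delta_jf\|_{L^\infty}=(2^{j\gamma_1}\|\dot\Delta_jf\|_{L^\infty})^{\theta}(2^{j\gamma_2}\|\dot\Delta_jf\|_{L^\infty})^{1-\theta}$ with $\theta=\frac{\gamma_2-\gamma}{\gamma_2-\gamma_1}$. An integer endpoint $\gamma_i=k$ then only requires $\|f\|_{\dot B^{k}_{\infty,\infty}}\lesssim\|\nabla^k f\|_{L^\infty}$, which is Bernstein's inequality.
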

		We have the following interpolation lemma, which is classical in singular integral theory.
		\begin{lemma}\label{intp}
			i) Let $m(\xi)$ be a Fourier multiplier of order $\sigma$ with $\sigma>-d$, and
			\begin{align}\label{conlem6.6}
				| \nabla_\xi^km(\xi)|\leq |\xi|^{\sigma-k}.
			\end{align}
			Then for $\mathcal{T}f=\mathcal{F}^{-1}(m\hat f)$, it holds
			\begin{align}\label{estlem6.6}
				\|\mathcal{T}f\|_{L^\infty}\lesssim_\varepsilon \|\Lambda^{\sigma+\varepsilon}f\|_{L^\infty}^\frac{1}{2}\|\Lambda^
				{\sigma-\varepsilon}f\|_{L^\infty}^\frac{1}{2},
			\end{align}
			for any $\varepsilon$ small enough.\\
            ii) Let $m(x,\xi)$ be a Fourier multiplier of order $\sigma$ with $\sigma>-d$, with the corresponding operator $\mathcal{T}$ defined by 
            \begin{equation*}
                \mathcal{T}f=(2\pi)^{-\frac{d}{2}}\int_{\mathbb{R}^d}\hat{f}(\xi)m(x,\xi)e^{ix\cdot\xi}d\xi,
            \end{equation*}
            and satisfies \eqref{conlem6.6} for any $x$, then \eqref{estlem6.6} also holds.
		\end{lemma}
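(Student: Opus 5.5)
The plan is a Littlewood--Paley decomposition with a two-sided geometric summation, optimized at a frequency threshold determined by the two norms on the right-hand side of \eqref{estlem6.6}. Write $A=\|\Lambda^{\sigma+\varepsilon}f\|_{L^\infty}$ and $B=\|\Lambda^{\sigma-\varepsilon}f\|_{L^\infty}$.

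\textbf{Step 1 (frequency blocks).} Decompose $\mathcal{T}f=\sum_{j\in\mathbb{Z}}\dot\Delta_j\mathcal{T}f$. Let $\tilde\phi_j$ be a fattened cutoff equal to $1$ on $\operatorname{supp}\phi_j$. For each sign $\pm$ we can write
\[
\dot\Delta_j\mathcal{T}f=K_j^{\pm}\ast\Lambda^{\sigma\pm\varepsilon}f,\qquad \widehat{K_j^{\pm}}(\xi)=\tilde\phi_j(\xi)\,m(\xi)\,|\xi|^{-\sigma\mp\varepsilon}.
\]
Rescaling $\xi=2^j\zeta$, the symbol $\zeta\mapsto\tilde\phi(\zeta)\,m(2^j\zeta)\,|2^j\zeta|^{-\sigma\mp\varepsilon}$ is smooth and supported in a fixed annulus. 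By \eqref{conlem6.6}, its first $d+1$ derivatives are bounded by $C2^{\mp j\varepsilon}$, uniformly in $j$. Integrating by parts $d+1$ times, as in the proof of Lemma \ref{esthtker}, gives
\[
\|K_j^{\pm}\|_{L^1}\lesssim 2^{\mp j\varepsilon}.
\]
Young's inequality then yields
\[
\|\dot\Delta_j\mathcal{T}f\|_{L^\infty}\lesssim\min\{2^{-j\varepsilon}A,\;2^{j\varepsilon}B\}.
\]

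\textbf{Step 2 (summation).} Sum the bound over $j$, splitting at an integer $N$: use the $B$-bound for $j\le N$ and the $A$-bound for $j>N$. The two geometric series give
\[
\|\mathcal{T}f\|_{L^\infty}\lesssim\varepsilon^{-1}\big(2^{N\varepsilon}B+2^{-N\varepsilon}A\big).
\]
Choose $N$ with $2^{2N\varepsilon}\sim A/B$; this gives the bound $\lesssim_\varepsilon\sqrt{AB}$, which is \eqref{estlem6.6}.

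Here $\varepsilon$ must be small enough that $\sigma-\varepsilon>-d$, so that $|\xi|^{-\sigma+\varepsilon}$ causes no trouble. This matters only for the qualitative convergence of the low-frequency sum. The same geometric decay shows the series converges absolutely in $L^\infty$, so the identification $\mathcal{T}f=\sum_j\dot\Delta_j\mathcal{T}f$ is justified for $f$ with $A,B<\infty$ (modulo polynomials, which the homogeneous norms do not see).

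\textbf{Step 3 (part ii).} For the $x$-dependent symbol, freeze the point: for fixed $x$, $\mathcal{T}f(x)=\mathcal{T}_{x}f(x)$, where $\mathcal{T}_{x}$ is the Fourier multiplier with symbol $m(x,\cdot)$. This is the same device as setting $x_0=x$ in \eqref{holu}. Since $m(x,\cdot)$ satisfies \eqref{conlem6.6} with constants independent of $x$, part i) gives
\[
|\mathcal{T}_{x}f(x)|\le\|\mathcal{T}_{x}f\|_{L^\infty}\lesssim\sqrt{AB}
\]
uniformly in $x$. The right-hand side does not depend on $x$, so taking the supremum in $x$ concludes the proof.

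The main obstacle is Step 1: obtaining the uniform $L^1$ kernel bounds with the exact $2^{\mp j\varepsilon}$ gain. This requires checking that \eqref{conlem6.6} supplies enough derivatives (at least $d+1$) after rescaling. Once those bounds are in hand, Steps 2 and 3 are routine.
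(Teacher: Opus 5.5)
Your argument is correct and is essentially the paper's: the paper splits $\mathcal{T}f$ with a single smooth cutoff $\chi_\ell$ at frequency $\ell$ and quotes the $L^1$ kernel bounds $\ell^{\pm\varepsilon}$ for the low- and high-frequency pieces from classical singular-integral theory (your dyadic blocks and geometric summation are the standard proof of those bounds). It then optimizes $\ell$ and proves ii) by the same freezing $\mathcal{T}f(x)=\mathcal{T}_xf(x)$. One small fix: with the fattened cutoff $\tilde\phi_j$ in $\widehat{K_j^{\pm}}$, write $\dot\Delta_j\mathcal{T}f=K_j^{\pm}\ast\dot\Delta_j\Lambda^{\sigma\pm\varepsilon}f$, or simply put $\phi_j$ in the kernel, because $K_j^{\pm}\ast\Lambda^{\sigma\pm\varepsilon}f$ has Fourier transform $\tilde\phi_j m\hat f$ rather than $\phi_j m\hat f$.
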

		\begin{proof}
			i) Denote $\chi_\ell$ as a dilation of a non-negative smooth cut-off function $\chi_\ell(\xi)=\chi(\frac{\xi}{\ell})$, $\mathbf{1}_{|\xi|<1}\leq \chi\leq \mathbf{1}_{|\xi|>2}$. Then we have the decomposition
			\begin{equation*}
				\mathcal{T}f=\mathcal{F}^{-1}\left(\chi_\ell\frac{m(\cdot)}{|\cdot|^{\sigma-\eps}}\widehat{\Lambda^{\sigma-\eps}f}\right)+\mathcal{F}^{-1}\left((1-\chi_\ell)\frac{m(\cdot)}{|\cdot|^{\sigma+\eps}}\widehat{\Lambda^{\sigma+\eps}f}\right).
			\end{equation*}
			By classical result in singular integral theory, one can see that
			\begin{equation*}
				\left\|\mathcal{F}^{-1}\left(\chi_\ell\frac{m(\cdot)}{|\cdot|^{\sigma-\eps}}\right)\right\|_{L^1}\lesssim \ell^\eps,\quad \left\|\mathcal{F}^{-1}\left(\chi_\ell\frac{m(\cdot)}{|\cdot|^{\sigma+\eps}}\right)\right\|_{L^1}\lesssim \ell^{-\eps}.
			\end{equation*}
			We can take $\ell^\varepsilon=\|\Lambda^{\sigma+\eps}f\|_{L^\infty}\|\Lambda^{\sigma-\eps}f\|_{L^\infty}^{-1}$ to get the result.\\
            ii) Define $\mathcal{T}_yf(x)=\mathcal{F}^{-1}(m(y,\cdot)\hat{u})(x)$. Then for any $x\in\mathbb{R}^d$, there holds
            \begin{equation*}
                \left|\mathcal{T}f(x)\right|\lesssim \sup_{x\in\mathbb{R}^d}\left\|\mathcal{T}_xf(y) \right\|_{L^\infty_y}\lesssim \|\Lambda^{\sigma+\varepsilon}f\|_{L^\infty}^\frac{1}{2}\|\Lambda^
				{\sigma-\varepsilon}f\|_{L^\infty}^\frac{1}{2},
            \end{equation*}
            where we use \eqref{estlem6.6} for the last inequality.
		\end{proof}
		
		\begin{lemma}\label{douint}
			Let $\kappa_1,\kappa_2\in (0,1)$, for any function $f:\mathbb{R}^d\to \mathbb{R}^N$, there holds
			\begin{equation*}
				\sup_{\alpha\in\mathbb{R}^d}  \frac{\|\delta_\alpha f\|_{L^\infty}}{|\alpha|^{\kappa_1}}\lesssim\|f\|_{\dot C^{\kappa_1}},\quad\quad\quad\quad  \sup_{\alpha,\beta\in\mathbb{R}^d}\frac{\|\delta_\alpha\delta_\beta f\|_{L^\infty}}{|\alpha|^{\kappa_1}|\beta|^{\kappa_2}}\lesssim\|f\|_{\dot C^{\kappa_1+\kappa_2}}.
			\end{equation*}
			Denote $\mathcal{O}_\alpha g(x)=\frac{\delta_\alpha\delta_{-\alpha}h(x)}{|\alpha|}$. Then  for any $\nu\in(0,\frac{1}{2})$, $\beta\in\mathbb{R}^d$, and $a\in(0,1)$, there hold
			\begin{align*}
				&\int_{\mathbb{R}^d}\|\mathcal{O}_\alpha g\|_{L^\infty}\frac{d\alpha}{|\alpha|^{d}}\lesssim \|g\|_{\dot C^{1+\nu}}^\frac{1}{2} \|g\|_{\dot C^{1-\nu}}^\frac{1}{2},\\
                &\int_{\mathbb{R}^d}\|\delta_\alpha f\|_{L^\infty}\|\delta_\alpha g\|_{L^\infty}\frac{d\alpha}{|\alpha|^{d+1}}\lesssim \|g\|_{\dot C^{\frac{1}{2}}}\|f\|_{\dot C^{\frac{1}{2}+\nu}}^\frac{1}{2} \|f\|_{\dot C^{\frac{1}{2}-\nu}}^\frac{1}{2}.
			\end{align*}
			Moreover, for $\E^\alpha, \E_\alpha$ defined in \eqref{defEal}, $\tilde{\E}^\alpha$  defined in \eqref{notapes}, and $h:\mathbb{S}\to \mathbb{R}^N$, there hold
			\begin{align*}
				\sup_{\alpha\in\mathbb{R}^d}\frac{\|\E^\alpha f\|_{L^\infty}+\|\E_\alpha f\|_{L^\infty}}{|\alpha|^{\kappa_1}}\lesssim \|f\|_{\dot C^{1+\kappa_1}},\quad \sup_{\alpha\in\mathbb{S}} \frac{  \|\tilde\E^\alpha h\|_{L^\infty}}{|\alpha|^{\kappa_1}}\lesssim \|h\|_{\dot C^{1+\kappa_1}}.
			\end{align*}
		\end{lemma}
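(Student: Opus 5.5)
The plan is to treat each inequality in Lemma \ref{douint} separately, always by elementary finite-difference arguments combined with the interpolation results of Lemma \ref{maininterpo}.

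\textbf{Finite differences and Hölder norms.} The first two bounds are essentially definitions. For $\kappa_1\in(0,1)$ the quantity $\sup_\alpha \|\delta_\alpha f\|_{L^\infty}/|\alpha|^{\kappa_1}$ is the $\dot C^{\kappa_1}$ seminorm itself.

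For the double difference I split into two cases.
\begin{itemize}
\item If $\kappa_1+\kappa_2<1$, I would use $|\delta_\alpha\delta_\beta f|\le 2\min\{\|\delta_\alpha f\|_{L^\infty},\|\delta_\beta f\|_{L^\infty}\}\lesssim \min\{|\alpha|,|\beta|\}^{\kappa_1+\kappa_2}\|f\|_{\dot C^{\kappa_1+\kappa_2}}$. Since $\min\{a,b\}^{\kappa_1+\kappa_2}\le a^{\kappa_1}b^{\kappa_2}$, the bound follows.
\item If $\kappa_1+\kappa_2\ge1$ (the integer case aside), I would write $\delta_\beta f(x)=\int_0^1\beta\cdot\nabla f(x-\tau\beta)\,d\tau$. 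Then $|\delta_\alpha\delta_\beta f|\le|\beta|\,|\alpha|^{\kappa_1+\kappa_2-1}\|\nabla f\|_{\dot C^{\kappa_1+\kappa_2-1}}$ when $|\beta|\le|\alpha|$, and symmetrically in the other case. Again $\max\cdot\min$ powers are dominated by $|\alpha|^{\kappa_1}|\beta|^{\kappa_2}$.
\end{itemize}

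\textbf{The $\mathcal{O}_\alpha$ integral.} I would split the $\alpha$-integral at a scale $\ell$.
\begin{itemize}
\item For $|\alpha|<\ell$, the second-difference bound gives $\|\delta_\alpha\delta_{-\alpha}g\|\lesssim|\alpha|^{1+\nu}\|g\|_{\dot C^{1+\nu}}$, contributing $\ell^{\nu}\|g\|_{\dot C^{1+\nu}}$.
\item For $|\alpha|>\ell$, I would use $\|\delta_\alpha\delta_{-\alpha}g\|\lesssim|\alpha|^{1-\nu}\|g\|_{\dot C^{1-\nu}}$. This is valid because a second difference is controlled by $\dot C^{1-\nu}$ via the first-difference bound applied twice. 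It contributes $\ell^{-\nu}\|g\|_{\dot C^{1-\nu}}$.
\end{itemize}
Optimizing in $\ell$ gives the geometric mean.

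\textbf{The product integral.} I would bound $\|\delta_\alpha g\|\le|\alpha|^{1/2}\|g\|_{\dot C^{1/2}}$, so the integrand is at most $|\alpha|^{-d-1/2}\|\delta_\alpha f\|\,\|g\|_{\dot C^{1/2}}$. I then split at $\ell$, using $\|\delta_\alpha f\|\le|\alpha|^{1/2+\nu}\|f\|_{\dot C^{1/2+\nu}}$ for small $\alpha$ and $|\alpha|^{1/2-\nu}\|f\|_{\dot C^{1/2-\nu}}$ for large $\alpha$. The same optimization in $\ell$ yields the claimed bound.

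\textbf{The $\E$ quantities.} For $\E^\alpha,\E_\alpha$ (defined in \eqref{defEal}, presumably $\partial f(x)-\Delta_\alpha f(x)$ and its shifted version), I would write
\[\E^\alpha f(x)=\int_0^1\bigl(f'(x)-f'(x-\tau\alpha)\bigr)\,d\tau,\]
whose modulus is bounded by $|\alpha|^{\kappa_1}\|f\|_{\dot C^{1+\kappa_1}}$. In $d$ dimensions the same argument works with the directional derivative.

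For $\tilde\E^\alpha h$ on $\mathbb{S}$, the additional issue is the replacement of $\alpha$ by $\tilde\alpha=2\tan(\alpha/2)$. I would write
\[\tilde\E^\alpha h=h'(x-\alpha)-\Delta_\alpha h\cdot\frac{\alpha}{\tilde\alpha}.\]
Here $\alpha/\tilde\alpha-1=O(\alpha^2)$ for $|\alpha|\le\pi$, and $\Delta_\alpha h$ is bounded by $\|h'\|_{L^\infty}$. The periodicity and mean-zero nature of $h'$ let me bound $\|h'\|_{L^\infty}\lesssim\|h\|_{\dot C^{1+\kappa_1}}$, and $\alpha^2\lesssim|\alpha|^{\kappa_1}$ on the torus.

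\textbf{Main obstacle.} I expect the main difficulty to be this last point: controlling the lower-order correction by the homogeneous norm. It relies on $h'$ having zero mean on $\mathbb{S}$, so that $\|h'\|_{L^\infty}\lesssim\|h'\|_{\dot C^{\kappa_1}}$.
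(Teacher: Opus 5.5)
Your proof is correct. The paper states Lemma~\ref{douint} without proof, treating it as elementary, so there is no argument of its own to compare against. Your route is the standard one it implicitly relies on: pointwise finite-difference bounds, then splitting each $\alpha$-integral at a scale $\ell$ and balancing $\ell^{\nu}$ against $\ell^{-\nu}$. This is the same balancing device the paper uses in the proof of Lemma~\ref{intp}.

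Three small points are worth tidying.

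\begin{enumerate}
\item The borderline case $\kappa_1+\kappa_2=1$ needs no separate treatment. Read $\dot C^1$ as the Lipschitz seminorm; then your first case applies verbatim, since $\|\delta_\alpha f\|_{L^\infty}\le|\alpha|\|\nabla f\|_{L^\infty}$ and $\min\{a,b\}\le a^{\kappa_1}b^{\kappa_2}$.
\item The integral formula you wrote is that of $\E_\alpha f$, not the shifted quantity. For $\E^\alpha$ one has $\E^\alpha f(x)=\int_0^1\bigl(f'(x-\alpha)-f'(x-\tau\alpha)\bigr)\,d\tau$. This gives the same bound, with the factor $\int_0^1(1-\tau)^{\kappa_1}\,d\tau$.
\item For $\tilde{\E}^\alpha$, say explicitly that everything is $2\pi$-periodic in $\alpha$, so one may take $\alpha\in(-\pi,\pi]$. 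Then write
\[
\tilde{\E}^\alpha h=\E^\alpha h+\bigl(1-\tfrac{\alpha}{2}\cot\tfrac{\alpha}{2}\bigr)\Delta_\alpha h,
\]
with $0\le 1-\tfrac{\alpha}{2}\cot\tfrac{\alpha}{2}\lesssim\alpha^2$ on that range.
\end{enumerate}

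Your use of $\int_{\mathbb{S}}h'=0$, componentwise, to get $\|h'\|_{L^\infty}\lesssim\|h'\|_{\dot C^{\kappa_1}}$ is exactly what makes the homogeneous bound hold.
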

		\begin{lemma}\label{lemcom}
			Let $m\in\mathbb{N}$, $\alpha\in(0,1)$. Consider $g,g_1,g_2:\mathbb{R}^d\to\mathbb{R}$, and $f:\mathbb{R}\to \mathbb{R}$ satisfying 
			\begin{align*}
				\sum_{k=1}^{m+2}\|f^{(k)}\|_{L^\infty}\lesssim 1.
			\end{align*}
			Then 
			\begin{equation}\label{mainint11}
				\|\nabla^m(f\circ g)\|_{L^\infty}\lesssim  \|g\|_{\dot C^1}^m+\| g\|_{\dot C^m},
			\end{equation}
			\begin{equation}\label{mainint12}
				\|\nabla^m(f\circ g)\|_{\dot C^\alpha}\lesssim  \| g\|_{\dot C^\alpha}^\frac{m+\alpha}{\alpha}+\|g\|_{\dot C^{m+\alpha}},
			\end{equation}
			\begin{equation}\label{mainint21}
				\|\nabla^m(f\circ g_1-f\circ g_2)\|_{L^\infty}\lesssim\sum_{n=0}^m \|g_1-g_2\|_{\dot C^n}(\|(g_1,g_2)\|_{\dot C^1}^{m-n}+\|(g_1,g_2)\|_{\dot C^{m-n}}),
			\end{equation}
			\begin{equation}\label{mainint22}
				\begin{aligned}
					\|\nabla^m(f\circ g_1-f\circ g_2)\|_{\dot C^\alpha}\lesssim &
					\sum_{n=0}^m\left\{\|g_1-g_2\|_{\dot C^{n+\alpha}}(\|(g_1,g_2)\|_{\dot C^1}^{m-n}+\|(g_1,g_2)\|_{\dot C^{m-n}})\right.\\
					&\left.\quad\quad+\|g_1-g_2\|_{\dot C^n}(\|(g_1,g_2)\|_{\dot C^\alpha}^\frac{m-n+\alpha}{\alpha}+\|(g_1,g_2)\|_{\dot C^{m-n+\alpha}})\right\}.
				\end{aligned}
			\end{equation}
			\begin{equation}\label{mainint31}
				\begin{aligned}
					&\left\|\nabla^m\big((f\circ g_1-f\circ g_2)-(f\circ g_3-f\circ g_4)\big)\right\|_{L^\infty}\\
					&\lesssim \sum_{n=0}^m\|(g_1-g_2)-(g_3-g_4)\|_{\dot C^n}(\|(g_1,g_2)\|_{\dot C^1}^{m-n}+\|(g_1,g_2)\|_{\dot C^{m-n}})\\
					&\quad\quad\quad+\sum_{n_1+n_2+n_3=n}\|g_3-g_4\|_{\dot C^{n_1}}\|(g_1-g_3,g_2-g_4)\|_{\dot C^{n_2}}\left(\sum_{k=1}^4(\|g_k\|_{\dot C^1}^{n_3}+\|g_k\|_{\dot C^{n_3}})\right),
				\end{aligned}
			\end{equation}
			
		\end{lemma}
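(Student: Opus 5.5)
The plan is to reduce everything to the Faà di Bruno formula combined with the interpolation inequalities of Lemma \ref{maininterpo}. For \eqref{mainint11} I would write
\[
\nabla^m(f\circ g)=\sum_{k=1}^m\ \sum_{\substack{j_1+\cdots+j_k=m\\ j_i\ge1}} c_{j}\, f^{(k)}(g)\,\nabla^{j_1}g\otimes\cdots\otimes\nabla^{j_k}g .
\]
Since $\|f^{(k)}\|_{L^\infty}\lesssim1$, only the products need to be bounded. I would interpolate each factor with Lemma \ref{maininterpo}(2), between $\nabla g$ and $\nabla^m g$:
\[
\|\nabla^{j}g\|_{L^\infty}\lesssim\|\nabla g\|_{L^\infty}^{1-\theta_j}\|\nabla^m g\|_{L^\infty}^{\theta_j},\qquad \theta_j=\frac{j-1}{m-1}.
\]
The product is then bounded by $a^{p}b^{q}$, where $a=\|g\|_{\dot C^1}$, $b=\|g\|_{\dot C^m}$, $q=\frac{m-k}{m-1}$ and $p=k-q$. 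A direct check gives $p/m+q=1$, so Young's inequality yields $a^pb^q\lesssim a^m+b$. This proves \eqref{mainint11}.

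For \eqref{mainint12} I would apply $\delta_\alpha$ to each Faà di Bruno term and distribute it with the discrete Leibniz rule, i.e.\ the product estimate of Lemma \ref{maininterpo}(3). This produces two kinds of terms.
\begin{itemize}
\item Terms where $\delta_\alpha$ falls on $f^{(k)}(g)$. These give $\|g\|_{\dot C^\alpha}\prod_i\|\nabla^{j_i}g\|_{L^\infty}$ and use $\|f^{(k+1)}\|_{L^\infty}\lesssim1$.
\item Terms where $\delta_\alpha$ falls on one factor $\nabla^{j_i}g$. These give a $\dot C^{j_i+\alpha}$ norm.
\end{itemize}
All integer and fractional norms $\dot C^{\gamma}$ with $\alpha\le\gamma\le m+\alpha$ would then be interpolated between $\dot C^\alpha$ and $\dot C^{m+\alpha}$ via Lemma \ref{maininterpo}(1),(2). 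Integer orders would be handled by passing through nearby non-integer orders. The exponents are again dictated by scaling: each term is homogeneous of degree $k$ in $g$ and carries $m+\alpha$ derivatives. Young's inequality with conjugate weights therefore gives $\|g\|_{\dot C^\alpha}^{(m+\alpha)/\alpha}+\|g\|_{\dot C^{m+\alpha}}$. The exponent $(m+\alpha)/\alpha$ is exactly the one for which the weights sum to one.

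For the difference estimates \eqref{mainint21}--\eqref{mainint22} I would write
\[
f\circ g_1-f\circ g_2=(g_1-g_2)\int_0^1 f'(g_\tau)\,d\tau,\qquad g_\tau=\tau g_1+(1-\tau)g_2 .
\]
Then $\nabla^m$, and $\delta_\alpha$ as well, can be distributed with the Leibniz rule. The factor $\nabla^n(g_1-g_2)$ is kept as is, and $\nabla^{m-n}(f'\circ g_\tau)$ is bounded by \eqref{mainint11}--\eqref{mainint12} applied to $f'$. This is the reason for requiring $m+2$ derivatives of $f$. Since $\|g_\tau\|\le\|(g_1,g_2)\|$, this reproduces the stated sums.

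For \eqref{mainint31} I would use the double-difference identity
\[
(f\circ g_1-f\circ g_2)-(f\circ g_3-f\circ g_4)=\big((g_1-g_2)-(g_3-g_4)\big)\int_0^1 f'(g_\tau)\,d\tau+(g_3-g_4)\int_0^1\big(f'(g_\tau)-f'(\tilde g_\tau)\big)\,d\tau,
\]
where $\tilde g_\tau=\tau g_3+(1-\tau)g_4$. The last bracket is handled by \eqref{mainint21} applied to $f'$, since $g_\tau-\tilde g_\tau$ is controlled by $(g_1-g_3,g_2-g_4)$.

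The main obstacle is the bookkeeping in \eqref{mainint12} and \eqref{mainint22}. The required interpolation has to avoid integer Hölder endpoints, where $\dot C^k$ and Besov norms differ. When $\delta_\alpha$ hits $f^{(k)}(g)$, the estimate needs the $\dot C^\alpha$ norm rather than a Lipschitz norm. In each case I would first check the exponent count by scaling and then apply Young's inequality.
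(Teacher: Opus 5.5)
Your proposal is correct and follows the paper's route. For \eqref{mainint11}--\eqref{mainint12} you use Fa\`a di Bruno together with the interpolation inequalities of Lemma~\ref{maininterpo} and Young's inequality; for \eqref{mainint21}--\eqref{mainint22} you use the representation $f\circ g_1-f\circ g_2=(g_1-g_2)\int_0^1 f'(g_\tau)\,d\tau$ with those bounds applied to $f'$; and for \eqref{mainint31} you use a double-difference splitting. The only difference is cosmetic: the paper expands $f'(g_\tau)-f'(\tilde g_\tau)$ once more via $\nabla^2 f$ instead of quoting \eqref{mainint21} for $f'$. Your version also makes explicit that the second sum in \eqref{mainint31} runs over $n_1+n_2+n_3=m$.
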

		\begin{proof}
			The first two inequalities follow from Lemma \ref{maininterpo}, and we omit the proof.
			For \eqref{mainint21} and \eqref{mainint22}, since
			\begin{equation*}
				f\circ g_1-f\circ g_2=\int_0^1(g_1-g_2)\cdot\nabla f(g_1-\lambda(g_1-g_2))d\lambda.
			\end{equation*}
			We can get the result by \eqref{mainint11},  \eqref{mainint12} and Lemma \ref{maininterpo},.\\
			For the last two inequalities, similarly,
			\begin{align*}
				&(f\circ g_1-f\circ g_2)-(f\circ g_3-f\circ g_4)\\
				&=\int_0^1(g_1-g_2)\cdot\nabla f(g_1-\lambda(g_1-g_2))d\lambda-\int_0^1(g_3-g_4)\cdot\nabla f(g_3-\lambda(g_3-g_4))d\lambda\\
				&=\int_0^1\int_0^1(g_3-g_4)(g_3-\lambda(g_3-g_4))\nabla^2f\left(g_3-\lambda(g_3-g_4)-\mu\big((g_1-g_3)-\lambda((g_1-g_3)-(g_2-g_4))\big)\right)\\
				&\quad+\int_0^1\left((g_1-g_2)-(g_3-g_4)\right)\cdot\nabla f(g_1-\lambda(g_1-g_2))d\lambda.
			\end{align*}
			Then we obtain  \eqref{mainint31} by \eqref{mainint11}, \eqref{mainint12} and Lemma \ref{maininterpo}. This completes the proof of the lemma.
		\end{proof}
		\subsection{Estimates of nonlinear terms in the Muskat equation}
        In this section, we estimate nonlinear terms in the equation \eqref{eqmusre}.
   Denote $$\Upsilon(f)=\frac{1}{\langle\partial_xf\rangle^3},\quad\quad\quad \tilde \Upsilon(f_1,f_2)=\Upsilon(f_1)-\Upsilon(f_2).$$  
   We have the following estimate of $\Upsilon(f)$ and $\tilde\Upsilon(f_1,f_2)$.
   \begin{lemma}\label{lemUps}
    For any $0\leq l\leq m+2$, it holds 
    \begin{align*}
    &\|\Upsilon(f)(t)\|_{\dot C^l}\lesssim t^{-\frac{l}{3}}(1+\|f\|_T)^l,\\
    &\|\tilde \Upsilon(f_1,f_2)(t)\|_{\dot C^l}\lesssim t^{-\frac{l}{3}}\|f_1-f_2\|_T(1+\|(f_1,f_2)\|_T)^l.
    \end{align*}
    for any $t\in (0,T].$
   \end{lemma}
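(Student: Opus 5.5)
The plan is to write $\Upsilon(f)=F(\partial_x f)$ with $F(r)=\langle r\rangle^{-3}$, a smooth function all of whose derivatives are bounded on $\mathbb{R}$. Then apply the composition estimates of Lemma \ref{lemcom} with $g=\partial_x f(t)$, and convert the resulting Hölder norms of $\partial_x f$ into powers of $t$ using the definition of $\|f\|_T$ in \eqref{normmst} together with the interpolation inequalities of Lemma \ref{maininterpo}.

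\textbf{Step 1: norms of $\partial_x f$.} The case $l=0$ is trivial since $|\Upsilon|\le 1$. For $0<\gamma\le m+\kappa$, interpolate between $\|\partial_x f(t)\|_{L^\infty}\le\|f\|_T$ and $t^{\frac{m+\kappa}{3}}\|\partial_x f(t)\|_{\dot C^{m+\kappa}}\le \|f\|_T$. Items 1–2 of Lemma \ref{maininterpo} (applied to the derivatives in the integer case) give
$$\|\partial_x f(t)\|_{\dot C^{\gamma}}\lesssim t^{-\frac{\gamma}{3}}\|f\|_T.$$
Since $3-\kappa\ll1$, the range $l\le m+2$ is covered.

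\textbf{Step 2: the estimate for $\Upsilon$.} Apply \eqref{mainint11} (and \eqref{mainint12} for non-integer $l$) with $g=\partial_x f$:
$$\|\nabla^l\Upsilon(f)\|_{L^\infty}\lesssim \|\partial_x f\|_{\dot C^1}^l+\|\partial_x f\|_{\dot C^l}.$$
By Step 1 the right side is at most $t^{-\frac l3}(\|f\|_T^l+\|f\|_T)\lesssim t^{-\frac l3}(1+\|f\|_T)^l$, which is the first bound. More precisely, the Faà di Bruno terms are products $\prod_i\|\nabla^{k_i}\partial_x f\|_{L^\infty}$ with $\sum_i k_i=l$. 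Each factor carries $t^{-k_i/3}$, so the powers of $t$ add up to exactly $t^{-l/3}$ and no loss occurs.

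\textbf{Step 3: the estimate for $\tilde\Upsilon$.} Write $F(a)-F(b)=(a-b)\int_0^1F'(b+\lambda(a-b))\,d\lambda$ and use \eqref{mainint21}. This gives a sum of terms $\|\partial_x(f_1-f_2)\|_{\dot C^n}$ times norms of order $l-n$ of $(\partial_x f_1,\partial_x f_2)$. By Step 1 applied to $f_1-f_2$ and to $f_i$, each term is bounded by
$$t^{-\frac n3}\|f_1-f_2\|_T\cdot t^{-\frac{l-n}3}(1+\|(f_1,f_2)\|_T)^{l}.$$

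The only delicate point is bookkeeping: checking that every product in the chain-rule expansion carries total weight exactly $t^{-l/3}$. This holds because the weights scale linearly in the derivative count. The other detail is that the top order $l\le m+2$ stays below the available regularity $m+\kappa+1$ of $f$, which is guaranteed by $\kappa>2$.
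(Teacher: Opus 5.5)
Your proposal follows the paper's own one-line argument and is correct as written for integer $l$. That argument writes $\Upsilon(f)=F(\partial_x f)$ with $F(r)=\langle r\rangle^{-3}$, applies the composition estimates of Lemma \ref{lemcom} to $g=\partial_x f$, and converts H\"older norms of $\partial_x f$ into powers of $t$ via the definition \eqref{normmst} and interpolation, using $m+2<m+\kappa$.

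For non-integer $l=n+\alpha$, which the paper needs (e.g.\ the norms $\dot C^{n-n_1+\frac14}$ in Lemma \ref{nonst}), citing \eqref{mainint12} or \eqref{mainint22} only gives the factor $\|\partial_x f\|_{\dot C^{\alpha}}^{l/\alpha}$, i.e.\ $(1+\|f\|_T)^{l/\alpha}$ rather than $(1+\|f\|_T)^{l}$. To get the stated exponent, interpolate your integer bounds at levels $n$ and $n+1\le m+2$ via $\|\nabla^n h\|_{\dot C^{\alpha}}\lesssim \|\nabla^n h\|_{L^\infty}^{1-\alpha}\|\nabla^{n+1}h\|_{L^\infty}^{\alpha}$; this gives exactly $t^{-\frac l3}(1+\|f\|_T)^{l}$, and the same works for $\tilde\Upsilon$.
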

   The proof follows directly from Lemma \ref{lemcom}
	and the definition of  $\|\cdot\|_T$ in \eqref{normmst}, we omit details here.

        	For simplicity, we denote the quantity
		\begin{align*}
			\mathbf{C}_n(f_1,f_2):=\left(\|f_1-f_2\|_{T,*}\|(f_1,f_2)\|_{T,*}+\|f_1-f_2\|_{T}\|(f_1,f_2)\|_{T,*}^2\right)(1+\|(f_1,f_2)\|_T)^{2n+5}.
		\end{align*}
Recalling the definition of $\|\cdot \|_T,\|\cdot\|_{T,*}$ and $\|\cdot\|_{X_T}$ in   \eqref{normmst},      we have the following estimate of nonlinear term $\N[f]$.
		\begin{lemma}\label{nonst}
			Let $\N[f](t,x)$  be as defined in \eqref{defNst} with the constant $\varrho_0$ defined in \eqref{defvr0}. For any $T>0$, and any $f,f_1,f_2$ with $\|f\|_{X_T}+\|f_1\|_{X_T}+\|f_2\|_{X_T}<\infty$,  the following estimates hold:
			\begin{itemize}
				\item \textbf{(i) Nonlinear estimate:}
				\begin{equation}\label{NNN}
					\begin{aligned}
						&		\sum_{j=0}^m\sup_{t\in[0,T]}t^\frac{j+\kappa}{3}\| \N[f](t)\|_{\dot C^{j+\kappa-2}}\lesssim \mathbf{C}_m(f,0)+|\varrho_0|(T^\frac{2}{3}+T^\frac{1}{10})\|f\|_{X_T}(1+\|f\|_{X_T})^{m+5},
					\end{aligned}
				\end{equation}
				\item \textbf{(ii) Lipschitz type continuity:}
				\begin{equation}\label{Ndi}
					\begin{aligned}
								\sum_{j=0}^m\sup_{t\in[0,T]}t^\frac{j+\kappa}{3}&\| \N[f_1](t)- \N[f_2](t)\|_{\dot C^{j+\kappa-2}}\\
						&\lesssim \mathbf{C}_m(f_1,f_2)+|\varrho_0|(T^\frac{2}{3}+T^\frac{1}{10})||f_1-f_2||_{X_T}(1+||(f_1,f_2)||_{X_T})^{m+4}.
					\end{aligned}
				\end{equation}	
			\end{itemize}
		\end{lemma}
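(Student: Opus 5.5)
We prove Lemma \ref{nonst} term by term: write $\N[f]=\N_1[f]+\N_2[f]+\varrho_0\N_3[f]$ as in \eqref{defNst}. For each piece we estimate the weighted norms $t^{\frac{j+\kappa}{3}}\|\cdot\|_{\dot C^{j+\kappa-2}}$, $0\le j\le m$. Since $\kappa$ is close to $3$, $j+\kappa-2\in(j,j+1)$ is non-integer, so each norm is a Hölder seminorm of $\nabla^j$ of exponent $\kappa-2\in(0,1)$. The estimates reduce to pointwise bounds on finite differences $\delta_\beta\nabla^j$ of the integrands. These are integrated in $\alpha$ after splitting $|\alpha|\le t^{1/3}$ and $|\alpha|> t^{1/3}$, and the pieces are balanced using the time weights in $\|\cdot\|_T$ and $\|\cdot\|_{T,*}$.

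\textbf{Treatment of $\N_1$ and $\N_2$.} The key structural point is that each of these terms is at least quadratic in quantities controlled by $\|f\|_{T,*}$. For $\N_1$, the kernel factor $\partial_xf(x)-\Delta_\alpha f(x)$ is bounded by $|\alpha|^{1/5}\|\partial_xf\|_{\dot C^{1/5}}$ through Lemma \ref{douint}, which gives one $*$-factor. The second $*$-factor comes from $\partial_x(\partial_x^2f\,\Upsilon(f))(x-\alpha)$, which carries three derivatives. On $|\alpha|\le t^{1/3}$ we move derivatives onto $f$ using the high norm $t^{\frac{m+\kappa}{3}}\|\partial_xf\|_{\dot C^{m+\kappa}}$, interpolated via Lemma \ref{maininterpo} against $t^{1/15}\|\partial_xf\|_{\dot C^{1/5}}$. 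On $|\alpha|>t^{1/3}$ we integrate by parts in $\alpha$ to transfer the derivatives onto the kernel, which is a smooth function of $\Delta_\alpha f$ and $\partial_xf$. Derivatives $\nabla^j$ are distributed by Leibniz. Compositions such as $\langle\Delta_\alpha f\rangle^{-2}$ and $\Upsilon$ are handled by Lemma \ref{lemcom} and Lemma \ref{lemUps}, and these produce the factor $(1+\|f\|_T)^{2m+5}$. For $\N_2$, the difference $\Upsilon(f)(x-\alpha)-\Upsilon(f)(x)$ supplies a factor $\delta_\alpha$ of a $\dot C^{1/5}$-type quantity. Symmetrizing in $\alpha$ converts it into $\mathcal{O}_\alpha$-type second differences, which are controlled through the integral bounds in Lemma \ref{douint}; this yields $\mathbf{C}_m(f,0)$.

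\textbf{Treatment of $\N_3$.} This term is lower order: it carries only one derivative, against the three-derivative dissipation. The weights $t^{\frac{j+\kappa}{3}}$ applied to quantities of order $j+\kappa-2$ then leave a surplus power of $t$, giving the $T^{\frac23}$ and $T^{\frac1{10}}$ factors. Since the equation is not homogeneous here, we also use $\|f\|_{L^\infty}$ in place of smallness, which explains the appearance of the $X_T$ norm.

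\textbf{Lipschitz estimate and main obstacle.} Part (ii) follows by the same computation applied to differences. Each nonlinear factor is telescoped, and compositions are handled with \eqref{mainint21} and \eqref{mainint22}; this yields $\mathbf{C}_m(f_1,f_2)$. The main obstacle is the top-order case $j=m$ near $\alpha=0$ in $\N_1$. There $m+\kappa-2+3$ derivatives fall on $f$, and closing the estimate requires extracting the gain $|\alpha|^{1/5}$ from $\partial_xf-\Delta_\alpha f$ without losing integrability in $\alpha$. To handle this we first use the commutator structure: we write $\delta_\beta$ of the product and put the Hölder increment on the smoother factor, using the double-difference bounds of Lemma \ref{douint}.
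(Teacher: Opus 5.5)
Your outline is correct and uses the same structural facts as the paper's proof:
\begin{itemize}
\item one $\|\cdot\|_{T,*}$ factor comes from $\E_\alpha f$ and one from the three-derivative factor $\partial_x\M[f](x-\alpha)$ in $\N_1$;
\item integration by parts in $\alpha$ handles the far field;
\item $\N_2$ is put in the symmetrized form
\[
\N_2[f]=-\frac{1}{\pi}\int_{\mathbb R}\delta_\alpha\partial_x^2f\,\delta_\alpha\Upsilon(f)\,\frac{d\alpha}{\alpha^2}+\frac{1}{2\pi}\,\partial_x^2f\int_{\mathbb R}\mathcal{O}_\alpha\Upsilon(f)\,\frac{d\alpha}{|\alpha|},
\]
which is controlled by Lemma~\ref{douint};
\item the order-one term $\N_3$ carries a surplus $t^{2/3}$.
\end{itemize}

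Your route differs from the paper's in two places.

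\textbf{Integer derivatives plus interpolation.} The paper never works with the exponent $\kappa-2$ directly. It proves integer-order bounds on $\sup_t t^{\frac{n+2}{3}}\|\partial_x^n(\N[f_1]-\N[f_2])(t)\|_{L^\infty}$ for $0\le n\le m+1$, and then interpolates between $n=j$ and $n=j+1$.
\begin{itemize}
\item The extra derivative is affordable because $3-\kappa\ll 1$: at $n=m+1$ one only needs $\partial_xf\in\dot C^{m+2+\frac34}$.
\item In exchange, no $\delta_\beta$ ever appears. The Leibniz-for-differences and double-difference step you flag as the main obstacle therefore does not arise.
\item In your setting that step is not a real obstacle either. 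The count $m+\kappa+1$ of derivatives on $f$ is exactly what $t^{\frac{m+\kappa}{3}}\|\partial_xf\|_{\dot C^{m+\kappa}}$ controls, and $|\alpha|^{\frac15-1}$ is integrable at $0$.
\end{itemize}

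\textbf{Global integration by parts.} The paper integrates by parts on all of $\mathbb R$:
\[
\int\B[f]\,\partial_x\M[f](x-\alpha)\,d\alpha=-\int\partial_\alpha\B[f]\,\delta_\alpha\M[f](x)\,d\alpha .
\]
The factor $\delta_\alpha\M[f]$ gives vanishing at $\alpha=0$, and the $\alpha^{-2}$ in $\partial_\alpha\B[f]$ gives decay. A single integral $\int\min\{(|\alpha|t^{-\frac13})^{\frac32},(|\alpha|t^{-\frac13})^{\frac12}\}\frac{d\alpha}{\alpha^2}\sim t^{-\frac13}$ then covers both regimes. Your split at $|\alpha|=t^{1/3}$ also works, but it produces boundary terms at $|\alpha|=t^{1/3}$ that you must check; they have the right size.

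Your route has the mild advantage of staying in the target H\"older norms: for example, $\Lambda f$ is bounded in $\dot C^{j+\kappa-2}$ by $\|\partial_xf\|_{\dot C^{j+\kappa-2}}$ with no $L^\infty$ information. The paper's route is shorter.

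For $\N_3$, you should state where $\|f\|_{L^\infty}$ actually enters; "non-homogeneity" is not the reason. After integrating by parts, the far-field part of $\int\partial_\alpha\B[f]\,\delta_\alpha f\,d\alpha$ cannot be bounded using only $\|\partial_xf\|_{L^\infty}$. The paper closes it with $|\delta_\alpha f|\lesssim|\alpha|^{1/2}\|f\|_{\dot C^{1/2}}\lesssim|\alpha|^{1/2}\|f\|_{X_T}$.
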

		\begin{proof}
			To prove \eqref{NNN} and \eqref{Ndi}, it suffices to show that, for any $n\in\mathbb{N}$, $n\leq m+1$,
			\begin{equation}\label{eee1}
				\begin{aligned}
					&\sup_{t\in[0,T]}t^{\frac{n+2}{3}}\|\N[f_1](t)-\N[f_2](t)\|_{\dot C^{n}}\\
					&\ \ \quad\quad\quad\quad\lesssim   \mathbf{C}_n(f_1,f_2)+|\varrho_0|(T^\frac{2}{3}+T^\frac{1}{10})||f_1-f_2||_{X_T}(1+||(f_1,f_2)||_{X_T})^{n+4}.
				\end{aligned}
			\end{equation}
			Note that $\kappa-2\in(0,1)$, then by Lemma \ref{maininterpo}, the desired estimate \eqref{Ndi} follows from \eqref{eee1}, and \eqref{NNN} follows from taking $f_1=f$, $f_2\equiv 0$ in \eqref{eee1} since $\N[0]\equiv 0$.
			
			For simplicity,  fix $t\in[0,T]$ and omit the time variable in the proof. Denote 
			\begin{equation}\label{defEal}
				\begin{aligned}
					&	\mathsf E_\alpha f(x)=\partial_x f(x)-\Delta_\alpha f(x),\ \ \ \ \ \ \E^\alpha f(x)=\partial_x f(x-\alpha)-\Delta_\alpha f(x),\\
					&\B[f](x,\alpha)=\frac{ \Delta_{\alpha} f(x)\E_\alpha f(x)}{\alpha\left\langle\Delta_{\alpha} f(x)\right\rangle^{2}},\ \ \quad\quad \mathsf{M}[f](x)=\frac{\partial_{x}^{2} f(x)}{\left\langle\partial_{x} f(x)\right\rangle^{3}}.
				\end{aligned}
			\end{equation}
			Then we have $\mathsf{N}[f]=\N_1[f]+\N_2[f]+\varrho_0\N_3[f]$ with 
			\begin{align*}
				&\N_{1}[f](x)=\frac{1}{\pi} \int_{\mathbb{R}}\B[f](\alpha,x)\partial_{x}\M[f](x-\alpha) {d \alpha},\\
				&\N_2[f](x)=-\frac{1}{\pi} \int_{\mathbb{R}}\partial_x^2f(x-\alpha)\left(\frac{1}{\left\langle\partial_{x} f(x-\alpha)\right\rangle^{3}}-\frac{1}{\left\langle\partial_{x} f(x)\right\rangle^{3}}\right)\frac{d\alpha}{\alpha^2},\\
				&\N_3[f](x)=-\frac{1}{\pi}\int_{\mathbb{R}}\B[f](\alpha,x)\partial_x f(x-\alpha){d\alpha}-\Lambda f(x).
			\end{align*}
			We first consider $\N_1[f_1]-\N_1[f_2]$, 
			\begin{align*}
				\N_1[f_1](x)-
				\N_1[f_2](x)=&\frac{1}{\pi} \int_{\mathbb{R}}\B[f_1](\alpha,x) \partial_{x}\left(\M[f_1]-\M[f_2]\right)(x-\alpha) d \alpha\\
				&+\frac{1}{\pi} \int_{\mathbb{R}}(\B[f_1]-\B[f_2])(\alpha,x) \partial_{x}\M[f_2](x-\alpha) d \alpha\\
				:=&\mathrm{I}_1+\mathrm{I}_2.
			\end{align*}
			Integrating  by parts gives
			\begin{align*}
				\mathrm{I}_1&=-\frac{1}{\pi} \int_{\mathbb{R}}\B[f_1](\alpha,x) \partial_{\alpha}\left(\M[f_1]-\M[f_2]\right)(x-\alpha)d \alpha\\
				&=-\frac{1}{\pi} \int_{\mathbb{R}}\partial_{\alpha}\B[f_1](\alpha,x)\delta_\alpha\left(\M[f_1]-\M[f_2]\right)(x) {d \alpha}.
			\end{align*}
			Then, for $n\in\mathbb{N}$, $n\leq m+1$,
			\begin{align}\label{fori1}
				\partial_x^{n}\mathrm{I}_1=-\frac{1}{\pi}\sum_{k+l=n}\int_{\mathbb{R}} \partial_x^k\partial_{\alpha}\B[f_1](\alpha,x)\delta_\alpha\partial_x^l\left(\M[f_1]-\M[f_2]\right)(x)d\alpha.
			\end{align}
			Denote $\psi(r)=\frac{r}{\langle r\rangle^2}$, then $\B[f](x,\alpha)=\frac{\psi(\Delta_\alpha f(x))\E_\alpha f(x)}{\alpha}$. 	Note that $-\partial_\alpha(\E_\alpha f(x))=\partial_\alpha (\Delta_\alpha f(x))=\frac{\E^\alpha f(x)}{\alpha}$. Hence,  \begin{align*}
				\partial_\alpha \B[f_1](\alpha,x)=\frac{1}{\alpha^2}\left(\psi'(\Delta_\alpha f(x))\E^\alpha f(x)\E_\alpha f(x)-\psi(\Delta_\alpha f(x))(\E^\alpha f(x)+\E_\alpha f(x))\right).
			\end{align*}
			This implies 
			\begin{align*}
				| \partial_x^k  \partial_\alpha \B[f](\alpha,x)|\lesssim& \frac{1}{\alpha^2}\left(\sum_{k_1+k_2+k_3=k}\|\partial_x^{k_1}\psi'(\Delta_\alpha f(\cdot))\|_{L^\infty}\|\partial_x^{k_2}\E^\alpha f\|_{L^\infty}\|\partial_x^{k_3}\E_\alpha f\|_{L^\infty}\right.\\
				&\ \left.+\sum_{k_4+k_5=k}\|\partial_x^{k_4}\left(\psi(\Delta_\alpha f(\cdot))\right)\|_{L^\infty}(\|\partial_x^{k_5}\E^\alpha f\|_{L^\infty}+\|\partial_x^{k_5}\E_\alpha f\|_{L^\infty})\right).
			\end{align*}
			Recall the definition of $\|\cdot\|_T$ in \eqref{normmst}. By \eqref{mainint11} in Lemma \ref{lemcom}, we have
			\begin{align*}
				\|\partial_x^{k_1}\left(\psi(\Delta_\alpha f(\cdot))\right)\|_{L^\infty}&+\|\partial_x^{k_1}\left(\psi'(\Delta_\alpha f(\cdot))\right)\|_{L^\infty}\lesssim \|\Delta_\alpha f\|_{\dot C^1}^{k_1}+\|\Delta_\alpha f\|_{\dot C^{k_1}}\\
				&\lesssim \|\partial_x f\|_{\dot C^{k_1}}+\|\partial_x f\|_{\dot C^1}^{k_1}\lesssim t^{-\frac{k_1}{3}}\|f\|_T(1+\|f\|_T)^{k_1}.
			\end{align*}
			Moreover, we have 
			\begin{align*}
				\|\partial_x^{k_3}\E_\alpha f\|_{L^\infty}\lesssim \|\partial_x f\|_{\dot C^{k_3}}\lesssim t^{-\frac{k_3}{3}}\|f\|_T.
			\end{align*}
			This yields that 
			\begin{equation}\label{b1}
				\begin{aligned}
					&|\partial_{\alpha}\partial_x^k\B[f](\alpha,x)|
					\lesssim \frac{1}{\alpha^2}\|f\|_T(1+\|f\|_T)^{k+1}\sum_{k_1+k_2=k} t^{-\frac{k_1}{3}}(\|\E^\alpha \partial_x^{k_2}f\|_{L^\infty}+\|\E_\alpha \partial_x^{k_2}f\|_{L^\infty}).
				\end{aligned}
			\end{equation}
			Moreover, by Lemma \ref{douint}, 
			\begin{align*}
				\|\partial_x^{k_2}\E^\alpha f\|_{L^\infty}+\|\partial_x^{k_2}\E_\alpha f\|_{L^\infty}&\lesssim \min\{|\alpha|^\frac{3}{4}\|\partial_xf\|_{\dot C^{k_2+\frac{3}{4}}},|\alpha|^\frac{3}{4}\|\partial_xf\|_{\dot C^{k_2+\frac{1}{4}}}\}\\
				&\lesssim t^{-\frac{k_2}{3}}\|f\|_{T,*}\min\{(|\alpha|t^{-\frac{1}{3}})^\frac{3}{4},(|\alpha|t^{-\frac{1}{3}})^\frac{1}{4}\}.
			\end{align*}
			Combining this with \eqref{b1}, we have
			\begin{equation}\label{i1esb}
				\begin{aligned}
					&|\partial_{\alpha}\partial_x^k\B[f](\alpha,x)|\lesssim \frac{1}{\alpha^2}\|f\|_{T,*}\| f\|_T (1+\| f\|_T )^{k+1}t^{-k/3}\min\{(|\alpha|t^{-\frac{1}{3}})^\frac{3}{4},(|\alpha|t^{-\frac{1}{3}})^\frac{1}{4}\}.
				\end{aligned}
			\end{equation}
			Here and in the following we will use Lemma \ref{lemcom} without claim.
		For any $0\leq l\leq m+2$,	one has 
			\begin{equation*}
				\begin{aligned}
					\|\partial_x^l\left(\M[f_1]-\M[f_2]\right)\|_{L^\infty}
					&\lesssim \left\|\partial_x^l\left( {\partial_{x}^{2} (f_1-f_2)\Upsilon(f_2)}\right)\right\|_{L^\infty}+\left\|\partial_x^l\left(\partial_x^2f_1\tilde \Upsilon(f_1,f_2)\right)\right\|_{L^\infty}\\
					&\quad\lesssim t^{-\frac{l+1}{3}}(1+\|(f_1,f_2)\|_{T})^{l+1}(\|f_1-f_2\|_T\|(f_1,f_2)\|_{T,*}+\|f_1-f_2\|_{T,*}).
				\end{aligned}
			\end{equation*}
			Combining this with \eqref{i1esb} and \eqref{fori1}, by interpolation, we derive
			\begin{equation}\label{i1}
				\begin{aligned}
					\|\partial_x^n\mathrm{I}_1\|_{L^\infty}&\lesssim t^{-\frac{n+1}{3}}\mathbf{C}_n(f_1,f_2)\int_{\mathbb{R}}\min\{(|\alpha|t^{-\frac{1}{3}})^\frac{3}{2},(|\alpha|t^{-\frac{1}{3}})^\frac{1}{2}\}\frac{d\alpha}{\alpha^2}\\
					&\lesssim t^{-\frac{n+2}{3}}\mathbf{C}_n(f_1,f_2).
				\end{aligned}
			\end{equation}
			Similarly, integrating by parts, we obtain
			\begin{align*}
				\mathrm{I}_2=-\frac{1}{\pi} \int_{\mathbb{R}}\partial_\alpha (\B[f_1]-\B[f_2])(\alpha,x) \delta_\alpha \M[f_2](x) {d \alpha},
			\end{align*}
			and
			\begin{align*}
				\partial_x^n\mathrm{I}_2=-\frac{1}{\pi} \sum_{k+l=n}\int_{\mathbb{R}}\partial_\alpha \partial_x^k(\B[f_1]-\B[f_2])(\alpha,x) \delta_\alpha \partial_x^l\M[f_2](x) {d \alpha}.
			\end{align*}
			Note that 
			\begin{equation*}
				(\B[f_1]-\B[f_2])(\alpha,x)= \frac{\Delta_\alpha(f_1-f_2)\E_\alpha f_1}{\alpha\langle \Delta_\alpha f\rangle^2}+\frac{\Delta_\alpha f_2\E_\alpha (f_1-f_2)}{\alpha\langle \Delta_\alpha f\rangle^2}+\frac{\Delta_\alpha f_2\E_\alpha f_2}{\alpha}\left(\frac{1}{\langle\Delta_\alpha f_1\rangle^2}-\frac{1}{\langle\Delta_\alpha f_2\rangle^2}\right).
			\end{equation*}
			Then we can use similar estimates as \eqref{b1} to obtain
			\begin{equation}\label{b2}
				\begin{aligned}
					&\left|\partial_\alpha \partial_x^k(\B[f_1]-\B[f_2])(\alpha,x) \right|\\
					&\lesssim \frac{1}{\alpha^2}\left(\sum_{k_1+k_2+k_3=k}(\|(f_1,f_2)\|_{\dot C^{k_1+1}}+\|(f_1,f_2)\|_{\dot C^2}^{k_1})(\|(\E_{\alpha}f_1,\E_{\alpha}f_2)\|_{\dot C^{k_2}}+\|(\E_{\alpha}f_1,\E_{\alpha}f_2)\|_{\dot C^1}^{k_2})\right.\\
					&\quad\quad\quad\quad\quad\quad\times(\|f_1-f_2\|_{\dot C^{k_3+1}}+\|f_1-f_2\|_{\dot C^2}^{k_3})\\
					&\left.\quad\quad+\sum_{k_1+k_2=k}(\|(f_1,f_2)\|_{\dot C^{k_1+1}}+\|(f_1,f_2)\|_{\dot C^2}^{k_1})(\|(\E_{\alpha}(f_1-f_2)\|_{\dot C^{k_2}}+\|(\E_{\alpha}(f_1-f_2)\|_{\dot C^1}^{k_2})\right)
				\end{aligned}
			\end{equation}
			and by definition of $\|\cdot\|_T$, we have
			\begin{equation*}
				\begin{aligned}
					&\left|\partial_\alpha \partial_x^k(\B[f_1]-\B[f_2])(\alpha,x) \right|\\
					&\lesssim \frac{1}{t^{\frac{k}{3}}\alpha^2}(1+\|(f_1,f_2)\|_T)^{k+3}(\|f_1-f_2\|_{T}\|(f_1,f_2)\|_{T,*}+\|f_1-f_2\|_{T,*})\min\{(|\alpha|t^{-\frac{1}{3}})^\frac{3}{4},(|\alpha|t^{-\frac{1}{3}})^\frac{1}{4}\},
				\end{aligned}
			\end{equation*}
			By Lemma \ref{lemcom}, we have for any $0\leq l\leq m+2$,
			\begin{equation*}
				\left\|\partial_x^l\M[f_2]\right\|_{L^\infty}\lesssim t^{-\frac{l+1}{3}}\|f_2\|_{T,*}(1+\| f_2\|_T)^l.
			\end{equation*}
			Hence one has
			\begin{equation}\label{i2}
				\begin{aligned}
					\|\partial_x^n\mathrm{I}_2\|_{L^\infty}&\lesssim t^{-\frac{n+1}{3}}\mathbf{C}_n(f_1,f_2)\int_{\mathbb{R}}\min\{(|\alpha|t^{-\frac{1}{3}})^\frac{3}{2},(|\alpha|t^{-\frac{1}{3}})^\frac{1}{2}\}\frac{d\alpha}{|\alpha|^2}\\
					&\lesssim  t^{-\frac{n+2}{3}}\mathbf{C}_n(f_1,f_2).
				\end{aligned}
			\end{equation}
			We conclude from \eqref{i1} and \eqref{i2} that 
			\begin{equation}\label{N1}
				\begin{aligned}
					&\sup_{t\in[0,T]}t^\frac{n+2}{3}\|\partial_x^n(\N_1[f_1]-\N_1[f_2])(t)\|_{L^\infty}\lesssim \mathbf{C}_n(f_1,f_2),
				\end{aligned}
			\end{equation}
			for any $0\leq n\leq m+1$.
			
			Then we consider $\N_2[f_1]-\N_2[f_2]$, note that 
			\begin{align*}
			\N_2[f](x)=-\frac{1}{\pi} \int_{\mathbb{R}}\delta_\alpha \partial_x^2f(x)\delta_\alpha\Upsilon(f)(x)\frac{d\alpha}{\alpha^2}+\frac{1}{2\pi}\partial_x^2f(x)\int _{\mathbb{R}}\mathcal{O}_\alpha \Upsilon(f)(x)\frac{d\alpha}{|\alpha|},
			\end{align*}
			where the operator $\mathcal{O}_\alpha$ is defined in Lemma \ref{douint}.
			We split $\N_2[f_1]-\N_2[f_2]$ into 
			\begin{equation*}\label{splitN2}
				\begin{aligned}
					&(\N_2[f_1]-\N_2[f_2])(x)\\
					&\quad\quad=-\frac{1}{\pi} \int_{\mathbb{R}}\delta_\alpha \partial_x^2(f_1-f_2)(x)\delta_\alpha\Upsilon(f_1)(x)\frac{d\alpha}{\alpha^2} -\frac{1}{\pi} \int_{\mathbb{R}}\delta_\alpha \partial_x^2f_2(x)\delta_\alpha\tilde \Upsilon(f_1,f_2)(x)\frac{d\alpha}{\alpha^2}\\
					&\quad\quad\quad\quad\ +\frac{1}{2\pi}\partial_x^2(f_1-f_2)(x)\int_{\mathbb{R}} \mathcal{O}_\alpha\Upsilon(f_1)(x)\frac{d\alpha}{|\alpha|}+\frac{1}{2\pi}\partial_x^2f_2(x)\int_{\mathbb{R}} \mathcal{O}_\alpha \tilde \Upsilon(f_1,f_2)(x)\frac{d\alpha}{|\alpha|}\\
					&\quad\quad:=\sum_{j=1}^4\mathrm{II}_j(x).
				\end{aligned}  
			\end{equation*}
            By Lemma \ref{douint}, we obtain 
            \begin{align*}
          &  \|\partial_x^n\mathrm{II}_1 \|_{L^\infty}\lesssim \sum_{n_1=0}^n(\|\partial_x^2(f_1-f_2)\|_{\dot C^{n_1+\frac{1}{4}}}\|\partial_x^2(f_1-f_2)\|_{\dot C^{n_1+\frac{3}{4}}}\|\Upsilon(f_1)\|_{\dot C^{n-n_1+\frac{1}{4}}}\|\Upsilon(f_1)\|_{\dot C^{n-n_1+\frac{3}{4}}})^\frac{1}{2},\\
           & \|\partial_x^n\mathrm{II}_2\|_{L^\infty}\lesssim \sum_{n_1=0}^n(\|\partial_x^2f_2\|_{\dot C^{n_1+\frac{1}{4}}}\|\partial_x^2f_2\|_{\dot C^{n_1+\frac{3}{4}}}\|\tilde \Upsilon(f_1,f_2)\|_{\dot C^{n-n_1+\frac{1}{4}}}\|\tilde \Upsilon(f_1,f_2)\|_{\dot C^{n-n_1+\frac{3}{4}}})^\frac{1}{2},\\
           &\|\partial_x^n\mathrm{II}_2\|_{L^\infty}\lesssim \sum_{n_1=0}^n\|\partial_x^2(f_1-f_2)\|_{\dot C^{n_1}}\|\Upsilon(f_1)\|_{\dot C^{n-n_1+\frac{3}{4}}}^\frac{1}{2}\|\Upsilon(f_1)\|_{\dot C^{n-n_1+\frac{5}{4}}}^\frac{1}{2},\\
           &\|\partial_x^n\mathrm{II}_2\|_{L^\infty}\lesssim \sum_{n_1=0}^n\|\partial_x^2f_2\|_{\dot C^{n_1}}\|\tilde \Upsilon(f_1,f_2)\|_{\dot C^{n-n_1+\frac{3}{4}}}^\frac{1}{2}\|\tilde \Upsilon(f_1,f_2)\|_{\dot C^{n-n_1+\frac{5}{4}}}^\frac{1}{2}.
            \end{align*}
	Combining this with Lemma \ref{lemUps}, we obtain  
			\begin{equation}\label{N2}
				\begin{aligned}
					&\sup_{t\in[0,T]}t^\frac{n+2}{3}\|\partial_x^n(\N_2[f_1]-\N_2[f_2])(t)\|_{L^\infty}\lesssim \mathbf{C}_n(f_1,f_2).
				\end{aligned}
			\end{equation}
			Finally, we estimate lower order terms
			\begin{align*}
				\N_3[f_1]-
                \N_3[f_2]=&-\frac{1}{\pi}\int_{\mathbb{R}}\B[f_1](\alpha,x)\partial_{x}(f_1-f_2)(x-\alpha){d \alpha}-\frac{1}{\pi}\int_{\mathbb{R}}(\B[f_1]-\B[f_2])(\alpha,x)\partial_{x}f_2(x-\alpha){d \alpha}\\
				&-\Lambda (f_1-f_2)(x)\\
				:=&\mathrm{III}_1+\mathrm{III}_2+\mathrm{III}_3.
			\end{align*}
			Using integration by parts,
			\begin{align*}
				&|\partial_x^n\mathrm{III}_1|\lesssim \sum_{k+l=n}\int_{\mathbb{R}}|\partial_\alpha\partial_x^k\B[f_1](\alpha,x)||\delta_\alpha\partial_x^l (f_1-f_2)(x)|d\alpha,\\
				&|\partial_x^n\mathrm{III}_2|\lesssim \sum_{k+l=n}\int_{\mathbb{R}}|\partial_\alpha\partial_x^k(\B[f_1]-\B[f_2])(\alpha,x)||\delta_\alpha \partial_x^lf_2(x)|d\alpha.
			\end{align*}
			Then we use \eqref{b1}, \eqref{b2}, together with the facts that
			\begin{align*}
				|\delta_\alpha\partial_x^l f(x)|\lesssim |\alpha|^\frac{1}{2}\|f\|_{\dot C^{l+\frac{1}{2}}}\lesssim |\alpha|^\frac{1}{2}(\mathbf{1}_{l\geq 1}t^{-\frac{1}{3}(l-\frac{1}{2})}\|f\|_T+\mathbf{1}_{l=0}\|f\|_{X_T}),
			\end{align*}
			to obtain
			\begin{align*}
				|\partial_x^n\mathrm{III}_1|&\lesssim \sum_{k+l=n}\|f_1\|_{T,*}\| f_1\|_T ^2(1+\| f_1\|_T )^kt^{-\frac{k}{3}}\int_{\mathbb{R}}\min\{|\alpha|^\frac{3}{4}t^{-\frac{1}{4}},|\alpha|^\frac{1}{4}t^{-\frac{1}{12}}\}|\delta_\alpha\partial_x^l (f_1-f_2)(x)|\frac{d\alpha}{\alpha^2}\\
				&\lesssim \|f_1\|_{T,*}\| f_1\|_T ^2(1+\| f_1\|_T )^nt^{-\frac{n}{3}}(\|f_1-f_2\|_T+t^{-\frac{1}{6}}||f_1-f_2||_{X_T}).\\
				|\partial_x^n\mathrm{III}_2|&\lesssim \sum_{k+l=n}(1+\|(f_1,f_2)\|_T)^{k+4}\|f_1-f_2\|_{T}t^{-\frac{k}{3}}\int_{\mathbb{R}}\min\{|\alpha|^\frac{3}{4}t^{-\frac{1}{4}},|\alpha|^\frac{1}{4}t^{-\frac{1}{12}}\}|\delta_\alpha \partial_x^lf_2(x)|\frac{d\alpha}{\alpha^2}\\
				&\lesssim (1+\|(f_1,f_2)\|_T)^{n+4}\|f_1-f_2\|_{T}t^{-\frac{n}{3}}(\|f_2\|_T+t^{-\frac{1}{6}}||f_2||_{X_T}).
			\end{align*}
			For $\mathrm{III}_3$, there holds
			\begin{align*}
				\|\partial_x^n\mathrm{III}_3\|_{L^\infty}\lesssim t^{-\frac{n}{3}}||f_1-f_2||_{X_T}.
			\end{align*}
			We conclude that 
			\begin{align*}
				\sup_{t\in[0,T]}t^\frac{n}{3}\|\partial_x^n (\N_3[f_1]-\N_3[f_2])(t)\|_{L^\infty}\lesssim (T^\frac{2}{3}+T^\frac{1}{10})||f_1-f_2)||_{X_T}(1+||(f_1,f_2)||_{X_T})^{n+5}.
			\end{align*}
			Combining this with \eqref{N1} and \eqref{N2} yields \eqref{eee1}. This completes the proof of the lemma.
		\end{proof}

     		\begin{lemma}\label{GGG}
			Let $\G[f,\phi]$ be as defined in \eqref{defffg}. For any $T>0$, and any $f,f_1,f_2$ with $\|f\|_{X_T}+\|f_1\|_{X_T}+\|f_2\|_{X_T}<\infty$, we have the following estimates:
			\begin{itemize}
				\item \textbf{(i) Nonlinear estimate:}
				\begin{align*}
					\sum_{n=0}^m\sup_{t\in[0,T]}t^{\frac{n+\kappa}{3}}\|\G[f_1,f_2](t)\|_{\dot C^{n+\kappa-2}}\lesssim \|f_1-f_2\|_T\|
					f_1\|_{T,*}(1+\|(f_1,f_2)\|_T)^{m+5},
				\end{align*}
				\item \textbf{(ii) Lipschitz type continuity:}
				\begin{align*}
					&\sum_{n=0}^m\sup_{t\in[0,T]}t^{\frac{n+\kappa}{3}}\|(\G[f_1,\phi]-\G[f_2,\phi])(t)\|_{\dot C^{n+\kappa-2}}\\
					&\quad\quad\quad\quad\lesssim (\|f_1-f_2\|_T\|f_1\|_{T,*}+\|f_1-f_2\|_{T,*}\|f_2-\phi\|_T)(1+\|(f_1,f_2,\phi)\|_T)^{m+5}.
				\end{align*}
			\end{itemize}
		\end{lemma}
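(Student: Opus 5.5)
Both estimates follow the pattern of Lemma \ref{nonst}, so I would reduce them to pointwise bounds at integer regularity and then interpolate. Since $\kappa-2\in(0,1)$, by Lemma \ref{maininterpo} it suffices to show, for every integer $0\le n\le m+1$,
\begin{align*}
\sup_{t\in[0,T]}t^{\frac{n+2}{3}}\|\partial_x^n\G[f_1,f_2](t)\|_{L^\infty}\lesssim \|f_1-f_2\|_T\|f_1\|_{T,*}(1+\|(f_1,f_2)\|_T)^{m+5}.
\end{align*}
Interpolating between $n$ and $n+1$ then gives the $\dot C^{n+\kappa-2}$ bounds with weights $t^{\frac{n+\kappa}{3}}$. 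Recall $\G[f_1,f_2]=-\tilde\Upsilon(f_1,f_2)\Lambda^3 f_1$. By Leibniz,
\begin{align*}
\partial_x^n\G=-\sum_{k+l=n}\binom{n}{k}\partial_x^k\tilde\Upsilon(f_1,f_2)\,\partial_x^l\Lambda^3 f_1 .
\end{align*}
Lemma \ref{lemUps} gives $\|\partial_x^k\tilde\Upsilon(f_1,f_2)\|_{L^\infty}\lesssim t^{-k/3}\|f_1-f_2\|_T(1+\|(f_1,f_2)\|_T)^k$. The factor $\partial_x^l\Lambda^3f_1=\partial_x^l\Lambda^2\mathcal H\partial_xf_1$ is an operator of order $l+2$ acting on $\partial_xf_1$. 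By Lemma \ref{intp} together with Lemma \ref{maininterpo}, it is bounded by $\|\partial_xf_1\|_{\dot C^{l+2-\epsilon}}^{1/2}\|\partial_xf_1\|_{\dot C^{l+2+\epsilon}}^{1/2}$. Since $l+2+\epsilon\le m+3+\epsilon$ and the interval $[\tfrac15,m+\kappa]$ needs to cover this range, the proof must use $m\ge3$ and $\kappa$ close to $3$; the relevant top order is really $m+1+\kappa-2+2\le m+\kappa+1$. At worst one power of the highest norm enters, and one again interpolates between the two pieces of $\|f_1\|_{T,*}$. This yields $t^{-\frac{l+2}{3}}\|f_1\|_{T,*}$, because interpolating $t^{1/15}\|\partial_x f\|_{\dot C^{1/5}}$ and $t^{\frac{m+\kappa}{3}}\|\partial_xf\|_{\dot C^{m+\kappa}}$ produces exactly the scaling weight $t^{-\gamma/3}$ at intermediate order $\gamma$. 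Multiplying the two factors gives $t^{-\frac{n+2}{3}}$ times the claimed product, so (i) follows.

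For (ii) I would use the algebraic splitting
\begin{align*}
\G[f_1,\phi]-\G[f_2,\phi]=-\tilde\Upsilon(f_1,\phi)\Lambda^3(f_1-f_2)-\tilde\Upsilon(f_1,f_2)\Lambda^3 f_2 .
\end{align*}
Pairing differently could give $\|f_1-\phi\|_T$ instead. I would choose the form matching the statement, writing $\Upsilon(f_1)-\Upsilon(\phi)$ and $\Upsilon(f_2)-\Upsilon(\phi)$ so that the prefactor becomes $\tilde\Upsilon(f_2,\phi)$ times $\Lambda^3(f_1-f_2)$, plus $\tilde\Upsilon(f_1,f_2)\Lambda^3f_1$. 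The second term is exactly (i) and is bounded by $\|f_1-f_2\|_T\|f_1\|_{T,*}$. The first is handled the same way, with Lemma \ref{lemUps} applied to $\tilde\Upsilon(f_2,\phi)$ giving the factor $\|f_2-\phi\|_T$, and the high-order interpolation of $\Lambda^3(f_1-f_2)$ giving $\|f_1-f_2\|_{T,*}$.

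The main obstacle is the step bounding $\Lambda^3 f_1$ (order $l+3$ on $f_1$, i.e. order $l+2$ on $\partial_xf_1$) purely by the $\|\cdot\|_{T,*}$ norm with the sharp weight $t^{-(l+2)/3}$. There are two issues. First, $\Lambda^3$ is nonlocal, so pointwise control needs the Lemma \ref{intp} splitting into $\dot C^{\gamma\pm\epsilon}$. Second, one must check that $l+2\pm\epsilon\in(\tfrac15,m+\kappa]$ for all $l\le m+1$. This holds for $l\le m-1$. For $l=m,m+1$ it fails, so there I would instead move derivatives onto $\tilde\Upsilon$: Lemma \ref{lemUps} allows up to $m+2$ derivatives. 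Handled that way, the top-order terms have $l$ small and $k$ large, which keeps every Hölder index inside the admissible window.
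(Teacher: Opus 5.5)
Your argument has a genuine gap in the reduction to integer orders. Since $m+\kappa-2\in(m,m+1)$, interpolating between $\dot C^{m}$ and $\dot C^{m+1}$ forces you to prove $t^{\frac{m+3}{3}}\|\partial_x^{m+1}\G[f_1,f_2](t)\|_{L^\infty}\lesssim\|f_1-f_2\|_T\|f_1\|_{T,*}(\cdots)$. The Leibniz expansion of $\partial_x^{m+1}\bigl(\tilde\Upsilon(f_1,f_2)\Lambda^3f_1\bigr)$ always contains the term $\tilde\Upsilon(f_1,f_2)\,\partial_x^{m+1}\Lambda^3f_1$. That term puts $m+3$ derivatives on $\partial_xf_1$, whereas $\|f_1\|_{T,*}$ controls $\partial_xf_1$ only up to $\dot C^{m+\kappa}$ with $\kappa<3$.

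This intermediate bound is in fact false. Take $f_2=0$ and $f_1$ independent of $t$ with $\partial_xf_1=c+|x|^{m+\kappa}$ near $0$, $c\neq0$. Then all the norms on the right are finite and $\tilde\Upsilon(f_1,0)\neq0$ near $0$, but $\partial_x^{m+1}\Lambda^3f_1$ is of size $|x|^{\kappa-3}$ there.

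Your remedy of ``moving derivatives onto $\tilde\Upsilon$'' is not available for a pointwise product. The Leibniz term with $k=0$, $l=m+1$ cannot be removed, and there is no $\alpha$-integral to integrate by parts in. This differs from $\N_1,\N_2$ in Lemma \ref{nonst}, where the $\delta_\alpha$ structure shares derivatives and the integer reduction does work. Incidentally, $l=m$ is harmless, since $m+2+\epsilon<m+\kappa$; only $l=m+1$ is out of range.

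The fix, which is what the paper does, is to stay at the fractional level and never pass through order $m+1$. For $n\le m$, estimate $\|\partial_x^n\G(t)\|_{\dot C^{\kappa-2}}$ directly, with $\kappa-2\in(0,1)$, using Leibniz, the product rule of Lemma \ref{maininterpo}(3) and Lemma \ref{lemUps}. The top contribution is then $\|\tilde\Upsilon(f_1,f_2)\|_{L^\infty}\|\partial_x^m\Lambda^3f_1\|_{\dot C^{\kappa-2}}\lesssim\|f_1-f_2\|_T\|\partial_xf_1\|_{\dot C^{m+\kappa}}$, using $\dot C^\gamma=\dot B^\gamma_{\infty,\infty}$ for $\gamma\notin\mathbb N$. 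This is exactly the top piece of $\|f_1\|_{T,*}$ with weight $t^{-\frac{m+\kappa}{3}}$. Every other term, for example $\|\tilde\Upsilon\|_{\dot C^{\kappa-2}}\|\partial_x^m\Lambda^3f_1\|_{L^\infty}$, uses $\partial_xf_1$ only at orders $\le m+2+\epsilon<m+\kappa$, where your Lemma \ref{intp} interpolation applies. Your splitting for (ii) agrees with the paper's, $\G[f_1,\phi]-\G[f_2,\phi]=\G[f_1,f_2]+\tilde\Upsilon(\phi,f_2)\Lambda^3(f_1-f_2)$, and goes through once each product is estimated this way.
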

		\begin{proof}
	Note that 
			\begin{align*}
		\G[f_1,f_2]=\Upsilon(f_2,f_1)\Lambda^3f_1,\quad\quad\quad	\G[f_1,\phi]-\G[f_2,\phi]
				&=\G[f_1,f_2]+\tilde \Upsilon(\phi,f_2)\Lambda^3 (f_1-f_2).
			\end{align*}
	For any fixed $t\in[0,T]$, by Lemma \ref{lemUps}, we obtain 
			\begin{align*}
				\left\|\partial_x^n \G[f_1,f_2](t)\right\|_{\dot C^{\kappa-2}}\lesssim t^{-\frac{n+\kappa}{3}}\|f_1-f_2\|_T\|
				f_1\|_{T,*}(1+\|(f_1,f_2)\|_T)^{m+5},
			\end{align*}
		and
			\begin{align*}
				\left\|\partial_x^n \left(\tilde \Upsilon(\phi,f_2)\Lambda^3 (f_1-f_2)\right)(t)\right\|_{\dot C^{\kappa-2}}\lesssim t^{-\frac{n+\kappa}{3}}\|f_2-\phi\|_T\|f_1-f_2\|_{T,*}(1+\|(f_2,\phi)\|_T)^{m+5}.
			\end{align*}
			Then we complete the proof.	
		\end{proof}
        \begin{remark}\label{musLinfty}
           Following the  proof of Lemma \ref{GGG}, we obtain the following $L^\infty$ estimates,
            \begin{align*}
                &\sup_{t\in[0,T]}t^\frac{2}{3}\|\G[f_1,f_2](t)\|_{L^\infty}\lesssim \|f_1-f_2\|_{T}\|f_1\|_{T,*},\\
                &\sup_{t\in[0,T]}t^\frac{2}{3}\|(\G[f_1,\phi]-\G[f_2,\phi])(t)\|_{L^\infty}\lesssim \|f_1-f_2\|_{T}\|f_1\|_{T,*}+\|f_1-f_2\|_{T,*}\|f_2-\phi\|_{T}.
            \end{align*}
        \end{remark}
		\subsection{Estimates of nonlinear terms in 2D Peskin problem}
		The following part is devoted to estimate the nonlinear terms of the Peskin equation. In this subsection, we fix $m\in\mathbb{N}$, $\kappa\in(0,1)$ and $1-\kappa\ll 1$. Recall the definition of semi-norms
		\begin{equation*}
			\begin{aligned}
				&\|h\|_{T}=\sup_{t\in[0,T]}(\|\partial_xh(t)\|_{L^\infty}+t^{m+\kappa}\|\partial_xh(t)\|_{\dot C^{m+\kappa}}),\\
				&	\|h\|_{T,*}=\sup_{t\in[0,T]}(t^\frac{1}{10}\|\partial_xh(t)\|_{\dot C^\frac{1}{10}}+t^{m+\kappa}\|\partial_xh(t)\|_{\dot C^{m+\kappa}}),
			\end{aligned}
		\end{equation*}
and		the set $\mathcal{X}^\sigma_{T,\Phi}$ defined in \eqref{defset} with $\sigma\in(0,1)$. We have the following results.
		\begin{lemma}
			\label{lemnonpes}Let $\mathcal{N}(X)$ be as defined in \eqref{defnonpes}. For any $T\in(0,1)$, and any functions $X,Y,Z$ satisfying $\|X\|_T+\|Y\|_T+\|Z\|_T<\infty$, and
            \begin{align}\label{MT}
            \mathfrak{M}_T:= \mathfrak{M}_T(X)+\mathfrak{M}_T(Y)+\mathfrak{M}_T(Z)<\infty,
            \end{align}
         where $\mathfrak{M}_T(X)$ is defined in \eqref{defM0},    the following estimates hold:
			\begin{itemize}
				\item \textbf{(i) Nonlinear estimate:}
				\begin{align}\label{resn}
					&\sum_{j=0,m}\sup_{t\in[0,T]}t^{j+\kappa}	\|\mathcal{N}(X)(t)\|_{\dot C^{j+\kappa}}
					\lesssim \mathfrak{M}_T\|X\|_{T,*}^2(1+\|X\|_{T})^{2(m+1)}.
				\end{align}
				\item \textbf{(ii) Lipschitz type continuity:}
				\begin{equation}\label{rendiff}
					\begin{aligned}
						\sum_{j=0,m}\sup_{t\in[0,T]} t^{j+\kappa}	&\|\mathcal{N}(Y)(t)-\mathcal{N}(Z)(t)\|_{\dot C^{j+\kappa}}\\
                        &\lesssim \mathfrak{M}_T\|Y-Z\|_{T}(\|( Y,Z)\|_{T,*}+T^{\frac{1}{2}}\|( Y,Z)\|_{T})(1+\|(Y,Z)\|_T)^{2(m+1)}.
					\end{aligned}
				\end{equation}
			\end{itemize}
		\end{lemma}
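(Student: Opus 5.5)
Our plan is to follow the scheme used for $\N[f]$ in Lemma \ref{nonst}: we estimate $\mathcal{N}(X)$ through its kernel structure. By Lemma \ref{maininterpo} and interpolation, \eqref{resn} and \eqref{rendiff} reduce to pointwise bounds of the form
\begin{align*}
\sup_{t\in[0,T]}t^{n+1}\|\partial_x^n\mathcal{N}(X)(t)\|_{L^\infty}\lesssim \mathfrak{M}_T\|X\|_{T,*}^2(1+\|X\|_T)^{2(m+1)},\qquad 0\le n\le m+1.
\end{align*}
Indeed, since $\dot C^{j+\kappa}$ lies between $\dot C^{j}$ and $\dot C^{j+1}$, the weight $t^{j+\kappa}$ is recovered from $t^{j}$ and $t^{j+1}$; at $n=0$ we use the mild bound $|\mathcal N(X)|\lesssim t^{-\kappa}$ directly. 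Each of the three integrals in \eqref{defnonpes} has the form
\begin{align*}
\int_{\mathbb{R}}\mathsf{K}[X](\alpha,x)\,\delta_\alpha(\mathbf{T}(|\partial_xX|)\partial_xX)(x)\frac{d\alpha}{\alpha},
\end{align*}
where the kernel $\mathsf{K}[X]$ is a smooth function of $\tilde\Delta_\alpha X$, homogeneous of degree $-1$ in $\tilde\Delta_\alpha X$, multiplied linearly by $\tilde{\E}^\alpha X$. The arc-chord bound $|\tilde\Delta_\alpha X|\ge \mathbf{\Theta}_X(T)^{-1}$ makes the smooth factors (e.g.\ $\tilde\Delta_\alpha X\otimes \tilde\Delta_\alpha X/|\tilde\Delta_\alpha X|^4$) bounded, together with all their $x$-derivatives, through Lemma \ref{lemcom}. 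We also need $\mathbf{T}$ and its derivatives on $[\mathbf\Theta^{-1},\|\partial_xX\|_{L^\infty}]$; their constants are all absorbed in $\mathfrak{M}_T(X)$ from \eqref{defM0}.

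The key steps are as follows.

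\textbf{Step 1.} We distribute $\partial_x^n$ by Leibniz over the kernel factor and the difference factor $\delta_\alpha(\mathbf T(|\partial_xX|)\partial_xX)$. For the factor $\tilde{\E}^\alpha X$ we use Lemma \ref{douint}:
\begin{align*}
\|\partial_x^k\tilde\E^\alpha X\|_{L^\infty}\lesssim \min\{|\alpha|^{a}\|\partial_xX\|_{\dot C^{k+a}}\}\lesssim t^{-k}\|X\|_{T,*}\min\{(|\alpha|/t)^{1/10},(|\alpha|/t)^{1-\epsilon}\},
\end{align*}
where the two admissible choices of $a$ give the two branches of the minimum.

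\textbf{Step 2.} For the difference factor we use $|\delta_\alpha\partial_x^l(\mathbf T\partial_xX)|\lesssim t^{-l}\|X\|_{T,*}\min\{(|\alpha|/t)^{1/10},(|\alpha|/t)^{1-\epsilon}\}$, obtained from Lemma \ref{lemcom} and the $\dot C^{1/10}$ and $\dot C^{m+\kappa}$ parts of $\|\cdot\|_{T,*}$. Both factors then carry one power of $\|X\|_{T,*}$, which produces the quadratic structure in \eqref{resn}.

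\textbf{Step 3.} The integral $\int \min\{(|\alpha|/t)^{1/10},(|\alpha|/t)^{1-\epsilon}\}^2\frac{d\alpha}{|\alpha|}\lesssim1$ converges at both ends and gives the factor $t^{-n}$. To reach the extra power $t^{-1}$, we first integrate by parts in $\alpha$, as in $\mathrm I_1$ of Lemma \ref{nonst}. Using $\partial_\alpha\tilde\Delta_\alpha X=\tilde\E^\alpha X/\tilde\alpha+$(smooth correction from the periodization $\tilde\alpha$), this moves one derivative off the difference factor. On $|\alpha|\gtrsim1$ the periodic kernel is smooth, and those contributions are bounded by $T^{1/2}$ times the norms; this is where the $T^{1/2}\|(Y,Z)\|_T$ term in \eqref{rendiff} comes from.

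\textbf{Step 4.} For \eqref{rendiff} we write telescoping differences of kernels, $\mathsf K[Y]-\mathsf K[Z]$, via the fundamental theorem of calculus along $Z+\lambda(Y-Z)$, using \eqref{mainint21}--\eqref{mainint22}. The factor $\tilde\Delta_\alpha(Y-Z)$ is then controlled by $\|Y-Z\|_T$ in $L^\infty$, while the remaining $\tilde\E^\alpha$ and $\delta_\alpha$ factors supply the $\|(Y,Z)\|_{T,*}$ smallness. When the derivative falls on $Y-Z$ through $\tilde\E^\alpha(Y-Z)$, only $\|Y-Z\|_T$ is available; we then use the $\|(Y,Z)\|_{T,*}$ smallness of the other factor to keep integrability in $\alpha$.

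We expect the main obstacle to be the top-order case $n=m+1$ near $\alpha\to0$. There all derivatives may land on one factor, and $\|\partial_xX\|_{\dot C^{m+1}}$ exceeds what $\|\cdot\|_{T}$ controls. We would handle it by never putting more than $m+\kappa$ derivatives in Hölder form on a single factor: we split the integral at $|\alpha|\sim t$ and, on $|\alpha|<t$, use the $(m+\kappa)$-Hölder seminorm of $\partial_xX$ against the vanishing of $\tilde\E^\alpha$ at small $\alpha$, in the manner of the estimate for $\mathrm I_1$.
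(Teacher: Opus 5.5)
Your reduction and your Step~3 do not work, for two related reasons.

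\textbf{The time weights are wrong.} They are imported from Lemma~\ref{nonst}, where $s=3$ and the forcing is measured in $\dot C^{\kappa-2}$. Here $s=1$ and $\mathcal N(X)$ scales like $\partial_xX$, so the pointwise target would be $t^{n}\|\partial_x^n\mathcal N(X)\|_{L^\infty}$, not $t^{n+1}$. Weights $t^{n+1}$ interpolate to $t^{j+1+\kappa}$, not $t^{j+\kappa}$, and your $n=0$ bound $t^{-\kappa}$ against $t^{-1}$ gives $t^{-\kappa(2-\kappa)}$. So there is no ``extra power $t^{-1}$'' to be gained.

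\textbf{The key integral is not bounded.} This is the decisive point. Both branches of your minimum are positive powers of $|\alpha|/t$, so with $r=|\alpha|/t$
\begin{align*}
\int_{|\alpha|\le\pi}\min\Big\{\Big(\frac{|\alpha|}{t}\Big)^{\frac{1}{10}},\Big(\frac{|\alpha|}{t}\Big)^{1-\epsilon}\Big\}^2\frac{d\alpha}{|\alpha|}\ \gtrsim\ \int_1^{\pi/t}r^{\frac15}\,\frac{dr}{r}\ \sim\ t^{-\frac15}.
\end{align*}
This is not a matter of choosing exponents. The measure $d\alpha/|\alpha|$ is exactly critical. When no derivative falls on the kernel (in particular for $n=0$), neither $\tilde\E^\alpha X$ nor $\delta_\alpha f_3$, with $f_3:=\mathbf T(|\partial_xX|)\partial_xX$, decays for $|\alpha|\gg t$. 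With $\mathsf K[X]$ merely bounded, absolute-value bounds therefore lose $t^{-\delta}$ if you use only $\|X\|_{T,*}$, or $\log(1/t)$ if you use $\|\partial_xX\|_{L^\infty}$.

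The integration by parts you propose does not cure this. Writing $\tilde\E^\alpha X/\tilde\alpha\approx\partial_\alpha\tilde\Delta_\alpha X$ and integrating by parts produces $\partial_\alpha\delta_\alpha f_3=\partial_xf_3(x-\alpha)$. That adds a derivative to the difference factor rather than removing one.

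\textbf{The missing idea is cancellation, which is what the paper's proof is built on.}
\begin{itemize}
\item The paper first uses the identity $\int_{\mathbb S}H(\tilde\Delta_\alpha X)\tilde\E^\alpha X\frac{d\alpha}{\tilde\alpha}=0$, which you never invoke. Without it, in your case $n=m+1$ the contribution $\partial_x^{m+1}f_3(x)\int\mathsf K[X]\frac{d\alpha}{\alpha}$ involves $\partial_x^{m+2}X$, which no available norm controls.
\item It then applies \eqref{res2} of Lemma~\ref{lemeee}, which estimates $\delta_\beta\partial_x^m\mathcal N$ directly. No bound on $\partial_x^{m+1}\mathcal N$ is ever needed.
\end{itemize}
Inside Lemma~\ref{lemeee}, the critical $\alpha$-integrals are handled in three ways.
\begin{itemize}
\item The decomposition $\tilde\E^\alpha f_2=\alpha\partial_\alpha(\Delta_\alpha f_2)+\delta_\alpha f_2(\frac1\alpha-\frac1{\tilde\alpha})$ is combined with Lemma~\ref{lempesibp}. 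Its geometric means $\|\cdot\|_{\dot C^{\sigma+\varepsilon}}^{1/2}\|\cdot\|_{\dot C^{\sigma-\varepsilon}}^{1/2}$ land exactly on the critical power of $t$.
\item The kernel $H(\tilde\Delta_\alpha X)$ is controlled in the weighted norms \eqref{defnorab}: H\"older in $x$ with weight $|\alpha|^{1/4}$, and H\"older in $\alpha$ via Lemma~\ref{dede}. This supplies the decay in $\alpha$ that your bounds lack.
\item When $\delta_\beta\partial_x^m$ falls entirely on $f_3$, a change of variables moves $\delta_\beta$ from $f_3(x-\alpha)$ onto the kernel in the $\alpha$ variable.
\end{itemize}
For \eqref{rendiff}, the difference terms do not all enjoy the cancellation, so the paper uses \eqref{res1}. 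There the periodization remainder produces the $T^{1/2}$ term, so that part of your heuristic is right. But your Step~4 rests on the same divergent integral as Step~3.
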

		The following lemma shows the estimates for $\M(\partial_xX)$.
		\begin{lemma}\label{lempesR}
			Let  $\M(\partial_xX)$ be as defined in \eqref{defpeR}. For any $T>0$, and any $X,Y,Z$ satisfying $\|X\|_T+\|Y\|_T+\|Z\|_T<\infty$ and \eqref{MT}, the following estimates hold:
            \begin{itemize}
                \item \textbf{(i) Nonlinear estimate:}
                \begin{align*}
				\sum_{j=0,m}\sup_{t\in[0,T]} t^{j+\kappa}\|\M(\partial_xX)(t)\|_{\dot C^{j+\kappa-\frac{1}{2}}}\lesssim \mathfrak{M}_T\|X\|_{T,*}^2(1+\|X\|_T)^{m+5}.
			\end{align*}
            \item \textbf{(ii) Lipschitz type continuity:}
			\begin{align*}
				&\sum_{j=0,m}\sup_{t\in[0,T]} t^{j+\kappa}\|(\M(\partial_xY)-\M(\partial_xZ))(t)\|_{\dot C^{j+\kappa-\frac{1}{2}}}\\
				&\quad\quad\quad\quad\lesssim \mathfrak{M}_T(\|Y-Z\|_{T,*}+\|Y-Z\|_{T}\|(Y,Z)\|_{T,
					*})\|(Y, Z)\|_{T,*}(1+\|(Y,Z)\|_T)^{m+4}.
			\end{align*}
		\end{itemize}
            \end{lemma}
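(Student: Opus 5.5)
The plan is to treat $\M(\partial_xX)$ as a bilinear singular-integral expression in $\partial_xX$ and $\mathbf{T}(|\partial_xX|)$, and to gain the $\tfrac12$ smoothing from the kernel $|\alpha|^{-3/2}$. Fix $t\in(0,T]$ and write $g=\mathbf{T}(|\partial_x X|)$, $h=\partial_xX$. By \eqref{defpeR},
\[
\M(h)=-\tfrac1\pi\int_{\mathbb{R}}\delta_\alpha g\,\delta_\alpha h\,\frac{d\alpha}{|\alpha|^{3/2}}+\tfrac1\pi h\int_{\mathbb{R}}\bigl(\delta_\alpha g-\delta_\alpha h\cdot\nabla g(h)\bigr)\frac{d\alpha}{|\alpha|^{3/2}} .
\]
The first integral is a standard commutator. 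For the second, I would use the chain-rule remainder identity
\[
\delta_\alpha g-\nabla\mathbf{T}(h)\cdot\delta_\alpha h=\int_0^1(1-\lambda)\nabla^2\mathbf{T}\bigl(h-\lambda\delta_\alpha h\bigr)\,d\lambda\,(\delta_\alpha h)^{\otimes2},
\]
where $\mathbf{T}(|\cdot|)$ is smooth on the range of $h$. This holds because the arc-chord bound keeps $|\partial_xX|\gtrsim\mathbf{\Theta}^{-1}$, and the resulting factors of $\mathfrak{c},\mathfrak{C},\mathbf{\Theta}$ are collected into $\mathfrak{M}_T$. After this step, both pieces have the form $\int \Psi(x,\alpha)\,\delta_\alpha h\otimes\delta_\alpha h\,|\alpha|^{-3/2}d\alpha$, with $\Psi$ a smooth bounded function of $h$ and $\delta_\alpha h$. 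Each piece is therefore quadratic in increments of $h$.

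To estimate $\|\M\|_{\dot C^{j+\kappa-1/2}}$ for $j=0$ and $j=m$, it is enough (by Lemma \ref{maininterpo} part 1) to bound $\|\partial_x^n\M\|_{L^\infty}$ at two neighbouring integer levels $n$ bracketing $j+\kappa-\tfrac12$. The target is
\[
\|\partial_x^n\M(t)\|_{L^\infty}\lesssim \mathfrak{M}_T\,t^{-(n+1/2)}\|X\|_{T,*}^2(1+\|X\|_T)^{m+5},\qquad 0\le n\le m+1,
\]
where the weight has the scaling $t^{-(n+1/2)}$ of order $n+\tfrac12$ for $s=1$. Distributing $\partial_x^n$ by Leibniz and Lemma \ref{lemcom}, each term is of the form $\partial^{k_1}\Psi\cdot\delta_\alpha\partial^{k_2}h\cdot\delta_\alpha\partial^{k_3}h$. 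I would bound each increment by
\[
|\delta_\alpha\partial^{k}h|\lesssim \min\bigl\{|\alpha|^{\beta}\|h\|_{\dot C^{k+\beta}},\,2\|h\|_{\dot C^{k}}\bigr\},
\]
with the norms read off from $\|X\|_{T,*}$ via interpolation between $t^{1/10}\|h\|_{\dot C^{1/10}}$ and $t^{m+\kappa}\|h\|_{\dot C^{m+\kappa}}$. The $\alpha$-integral is then split at $|\alpha|=t$. For $|\alpha|<t$, the exponents $\beta$ are chosen with $\beta_2+\beta_3>\tfrac12$, which makes $\int_0^t\alpha^{\beta_2+\beta_3-3/2}\,d\alpha$ converge. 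For $|\alpha|>t$, the $\alpha$-powers are kept smaller than $\tfrac12$ in total. This reproduces the factor $t^{-(n+1/2)}$ with two factors of $\|X\|_{T,*}$. The $\dot C^{1/10}$ part of $\|\cdot\|_{T,*}$ gives the small-$\alpha$ Hölder gain, and any extra derivatives falling on $\Psi$ cost only powers of $\|X\|_T$.

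The Lipschitz estimate (ii) follows by telescoping. I would write $\M(\partial_xY)-\M(\partial_xZ)$ by replacing one increment at a time by $\delta_\alpha(h_Y-h_Z)$, and using \eqref{mainint21} and \eqref{mainint22} for $\Psi(h_Y)-\Psi(h_Z)$. A term in which the difference sits in an increment gives $\|Y-Z\|_{T,*}\|(Y,Z)\|_{T,*}$. A term in which the difference sits in $\Psi$ costs $\|Y-Z\|_T$, while the two increments still give $\|(Y,Z)\|_{T,*}^2$, which is bounded by the stated product.

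I expect the main obstacle to be the bookkeeping at the top level $n=m+1$. There all derivatives may land on one increment, and $\|h\|_{\dot C^{m+1+\beta}}$ is not controlled by $\|X\|_{T,*}$, since it only reaches $\dot C^{m+\kappa}$. To handle this, I would use the difference-quotient form of the Hölder seminorm at level $m+\kappa-\tfrac12$ directly, instead of bounding $\partial^{m+1}$. With $1-\kappa\ll1$, the top-order factor then needs only $\dot C^{m+\kappa}$ on one increment, and the other increment supplies the $|\alpha|^{>1/2}$ gain from its $\dot C^{1/10}$–$\dot C^{1}$ interpolated norm.
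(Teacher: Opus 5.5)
Your proposal is correct and is essentially the paper's argument. The paper telescopes the difference into $\mathfrak{R}(\alpha,\cdot)$, whose term $D_{\mathbf{T},\alpha}$ is exactly your second-order Taylor remainder. It bounds the $\dot C^{j+\kappa-\frac12}$ seminorm of this integrand pointwise in $\alpha$ by $t^{-(j+\kappa-\frac12)}\min\{(|\alpha|/t)^{1/5},|\alpha|/t\}$ times the stated norms, with the $\dot C^{1/10}$ part of $\|\cdot\|_{T,*}$ handling large $|\alpha|$, and then integrates against $|\alpha|^{-3/2}$. It does this directly at every order, so your detour through integer-order $L^\infty$ bounds and interpolation is unnecessary (and, as you noticed, unavailable at the top order).

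One small repair is needed. Taylor's formula evaluates $\nabla^2\mathbf{T}$ on the segment between $h(x)$ and $h(x-\alpha)$, and for a closed curve this segment can pass through $0$, where $\mathbf{T}(|\cdot|)$ need not be smooth. So you should first replace $\mathbf{T}(|\cdot|)$ by a smooth extension agreeing with it on $\{|b|\ge(2\mathbf{\Theta})^{-1}\}$. This does not change $\M$, and the paper's appeal to Lemma~\ref{lemcom} tacitly does the same.
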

		Recalling the definition \eqref{notapes},	we first prove the following lemmas. 
		\begin{lemma}\cite[Lemma 2.4]{KN}\label{lempesibp}
			For any function $f,g:{\mathbb{R}}\rightarrow\mathbb{R}$, denote $\tilde g(\alpha)={\Delta}_\alpha g(0)$. Then for any $\sigma\in(0,1)$ and $0<\varepsilon< 10^{-3}\min\{1-\sigma,\sigma\}$, there holds
			$$
			\left|\int_{\mathbb{R}} f(\alpha)(\partial_\alpha \tilde g)(\alpha)d\alpha\right|\lesssim \| f\|_{\dot C^{\sigma+\varepsilon}}^\frac{1}{2} \| f\|_{\dot C^{\sigma-\varepsilon}}^\frac{1}{2}\|g\|_{\dot C^{1-\sigma+\varepsilon}}^\frac{1}{2}\|g\|_{\dot C^{1-\sigma-\varepsilon}}^\frac{1}{2}.$$
		\end{lemma}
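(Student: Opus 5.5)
The plan is to reduce the integral to a one-dimensional pairing of $f$ with a derivative of the difference quotient of $g$, and then to balance low and high frequencies with a Littlewood--Paley (or dyadic-in-$\alpha$) decomposition, exactly as in the interpolation estimates of Lemma \ref{maininterpo} and Lemma \ref{douint}. Write $\tilde g(\alpha)=\frac{g(0)-g(-\alpha)}{\alpha}$, so that
\[
\partial_\alpha\tilde g(\alpha)=\frac{g'(-\alpha)-\tilde g(\alpha)}{\alpha}=\frac{1}{\alpha^2}\int_0^\alpha\big(g'(-\alpha)-g'(-\beta)\big)\,d\beta .
\]
From this formula, $|\partial_\alpha\tilde g(\alpha)|\lesssim |\alpha|^{-\sigma'}\|g\|_{\dot C^{1-\sigma'}}$ for any $\sigma'\in(0,1)$. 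This already indicates the scaling: the pairing is homogeneous of degree $\sigma$ in $f$ and degree $1-\sigma$ in $g$. However, $\int|\alpha|^{-\sigma'}|f(\alpha)|\,d\alpha$ does not converge, so we must exploit cancellation in $f$.

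The first step is to remove constants. Since $\int_{\mathbb{R}}\partial_\alpha\tilde g\,d\alpha=\tilde g(\infty)-\tilde g(-\infty)=0$ for, say, bounded $g$ (otherwise argue by truncation), we may replace $f(\alpha)$ by $f(\alpha)-f(0)$. Next, split the integral at a scale $\lambda>0$ chosen at the end.

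On the small region $|\alpha|<\lambda$, use $|f(\alpha)-f(0)|\le|\alpha|^{\sigma+\varepsilon}\|f\|_{\dot C^{\sigma+\varepsilon}}$ together with $|\partial_\alpha\tilde g|\lesssim|\alpha|^{-(\sigma+\varepsilon)}\|g\|_{\dot C^{1-\sigma-\varepsilon}}$. This gives the bound $\lambda\,\|f\|_{\dot C^{\sigma+\varepsilon}}\|g\|_{\dot C^{1-\sigma-\varepsilon}}$, which is too crude as written.

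A better pairing comes from exponents that make the integral converge and leave a power of $\lambda$. On $|\alpha|<\lambda$, use $\|f\|_{\dot C^{\sigma+\varepsilon}}$ and $\|g\|_{\dot C^{1-\sigma+\varepsilon}}$. The integrand is then bounded by $|\alpha|^{\sigma+\varepsilon-1-(\sigma-\varepsilon)}=|\alpha|^{2\varepsilon-1}$, which is integrable near $0$ and contributes $\lambda^{2\varepsilon}\|f\|_{\dot C^{\sigma+\varepsilon}}\|g\|_{\dot C^{1-\sigma+\varepsilon}}$.

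On $|\alpha|>\lambda$, use $\|f\|_{\dot C^{\sigma-\varepsilon}}$ and $\|g\|_{\dot C^{1-\sigma-\varepsilon}}$. The integrand is bounded by $|\alpha|^{\sigma-\varepsilon-1-(\sigma+\varepsilon)}=|\alpha|^{-1-2\varepsilon}$, which is integrable at infinity and contributes $\lambda^{-2\varepsilon}\|f\|_{\dot C^{\sigma-\varepsilon}}\|g\|_{\dot C^{1-\sigma-\varepsilon}}$.

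Optimizing in $\lambda$ gives the geometric mean
\[
\Big(\|f\|_{\dot C^{\sigma+\varepsilon}}\|g\|_{\dot C^{1-\sigma+\varepsilon}}\,\|f\|_{\dot C^{\sigma-\varepsilon}}\|g\|_{\dot C^{1-\sigma-\varepsilon}}\Big)^{1/2},
\]
which is exactly the claimed bound. The constraint $\varepsilon<10^{-3}\min\{\sigma,1-\sigma\}$ guarantees that all four Hölder indices lie in $(0,1)$, so the difference-quotient bounds above are valid.

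The main obstacle is the pointwise bound for $\partial_\alpha\tilde g$ with a Hölder index of $g'$ in the range $(0,1)$, namely $|g'(-\alpha)-g'(-\beta)|\le|\alpha-\beta|^{\,\cdot}\,\|g\|_{\dot C^{1+\cdot}}$. But $\dot C^{1-\sigma\pm\varepsilon}$ controls only $g$ itself, not $g'$, so this step fails as stated. The fix I would try first is to avoid differentiating $g$ at all: integrate by parts back, or equivalently write
\[
\partial_\alpha\tilde g(\alpha)=\frac{g'(-\alpha)}{\alpha}-\frac{\tilde g(\alpha)}{\alpha}.
\]
The second term is bounded by $|\alpha|^{-1-\sigma'}\,|g(0)-g(-\alpha)|\lesssim|\alpha|^{-\sigma'}\ldots$, with $\sigma'=\sigma\mp\varepsilon$. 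For the first term $\int f(\alpha)\,g'(-\alpha)\,\alpha^{-1}\,d\alpha$, the derivative on $g$ should be moved onto $f(\alpha)/\alpha$ only at the level of Littlewood--Paley pieces: decompose $g=\sum_j\dot\Delta_j g$ and $f=\sum_k\dot\Delta_k f$, and use $\|\partial\dot\Delta_j g\|_{L^\infty}\lesssim 2^{j}2^{-j(1-\sigma\pm\varepsilon)}$. The $|\alpha|<\lambda$ versus $|\alpha|>\lambda$ split then becomes a split at frequency $\lambda^{-1}$, and the same $\lambda^{\pm2\varepsilon}$ balance closes the argument. I expect this frequency bookkeeping, in particular handling the singular weight $\alpha^{-1}$ uniformly across dyadic blocks, to be the delicate part.
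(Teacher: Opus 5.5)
\textbf{There is a genuine gap, and in fact the statement as written cannot be proved.}

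\textbf{The pointwise bound is false.} Your $\lambda$-split rests on a pointwise bound for $\partial_\alpha\tilde g$ that does not hold. As first written it has the wrong homogeneity; the split actually uses $|\alpha|^{-1-\sigma'}\|g\|_{\dot C^{1-\sigma'}}$. Even with that power, it holds only for the piece $-\tilde g(\alpha)/\alpha$ of $\partial_\alpha\tilde g=(g'(-\alpha)-\tilde g(\alpha))/\alpha$. As you note yourself, the piece $g'(-\alpha)/\alpha$ is not controlled by any $\dot C^{1-\sigma'}$ seminorm. So the optimization in $\lambda$ only proves the bound for $\int (f(\alpha)-f(0))\,\tilde g(\alpha)\,\alpha^{-1}\,d\alpha$.

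\textbf{Where the split can and cannot be repaired.} The inner region can be repaired for the full integrand. On a shell $|\alpha|\sim r$ one has $[\tilde g]_{C^{1-\sigma+\eps}}\lesssim r^{-1}\|g\|_{\dot C^{1-\sigma+\eps}}$. Since $(\sigma+\eps)+(1-\sigma+\eps)>1$, a Young-integral estimate bounds $\int_{|\alpha|\sim r}(f(\alpha)-f(0))\,\partial_\alpha\tilde g\,d\alpha$ by $r^{2\eps}\|f\|_{\dot C^{\sigma+\eps}}\|g\|_{\dot C^{1-\sigma+\eps}}$, which sums over $r<\lambda$. On $|\alpha|>\lambda$, however, the exponents you must use satisfy $(\sigma-\eps)+(1-\sigma-\eps)<1$. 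The term $\int_{|\alpha|>\lambda}(f(\alpha)-f(0))\,g'(-\alpha)\,\alpha^{-1}\,d\alpha$ carries the entire difficulty, and it is left to a Littlewood--Paley sketch that is never carried out.

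\textbf{Counterexample on $\mathbb{R}$.} The sketch cannot be carried out, because the inequality is false for general $f,g:\mathbb{R}\to\mathbb{R}$. At large $|\alpha|$ the same-frequency interaction of $f$ and $g'$ has a non-oscillating part, and against the kernel $\alpha^{-1}$ this diverges logarithmically. Take $f(\alpha)=\cos\alpha$ and $g(x)=\tanh(x)\sin x$, so that $\tilde g(\alpha)=-\tanh(\alpha)\sin(\alpha)/\alpha$ is odd. Integrating by parts,
\[
\int_{-R}^{R}f\,\partial_\alpha\tilde g\,d\alpha=2\cos R\,\tilde g(R)-\int_{-R}^{R}\frac{\tanh\alpha}{\alpha}\,\sin^2\alpha\,d\alpha .
\]
This tends to $-\infty$ like $-\log R$, while all four H\"older seminorms on the right-hand side are bounded by $2$. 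Replacing $f$ by $\cos(\alpha)\chi(\alpha/R)$, with a fixed cutoff $\chi$, gives convergent integrals of size $\log R$ with uniformly bounded norms. So this is not merely a convergence issue.

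\textbf{Periodicity does not help.} Periodicity is the only extra structure present where the lemma is used (Lemma \ref{lemeee}), and it does not rescue the geometric-mean form. Take $f(\alpha)=\cos N\alpha$ and $g(x)=s(x)\sin Nx$, where $s$ is a $2\pi$-periodic odd smoothed sign with transition width $1/N$. The same computation gives a left-hand side $\gtrsim N\log N$ against a right-hand side $\lesssim N$.

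No frequency bookkeeping can close your argument. The missing ingredient is not a sharper decomposition but additional hypotheses, or a weaker, non-scale-invariant right-hand side, and your proposal supplies neither.
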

		\begin{lemma}\label{dede}
			For any function $f:\mathbb{S}\to\mathbb{R}$, then for any $0<\gamma_1\leq\gamma<1$,
			\begin{align*}
				\label{es2}
				&	
				\sup_x\sup_{\alpha\neq h} \frac{|\tilde \Delta_\alpha f(x)-\tilde \Delta_h f(x)||\alpha|^{\gamma-\gamma_1}}{|\alpha-h|^\gamma}\lesssim {\|f'\|_{\dot C^{\gamma_1}}}.
			\end{align*}
		\end{lemma}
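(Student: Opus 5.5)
We reduce to the untilded difference quotient $\Delta_\alpha f(x)=\delta_\alpha f(x)/\alpha$ and then split into two regimes, according to whether $h$ is far from or close to $\alpha$ relative to $|\alpha|$. By periodicity we take $\alpha,h\in[-\pi,\pi]$ (with $|\alpha-h|$ the torus distance). Since $\tilde\alpha=2\tan(\alpha/2)$, the function $\omega(\alpha):=\alpha/\tilde\alpha=\frac{\alpha}{2}\cot\frac{\alpha}{2}$ is smooth and bounded on $[-\pi,\pi]$, and $\tilde\Delta_\alpha f=\omega(\alpha)\Delta_\alpha f$. We then write
\begin{align*}
\tilde\Delta_\alpha f-\tilde\Delta_h f=\omega(\alpha)\big(\Delta_\alpha f-\Delta_h f\big)+\big(\omega(\alpha)-\omega(h)\big)\Delta_h f .
\end{align*}
For the second term we use $|\omega(\alpha)-\omega(h)|\lesssim|\alpha-h|$ and $|\Delta_hf|\le\|f'\|_{L^\infty}$. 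Since $f'$ has zero mean on $\mathbb{S}$, it vanishes somewhere, so $\|f'\|_{L^\infty}\lesssim\|f'\|_{\dot C^{\gamma_1}}$. This gives the bound $|\alpha-h|\,\|f'\|_{\dot C^{\gamma_1}}$, and since $|\alpha-h|^{1-\gamma}|\alpha|^{\gamma-\gamma_1}\lesssim1$ on the torus, this term is harmless.

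For the main term we split into two cases.

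\emph{Case 1: $|\alpha-h|\ge|\alpha|/2$.} Using $\Delta_\alpha f(x)=\int_0^1 f'(x-s\alpha)\,ds$ we get
\begin{align*}
|\Delta_\alpha f-\Delta_h f|\le\|f'\|_{\dot C^{\gamma_1}}|\alpha-h|^{\gamma_1}.
\end{align*}
Multiplying by $|\alpha|^{\gamma-\gamma_1}|\alpha-h|^{-\gamma}$ gives $(|\alpha|/|\alpha-h|)^{\gamma-\gamma_1}\|f'\|_{\dot C^{\gamma_1}}\lesssim\|f'\|_{\dot C^{\gamma_1}}$, because $\gamma\ge\gamma_1$.

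\emph{Case 2: $|\alpha-h|<|\alpha|/2$.} Here the estimate above is too weak, so we need an algebraic rearrangement. From $\alpha\Delta_\alpha f=\int_{x-\alpha}^x f'$ we obtain
\begin{align*}
\Delta_\alpha f-\Delta_h f=\frac1\alpha\int_{x-\alpha}^{x-h}\big(f'(y)-\Delta_h f(x)\big)\,dy .
\end{align*}
The quantity $\Delta_hf(x)$ is an average of $f'$ over an interval of length $|h|\sim|\alpha|$. For $y$ in the integration range, that interval lies within distance $\lesssim|\alpha|$ of $y$, so the integrand is bounded by $\|f'\|_{\dot C^{\gamma_1}}|\alpha|^{\gamma_1}$. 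Hence
\begin{align*}
|\Delta_\alpha f-\Delta_hf|\lesssim\|f'\|_{\dot C^{\gamma_1}}|\alpha-h|\,|\alpha|^{\gamma_1-1}.
\end{align*}
Multiplying by the weight gives $(|\alpha-h|/|\alpha|)^{1-\gamma}\le1$, since $\gamma<1$.

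The main obstacle is Case 2. The naive Hölder bound degenerates there, and the rearrangement above is needed to gain the full factor $|\alpha-h|$. The torus conventions near $\alpha=\pm\pi$ are absorbed by the smoothness of $\omega$ and by the boundedness of $|\alpha|$.
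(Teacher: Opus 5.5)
Your argument is correct. Its key step is the same as in the paper: separate the smooth part of the kernel $\tfrac12\cot\tfrac\alpha2$ from $\tfrac1\alpha$, then write the difference of quotients as an integral of $f'$ minus a constant over the interval between $x-\alpha$ and $x-h$. That gives the full factor $|\alpha-h|$ together with a H\"older gain.

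The bookkeeping differs in three ways:
\begin{itemize}
\item The paper handles the kernel additively, through $\tfrac12\cot\tfrac\alpha2-\tfrac1\alpha$, rather than through your multiplicative factor $\omega(\alpha)$.
\item It subtracts the point value $f'(x-\alpha)$ rather than the average $\Delta_hf(x)$.
\item It makes no case split. After reducing by periodicity to $\alpha\in(0,\pi)$ and $h\in[\alpha/2,\alpha/2+2\pi]$, it proves the single bound $|\tilde\Delta_\alpha f-\tilde\Delta_hf|\lesssim\bigl(|h|+|\alpha|+\frac{|h-\alpha|^{\gamma_1}+|\alpha|^{\gamma_1}}{|h|}\bigr)|h-\alpha|\,\|f'\|_{\dot C^{\gamma_1}}$ and closes using $h\ge\alpha/2$.
\end{itemize}
The paper's version is shorter. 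However, the term $\frac{|\alpha|^{\gamma_1}}{|h|}|h-\alpha|$ is useless when $|h|\ll|\alpha|$. The periodicity reduction also sends small $|h|$ to $h$ near $2\pi$, where its estimates on $\tfrac12\cot\tfrac h2-\tfrac1h$ no longer hold. Your Case 1, the direct H\"older bound when $|\alpha-h|\ge|\alpha|/2$, handles exactly this regime. Your route is therefore the more complete of the two.

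One point in your closing sentence needs to be made precise. Suppose $\alpha,h\in[-\pi,\pi]$ have opposite signs and their Euclidean difference exceeds $\pi$. Then the integral in Case 2 runs over an interval whose length is close to $2\pi$, not the torus distance, so that estimate fails as written. The rescue is not the smoothness of $\omega$ but the fact that $\omega(\pm\pi)=0$:
\begin{itemize}
\item One has $|\omega(\alpha)|\le\pi-|\alpha|$.
\item In this configuration the torus distance equals $(\pi-|\alpha|)+(\pi-|h|)$.
\item Hence both terms of your decomposition are bounded by the torus distance times $\|f'\|_{L^\infty}$, which is enough after multiplying by the weight.
\end{itemize}
The same vanishing also justifies the Lipschitz bound for $\omega$ in the torus distance that you use for your second term.
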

		\begin{proof} 
			Consider $x=0$ without loss of generality, and denote $\tilde f (\alpha)=\tilde \Delta_\alpha f(0)$.
			Thanks to the periodicity, it is enough to consider $\alpha\in (0,\pi)$ and $h\in[\alpha/2,\alpha/2+2\pi]$. 
			We have 
			\begin{align*}
				\tilde f(\alpha)-\tilde f(h)=&\left(\frac{1}{2}\cot\left(\frac{h}{2}\right)-\frac{1}{h}\right)\int_0^{-h}f'(\omega)d\omega-\left(\frac{1}{2}\cot\left(\frac{\alpha}{2}\right)-\frac{1}{\alpha}\right)\int_0^{-\alpha}f'(\omega)d\omega\\
				&+\frac{1}{h}\int_0^{-h}f'(\omega)d\omega-\frac{1}{\alpha}\int_0^{-\alpha}f'(\omega)d\omega\\
				=&\left(\frac{1}{2}\cot\left(\frac{\alpha}{2}\right)-\frac{1}{\alpha}\right)\int_{-\alpha}^{-h}f'(\omega)d\omega+\left(\frac{1}{2}\cot\left(\frac{h}{2}\right)-\frac{1}{h}-\left(\frac{1}{2}\cot\left(\frac{\alpha}{2}\right)-\frac{1}{\alpha}\right)\right)\int_0^{-h}f'(\omega)d\omega\\
				&+\frac{1}{h}\int_{-\alpha}^{-h}(f'(\omega)-f'(-\alpha))d\omega+\frac{\alpha-h}{\alpha h}\int_0^{-\alpha}(f'(\omega)-f'(-\alpha))d\omega.
			\end{align*}
			Here, with a slight abuse of notation, we denote $\int_x^yg(\omega)d\omega=-\int_y^xg(\omega)d\omega$ if $y<x$.
			Note that $$\left|\frac{1}{2}\cot\left(\frac{\alpha}{2}\right)-\frac{1}{\alpha}\right|\lesssim |\alpha|,\ \ \ \ \  \left|\frac{1}{2}\cot\left(\frac{\alpha}{2}\right)-\frac{1}{\alpha}-\frac{1}{2}\cot\left(\frac{h}{2}\right)+\frac{1}{h}\right|\lesssim |\alpha-h|.$$ Hence, 
			\begin{align*}
				|\tilde f(\alpha)-\tilde f(h)|\lesssim \left(|h|+|\alpha|+\frac{|h-\alpha|^{\gamma_1}}{|h|}+\frac{|\alpha|^{\gamma_1}}{|h|}\right) |h-\alpha|\|f'\|_{\dot C^{\gamma_1}}.
			\end{align*}
			We obtain 
			\begin{align*}
				\frac{|\tilde \Delta_\alpha f-\tilde \Delta_h f|}{|\alpha-h|^\gamma}\lesssim \left(|h|+|\alpha|+\frac{|h-\alpha|^{\gamma_1}}{|h|}+\frac{|\alpha|^{\gamma_1}}{|h|}\right) |h-\alpha|^{1-\gamma}\|f'\|_{\dot C^{\gamma_1}}\lesssim \frac{\|f'\|_{\dot C^{\gamma_1}}}{|h|^{\gamma-\gamma_1}}\lesssim \frac{\|f'\|_{\dot C^{\gamma_1}}}{|\alpha|^{\gamma-\gamma_1}}.
			\end{align*}
			This completes the proof of the lemma.

		\end{proof}

		Fix $m\in\mathbb{N}$, $\kappa\in(0,1)$ and $1-\kappa\ll 1$. For $f(t,x,\alpha): [0,T]\times \mathbb{S}\times \mathbb{S}\to \mathbb{R}$, we define 
		\begin{equation}\label{defTn}
			\begin{aligned}
				&	[f_1]_{T,*}:=	\sup_{\vartheta\in[\frac{1}{2},\kappa]}\sup_{t\in[0,T]}(t^{\vartheta-\frac{1}{4}}\|f_1(t)\|_{\vartheta,\frac{1}{4}}+t^{m+\vartheta-\frac{1}{4}}\|\partial_x^mf_1(t)\|_{\vartheta,\frac{1}{4}})\\
				&\quad\quad\quad\quad\quad+\sup_{t\in[0,T]}\sup_x(t^\frac{1}{10}\|f_1(t,x,\cdot)\|_{\dot C^\frac{1}{10}}+t^{m+\kappa}\|\partial_x^{m}f_1(t,x,\cdot)\|_{\dot C^\kappa}),\\
				&[f_1]_{T}:=\sup_{t\in[0,T]}\sup_\alpha(\|f_1(t,\cdot,\alpha)\|_{L^\infty}+t^{m+\kappa}\|f_1(t,\cdot,\alpha)\|_{\dot C^{m+\kappa}})+	[f_1]_{T,*},
			\end{aligned}
		\end{equation}
		where 
		\begin{align}
			\label{defnorab}
			\|f_1(t)\|_{a,b}:=	\sup_{\alpha,z\in\mathbb{S}}|\alpha|^b\left(\|f_1(t,\cdot,\alpha)\|_{\dot C^a}+\frac{\|f_1(t,\cdot,\alpha )-f_1(t,\cdot,\alpha -z)\|_{L^\infty}}{|z|^a}\right),\ \ \ \text{for}\ a,b\in[0,1].
		\end{align}
		Denote 
		\begin{align*}
			&\interleave h \interleave_T=\sup_{t\in[0,T]}(\|h(t)\|_{L^\infty}+t^{m+\kappa}\|h(t)\|_{\dot C^{m+\kappa}}),\\
			&	\interleave h \interleave_{T,*}=\sup_{t\in[0,T]}(t^\frac{1}{10}\|h(t)\|_{\dot C^\frac{1}{10}}+t^{m+\kappa}\|h(t)\|_{\dot C^{m+\kappa}}).
		\end{align*}
		Note that $\interleave \partial_xh\interleave_T=\|h\|_T$ and  $\interleave \partial_xh\interleave_{T,*}=\|h\|_{T,*}$, where  $\|\cdot \|_T$ and $\|\cdot\|_{T,*}$ are defined in \eqref{normstar}.
		We have the following lemma.
		\begin{lemma}\label{lemeee} 
			Let $f_1:[0,T]\times \mathbb{S}\times \mathbb{S}\to \mathbb{R}$, and  $f_2,f_3: [0,T]\times  \mathbb{S}\to \mathbb{R} $ satisfy
			\begin{equation*}
				\begin{aligned}
					& [f_1]_{T}+\|f_2\|_{T}+ \interleave  f_3\interleave_{T} <\infty
				\end{aligned}
			\end{equation*}
			with $0<T<1$.
			Then for
			\begin{align*}
				f(t,x)=\int_{\mathbb{S}} f_1(t,x,\alpha)\tilde{\E}^\alpha f_2(t,x) f_3(t,x-\alpha)\frac{d\alpha}{\tilde \alpha},
			\end{align*}
			where $\tilde \alpha$ and $\tilde{\E}^\alpha$ are  defined in \eqref{notapes},
			there holds
			\begin{equation}\label{res1}
				\begin{aligned}
					\sup_{t\in[0,T]}t^{m+\kappa }\|f(t,\cdot)\|_{\dot C^{m+\kappa }}
					\lesssim \|f_2\|_{T,*}([f_1]_{T,*}\interleave f_3\interleave_{T}+[f_1]_T\interleave f_3\interleave_{T,*}+T^\frac{1}{2}[f_1]_T\interleave f_3\interleave_{T}).
				\end{aligned}
			\end{equation}
			If we further have $\int_{\mathbb{S}} f_1(t,x,\alpha)\tilde{\E}^\alpha f_2(t,x)\frac{d\alpha}{\tilde \alpha}=0$, then 
			\begin{align}\label{res2}
				\sup_{t\in[0,T]}t^{m+\kappa }\|f(t,\cdot)\|_{\dot C^{m+\kappa }}\lesssim [f_1]_T\|f_2\|_{T,*}\interleave f_3\interleave_{T,*}.
			\end{align}
		\end{lemma}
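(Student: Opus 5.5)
The plan is to estimate $\partial_x^m f$ by distributing the $m$ derivatives over the three factors with the Leibniz rule, and then to bound the $\dot C^{\kappa}$ seminorm of each resulting term by splitting the $\alpha$-integral into a near region $|\alpha|\le t$ and a far region $|\alpha|>t$. The time weight $t^{m+\kappa}$ corresponds to scale $t$, since the order $s=1$. Writing $\partial_x^m f=\sum_{m_1+m_2+m_3=m} c\int \partial_x^{m_1}f_1\,\tilde{\E}^\alpha\partial_x^{m_2}f_2\,\partial_x^{m_3}f_3(x-\alpha)\,\frac{d\alpha}{\tilde\alpha}$, we take a finite difference $\delta_h$ in $x$ with $|h|\le t$; for $|h|\ge t$ the $L^\infty$ bound suffices by interpolation. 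Then $\delta_h$ falls on exactly one of the three factors, by the discrete product rule.

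First we record the pointwise bounds. By Lemma \ref{douint} and interpolation,
\[
|\tilde{\E}^\alpha\partial_x^{j}f_2|\lesssim \|f_2\|_{T,*}\,t^{-j}\min\{(|\alpha|/t)^{1/10},(|\alpha|/t)^{\kappa}\}
\]
for $0\le j\le m$. The factor $f_1$ obeys the analogous bounds encoded in $[f_1]_T$ and $[f_1]_{T,*}$, namely $t^{-j}$ growth for $j$ derivatives and Hölder regularity in both $x$ and $\alpha$ through $\|\cdot\|_{\vartheta,1/4}$. The factor $f_3$ obeys the bounds in $\interleave f_3\interleave_T$ and $\interleave f_3\interleave_{T,*}$. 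The extra smallness $\min\{\cdot\}$ from $\tilde{\E}^\alpha f_2$ makes $\int\min\{(|\alpha|/t)^{1/10},(|\alpha|/t)^\kappa\}\frac{d\alpha}{|\alpha|}$ converge near $0$ and produce an $O(1)$ constant at scale $t$. At large $|\alpha|$ on the torus, $|\alpha|\le\pi$, so the crude bound costs only $\log$ or $t^{1/2}$ factors. This is the source of the $T^{1/2}[f_1]_T\interleave f_3\interleave_T$ term, because on $\mathbb{S}$ and for $t<1$ the far region $t<|\alpha|<\pi$ is handled with a power of $t$ to spare.

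For the terms where $\delta_h$ hits $f_2$ or $f_1$ (in $x$), we place the Hölder gain $|h|^\kappa$ directly and integrate in $\alpha$. This uses $\|f_2\|_{T,*}$ at the top order $m+\kappa$, or $[f_1]_{T,*}$ together with $\interleave f_3\interleave_T$. When $\delta_h$ hits $f_3(x-\alpha)$ at top order with $m_3=m$, we cannot afford $\interleave f_3\interleave_{T,*}$ alone if $f_3$ carries no extra smallness. Instead we change variables $\alpha\mapsto\alpha+h$ in the shifted term, which moves the difference onto $f_1(\cdot,\alpha)$, $\tilde{\E}^\alpha f_2$ and $1/\tilde\alpha$ in the $\alpha$-variable. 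The $\alpha$-Hölder control of $f_1$ in $\|\cdot\|_{\vartheta,1/4}$, Lemma \ref{dede} for $\tilde{\E}^\alpha f_2$, and smoothness of $1/\tilde\alpha$ away from $0$ then give the $|h|^\kappa t^{-m-\kappa}$ bound. Near $|\alpha|\lesssim|h|$ no cancellation is needed, because of the $\min$ smallness.

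For \eqref{res2}, we subtract $\partial_x^{m}f_3(x)\cdot\int f_1\tilde{\E}^\alpha f_2\,\frac{d\alpha}{\tilde\alpha}=0$. This replaces $f_3(x-\alpha)$ by $f_3(x-\alpha)-f_3(x)$, which supplies $|\alpha|^{1/10}$ smallness from $\interleave f_3\interleave_{T,*}$ and removes the need for $[f_1]_{T,*}$ and the $T^{1/2}$ term. The main obstacle is the top-order term with $\delta_h$ on $f_3$. There, the change of variables in $\alpha$ and the careful use of the $\alpha$-Hölder seminorm of $f_1$ and of Lemma \ref{dede} are essential, and the exponents $\vartheta\in[\frac12,\kappa]$ and $\frac14$ must be balanced so that every $\alpha$-integral converges at both ends.
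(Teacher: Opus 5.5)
There is a genuine gap in the top-order term where the difference falls on $f_2$, i.e.\ $\delta_h\tilde{\E}^\alpha\partial_x^m f_2$ with $m_1=m_3=0$.

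\textbf{Top-order term on $f_2$.} You propose to place $|h|^\kappa$ directly and integrate in $\alpha$. With only $t^{m+\kappa}\|\partial_xf_2\|_{\dot C^{m+\kappa}}$ available, you get either $|h|^\kappa$ or $|\alpha|^\kappa$ smallness, never both; a bound $|h|^\kappa|\alpha|^{\varepsilon}$ would need $\partial_xf_2\in\dot C^{m+\kappa+\varepsilon}$. On $|h|<|\alpha|<t$ the absolute-value estimate is therefore $|h|^\kappa t^{-m-\kappa}\log(t/|h|)$, which is not a $\dot C^{m+\kappa}$ bound. The map $g\mapsto\int a(\alpha)\tilde{\E}^\alpha g\,\frac{d\alpha}{\tilde\alpha}$ is a first-order singular integral, bounded at the endpoint only through cancellation, and your plan uses no cancellation for this term.

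The paper writes $\tilde{\E}^\alpha g=\alpha\partial_\alpha(\Delta_\alpha g)+\delta_\alpha g\,(\frac1\alpha-\frac1{\tilde\alpha})$. It then applies Lemma \ref{lempesibp} (an integration by parts in $\alpha$) to the main part. This pairs the $\dot C^{3/4\pm\varepsilon}_\alpha$ regularity of $f_1^{m_1}(x,\alpha)f_3^{m_3}(x-\alpha)$ with $\|\delta_\beta f_2^{m_2}\|_{\dot C^{1/4\pm\varepsilon}}\lesssim|\beta|^\kappa\|\partial_xf_2^{m_2}\|_{\dot C^{\kappa-3/4\pm\varepsilon}}$, which is strictly below top order.

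\textbf{Top-order term on $f_3$.} The same device is needed here. After the shift in $\alpha$, $\tilde{\E}^\alpha f_2-\tilde{\E}^{\alpha-\beta}f_2$ contains $\partial_x(\delta_{-\beta}f_2)(x-\alpha)=\alpha\partial_\alpha(\Delta_\alpha\delta_{-\beta}f_2)+\Delta_\alpha\delta_{-\beta}f_2$. Lemma \ref{dede} only handles the other piece, $\tilde\Delta_\alpha f_2-\tilde\Delta_{\alpha-\beta}f_2$ (in the paper, $P_{3,2,1}$ versus $P_{3,2,3}$). This step also needs periodicity, $\int_{\mathbb S}(h_1(\alpha)-h_1(\alpha-\beta))\,d\alpha=0$, to replace $f_3^m(x-\alpha)$ by $-\delta_\alpha f_3^m(x)$. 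Otherwise the term carrying $\frac1\alpha-\frac1{\alpha-\beta}$ is not absolutely integrable at $\alpha=0$, and your remark that no cancellation is needed for $|\alpha|\lesssim|h|$ fails there.

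\textbf{Far region.} The estimate must be uniform in $t$, so a $\log(1/t)$ loss is fatal. Your own pointwise bound gives $\int_t^\pi(|\alpha|/t)^{1/10}\frac{d\alpha}{|\alpha|}\sim t^{-1/10}$, not $O(1)$. Decay for $|\alpha|>t$ has to come from the structure of the norms. The weight $|\alpha|^{1/4}$ in $\|\cdot\|_{\vartheta,\frac14}$ yields a factor $(t/|\alpha|)^{1/4}$ that beats the $(|\alpha|/t)^{1/10}$ growth of $\tilde{\E}^\alpha f_2$; this is how the term with $\delta_\beta$ on $f_1$ closes, via $\int|\tilde{\E}^\alpha f_2^{m_2}|\,|\alpha|^{-5/4}d\alpha$. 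Decay can also come from extra factors $|\alpha|^{-1}$, or from Lemma \ref{lempesibp}.

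\textbf{The $T^{1/2}$ term.} The term $T^{1/2}[f_1]_T\interleave f_3\interleave_T$ does not come from $t<|\alpha|<\pi$. It comes from the nonsingular part of the periodic kernel, $\frac1\alpha(\frac1\alpha-\frac1{\tilde\alpha})=O(\min\{1,|\alpha|^{-2}\})$. That part only costs $\partial_xf_2\in\dot C^{m_2+\kappa-1/2}$ and hence gains $t^{1/2}$.

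\textbf{The reduction for \eqref{res2}.} The vanishing integral must be used before the Leibniz expansion, by rewriting $f=-\int f_1\tilde{\E}^\alpha f_2\,\delta_\alpha f_3\,\frac{d\alpha}{\tilde\alpha}$. The individual pieces $\int\partial_x^{m_1}f_1\,\tilde{\E}^\alpha\partial_x^{m_2}f_2\,\frac{d\alpha}{\tilde\alpha}$ need not vanish. The resulting top-order $f_2$ term again requires Lemma \ref{lempesibp}.
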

		\begin{proof}
			We first prove \eqref{res1}. For simplicity, for any $t\in[0,T]$, we fix $t$ and drop the time variable $t$ in this proof, and denote $f_i^{m_i}=\partial_x ^{m_i}f_i$ for $i=1,2,3$, $m_i\in\mathbb{N}$. We have 
			\begin{align*}
				\partial_x^m f(x)=\sum_{m_1+m_2+m_3=m}\int_{\mathbb{S}}f_1^{m_1}(x,\alpha)\tilde{\E}^\alpha f_2^{m_2}(x) f_3^{m_3}(x-\alpha)\frac{d\alpha}{\tilde \alpha}.
			\end{align*}
			In the following,  we drop the summation for $m_1+m_2+m_3=m$ with a slight abuse of notation. \\
			For any $\beta \neq 0$, we can write 
			\begin{align*}
				\delta_\beta\partial_x^m f(x)=&\int_{\mathbb{S}}\delta_\beta f_1^{m_1}(x,\alpha)\tilde{\E}^\alpha f_2^{m_2}(x) f_3^{m_3}(x-\alpha)\frac{d\alpha}{\tilde \alpha}\\
				&+\int_{\mathbb{S}} f_1^{m_1}(x-\beta,\alpha)\delta_\beta\tilde{\E}^\alpha f_2^{m_2}(x) f_3^{m_3}(x-\alpha)\frac{d\alpha}{\tilde \alpha}\\
				&+\int_{\mathbb{S}} f_1^{m_1}(x-\beta,\alpha) \tilde{\E}^\alpha f_2^{m_2}(x-\beta)\delta_\beta f_3^{m_3}(x-\alpha)\frac{d\alpha}{\tilde \alpha}\\
				:=&P_1+P_2+P_3.
			\end{align*}
			By the definition of $\|\cdot\|_{a,b}$ in \eqref{defnorab}, we have
			\begin{align*}
				&|\alpha|^\frac{1}{4}|\delta_\beta f_1^{m_1}(x,\alpha)|\lesssim |\beta|^\kappa\|f_1^{m_1}\|_{\kappa,\frac{1}{4}}.
			\end{align*}
			Hence it is easy to check that 
			\begin{equation}\label{1P1}
				\begin{aligned}
					|P_1|&\lesssim |\beta|^\kappa t^{-(m_1+\kappa-\frac{1}{4})}[f_1]_{T,*}\|f_3^{m_3}\|_{L^\infty} \int_{\mathbb{S}}|\tilde{\E}^\alpha f_2^{m_2}|\frac{d\alpha}{|\alpha|^\frac{5}{4}}\\
					&\lesssim |\beta|^\kappa t^{-(m+\kappa)}[f_1]_{T,*}\|f_2\|_{T,*}\interleave f_3\interleave_{T}.
				\end{aligned}
			\end{equation}
			Then we deal with $P_{2}$. Observe that 
			\begin{align}\label{ealp}
				\tilde{\E}^\alpha f_2^{m_2}(x)=\alpha\partial_\alpha (\Delta_\alpha f_2^{m_2})(x)+\delta_\alpha f_2^{m_2}(x)\left(\frac{1}{\alpha}-\frac{1}{\tilde \alpha}\right).
			\end{align}
			Hence, by \eqref{1perio},
			\begin{align*}
				P_{2}
				=&\int_{\mathbb{R}} f_1^{m_1}(x-\beta,\alpha)\partial_\alpha (\Delta_\alpha \delta_\beta f_2^{m_2})(x) f_3^{m_3}(x-\alpha){d\alpha}\\
				&+\int_{\mathbb{R}} f_1^{m_1}(x-\beta,\alpha)(\delta_\alpha\delta_\beta f_2^{m_2})(x)  f_3^{m_3}(x-\alpha)\left(\frac{1}{\alpha}-\frac{1}{\tilde\alpha}\right)\frac{d\alpha}{\alpha}\\
				:=&P_{2,1}+P_{2,2}.
			\end{align*}
			Since $|\frac{1}{\alpha}(\frac{1}{\alpha}-\frac{1}{\tilde\alpha})|\lesssim\min\{1,|\alpha|^{-2}\}$ for $|\alpha|\leq \pi$, we remark that $P_{2,1}$ is the main term, and $P_{2,2}$ is the remainder error term. 
			Applying Lemma \ref{lempesibp} with $f(\alpha)=f_1^{m_1}(x,\alpha) f_3^{m_3}(x-\alpha)$, $g=\delta_\beta f_2^{m_2}$, one has 
			\begin{align*}
				|P_{2,1}|\lesssim\prod_{+,-} \left\{ \|\delta_\beta f_2^{m_2}\|_{\dot C^{\frac{1}{4}\pm\varepsilon}}^\frac{1}{2}\sup_x\|f_1^{m_1}(x,\alpha) f_3^{m_3}(x-\alpha)\|_{\dot C_\alpha^{\frac{3}{4}\pm\varepsilon}}^\frac{1}{2}\right\}.
			\end{align*}
			Here we denote $\dot C^{b}_\alpha$ the H\"{o}lder semi-norm in variable $\alpha$. By Lemma \ref{maininterpo}, it is easy to check  that for $\gamma=\frac{3}{4}\pm\varepsilon$, 
			\begin{align*}
				\|f_1^{m_1}(x,\alpha) f_3^{m_3}(x-\alpha)\|_{L^\infty_x\dot C_\alpha^{\gamma}}
				&\lesssim \|f_1^{m_1}\|_{L^\infty_x\dot C^\gamma_\alpha}\|f_3^{m_3}\|_{L^\infty}+\|f_1^{m_1}\|_{L^\infty_{x,\alpha}}\|f_3^{m_3}\|_{\dot C^\gamma}\\
				&\lesssim t^{-(m_1+m_3)-\gamma}([f_1]_{T,*}\interleave f_3\interleave_{T}+[f_1]_T\interleave f_3\interleave_{T,*}).
			\end{align*}
			Hence we obtain that 
			\begin{align}
				|P_{2,1}|&\lesssim |\beta|^\kappa  \prod_{+,-} \left\{t^{-(m_1+m_3)-\frac{3}{4}\mp\varepsilon}\|\partial_x f_2^{m_2}\|_{\dot C^{\kappa-\frac{3}{4}\pm\varepsilon}} \right\}^\frac{1}{2}([f_1]_{T,*}\interleave f_3\interleave_{T}+[f_1]_T\interleave f_3\interleave_{T,*})\nonumber\\
				&\lesssim t^{-\kappa-m} |\beta|^\kappa \|f_2\|_{T,*}([f_1]_{T,*}\interleave f_3\interleave_{T}+[f_1]_T\interleave f_3\interleave_{T,*}),\label{1lP21}
			\end{align}
			where $m=m_1+m_2+m_3$. 
			For $P_{2,2}$, by \eqref{1perio} we have 
			\begin{equation}\label{aaaaa}
				\begin{aligned}
					\left|	\int_{\mathbb{R}}h(\alpha)\left(\frac{1}{\alpha}-\frac{1}{\tilde\alpha}\right)\frac{d\alpha}{\alpha}\right|&=\left|\int_{\mathbb{R}} h(\alpha)\left(\frac{1}{\alpha^2}-\frac{\mathbf{1}_{|\alpha|\leq \pi}}{\tilde\alpha^2}\right)d\alpha\right|\\
					&\lesssim \int_{\mathbb{R}}\left| h(\alpha)\right|\min\{1,|\alpha|^{-2}\}d\alpha.
				\end{aligned}
			\end{equation}
			Then we obtain that 
			\begin{align*}
				|P_{2,2}|&\lesssim \|f_1^{m_1}\|_{L^\infty}\| f_3^{m_3}\|_{L^\infty}\int _{\mathbb{R}} \|\delta_\alpha\delta_\beta f_2^{m_2}\|_{L^\infty}\min\{1,|\alpha|^{-2}\}d\alpha \\
				&\lesssim |\beta|^\kappa\|f_1^{m_1}\|_{L^\infty}\| f_3^{m_3}\|_{L^\infty}\|\partial_xf_2^{m_2}\|_{\dot C^{\kappa-\frac{1}{2}}}\\
				&\lesssim t^{-\kappa-m+\frac{1}{2}} |\beta|^\kappa [f_1]_T\|f_2\|_{T,*}\interleave f_3\interleave_{T}.
			\end{align*}
			Combining this with \eqref{1lP21} to obtain that for any $t\in[0,T]$,
			\begin{align}\label{1P2}
				t^{m+\kappa } |\beta|^{-\kappa} |P_{2}|\lesssim \|f_2\|_{T,*}([f_1]_{T,*}\interleave f_3\interleave_{T}+[f_1]_T\interleave f_3\interleave_{T,*}+T^\frac{1}{2}[f_1]_T\interleave f_3\interleave_{T}).
			\end{align}
			Finally, we deal with $P_3$. If $m_1+m_2\neq 0$, \textit{i.e.} $m_3<m$, we directly have
			\begin{align*}
				|P_3|&\lesssim \|f_1^{m_1}\|_{L^\infty}\|\partial_x f_2\|_{\dot C^{m_2+\frac{3}{4}}}\|\delta_\beta f_3\|_{\dot C^{m_3}}\\
				&\lesssim |\beta|^\kappa t^{-(m+\kappa)}[f_1]_T\|f_2\|_{T,*}\interleave f_3\interleave_{T,*}.
			\end{align*}
			If $m_1+m_2=0$, we observe that 
			for any function $h_1,h_2$, by a change of variable, 
			\begin{align*}
				\int_{\mathbb{S}} h_1(\alpha)\delta_\beta h_2(x-\alpha)d\alpha&=\int_{\mathbb{S}} \left(h_1(\alpha)-h_1(\alpha-\beta)\right) h_2(x-\alpha)d\alpha\\
				&=-\int_{\mathbb{S}} \left(h_1(\alpha)-h_1(\alpha-\beta)\right) \delta_\alpha h_2(x)d\alpha.
			\end{align*}
			Applying this to $P_3$, it follows
			\begin{align*}
				P_3&=\int_{\mathbb{S}}\left( \frac{f_1(x-\beta,\alpha)\tilde{\E}^\alpha f_2(x-\beta)}{\alpha}-\frac{f_1(x-\beta,\alpha-\beta)\tilde{\E}^{\alpha-\beta} f_2(x-\beta)}{\alpha-\beta}\right)\delta_\alpha f_3^m(x)d\alpha\\
				&=\int_{\mathbb{S}}{ (f_1(x-\beta,\alpha)-f_1(x-\beta,\alpha-\beta))\tilde{\E}^\alpha f_2(x-\beta)}\delta_\alpha f_3^m(x) \frac{d\alpha}{\alpha}\\
				&\ \ \ \ +\int_{\mathbb{S}} f_1(x-\beta,\alpha-\beta)(\tilde{\E}^\alpha f_2-\tilde{\E}^{\alpha-\beta}f_2)(x-\beta)\delta_\alpha f_3^m(x)\frac{d\alpha}{\alpha}\\
				&\ \ \ \ +\int_{\mathbb{S}} f_1(x-\beta,\alpha-\beta)\tilde{\E}^{\alpha-\beta}f_2(x-\beta)\delta_\alpha f_3^m(x)\left(\frac{1 }{\alpha}-\frac{1}{\alpha-\beta}\right)d\alpha \\
				&:=P_{3,1}+P_{3,2}+P_{3,3}.
			\end{align*}
			For $P_{3,1}$, by \eqref{defnorab} and  Lemma \ref{douint} we have 
			\begin{align}
				&	|f_1(x-\beta,\alpha)-f_1(x-\beta,\alpha-\beta)|\lesssim|\beta|^\kappa  \min\left\{\sup_x\|f_1(x,\cdot)\|_{\dot C^\kappa},|\alpha|^{-\frac{1}{4}}\|f_1\|_{\kappa,\frac{1}{4}}\right\},\nonumber\\
				&	|\tilde{\E}^\alpha f_2(x,\beta)|\lesssim |\alpha|^a \|\partial_x f_2\|_{\dot C^{a}},\quad\quad |\delta_\alpha f_3^m(x)|\lesssim |\alpha |^a \|f_3\|_{\dot C^{m+a}}, \ \ \forall a\in(0,1).\label{eal111}
			\end{align}
			This implies
			\begin{equation}\label{1lP31}
				\begin{aligned}
					|P_{3,1}|	&\lesssim 	\int_{\mathbb{S}} | (f_1(x-\beta,\alpha)-f_1(x-\beta,\alpha-\beta))\tilde{\E}^\alpha f_2(x-\beta)\delta_\alpha f_3^m(x) |\frac{d\alpha}{|\alpha|}\\
					&\lesssim |\beta|^\kappa\int_{\mathbb{S}} \min\left\{\sup_x\|f_1(x,\cdot)\|_{\dot C^\kappa},|\alpha|^{-\frac{1}{4}}\|f_1\|_{\kappa,\frac{1}{4}}\right\}\frac{d\alpha}{|\alpha|^\frac{4}{5}}  \|\partial_x f_2\|_{\dot C^{\frac{1}{10}}}\|f_3\|_{\dot C^{\frac{1}{10}+m}}\\
					&\lesssim |\beta|^\kappa t^{-(m+\kappa)}[f_1]_{T,*}\|f_2\|_{T,*}\interleave f_3\interleave_{T,*}.
				\end{aligned}
			\end{equation}
			For $P_{3,2}$, observe that
			\begin{align*}
				({\tilde{\E}^\alpha  f_2}-{\tilde{\E}^{\alpha-\beta}  f_2})(x)&=\partial_x(\delta_{-\beta}f_2)(x-\alpha)-(\tilde \Delta_\alpha f_2(x)-\tilde \Delta_{\alpha-\beta} f_2(x))\\
				&=\partial_\alpha (\alpha \Delta_\alpha \delta_{-\beta}f_2)(x)-(\tilde \Delta_\alpha f_2(x)-\tilde \Delta_{\alpha-\beta} f_2(x))\\
				&=\alpha\partial_\alpha ( \Delta_\alpha \delta_{-\beta}f_2)(x)+\Delta_\alpha \delta_{-\beta}f_2(x)-(\tilde \Delta_\alpha f_2(x)-\tilde \Delta_{\alpha-\beta} f_2(x)).
			\end{align*}
			We further decompose $P_{3,2}$ into 
			\begin{align*}
				P_{3,2}=&\int_{\mathbb{S}}  f_1(x-\beta,\alpha-\beta)\partial_\alpha ( \Delta_\alpha \delta_{-\beta}f_2)(x) \delta_\alpha f_3^m(x){d\alpha}\\
				&+\int_{\mathbb{S}}  f_1(x-\beta,\alpha-\beta)( \Delta_\alpha \delta_{-\beta}f_2)(x) \delta_\alpha f_3^m(x)\frac{d\alpha}{\alpha}\\
				&+\int_{\mathbb{S}}  f_1(x-\beta,\alpha-\beta)(\tilde \Delta_\alpha f_2-\tilde \Delta_{\alpha-\beta} f_2)(x) \delta_\alpha f_3^m(x)\frac{d\alpha}{\alpha}\\
				=&P_{3,2,1}+P_{3,2,2}+P_{3,2,3}.
			\end{align*}
			We estimate $P_{3,2,1}$ similarly as $P_{2,1}$, which yields
			\begin{equation}\label{1lP321}
				\begin{aligned}
					&|P_{3,2,1}|
					\lesssim  t^{-\kappa-m} |\beta|^\kappa \|f_2\|_{T,*}([f_1]_{T,*}\interleave f_3\interleave_{T}+[f_1]_{T}\interleave f_3\interleave_{T,*}).
				\end{aligned}
			\end{equation}	
			Moreover, we have 
			\begin{align*}
				|P_{3,2,2}|&\lesssim [f_1]_{T}\int_{\mathbb{S}} |\delta_\alpha\delta_{-\beta} f_2(x)\delta_\alpha f_3 ^m(x)|\frac{d\alpha}{|\alpha|^2}.
			\end{align*}
			By Lemma \ref{douint}, we obtain 
			\begin{align*}
				&\|\delta_\alpha\delta_{-\beta} f_2\|_{L^\infty}\lesssim |\beta|^\kappa \min\left\{|\alpha|^{\frac{1}{4}}\|\partial_x f_2\|_{\dot C^{\kappa-\frac{3}{4}}},|\alpha|^\frac{3}{4}\|\partial_x f_2\|_{\dot C^{\kappa-\frac{1}{4}}}\right\},\\
				&\|\delta_\alpha f_3^m\|_{L^\infty}\lesssim \min\left\{|\alpha|^\frac{1}{4}\|f_3\|_{\dot C^{\frac{1}{4}+m}}, |\alpha|^\frac{3}{4}\|f_3\|_{\dot C^{\frac{3}{4}+m}}\right\}.
			\end{align*}
			It  follows
			\begin{equation}\label{1lP322}
				\begin{aligned}
					&|P_{3,2,2}|\\
					&\lesssim |\beta|^\kappa \|f_1\|_{L^\infty} \int_{\mathbb{S}}  \min\left\{\|\partial_x f_2\|_{\dot C^{\kappa-\frac{3}{4}}},|\alpha|^{\frac{1}{2}}\|\partial_x f_2\|_{\dot C^{\kappa-\frac{1}{4}}}\right\}\min\{\|f_3\|_{\dot C^{\frac{1}{4}+m}}, |\alpha|^\frac{1}{2}\|f_3\|_{\dot C^{\frac{3}{4}+m}}\}\frac{d\alpha}{|\alpha|^\frac{3}{2}}\\
					&\lesssim |\beta|^\kappa \|f_1\|_{L^\infty} (\|\partial_x f_2\|_{\dot C^{\kappa-\frac{3}{4}}}\|\partial_x f_2\|_{\dot C^{\kappa-\frac{1}{4}}}\|f_3\|_{\dot C^{\frac{1}{4}+m}}\|f_3\|_{\dot C^{\frac{3}{4}+m}})^\frac{1}{2}.
				\end{aligned}
			\end{equation}
			Finally, similar to $P_{3,1}$, by Lemma \ref{dede},
			\begin{align*}
				|\alpha|^\frac{1}{4}\frac{\|\tilde \Delta_\alpha f_2-\tilde \Delta_{\alpha-\beta} f_2\|_{L^\infty}}{|\beta|^\kappa}\lesssim  \|\partial_x f_2\|_{\dot C^{\kappa-\frac{1}{4}}}.
			\end{align*}
			Hence, 
			\begin{align*}
				|P_{3,2,3}|&\lesssim \int_{\mathbb{S}}| f_1(x-\beta,\alpha-\beta)(\tilde \Delta_\alpha f_2-\tilde \Delta_{\alpha-\beta} f_2)(x) \delta_\alpha f_3^m(x)|\frac{d\alpha}{|\tilde \alpha|}\\&\lesssim |\beta|^\kappa \|f_1\|_{L^\infty_{x,\alpha}} \|\partial_x f_2\|_{\dot C^{\kappa-\frac{1}{4}}}\int_{\mathbb{S}}|\delta_\alpha f_3^m|\frac{d\alpha}{|\tilde\alpha|^\frac{5}{4}}\\
				&\lesssim t^{-{m+\kappa }} |\beta|^\kappa [f_1]_T\|f_2\|_{T,*}\interleave f_3\interleave_{T,*}.
			\end{align*}
			Combining this with \eqref{1lP321} and \eqref{1lP322}, we deduce that 
			\begin{align}\label{1lP32}
				t^{m+\kappa}|\beta|^{-\kappa}|P_{3,2}|\lesssim [f_1]_T\|f_2\|_{T,*}\interleave f_3\interleave_{T,*}.
			\end{align}
			Then we estimate $$P_{3,3}=-\int_{\mathbb{S}}  f_1(x-\beta,\alpha-\beta)\tilde\E^{\alpha-\beta}f_2(x-\beta)\delta_\alpha f_3^m(x)\left(\frac{1 }{\alpha}-\frac{1}{\alpha-\beta}\right)d\alpha. $$
			By \eqref{eal111}, one has
			\begin{align*}
				|P_{3,3}|\lesssim \|f_1\|_{L^\infty_{x,\alpha}}\|\partial_x f_2\|_{\dot C^\frac{\kappa}{2}} \|f_3\|_{\dot C^{m+\frac{\kappa}{2}}}\int_{\mathbb{R}}|\alpha-\beta|^\frac{\kappa}{2}|\alpha|^\frac{\kappa}{2}\left|\frac{1 }{\alpha}-\frac{1}{\alpha-\beta}\right| d\alpha.
			\end{align*}
			Note that 
			\begin{align*}
				\int_{\mathbb{R}}|\alpha-\beta|^\frac{\kappa}{2}|\alpha|^\frac{\kappa}{2}\left|\frac{1 }{\alpha}-\frac{1}{\alpha-\beta}\right| d\alpha\lesssim |\beta|^\kappa\int_{\mathbb{R}}|\alpha-1|^\frac{\kappa}{2}|\alpha|^\frac{\kappa}{2}\left|\frac{1 }{\alpha}-\frac{1}{\alpha-1}\right| d\alpha\lesssim |\beta|^\kappa.
			\end{align*}
			This yields that 
			\begin{align}\label{1lP33}
				|P_{3,3}|\lesssim |\beta|^\kappa \|f_1\|_{L^\infty_{x,\alpha}}\|\partial_x f_2\|_{\dot C^\frac{\kappa}{2}} \|f_3\|_{\dot C^{m+\frac{\kappa}{2}}}\lesssim|\beta|^\kappa t^{-(m+\kappa)}[f_1]_T\|f_2\|_{T,*}\interleave f_3\interleave_{T,*}.
			\end{align}
			Combining \eqref{1lP31},  \eqref{1lP32}, and \eqref{1lP33}, we deduce 
			\begin{align}\label{1lP3}
				t^{m+\kappa}|\beta |^{-\kappa}|P_{3}|\lesssim [f_1]_T\|f_2\|_{T,*}\interleave f_3\interleave_{T,*}.
			\end{align}
			Hence we obtain from \eqref{1P1}, \eqref{1P2} and \eqref{1lP3} that 
			\begin{align*}
				&	\|\delta_\beta \partial_x f^m\|_{L^\infty}\lesssim 	|P_1|+|P_2|+|P_3|\\
				&\quad\quad\quad\lesssim |\beta|^\kappa t^{-(m+\kappa)}\|f_2\|_{T,*}([f_1]_{T,*}\interleave f_3\interleave_{T}+[f_1]_{T}\interleave f_3\interleave_{T,*}+T^\frac{1}{2}[f_1]_T\interleave f_3\interleave_{T}).
			\end{align*}
			This completes the proof of \eqref{res1}.\vspace{0.3cm}\\
			Then we prove \eqref{res2}. If $$
			\int_{\mathbb{S}} f_1(t,x,\alpha)\tilde{\E}^\alpha f_2(t,x)\frac{d\alpha}{\tilde \alpha}=0
			$$
			holds, then 
			\begin{align*}
				f(t,x)=-\int_{\mathbb{S}}f_1(t,x,\alpha)\tilde{\E}^\alpha f_2(t,x)\delta_\alpha f_3(t,x)\frac{d\alpha}{\tilde \alpha}.
			\end{align*}
			We note the finite difference operator in $f_3$ allows $f_3$ to share derivative, hence we avoid the endpoint norm $\|f_3\|_{L^\infty}$ in the estimate \eqref{res2}. The idea of the proof is similar to that of \eqref{res1}. For $m\in\mathbb{N}$, we write 
			\begin{align*}
				\partial_x^m f(x)=-\sum_{m_1+m_2+m_3=m}\int_{\mathbb{S}}f_1^{m_1}(x,\alpha)\tilde{\E}^\alpha f_2^{m_2}(x)\delta_\alpha f_3^{m_3}(x)\frac{d\alpha}{\tilde \alpha}.
			\end{align*}
			We drop the summation for $m_1+m_2+m_3=m$ with a slight abuse of notation. \\
			For any $\beta \neq 0$, we can write 
			\begin{align*}
				\delta_\beta\partial_x^m f(x)=&\int_{\mathbb{S}}\delta_\beta f_1^{m_1}(x,\alpha)\tilde{\E}^\alpha f_2^{m_2}(x)\delta_\alpha f_3^{m_3}(x)\frac{d\alpha}{\tilde \alpha}\\
				&+\int_{\mathbb{S}} f_1^{m_1}(x-\beta,\alpha)\delta_\beta \tilde{\E}^\alpha f_2^{m_2}(x)\delta_\alpha f_3^{m_3}(x)\frac{d\alpha}{\tilde \alpha}\\
				&+\int_{\mathbb{S}} f_1^{m_1}(x-\beta,\alpha) \tilde{\E}^\alpha f_2^{m_2}(x-\beta)\delta_\beta\delta_\alpha f_3^{m_3}(x)\frac{d\alpha}{\tilde \alpha}\\
				:=&\tilde P_1+\tilde P_2+\tilde P_3.
			\end{align*}
			Note that 
			\begin{align*}
				&|\alpha|^\frac{1}{4}|\delta_\beta f_1^{m_1}(x,\alpha)|\lesssim |\beta|^\kappa\|f_1^{m_1}\|_{\kappa,\frac{1}{4}}.
			\end{align*}
			Then it is straightforward to obtain
			\begin{equation}\label{P1}
				\begin{aligned}
					|\tilde P_1|&\lesssim |\beta|^\kappa t^{-(m_1+\kappa-\frac{1}{4})}[f_1]_{T,*} \|f_2^{m_2+1}\|_{\dot C^\frac{1}{8}}\int_{\mathbb{S}}|\delta_\alpha f_3^{m_3}|\frac{d\alpha}{|\alpha|^\frac{9}{8}}\\
					&\lesssim |\beta|^\kappa t^{-(m+\kappa)}[f_1]_{T,*}\|f_2\|_{T,*}\interleave f_3\interleave_{T,*}.
				\end{aligned}
			\end{equation}
			Then we deal with $P_{2}$. By \eqref{ealp}, we can write 
			\begin{align*}
				\tilde P_{2}=&\int f_1^{m_1}(x-\beta,\alpha)\partial_\alpha (\Delta_\alpha \delta_\beta f_2^{m_2})(x)\delta_\alpha f_3^{m_3}(x){d\alpha}\\
				&+\int f_1^{m_1}(x-\beta,\alpha)(\delta_\alpha\delta_\beta f_2^{m_2})(x) \delta_\alpha f_3^{m_3}(x)\left(\frac{1}{\alpha}-\frac{1}{\tilde\alpha}\right)\frac{d\alpha}{\alpha}\\
				:=&\tilde P_{2,1}+\tilde P_{2,2}.
			\end{align*}
			By Lemma \ref{lempesibp}, one has 
			\begin{align*}
				|\tilde P_{2,1}|\lesssim\prod_{+,-} \left\{ \|\delta_\beta f_2^{m_2}\|_{\dot C^{\frac{1}{4}\pm\varepsilon}}^\frac{1}{2}\sup_x\|f_1^{m_1}(x,\alpha)\delta_{\alpha} f_3^{m_3}(\tau)\|_{\dot C_\alpha^{\frac{3}{4}\pm\varepsilon}}^\frac{1}{2}\right\}.
			\end{align*}
			Here we use $\dot C^{a}_\alpha$ denote the $\dot C^a$ H\"{o}lder semi-norm in variable $\alpha$. By Lemma \ref{maininterpo}, we obtain that for $\gamma=\frac{3}{4}\pm\varepsilon$, 
			\begin{align*}
				&\frac{|	f_1^{m_1}(x,\alpha)\delta_{\alpha} f_3^{m_3}(x)-f_1^{m_1}(x,\alpha-z)\delta_{\alpha-z} f_3^{m_3}(x)|}{|z|^\gamma}
				\lesssim \|f_1^{m_1}\|_{\gamma,\frac{1}{4}}\|f_3^{m_3}\|_{\dot C^\frac{1}{4}}+\|f_1^{m_1}\|_{L^\infty}\|f_3^{m_3}\|_{\dot C^\gamma}.
			\end{align*}
			Hence we obtain that 
			\begin{align}
				|\tilde P_{2,1}|&\lesssim |\beta|^\kappa  \prod_{+,-} \left\{\|\partial_x f_2^{m_2}\|_{\dot C^{\kappa-\frac{3}{4}\pm\varepsilon}}\left(\|f_1^{m_1}\|_{\frac{3}{4}\pm\varepsilon,\frac{1}{4}}\|f_3^{m_3}\|_{\dot C^\frac{1}{4}}+\|f_1^{m_1}\|_{L^\infty}\|f_3^{m_3}\|_{\dot C^{\frac{3}{4}\pm\varepsilon}} \right) \right\}\nonumber\\
				&\lesssim t^{-\kappa-m} |\beta|^\kappa [f_1]_T\|f_2\|_{T,*}\interleave f_3\interleave_{T,*}.\label{lP21}
			\end{align}
			For $\tilde P_{2,2}$, by \eqref{aaaaa},
			\begin{align*}
				|\tilde P_{2,2}|&\lesssim \|f_1^{m_1}\|_{L^\infty}\| f_3^{m_3}\|_{\dot C^\frac{1}{4}}\int _{\mathbb{R}} \|\delta_\alpha\delta_\beta f_2^{m_2}\|_{L^\infty}|\alpha|^\frac{1}{4}\min\{1,|\alpha|^{-2}\}d\alpha \\
				&\lesssim |\beta|^\kappa\|f_1^{m_1}\|_{L^\infty}\| f_3^{m_3}\|_{\dot C^\frac{1}{4}}\|f_2^{m_2}\|_{\dot C^{\kappa+\frac{1}{4}}}\int_{\mathbb{R}}  \min\{|\alpha|^\frac{1}{2},|\alpha|^{-\frac{3}{2}}\}d\alpha\\
				&\lesssim |\beta|^\kappa\|f_1^{m_1}\|_{L^\infty}\| f_3^{m_3}\|_{\dot C^\frac{1}{4}}\|f_2^{m_2}\|_{\dot C^{\kappa+\frac{1}{4}}}\\
				&\lesssim t^{-\kappa-m} |\beta|^\kappa [f_1]_T\|f_2\|_{T,*}\interleave f_3\interleave_{T,*}.
			\end{align*}
			Combining this with \eqref{lP21}, we obtain that for any $t\in[0,T]$,
			\begin{align}\label{lP2}
				t^{m+\kappa } |\beta|^{-\kappa} |\tilde P_{2}|\lesssim [f_1]_T\|f_2\|_{T,*}\interleave f_3\interleave_{T,*}.
			\end{align}
			Finally, for $\tilde P_3$, we consider two cases.
			If $m_1+m_2\neq 0$, we directly have
			\begin{align*}
				|\tilde P_3|&\lesssim \|f_1^{m_1}\|_{L^\infty}\|\partial_x f_2\|_{\dot C^{m_2+\frac{3}{4}}}\|f_3\|_{\dot C^{m_3+\kappa+\frac{1}{4}}}\lesssim |\beta|^\kappa t^{-(m+\kappa)}[f_1]_T\|f_2\|_{T,*}\interleave f_3\interleave_{T,*}.
			\end{align*}
			When $m_1=m_2=0$, by the condition $\int f_1(x,\alpha)E^\alpha f_2(x)\frac{d\alpha}{\alpha}=0$, the estimate is the same as $P_3$. Hence,
			\begin{align}\label{lP3}
				t^{m+\kappa}|\beta |^{-\kappa}|\tilde P_{3}|\lesssim [f_1]_T\|f_2\|_{T,*}\interleave f_3\interleave_{T,*}.
			\end{align}
			Hence we obtain from \eqref{P1}, \eqref{lP2} and \eqref{lP3} that 
			\begin{align*}
				\|\delta_\beta \partial_x f^m\|_{L^\infty}\lesssim 	|\tilde P_1|+|\tilde P_2|+|\tilde P_3|\lesssim |\beta|^\kappa t^{-(m+\kappa)}[f_1]_T\|f_2\|_{T,*}\interleave f_3\interleave_{T,*}.
			\end{align*}
			This holds for any $\beta \neq 0$. Then we  complete the proof of \eqref{res2}. \vspace{0.5cm}
		\end{proof}\\
		\begin{proof}[Proof of Lemma \ref{lemnonpes}]
			In this proof, we consider for any fixed $t\in[0,T]$. We first prove \eqref{resn}.	Rewrite  \eqref{defnonpes} to the following form
			\begin{align*}
				\mathcal{N}(X)(x)=&-\sum \int _{\mathbb{R}}H(\tilde \Delta_\alpha X(x))\tilde{\E}^\alpha X_i(x) (\mathbf{T}(|\partial_x X|)\partial_xX_j)(x-\alpha)\frac{d\alpha}{\alpha}.
			\end{align*}
			where the sum is for some $i,j=1,2$ and $H(x)=C_{i_1,i_2,i_3}\frac{x_{i_1}x_{i_2}x_{i_3}}{|x|^4}$, $i_1,i_2,i_3=1,2$.  With a slight abuse of notation, we omit the summation symbol and subscript $i,j$ in our proof, and we do not distinguish $\partial_x f$ and $f'$.
			Note that $H(\tilde \Delta_\alpha X(x))\tilde{\E}^\alpha X_i(x)=\partial_\alpha G(\delta_\alpha X(x))-\frac{1}{2}\cot \frac{\alpha}{2}$, hence
			\begin{align*}
				\int_{\mathbb{S}}H(\tilde {\Delta}_\alpha X(t,x))\tilde \E^\alpha X_i \frac{d\alpha}{\tilde \alpha}=0.
			\end{align*}
			By \eqref{eqpesk}, and applying \eqref{res2} with 
			\begin{align*}
				f_1(t,x,\alpha)=H(\tilde {\Delta}_\alpha X(t,x)),\ \ \ \ f_2(t,x)=X_i(t,x),\ \ \ \ f_3(t,x)=\big(\mathbf{T}(|\partial_x X|)\partial_xX_j\big)(t,x),
			\end{align*}
			we obtain 
			\begin{align*}
				\sup_{t\in[0,T]}t^{m+\kappa }\|\mathcal{N}(X)(t)\|_{\dot C^{m+\kappa }}\lesssim [f_1]_T\|f_2\|_{T,*}\interleave f_3\interleave_{T,*}.
			\end{align*}
			Denote $\tilde{H}[X](t,x,\alpha)=H(\tilde \Delta_\alpha X)(t,x)$. We first consider 
			\begin{align*}
				[f_1]_T=[\tilde{H}[X](t)]_T,
			\end{align*}
			with $[\cdot]_T$ defined in \eqref{defTn}. For any $k\in\mathbb{N}$, $\gamma\in[\frac{1}{2},\kappa]$,
			\begin{align}\label{Tn1}
				\|\partial_x^k\tilde{H}[X](t))\|_{\gamma,\frac{1}{4}}=\sup_{\alpha,z\in\mathbb{S}}|\alpha|^\frac{1}{4}\left(\|\partial_x^kH(\tilde {\Delta}_\alpha X(t,\cdot))\|_{\dot C^\gamma}+\frac{\|\partial_x^kH(\tilde {\Delta}_\alpha X(t,.))-\partial_x^kH(\tilde {\Delta}_{\alpha-z} X(t,.))\|_{L^\infty}}{|z|^\gamma}\right).
			\end{align}
			By definition, it is easy to check that for any $0\leq k\leq m$, $0\leq t\leq T$,
			\begin{equation}\label{Tn2}
				\sup_{\alpha\in\mathbb{S}}|\alpha|^\frac{1}{4}\|\partial_x^kH(\tilde {\Delta}_\alpha X(t,\cdot))\|_{\dot C^\gamma}\lesssim  t^{-(k+\gamma-\frac{1}{4})}(1+\mathbf{\Theta}_X(T))^{m+2}\| X\|_{T,*}(1+\|X\|_T)^{m+1}.
			\end{equation}
			Moreover, by Lemma \ref{dede}, 
			\begin{equation}\label{Tn3}
				\begin{aligned}
					&\sup_{\alpha,z\in\mathbb{S}}|\alpha|^\frac{1}{4}\frac{\|\partial_x^kH(\tilde {\Delta}_\alpha X(t,x))-\partial_x^kH(\tilde {\Delta}_{\alpha-z} X(t,x))\|_{L^\infty}}{|z|^\gamma}\\
					&\quad\lesssim t^{-(k+\gamma-\frac{1}{4})}(1+\mathbf{\Theta}_X(T))^{m+2}\| X\|_{T,*}(1+\|X\|_T)^{m+1}.
				\end{aligned}
			\end{equation}
			Combine \eqref{Tn1}, \eqref{Tn2} with \eqref{Tn3}, we obtain 
			\begin{align}\label{Hs}
				[\tilde{H}[X](t)]_{T,*}\lesssim (1+\mathbf{\Theta}_X(T))^{m+2}\| X\|_{T,*}(1+\| X\|_T)^{m+1}.
			\end{align}
			Note that $\mathbf{T}(|\partial_x X|)\partial_xX_j=\mathcal{T}(|\partial_x X|)\frac{\partial_xX_j}{|\partial_x X|}$, then for any $k\in\mathbb{N}$, $a\in(0,1)$,
			\begin{align*}
				\|\mathbf{T}(|\partial_x X|)\partial_xX_j\|_{\dot C^{k+a}}\lesssim \mathfrak{C}(\|X\|_{L^\infty_T\dot W^{1,\infty}}) t^{-(k+a)} \mathbf{\Theta}_X(T) \| X\|_{T,*}(1+\|X\|_{T})^{k+1}.
			\end{align*}
			This yields 
			\begin{align*}\label{Ts}
				\interleave\mathbf{T}(|\partial_x X|)\partial_xX_j\interleave_{T,*}\lesssim \mathfrak{C}(\|X\|_{L^\infty_T\dot W^{1,\infty}})\mathbf{\Theta}_X(T) \| X\|_{T,*}(1+\|X\|_{T})^{m+1}.
			\end{align*}
			Then we obtain 
			\begin{align*}
				&\sup_{t\in[0,T]}(t^{\kappa}\|\mathcal{N}(X)(t)\|_{\dot C^{\kappa}}+	t^{m+\kappa }\|\mathcal{N}(X)(t)\|_{\dot C^{m+\kappa }})\lesssim \mathfrak{M}_T\| X\|_{T,*}^2(1+\| X\|_T)^{2(m+1)}.
			\end{align*}
			This completes the proof of \eqref{resn}. 
			
			Then we deal with \eqref{rendiff}.
			Note that 
			\begin{align*}
				(\mathcal{N}(Z)-\mathcal{N}(Y))(x)=& \int_{\mathbb{R}} (H(\tilde \Delta_\alpha Y)-H(\tilde \Delta_\alpha Z))(x)\tilde{\E}^\alpha Y(x) (\mathbf{T}(|\partial_x Y|)\partial_xY)(x-\alpha)\frac{d\alpha}{\alpha}\\
				&+ \int_{\mathbb{R}} H(\tilde \Delta_\alpha Z)(x)\tilde{\E}^\alpha (Y-Z)(x)  (\mathbf{T}(|\partial_x Y|)\partial_xY)(x-\alpha)\frac{d\alpha}{\alpha}\\
				&+\int_{\mathbb{R}} H(\tilde \Delta_\alpha Z)(x)\tilde{\E}^\alpha Z(x) (\mathbf{T}(|\partial_x Y|)\partial_xY_j-\mathbf{T}(|\partial_x Z|)\partial_xZ_j)(x-\alpha)\frac{d\alpha}{\alpha}\\
				:=&B_1+B_2+B_3.
			\end{align*}
			We apply \eqref{res1} with $(f_1,f_2,f_3)$ equal to $((H(\tilde \Delta_\alpha Y)-H(\tilde \Delta_\alpha Z))(x),Y(x),\mathbf{T}(|\partial_x Y|)\partial_xY(x))$, $(H(\tilde \Delta_\alpha Z)(x),(Y-Z)(x),\mathbf{T}(|\partial_x Y|)\partial_xY(x))$ and $(H(\tilde \Delta_\alpha Z)(x),Z(x),(\mathbf{T}(|\partial_x Y|)\partial_xY-\mathbf{T}(|\partial_x Z|)\partial_xZ)(x))$, respectively. Then we derive
			\begin{align*}
				&\sup_{t\in[0,T]}t^{m+\kappa}\|\partial_x^m B_1(t)\|_{\dot C^\kappa}\\
				&\quad\lesssim \|Y\|_{T,*}\left([(\tilde{H}[Y]-\tilde{H}[Z])]_{T,*}\interleave\mathbf{T}(|\partial_x Y|)\partial_xY\interleave_{T}+[(\tilde{H}[Y]-\tilde{H}[Z])]_T\interleave\mathbf{T}(|\partial_x Y|)\partial_xY\interleave_{T,*}\right.\\
				&\quad\quad\quad\quad\quad\quad\quad\quad\left.+T^{\frac{1}{2}}[(\tilde{H}[Y]-\tilde{H}[Z])]_T\interleave\mathbf{T}(|\partial_x Y|)\partial_xY\interleave_{T}\right),\\
				&\sup_{t\in[0,T]}t^{m+\kappa}	\|\partial_x^m B_2(t)\|_{\dot C^\kappa}\\
				&\quad\lesssim \| W\|_{T,*}\left([\tilde{H}[Z]]_{T,*}\interleave\mathbf{T}(|\partial_x Y|)\partial_xY\interleave_{T}+[\tilde{H}[Z]]_T\interleave\mathbf{T}(|\partial_x Y|)\partial_xY\interleave_{T,*}+T^{\frac{1}{2}}[\tilde{H}[Z]]_T\interleave\mathbf{T}(|\partial_x Y|)\partial_xY\interleave_{T}\right),\\
				&\sup_{t\in[0,T]}t^{m+\kappa}	\|\partial_x^m B_3(t)\|_{\dot C^\kappa}\\
				&\quad\lesssim  \| Z\|_{T,*}\left([\tilde{H}[Z]]_{T,*}\interleave(\mathbf{T}(|\partial_x Y|)\partial_xY-\mathbf{T}(|\partial_x Z|)\partial_xZ)\interleave_T+[\tilde{H}[Z]] _T\interleave(\mathbf{T}(|\partial_x Y|)\partial_xY-\mathbf{T}(|\partial_x Z|)\partial_xZ)\interleave_{T,*}\right.\\
				&\quad\quad\quad\quad\quad\quad\quad\quad\left.+T^{\frac{1}{2}}[\tilde{H}[Z]] _T\|Z\|_{T,*}\interleave(\mathbf{T}(|\partial_x Y|)\partial_xY-\mathbf{T}(|\partial_x Z|)\partial_xZ)\interleave_T\right).
			\end{align*}
			Denote $W=Y-Z$. We prove similarly as \eqref{Hs} that 
			\begin{align*}
				[(\tilde{H}[Y]-\tilde{H}[Z]) ]_{T,*}\lesssim (1+\mathbf{\Theta}_Y(T)+\mathbf{\Theta}_Z(T))^{m+1} \|W\|_{T}(1+\| (Y,Z)\|_T)^{m+1},
			\end{align*}
			and 
			\begin{align*}
				&\interleave\mathbf{T}(|\partial_x Y|)\partial_xY-\mathbf{T}(|\partial_x Z|)\partial_xZ\interleave_{T,*}\\
                &\quad\quad\quad\lesssim \mathfrak{C}(\|(Y,Z)\|_{L^\infty_T\dot W^{1,\infty}})(1+\mathbf{\Theta}_Y(T)+\mathbf{\Theta}_Z(T))\|W\|_{T}(1+\|(Y,Z)\|_T)^{m+1}.
			\end{align*}
			Combining the above estimates to obtain
			\begin{align*}
				\sup_{t\in[0,T]}t^{m+\kappa}&(\|\partial_x^m B_1(t)\|_{\dot C^\kappa}+\|\partial_x^m B_2(t)\|_{\dot C^\kappa}+\|\partial_x^m B_3(t)\|_{\dot C^\kappa})\\
				&\lesssim \mathfrak{M}_T\| W\|_{T}(\|(Y,Z)\|_{T,*}+T^{\frac{1}{2}}\|(Y,Z)\|_{T})(1+\|(Y,Z)\|_T)^{2(m+1)}.
			\end{align*} 
			This completes the proof of \eqref{rendiff}.
		\end{proof}
		~~\vspace{0.3cm}\\	
		\begin{proof}[Proof of Lemma \ref{lempesR}] The proof of \textbf{(i)} follows the proof of \textbf{(ii)}  with $(Y,Z)=(X,0)$, so we only need to prove \textbf{(ii)}. For simplicity, we only estimate the norm $\sup_{t\in[0,T]}t^\kappa\|(\M(\partial_x Y)-\M(\partial_x Z))(t)\|_{\dot C^{\kappa-\frac{1}{2}}}$. The estimates for higher order derivatives are parallel. We denote $W=Y-Z$, $\vec{V}=(Y,Z)$, $W'=\partial_x(Y-Z)$ and $\vec{V}'=(\partial_xY,\partial_xZ)$ in this proof. During the proof, we fix $t\in[0,T]$ and drop the time variable $t$. Note that
			\begin{align*}
				\M(\partial_xY)(x)-\M(\partial_xZ)(x)=-\frac{1}{\pi}\int_{\mathbb{R}} \mathfrak{R}(\alpha,s)\frac{d\alpha}{|\alpha|^{\frac{3}{2}}}.
			\end{align*}
			Here we denote 
			\begin{align*}
				&\mathfrak{R}(\alpha,s)= \delta_\alpha (\mathbf{T}(|\partial_x Y|)-\mathbf{T}(|\partial_x Z|))(x)\delta_\alpha \partial_x Y(x)+\delta_\alpha (\mathbf{T}(|\partial_x Z|))(x)\delta_\alpha \partial_x W(x)\\
				&\ \ \quad\quad\quad+\partial_xW(x)D_{\mathbf{T},\alpha}(\partial_x Y)(x)+\partial_x Z(x)(D_{\mathbf{T},\alpha}(\partial_x Y)-D_{\mathbf{T},\alpha}(\partial_x Z))(x)\end{align*}
			with 
			\begin{align*}
				&D_{\mathbf{T},\alpha}(A)=\delta_\alpha (\mathbf{T}(|A|))-\delta_\alpha A\cdot \nabla \big(\mathbf{T}(|A|)\big),\ \ \ \forall \ A:\mathbb{R}\to \mathbb{R}^2.
			\end{align*}
			By Lemma \ref{maininterpo},
			\begin{equation}\label{eR1}
				\begin{aligned}
					&\|\mathfrak{R}(\alpha,\cdot)\|_{\dot C^{\kappa-\frac{1}{2}}}\lesssim
					\|\mathbf{T}(|\partial_x Y|)-\mathbf{T}(|\partial_x Z|)\|_{\dot C^{\kappa-\frac{1}{2}}}\|\delta_\alpha \partial_x Y\|_{L^\infty}+\|\delta_\alpha (\mathbf{T}(|\partial_x Y|)-\mathbf{T}(|\partial_x Z|))\|_{L^\infty}\|\partial_x Y\|_{\dot C^{\kappa-\frac{1}{2}}}\\
					&\quad\quad+\| \mathbf{T}(|\partial_x Z|)\|_{\dot C^{\kappa-\frac{1}{2}}}\|\delta_\alpha \partial_x W\|_{L^\infty}+\|\delta_\alpha (\mathbf{T}(|\partial_x Z|))\|_{L^\infty}\|\partial_x W\|_{\dot C^{\kappa-\frac{1}{2}}}\\
					&\quad\quad+\|\partial_xW\|_{\dot C^{\kappa-\frac{1}{2}}}\|D_{\mathbf{T},\alpha}(\partial_x Y)\|_{L^\infty}+\|\partial_xW\|_{L^\infty}\|D_{\mathbf{T},\alpha}(\partial_x Y)\|_{\dot C^{\kappa-\frac{1}{2}}}\\
					&\quad\quad+\|\partial_x Z\|_{\dot C^{\kappa-\frac{1}{2}}}\|(D_{\mathbf{T},\alpha}(\partial_x Y)-D_{\mathbf{T},\alpha}(\partial_x Z))\|_{L^\infty}+\|\partial_x Z\|_{L^\infty}\|(D_{\mathbf{T},\alpha}(\partial_x Y)-D_{\mathbf{T},\alpha}(\partial_x Z))\|_{\dot C^{\kappa-\frac{1}{2}}},
				\end{aligned}
			\end{equation}
			where all $\dot C^{\kappa-\frac{1}{2}}$ denote H\"{o}lder semi-norm of $x$ variable. We remark that all the estimates related to the tension $\mathbf{T}$ and its derivatives will result in a factor $\mathfrak{M}_T$. We omit this factor in the following to avoid redundancy. 
			Lemma \ref{lemcom} implies that 
			\begin{align*}
				&\|\mathbf{T}(|\partial_x Y|)-\mathbf{T}(|\partial_x Z|)\|_{\dot C^{\kappa-\frac{1}{2}}}\lesssim \|W'\|_{\dot C^{\kappa-\frac{1}{2}}}+\| W'\|_{L^\infty}\|\vec{V}'\|_{\dot C^{\kappa-\frac{1}{2}}},\\
				&\| \mathbf{T}(|\partial_x Z|)\|_{\dot C^{\kappa-\frac{1}{2}}}\lesssim \| Z'\|_{\dot C^{\kappa-\frac{1}{2}}},\ \ \ \ \ \ \ \ \|D_{\mathbf{T},\alpha}(\partial_x Y)\|_{L^\infty}\lesssim \|Y'\|_{L^\infty}\|\delta_\alpha Y'\|_{L^\infty}^2,\\
				&\|D_{\mathbf{T},\alpha}(\partial_x Y)\|_{\dot C^{\kappa-\frac{1}{2}}}\lesssim \|Y'\|_{L^\infty}\|\delta_\alpha Y'\|_{\dot C^{\kappa-\frac{1}{2}}}\|\delta_\alpha Y'\|_{L^\infty}+\| Y'\|_{\dot C^{\kappa-\frac{1}{2}}}\|\delta_\alpha Y'\|_{L^\infty}^2,\\
				&\|(D_{\mathbf{T},\alpha}(\partial_x Y)-D_{\mathbf{T},\alpha}(\partial_x Z))\|_{L^\infty}\lesssim \|\vec{V}'\|_{L^\infty}\|\delta_\alpha W'\|_{L^\infty}\|\delta_\alpha \vec{V}'\|_{L^\infty}+\|\delta_\alpha \vec{V}'\|_{L^\infty}^2\|W'\|_{L^\infty},\\
				&\|(D_{\mathbf{T},\alpha}(\partial_x Y)-D_{\mathbf{T},\alpha}(\partial_x Z))\|_{\dot C^{\kappa-\frac{1}{2}}}\lesssim\|\vec{V}'\|_{L^\infty}\|\|\delta_\alpha W'\|_{\dot C^{\kappa-\frac{1}{2}}}\|\delta_\alpha \vec{V}'\|_{L^\infty}\\
				&\quad\quad +\|\vec{V}'\|_{L^\infty}\|\delta_\alpha W'\|_{L^\infty}\|\delta_\alpha \vec{V}'\|_{\dot C^{\kappa-\frac{1}{2}}}+\|\delta_\alpha W'\|_{L^\infty}\|\delta_\alpha \vec{V}'\|_{L^\infty}\|\vec{V}'\|_{\dot C^{\kappa-\frac{1}{2}}}\\
				&\quad\quad+\|\delta_\alpha \vec{V}'\|_{\dot C^{\kappa-\frac{1}{2}}}\|\delta_\alpha \vec{V}'\|_{L^\infty}\| W'\|_{L^\infty}+\|\delta_\alpha \vec{V}'\|_{L^\infty}^2(\|W'\|_{\dot C^{\kappa-\frac{1}{2}}}+\| W'\|_{L^\infty}\|\vec{V}'\|_{\dot C^{\kappa-\frac{1}{2}}}),
			\end{align*}
			where all the norms are defined with respect to $x$ variable. Hence, 
			\begin{equation*}
				\label{eR2}
				\begin{aligned}
					&\|D_{\mathbf{T},\alpha}(\partial_x Y)\|_{L^\infty}\lesssim \min\{|\alpha|^\frac{1}{5}\|Y'\|_{\dot C^\frac{1}{10}}^2,|\alpha|\|Y'\|_{\dot C^\frac{1}{2}}^2\} \lesssim \min\{|\alpha|^\frac{1}{5}t^{-\frac{1}{5}},|\alpha|t^{-1}\}\|Y\|_{T,*}^2\|Y\|_{T}.\\
					&\|D_{\mathbf{T},\alpha}(\partial_x Y)\|_{\dot C^{\kappa-\frac{1}{2}}}
					\lesssim  \min\{|\alpha|^{\frac{1}{5}}\|Y'\|_{\dot C^{\kappa-\frac{1}{2}}}\| Y'\|_{\dot C^{\frac{1}{5}}}(1+\| Y'\|_{L^\infty}),|\alpha|(\|Y'\|_{\dot C^{\kappa}}\|Y'\|_{\dot C^{\frac{1}{2}}}+\| Y'\|_{\dot C^{\kappa-\frac{1}{2}}}\|Y'\|_{\dot C^\frac{1}{2}}^2)\}\\
					&\quad\quad\quad\quad \quad\quad\quad\quad \lesssim 	t^{-(\kappa-\frac{1}{2})}\min\{|\alpha|^{\frac{1}{10}}t^{-\frac{1}{10}},|\alpha|t^{-1}\}\|Y\|_{T,*}^2(1+\|Y\|_T)^2,\\
					&\|(D_{\mathbf{T},\alpha}(\partial_x Y)-D_{\mathbf{T},\alpha}(\partial_x Z))\|_{L^\infty}\\
					&\quad\quad \lesssim\min\left\{|\alpha|^\frac{1}{5}\|W'\|_{L^\infty}\|\vec{V}'\|_{\dot C ^\frac{1}{10}}^2(1+\|\vec{V}'\|_{L^\infty}), |\alpha|(\|W'\|_{\dot C^\frac{1}{2}}\|\vec{V}'\|_{\dot C^\frac{1}{2}}+\|\vec{V}'\|_{\dot C^\frac{1}{2}}^2\|W'\|_{L^\infty})\right\} \\
					&\quad\quad\lesssim\min\{|\alpha|^\frac{1}{5}t^{-\frac{1}{5}},|\alpha|t^{-1}\}(\|\vec{V}\|_{T,*}\|W\|_T+\|W\|_{T,*})\|\vec{V}\|_{T,*}(1+\|\vec{V}\|_T).
				\end{aligned}
			\end{equation*}
			Similarly, we have both
			\begin{align*}
				&\|(D_{\mathbf{T},\alpha}(\partial_x Y)-D_{\mathbf{T},\alpha}(\partial_x Z))\|_{\dot C^{\kappa-\frac{1}{2}}}\\
				&\quad\lesssim |\alpha|^\frac{1}{5}\big(\|W'\|_{\dot C^{\kappa-\frac{1}{2}}}\|\vec{V}'\|_{\dot C^\frac{1}{5}}+\|W'\|_{\dot C^\frac{1}{5}}\|\vec{V}'\|_{\dot C^{\kappa-\frac{1}{2}}}\big)(1+\|\vec{V}'\|_{L^\infty})+|\alpha|^\frac{1}{5}\|W'\|_{L^\infty}\|\vec{V}'\|_{\dot C^{\kappa-\frac{1}{2}}}\|\vec{V}'\|_{\dot C^\frac{1}{5}}\\
				&\quad\lesssim|\alpha|^\frac{1}{5} t^{-(\kappa-\frac{1}{2}+\frac{1}{5})}(\|\vec{V}\|_{T,*}\|W\|_T+\|W\|_{T,*})\|\vec{V}\|_{T,*}(1+\|\vec{V}\|_T),
			\end{align*}
			and 
			\begin{align*}
				&\|(D_{\mathbf{T},\alpha}(\partial_x Y)-D_{\mathbf{T},\alpha}(\partial_x Z))\|_{\dot C^{\kappa-\frac{1}{2}}}\\
				&\quad\lesssim |\alpha|\left\{\|W'\|_{\dot C^{\kappa}}\|\vec{V}'\|_{\dot C^\frac{1}{2}}+\|W'\|_{\dot C^\frac{1}{2}}\|\vec{V}'\|_{\dot C^{\kappa}}(1+\|\vec{V}'\|_{L^\infty})\right.\\
				&\quad\quad\quad\left.+\|W'\|_{L^\infty}\|\vec{V}'\|_{\dot C^{\kappa}}\|\vec{V}'\|_{\dot C^\frac{1}{2}}+\|\vec{V}'\|_{\dot C^\frac{1}{2}}^2(\|W'\|_{\dot C^{\kappa-\frac{1}{2}}}+\| W'\|_{L^\infty}\|\vec{V}'\|_{\dot C^{\kappa-\frac{1}{2}}})\right\}\\
				&\quad\lesssim |\alpha|t^{-(\kappa+\frac{1}{2})}(\|\vec{V}\|_{T,*}\|W\|_T+\|W\|_{T,*})\|\vec{V}\|_{T,*}(1+\|\vec{V}\|_T)^2.
			\end{align*}
			These imply 
			\begin{align}\label{eR3}
				&\|(D_{\mathbf{T},\alpha}(\partial_x Y)-D_{\mathbf{T},\alpha}(\partial_x Z))\|_{\dot C^{\kappa-\frac{1}{2}}}\lesssim \frac{\min\{|\alpha|^{\frac{1}{5}}t^{-\frac{1}{5}},|\alpha|t^{-1}\}}{t^{\kappa-\frac{1}{2}}}(\|\vec{V}\|_{T,*}\|W\|_T+\|W\|_{T,*})\|\vec{V}\|_{T,*}(1+\|\vec{V}\|_T)^2.
			\end{align}
			Combining \eqref{eR1}-\eqref{eR3}, we obtain 
			\begin{align*}
				\|\mathfrak{R}(\alpha,\cdot)\|_{\dot C^{\kappa-\frac{1}{2}}}\lesssim \frac{\min\{|\alpha|^{\frac{1}{5}}t^{-\frac{1}{5}},|\alpha|t^{-1}\}}{t^{\kappa-\frac{1}{2}}}(\|\vec{V}\|_{T,*}\|W\|_T+\|W\|_{T,*})\|\vec{V}\|_{T,*}(1+\|\vec{V}\|_T)^2. 
			\end{align*}
			This yields 
			\begin{align*}
				\|\M(\partial_xY)-\M(\partial_xZ)\|_{\dot C^{\kappa-\frac{1}{2}}}&\lesssim\int_{\mathbb{R}}	\|\mathfrak{R}(\alpha,\cdot)\|_{\dot C^{\kappa-\frac{1}{2}}}\frac{d\alpha}{|\alpha|^{\frac{3}{2}}}\\
				&\lesssim t^{-\kappa }(\|\vec{V}\|_{T,*}\|W\|_T+\|W\|_{T,*})\|\vec{V}\|_{T,*}(1+\|\vec{V}\|_T)^2.
			\end{align*}
			This completes the proof of the lemma.
		\end{proof}\\
		\subsection{Estimates of nonlinear terms in 3D Peskin problem}
		The following lemma establishes the boundedness of non-convolution-type singular integral operators and will be useful for estimating the 3D Peskin problem. While the proof follows classical methods, we refer interested readers to \cite{Agmon1964} for a detailed exposition.
		\begin{lemma}
			\label{lemG11}
			Consider a function $\mathbf{G}:\mathbb{R}^2\to\mathbb{R}$.		Suppose that there exists $C_{\mathbf{G}}>0$ such that  \begin{align}\label{conG}|\mathbf{G}(\theta,\eta)|+|\theta-\eta||\nabla _{\theta,\eta} \mathbf{G}(\theta,\eta)|+|\theta-\eta|^2|\nabla _\theta\nabla _\eta \mathbf{G}(\theta,\eta)|\leq \frac{C_{\mathbf{G}}}{|\theta-\eta|},\ \ \ \forall \theta,\eta\in\mathbb{R}^2,\ \theta\neq \eta.\end{align}   Define 
			\begin{align*}
				\mathcal{M}f(\theta)=\int_{\mathbb{R}^2}\mathbf{G}(\theta,\eta)\nabla \cdot f(\eta)d\eta.
			\end{align*}
			Then for $\kappa\in(0,1)$, there holds 
			\begin{align*}
				\|\mathcal{M}f\|_{\dot C^\kappa}\lesssim C_{\mathbf{G}}\|f\|_{\dot C^\kappa}.
			\end{align*}
		\end{lemma}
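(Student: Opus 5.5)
We plan to reduce the statement to a Calderón--Zygmund type estimate for a kernel of order $-2$ (after moving the divergence onto $\mathbf{G}$), using the standard splitting of the $\dot C^\kappa$ difference into near and far regions. First, integrate by parts to write
\[
\mathcal{M}f(\theta)=-\int_{\mathbb{R}^2}\nabla_\eta\mathbf{G}(\theta,\eta)\cdot\big(f(\eta)-f(\theta)\big)\,d\eta ,
\]
which is legitimate since by \eqref{conG} we have $|\nabla_\eta\mathbf{G}|\lesssim C_{\mathbf{G}}|\theta-\eta|^{-2}$ and $|f(\eta)-f(\theta)|\lesssim\|f\|_{\dot C^\kappa}|\theta-\eta|^\kappa$. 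The boundary terms vanish: near $\eta=\theta$ the circle integral of $|\mathbf{G}||f(\eta)-f(\theta)|$ is $O(r^\kappa)$. For the terms at infinity we first subtract the constant $f(\theta)$, using $\int\mathbf{G}\nabla\cdot c=0$. If necessary we truncate $f$ and pass to the limit; this approximation is routine.

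Next, fix $\theta_1,\theta_2$ with $h=|\theta_1-\theta_2|$, and let $B=B(\theta_1,2h)$. Write $K(\theta,\eta)=\nabla_\eta\mathbf{G}(\theta,\eta)$, so that $|K|\lesssim C_{\mathbf{G}}|\theta-\eta|^{-2}$ and $|\nabla_\theta K|\lesssim C_{\mathbf{G}}|\theta-\eta|^{-3}$ by \eqref{conG}. We split
\[
\mathcal{M}f(\theta_1)-\mathcal{M}f(\theta_2)=I_{\rm near}+I_{\rm far}.
\]
For $I_{\rm near}$, integrate over $B$ with the representation above for each point separately. Each piece is bounded by $C_{\mathbf{G}}\|f\|_{\dot C^\kappa}\int_{|\eta-\theta_i|\le 3h}|\eta-\theta_i|^{\kappa-2}d\eta\lesssim C_{\mathbf{G}}\|f\|_{\dot C^\kappa}h^\kappa$; this is where $\kappa>0$ is needed.

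For $I_{\rm far}$ over $B^c$, write
\[
K(\theta_1,\eta)(f(\eta)-f(\theta_1))-K(\theta_2,\eta)(f(\eta)-f(\theta_2))
=\big(K(\theta_1,\eta)-K(\theta_2,\eta)\big)(f(\eta)-f(\theta_1))+K(\theta_2,\eta)\big(f(\theta_2)-f(\theta_1)\big).
\]
The first term is bounded via the mean value theorem by $C_{\mathbf{G}}h|\eta-\theta_1|^{-3}\|f\|_{\dot C^\kappa}|\eta-\theta_1|^\kappa$. This is integrable at infinity precisely because $\kappa<1$, and it gives $C_{\mathbf{G}}\|f\|_{\dot C^\kappa}h^\kappa$.

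The main obstacle is the second term, $(f(\theta_2)-f(\theta_1))\int_{B^c}K(\theta_2,\eta)\,d\eta$. The kernel decays only like $|\eta|^{-2}$, so this integral is borderline, and the paper's hypotheses give no cancellation condition for it. We would handle it by undoing the integration by parts: $\int_{B^c}\nabla_\eta\mathbf{G}(\theta_2,\eta)\,d\eta$ (taken over $B^c\cap B(\theta_1,R)$) equals a boundary integral of $\mathbf{G}$ over $\partial B$ and $\partial B(\theta_1,R)$. On $\partial B$ this is $O(C_{\mathbf{G}}h^{-1}\cdot h)=O(C_{\mathbf{G}})$, and likewise on the outer circle uniformly in $R$. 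So the term is bounded by $C_{\mathbf{G}}\,h^\kappa\|f\|_{\dot C^\kappa}$.

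To justify this, we would perform the whole far-region argument at a truncation radius $R$ from the start, with the boundary contributions at $\partial B(\theta_1,R)$ cancelling between the two points up to $O(C_{\mathbf{G}}h R^{\kappa-1})\to0$. Combining the near and far bounds yields the claimed estimate.
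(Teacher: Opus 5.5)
Your argument is correct, and it is the classical near/far splitting. You use the size bound near the diagonal, the mixed-derivative smoothness far away, and the divergence structure of $\nabla_\eta\mathbf{G}$ to supply the missing cancellation through a boundary integral of $\mathbf{G}$; the paper gives no proof of its own and simply refers to \cite{Agmon1964} for exactly this kind of argument. One precision: your first display is not absolutely convergent at infinity (the integrand has size $|\eta-\theta|^{\kappa-2}$ in $\mathbb{R}^2$, and subtracting $f(\theta)$ still leaves a boundary term of size up to $R^{\kappa}$ on the outer circle), so the truncation at radius $R$ in your last paragraph is necessary rather than optional; done that way, the outer boundary terms from the two representations and from undoing the integration by parts cancel exactly except for $\int_{\partial B(\theta_1,R)}(\mathbf{G}(\theta_1,\eta)-\mathbf{G}(\theta_2,\eta))(f(\eta)-f(\theta_1))\cdot\nu\,dS=O(C_{\mathbf{G}}\|f\|_{\dot C^\kappa}hR^{\kappa-1})$, and the proof closes.
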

		\begin{remark}\label{rmk3dpesk}
			The result of Lemma \ref{lemG11} on $\mathbb{R}^2$ can be extended parallel to $\mathbb{S}^2$. More precisely, for $\mathbf{G}:\mathbb{S}^2\to\mathbb{R}$, denote
			\begin{equation*}
				\tilde{\mathcal{M}} F(\widehat{\boldsymbol{x}})=\int_{\mathbb{S}^2}\mathbf{G}(\widehat{\boldsymbol{x}},\widehat{\boldsymbol{y}})\widetilde\nabla\cdot F(\widehat{\boldsymbol{y}})d \mu_{\mathbb{S}^2}(\widehat{\boldsymbol{y}}).
			\end{equation*}
			Suppose there exists $C_\mathbf{G}>0$ such that 
			\begin{equation*}
				|\mathbf{G}(\widehat{\boldsymbol{x}},\widehat{\boldsymbol{y}})|+|\widehat{\boldsymbol{x}}-\widehat{\boldsymbol{y}}||\widetilde\nabla_{ \widehat{\boldsymbol{x}},\widehat{\boldsymbol{y}}}\mathbf{G}(\widehat{\boldsymbol{x}},\widehat{\boldsymbol{y}})|+|\widehat{\boldsymbol{x}}-\widehat{\boldsymbol{y}}|^2|\widetilde\nabla_{ \widehat{\boldsymbol{x}}}\widetilde\nabla_{ \widehat{\boldsymbol{y}}}\mathbf{G}(\widehat{\boldsymbol{x}},\widehat{\boldsymbol{y}})|\leq \frac{C_{\mathbf G}}{|\widehat{\boldsymbol{x}}-\widehat{\boldsymbol{y}}|},
			\end{equation*}
			then for $\kappa\in(0,1)$,
			\begin{equation*}
				\|\tilde{\mathcal{M}}F\|_{C^{\kappa}(\mathbb{S}^2)}\lesssim C_{\mathbf{G}}\|F\|_{ C^{\kappa}(\mathbb{S}^2)}.
			\end{equation*}
		To prove this,	we can work on local charts of  $\mathbb{S}^2$ and reduce the problem onto $\mathbb{R}^2$, we omit the details of proof here.
		\end{remark}
		We now estimate the nonlinear term $N_1(F,\Phi)$, introduced in \eqref{3dpesnlt}, in appropriate time-weighted H\"{o}lder norms. This will be crucial for establishing the local well-posedness via a fixed-point argument in the space $\mathcal{Z}_{T,\Phi}^\sigma$. For simplicity, in the following of this section we will denote  
		\begin{equation*}
			\begin{aligned}
				&\mathcal{M}(X;\Phi)=\mathfrak{M}_1\|X-\Phi\|_{Z_T}(\|X-\Phi\|_{Z_T}+T^{\kappa}M_{\Phi})(1+\|X-\Phi\|_{Z_T}+m_{\Phi}+T^{\kappa}M_{\Phi})^{m+3},\\
				&\mathcal{M}(Y,Z;\Phi)=\mathfrak{M}_1\|Y-Z\|_{Z_T}(\|(Y-\Phi,Z-\Phi)\|_{Z_T}+T^{\eps}M_{\Phi})(1+\|(Y-\Phi,Z-\Phi)\|_{Z_T}+m_{\Phi}+T^{\kappa}M_{\Phi})^{m+3}.
                \end{aligned}
		\end{equation*}
		where $M_\Phi$ and $m_\Phi$ are defined in \eqref{Pesconphi}, $\|\cdot \|_{Z_T}$ is defined in \eqref{def3dpeszt}, and $\mathfrak{M}_1$ is defined in \eqref{defM1}. The following lemma provides both growth and Lipschitz-type bounds, with explicit dependence on the profile $\Phi$.
		
		\begin{lemma}\label{3dpeslemn1}
			Let $N_1(F, \Phi)$ be as defined in \eqref{3dpesnlt}. For any $T \in(0,1)$, and any functions $X, Y, Z \in \mathcal{Z}_{T,\Phi}^\sigma$, the following estimates hold:
			\begin{itemize}
				\item \textbf{(i) Nonlinear estimate:}
				\begin{equation*}
					\begin{aligned}
						\sum_{j=0,m}\sup_{t\in[0,T]}t^{j+\kappa}\|N_1(X,\Phi)(t)\|_{C^{j+\kappa}}\lesssim\mathcal{M}(X;\Phi),
					\end{aligned}
				\end{equation*}
				\item  \textbf{(ii) Lipschitz-type continuity:}
				\begin{equation*}
					\begin{aligned}
						\sum_{j=0,m}\sup_{t\in[0,T]}t^{j+\kappa}\|(N_1(Y,\Phi)-N_1(Z,\Phi))(t)\|_{C^{j+\kappa}}\lesssim\mathcal{M}(Y,Z;\Phi).
					\end{aligned}
				\end{equation*}
			\end{itemize}
		\end{lemma}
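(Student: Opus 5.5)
The plan is to treat $N_1$ as an integral operator whose kernel is the \emph{difference} of Stokeslets. Write
\[
N_1(X,\Phi)(\widehat{\boldsymbol{x}})=-\int_{\mathbb{S}^2}\mathbf{D}_X(\widehat{\boldsymbol{x}},\widehat{\boldsymbol{y}})\,\widetilde\nabla\cdot\big(\mathbf{T}(|\widetilde\nabla X|)\widetilde\nabla X\big)(\widehat{\boldsymbol{y}})\,d\mu_{\mathbb{S}^2}(\widehat{\boldsymbol{y}}),
\]
with $\mathbf{D}_X(\widehat{\boldsymbol{x}},\widehat{\boldsymbol{y}})=G(X(\widehat{\boldsymbol{x}})-X(\widehat{\boldsymbol{y}}))-G(\Phi(\widehat{\boldsymbol{x}})-\Phi(\widehat{\boldsymbol{y}}))$. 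Using the fundamental theorem of calculus along $\lambda\mapsto \Phi+\lambda(X-\Phi)$,
\[
\mathbf{D}_X=\int_0^1\nabla G\big(\delta\Phi+\lambda\,\delta(X-\Phi)\big)\,d\lambda\cdot \delta(X-\Phi),
\]
where $\delta g=g(\widehat{\boldsymbol{x}})-g(\widehat{\boldsymbol{y}})$.

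The arc-chord bound $\mathbf{\Theta}_{\Phi+\lambda(X-\Phi)}\le 2\mathbf{\Theta}_0$ holds for $X\in\mathcal{Z}^\sigma_{T,\Phi}$ by the same argument as \eqref{3dpesPc}. Together with $|\delta(X-\Phi)|\le |\widehat{\boldsymbol{x}}-\widehat{\boldsymbol{y}}|\,\|X-\Phi\|_{C^1}$, this shows that $\mathbf{D}_X$ satisfies the kernel condition of Remark~\ref{rmk3dpesk} with $C_{\mathbf G}\lesssim \mathfrak{M}_1\|X-\Phi\|_{Z_T}$. This is the key smallness factor. The other factor comes from the force $\mathbf{T}(|\widetilde\nabla X|)\widetilde\nabla X$, whose $C^\kappa$ norm at time $t$ is bounded by $t^{-\kappa}(\|X-\Phi\|_{Z_T}+T^\kappa M_\Phi)(1+\cdots)$, since $\|\widetilde\nabla\Phi\|_{\dot C^\kappa}\le M_\Phi$. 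Remark~\ref{rmk3dpesk} then gives the $j=0$ bound $t^\kappa\|N_1\|_{C^\kappa}\lesssim\mathcal{M}(X;\Phi)$. This requires the lower-order $L^\infty$ part to be absorbed by the same kernel bound after integrating by parts against the divergence.

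For $j=m$, I would localize with the charts $\chi_i$ and $\mathcal{R}_i\circ\mathcal{X}$, as in Lemma~\ref{3dpesmlem}, and redistribute the $m$ derivatives using the binomial splitting \eqref{3dpeskerd}. Each term has the form $(\widetilde\nabla_{\widehat{\boldsymbol{x}}}+\widetilde\nabla_{\widehat{\boldsymbol{y}}})^l\mathbf{D}_X$ acting on $\widetilde\nabla^{m-l}$ of the force.

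The derivatives $(\widetilde\nabla_{\widehat{\boldsymbol{x}}}+\widetilde\nabla_{\widehat{\boldsymbol{y}}})^l$ fall on $\delta X$ and $\delta\Phi$ as difference quotients. These are controlled by $\|\widetilde\nabla^{l}X\|$ and behave like kernels of the same homogeneity, so Lemma~\ref{lemG11}/Remark~\ref{rmk3dpesk} applies again. The time weights are $t^{-(l+\kappa)}$ for $X-\Phi$ and $T^\kappa M_\Phi$ for $\Phi$. They are combined with $t^{-(m-l)}$ from the force, using Lemma~\ref{lemcom} for the composition $\mathbf{T}(|\cdot|)$ and Lemma~\ref{maininterpo} for interpolation between the $W^{1,\infty}$ and $C^{m+1+\kappa}$ endpoints. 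This produces the power $(1+\cdots)^{m+3}$.

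The main obstacle is the endpoint case where all $m+\kappa$ derivatives fall on the kernel through $X$, that is $\widetilde\nabla^{m+1}X\in \dot C^\kappa$, while the force is only $L^\infty$-bounded in its non-small part. In that case I would proceed as in the proof of \eqref{res1}. I would first move the $\widehat{\boldsymbol{y}}$-divergence onto the kernel. I would then use the cancellation $\int \widetilde\nabla_{\widehat{\boldsymbol{y}}}\mathbf{D}_X(\cdot)\,d\mu$, which is a lower-order error, so that the force appears only through the difference $f(\widehat{\boldsymbol{y}})-f(\widehat{\boldsymbol{x}})$ and gains a Hölder factor. This keeps the time singularity integrable near $\widehat{\boldsymbol{y}}=\widehat{\boldsymbol{x}}$.

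Part (ii) follows the same scheme. I would split
\[
N_1(Y)-N_1(Z)=\int(\mathbf{D}_Y-\mathbf{D}_Z)\,\widetilde\nabla\cdot(\cdots)_Y+\int\mathbf{D}_Z\,\widetilde\nabla\cdot\big[(\cdots)_Y-(\cdots)_Z\big].
\]
Here $\mathbf{D}_Y-\mathbf{D}_Z$ is expanded by a second-order Taylor formula, using \eqref{mainint21}/\eqref{mainint31} from Lemma~\ref{lemcom}, so it carries the factor $\|Y-Z\|_{Z_T}$. The remaining factor $(\|(Y-\Phi,Z-\Phi)\|_{Z_T}+T^{\kappa}M_\Phi)$ comes from either the kernel or the force.
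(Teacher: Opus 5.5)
Your proposal is correct and follows essentially the paper's route. The difference-of-Stokeslets kernel satisfies \eqref{conG} with constant $\lesssim\|\widetilde\nabla(X-\Phi)\|_{L^\infty}$, which Remark~\ref{rmk3dpesk} turns into the $C^\kappa$ bound; the binomial splitting \eqref{3dpeskerd} with $(\widetilde\nabla_{\widehat{\boldsymbol{x}}}+\widetilde\nabla_{\widehat{\boldsymbol{y}}})^l$ on the kernel handles $j=m$, and (ii) uses the same two-term split. The endpoint cancellation you single out is already built into Remark~\ref{rmk3dpesk}, which always places the H\"older seminorm on the force, so the paper needs no separate argument for it; likewise the first-order difference $G(\delta Y)-G(\delta Z)$ suffices in (ii), with no second-order Taylor expansion needed.
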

		\begin{proof}
			For simplicity, we fix $t\in[0,T]$ and omit the time variable in the proof. We start with the lowest order H\"{o}lder estimate. 
			Let
            $$
			\tilde{G}(\widehat{\boldsymbol{x}},\widehat{\boldsymbol{y}}) := G(X(\widehat{\boldsymbol{x}}) - X(\widehat{\boldsymbol{y}})) - G(\Phi(\widehat{\boldsymbol{x}}) - \Phi(\widehat{\boldsymbol{y}})).
			$$
			By \eqref{3dpesnc}, the condition \eqref{conG} holds. Let $C_{\tilde{G}} = C (1+\mathbf{\Theta}_0)^m\|\widetilde\nabla(X - \Phi)\|_{L^\infty} (1 + \|\widetilde\nabla(X - \Phi)\|_{L^\infty})$. Applying Remark \ref{rmk3dpesk}, we obtain:
			\begin{equation}\label{3dpesn1}
				\begin{aligned}
					&\|N_1\|_{C^{\kappa}}\lesssim \mathfrak{M}_1\|\widetilde\nabla(X-\Phi)\|_{L^\infty}(1+\|\widetilde\nabla(X-\Phi)\|_{L^\infty})\|(\mathbf{T}(|\widetilde\nabla X|)\widetilde\nabla X)\|_{C^\kappa}\lesssim t^{-\kappa}\mathcal{M}(X;\Phi).
				\end{aligned}
			\end{equation}
			For higher order derivatives of $N_1(X,\Phi)$, we have
			$$
			N_1(X,\Phi)(t,\widehat{\boldsymbol{x}})=-\int_{\mathbb{S}^2}\tilde G(\widehat{\boldsymbol{x}},\widehat{\boldsymbol{y}})\widetilde\nabla\cdot(\mathbf{T}(|\widetilde\nabla X|) \widetilde\nabla X)(t,\widehat{\boldsymbol{y}})d \mu_{\mathbb{S}^2}(\widehat{\boldsymbol{y}}),
			$$
			By \eqref{3dpeskerd}, it holds
			\begin{equation*}
				\tilde\nabla_{\widehat{\boldsymbol{x}}}^k\tilde G(\widehat{\boldsymbol{x}},\widehat{\boldsymbol{y}})=\sum_{l\leq k}C(k,l)\tilde\nabla_{\widehat{\boldsymbol{y}}}^{k-l}G_l(\widehat{\boldsymbol{x}},\widehat{\boldsymbol{y}}),
			\end{equation*}
            where 
			$
			G_l(\widehat{\boldsymbol{x}},\widehat{\boldsymbol{y}})=(\widetilde\nabla_{ \widehat{\boldsymbol{x}}}+\widetilde\nabla_{ \widehat{\boldsymbol{y}}})^l\tilde G(\widehat{\boldsymbol{x}},\widehat{\boldsymbol{y}})
			$.
			Using the fact that
			\begin{equation*}
				\begin{aligned}
					|G_l(\widehat{\boldsymbol{x}},\widehat{\boldsymbol{y}})|+|\widehat{\boldsymbol{x}}-\widehat{\boldsymbol{y}}||\widetilde\nabla_{ \widehat{\boldsymbol{x}},\widehat{\boldsymbol{y}}}G_l(\widehat{\boldsymbol{x}},\widehat{\boldsymbol{y}})|&+|\widehat{\boldsymbol{x}}-\widehat{\boldsymbol{y}}|^2|\widetilde\nabla_{ \widehat{\boldsymbol{x}}}\widetilde\nabla_{ \widehat{\boldsymbol{y}}}G_l(\widehat{\boldsymbol{x}},\widehat{\boldsymbol{y}})|\\
					&\lesssim t^{-l}(1+\mathbf{\Theta}_0)^l\frac{\|\widetilde\nabla(X-\Phi)\|_{T}(1+\|\widetilde\nabla(X-\Phi)\|_{T}+TM_{\Phi})^{m}}{|\widehat{\boldsymbol{x}}-\widehat{\boldsymbol{y}}|},
				\end{aligned}
			\end{equation*}
			and applying Remark \ref{rmk3dpesk} with corresponding $G_l$ and $C_{G_l}$, we obtain
			\begin{equation}\label{3dpesn1h}
				\begin{aligned}
					\|N_1\|_{ C^{m+\kappa}}\lesssim t^{-m-\kappa}\mathcal{M}(X;\Phi).
				\end{aligned}
			\end{equation}
		Then we estimate $N_1(Y,\Phi)-N_1(Z,\Phi)$. Note that
			\begin{equation*}
				\begin{aligned}
					&N_1(Y,\Phi)(t,\widehat{\boldsymbol{x}})-N_1(Z,\Phi)(t,\widehat{\boldsymbol{x}})\\
					&\quad=\int_{\mathbb{S}^2}\left(G(Y(t,\widehat{\boldsymbol{x}})-Y(t,\widehat{\boldsymbol{y}}))-G(Z(t,\widehat{\boldsymbol{x}})-Z(t,\widehat{\boldsymbol{y}}))\right)\widetilde\nabla\cdot(\mathbf{T}(|\widetilde\nabla Y|)\widetilde\nabla Y)(t,\widehat{\boldsymbol{y}})d \mu_{\mathbb{S}^2}(\widehat{\boldsymbol{y}})\\
					&\quad\quad+\int_{\mathbb{S}^2}\left(G(Z(t,\widehat{\boldsymbol{x}})-Z(t,\widehat{\boldsymbol{y}}))-G(\Phi(\widehat{\boldsymbol{x}})-\Phi(\widehat{\boldsymbol{y}}))\right)\widetilde\nabla\cdot(\mathbf{T}(|\widetilde\nabla Y|)\widetilde\nabla Y-\mathbf{T}(|\widetilde\nabla Z|)\widetilde\nabla Z)(t,\widehat{\boldsymbol{y}})d \mu_{\mathbb{S}^2}(\widehat{\boldsymbol{y}}).
				\end{aligned}
			\end{equation*}
			Applying Remark \ref{rmk3dpesk} with $\mathbf{G}=G(Y(t,\widehat{\boldsymbol{x}})-Y(t,\widehat{\boldsymbol{y}}))-G(Z(t,\widehat{\boldsymbol{x}})-Z(t,\widehat{\boldsymbol{y}}))$ and $\mathbf{G}=G(t,Z(t,\widehat{\boldsymbol{x}})-Z(t,\widehat{\boldsymbol{y}}))-G(\Phi(\widehat{\boldsymbol{x}})-\Phi(\widehat{\boldsymbol{y}}))$, respectively, and by similar methods of \eqref{3dpesn1} and \eqref{3dpesn1h}, we deduce
			\begin{equation*}\label{pesN1}
				\begin{aligned}
					\|(N_1(Y,\Phi)-N_1(Z,\Phi))\|_{ C^{j+\kappa}}\lesssim  t^{-j+\kappa}\mathcal{M}(Y,Z;\Phi),
				\end{aligned}
			\end{equation*}
			for $j=0,m$. This completes the proof.
		\end{proof}
		\begin{lemma}\label{3dpeslemn2}
			Let $N_2(F,\Phi)$ be defined in \eqref{3dpesnlt}. For any $T\in(0,1)$, and any $X,Y,Z\in\mathcal{Z}_{T,\Phi}^\sigma$, we have the following estimates:
			\begin{itemize}
				\item \textbf{1: Nonlinear estimate:}\\
				\begin{equation*}
					\begin{aligned}
						\sum_{j=0,m}\sup_{t\in[0,T]}t^{j+\kappa}\|N_2(X,\Phi)(t)\|_{ C^{j+\kappa}}\lesssim\mathcal{M}(X;\Phi),
					\end{aligned}
				\end{equation*}
				\item \textbf{2: Lipschitz type continuity:}\\
				\begin{equation*}
					\begin{aligned}
						\sum_{j=0,n}\sup_{t\in[0,T]}t^{j+\kappa}\|(N_2(Y,\Phi)-N_2(Z,\Phi))(t)\|_{ C^{j+\kappa}}\lesssim\mathcal{M}(Y,Z;\Phi).
					\end{aligned}
				\end{equation*} 
			\end{itemize}
		\end{lemma}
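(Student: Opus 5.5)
The plan is to split $N_2$ into a genuinely quadratic part and a harmless kernel operator, and then reuse Remark \ref{rmk3dpesk} exactly as in the proof of Lemma \ref{3dpeslemn1}. Write $A=\widetilde\nabla\Phi$, $D=\widetilde\nabla(X-\Phi)$ and $\mathbf{S}(A)=\mathbf{T}(|A|)A$. Then $J(A)$ is the derivative $D_A\mathbf{S}(A)$, as one checks from \eqref{3dpesdefJ}. By Taylor's formula with integral remainder, the integrand of $N_2$ is
\[
\mathbf{S}(A+D)-\mathbf{S}(A)-J(A)D=\int_0^1(1-\lambda)\,D^2\mathbf{S}(A+\lambda D)[D,D]\,d\lambda=:\mathcal{Q}(A,D).
\]
Hence
\[
N_2(X,\Phi)=-\int_{\mathbb{S}^2}G(\Phi(\widehat{\boldsymbol{x}})-\Phi(\widehat{\boldsymbol{y}}))\,\widetilde\nabla\cdot\mathcal{Q}(A,D)(\widehat{\boldsymbol{y}})\,d\mu_{\mathbb{S}^2}(\widehat{\boldsymbol{y}}).
\]
The kernel $\mathbf{G}=G(\Phi(\widehat{\boldsymbol{x}})-\Phi(\widehat{\boldsymbol{y}}))$ is time independent. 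By \eqref{3dpesPc} it satisfies the hypotheses of Remark \ref{rmk3dpesk} with $C_{\mathbf G}\lesssim M_\Phi$. Along the segment $A+\lambda D$, the lower bound of $|A+\lambda D|$ comes from \eqref{3dpesPc} together with $\sigma\ll\mathbf\Theta_0^{-1}$. Therefore $D^2\mathbf S$ and all of its derivatives are bounded by powers of $\mathfrak C$, $\mathfrak c^{-1}$ and $\mathbf\Theta_0$, and these are absorbed into $\mathfrak M_1$.

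\textbf{Low-order estimate.} Remark \ref{rmk3dpesk} gives $\|N_2\|_{C^\kappa}\lesssim M_\Phi\|\mathcal Q\|_{C^\kappa}$. By the product rule of Lemma \ref{maininterpo} and the composition bounds of Lemma \ref{lemcom},
\[
\|\mathcal Q\|_{C^\kappa}\lesssim\mathfrak M_1\|D\|_{L^\infty}\bigl(\|D\|_{C^\kappa}+\|D\|_{L^\infty}\|(A,D)\|_{C^\kappa}\bigr).
\]
Here $\|A\|_{C^\kappa}\lesssim M_\Phi$. The term $\|D\|_{C^\kappa}$ is obtained by interpolating between $\|D\|_{L^\infty}$ and $t^{m+\kappa}\|D\|_{C^{m+\kappa}}$, which costs $t^{-\kappa}\|X-\Phi\|_{Z_T}$. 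This yields the bound $t^{-\kappa}\mathcal M(X;\Phi)$. The factor $T^\kappa M_\Phi$ in $\mathcal M$ absorbs the terms where all the derivatives land on $\Phi$, since $t^\kappa\|A\|_{C^\kappa}\le T^\kappa M_\Phi$.

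\textbf{High-order estimate.} For the $C^{m+\kappa}$ norm I would commute $\widetilde\nabla^m_{\widehat{\boldsymbol{x}}}$ through the kernel using the binomial decomposition \eqref{3dpeskerd}, as in Lemma \ref{3dpeslemn1}. Each kernel $(\widetilde\nabla_{\widehat{\boldsymbol{x}}}+\widetilde\nabla_{\widehat{\boldsymbol{y}}})^l\mathbf G$ again satisfies the hypotheses of Remark \ref{rmk3dpesk}, with constant $\lesssim M_\Phi(1+M_\Phi)^{l}$ and with no loss in $t$. The remaining $\widetilde\nabla_{\widehat{\boldsymbol{y}}}^{m-l}$ derivatives are integrated by parts onto $\mathcal Q$. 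It then suffices to bound $\|\mathcal Q\|_{C^{k+\kappa}}\lesssim t^{-(k+\kappa)}\mathfrak M_1\|X-\Phi\|_{Z_T}(\|X-\Phi\|_{Z_T}+T^\kappa M_\Phi)(\cdots)^{m+3}$ for $k\le m$. This follows from Lemma \ref{lemcom}, estimates \eqref{mainint12} and \eqref{mainint22}, applied to the smooth map $(A,D)\mapsto D^2\mathbf S(A+\lambda D)$. Each term is quadratic in $D$, and the top derivative falls either on one factor of $D$ (weight $t^{-(k+\kappa)}$) or on $A$ (bounded by $M_\Phi$, which is harmless).

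\textbf{Lipschitz estimate.} For the difference with $D_i=\widetilde\nabla(Y_i-\Phi)$, I would write
\[
\mathcal Q(A,D_1)-\mathcal Q(A,D_2)=\int_0^1 D_D\mathcal Q(A,D_2+\mu(D_1-D_2))\,[D_1-D_2]\,d\mu .
\]
This expression is linear in $D_1-D_2$ and carries one further factor of $D_1$ or $D_2$, because $D_D\mathcal Q$ vanishes at $D=0$. The same two steps then give $\mathcal M(Y,Z;\Phi)$, using \eqref{mainint31} for the second differences.

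The main obstacle is expected to be the integration-by-parts step at order $m+\kappa$. One must ensure that no derivative falls on the singular kernel beyond what Remark \ref{rmk3dpesk} tolerates, and that commutators with the chart cutoffs and with the metric factors from stereographic projection stay lower order. I would handle this as in the estimates \eqref{3dpesn4h}–\eqref{3dpesn6} of Lemma \ref{3dpesmlem}, working in the charts \eqref{normsph}.
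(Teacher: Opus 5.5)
Your argument is the paper's argument. The integral Taylor remainder $\mathcal Q(A,D)$ is exactly what the paper writes out componentwise, via the Newton--Leibniz formula, as $R_1+R_2+R_3$ (and as $\mathbf R_1+\dots+\mathbf R_4$ for the difference). After that, both of you apply Remark \ref{rmk3dpesk} to the frozen kernel $G(\Phi(\widehat{\boldsymbol{x}})-\Phi(\widehat{\boldsymbol{y}}))$ and treat the higher norms with \eqref{3dpeskerd} plus integration by parts.

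One step does not give the stated bound as written. You take $C_{\mathbf G}\lesssim M_\Phi$ for the frozen kernel and conclude $\|N_2\|_{C^\kappa}\lesssim M_\Phi\|\mathcal Q\|_{C^\kappa}$. That factor $M_\Phi=(1+\mathbf\Theta_0)\|\Phi\|_{C^{10m}}$ carries no power of $T$. It is therefore not dominated by $\mathcal M(X;\Phi)$, where $M_\Phi$ may appear only in the combination $T^\kappa M_\Phi$.

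This matters, it is not cosmetic. In the proof of Theorem \ref{thmPes3d}, $\varepsilon_0$ may depend only on $\mathfrak M_1$, $\mathbf\Theta_0$ and $m_\Phi$; high norms of $\Phi$ are controlled only by shrinking $T$. An unweighted $M_\Phi$ would make $\varepsilon_0$ depend on $\|\Phi\|_{C^{10m}}$. That is circular, because $\Phi$ must approximate $X_0$ to within $\varepsilon_0$ in $W^{1,\infty}$, and its higher norms in general blow up as $\varepsilon_0\to0$.

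The fix is to note that the quantities in the hypothesis of Remark \ref{rmk3dpesk}, for $G(\Phi(\widehat{\boldsymbol{x}})-\Phi(\widehat{\boldsymbol{y}}))$, involve only $\widetilde\nabla\Phi$ and the lower bound \eqref{3dpesPc}. For example, the mixed derivative $\widetilde\nabla_{\widehat{\boldsymbol{x}}}\widetilde\nabla_{\widehat{\boldsymbol{y}}}$ of the kernel is, schematically, $-\nabla^2G(\Phi(\widehat{\boldsymbol{x}})-\Phi(\widehat{\boldsymbol{y}}))[\widetilde\nabla\Phi(\widehat{\boldsymbol{x}}),\widetilde\nabla\Phi(\widehat{\boldsymbol{y}})]$. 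Hence $C_{\mathbf G}\lesssim(1+\mathbf\Theta_0)^3(1+m_\Phi)^2$. This is what the paper uses: $C_{\mathbf G}=m_\Phi(1+m_\Phi)$, with the powers of $\mathbf\Theta_0$ absorbed into $\mathfrak M_1$.

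The same bookkeeping is needed in your high-order step. For $l\ge1$ the kernels $(\widetilde\nabla_{\widehat{\boldsymbol{x}}}+\widetilde\nabla_{\widehat{\boldsymbol{y}}})^l\mathbf G$ do involve higher derivatives of $\Phi$. You should say explicitly that these are paid for by the spare factor $t^{l}$ from $t^{m+\kappa}\|\mathcal Q\|_{C^{m-l+\kappa}}\lesssim t^{l}(\cdots)$, so that they enter only through $T^\kappa M_\Phi$. With the constants tracked this way, your proof goes through.
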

		\begin{proof}
			The proof is just similar with Lemma \ref{3dpeslemn1}. Fix $t\in[0,T]$ and for simplicity we drop $t$ in the proof. We denote 
			\begin{equation*}
				R(X,\Phi)=\mathbf{T}(|\widetilde\nabla X|) \widetilde\nabla X-\mathbf{T}(|\widetilde\nabla\Phi|) \widetilde\nabla\Phi-J(\widetilde\nabla\Phi) \widetilde\nabla(X-\Phi).
			\end{equation*}
			By definition, we have 
			\begin{equation*}
				\begin{aligned}
					R(X,\Phi)=R_1+R_2+R_3,\\
				\end{aligned}
			\end{equation*}
			with
			\begin{align*}
				&R_1=(\mathbf{T}(|\widetilde\nabla X|)-\mathbf{T}(|\widetilde\nabla\Phi|))\widetilde\nabla\Phi-\mathbf{T}'(|\widetilde\nabla\Phi|)(|\widetilde\nabla X|-|\widetilde\nabla\Phi|)\widetilde\nabla\Phi,\\
				&R_2=(\mathbf{T}(|\widetilde\nabla X|)-\mathbf{T}(|\widetilde\nabla\Phi|))\widetilde\nabla(X-\Phi),\\
				&R_3=\mathbf{T}'(|\widetilde\nabla\Phi|)(|\widetilde\nabla X|-|\widetilde\nabla\Phi|)\widetilde\nabla\Phi-J_2(\widetilde\nabla\Phi)\widetilde\nabla(X-\Phi),
			\end{align*}
			with $J_2$ defined in \eqref{3dpesdefJ}. For lowest order H\"{o}lder norm, let $\mathbf{G}=G(\Phi(\widehat{\boldsymbol{x}})-\Phi(\widehat{\boldsymbol{y}}))$ and $C_{\mathbf{G}}=m_{\Phi}(1+m_{\Phi})$. By \eqref{3dpesnc}, the condition \eqref{conG} is satisfied. Then we apply Remark \ref{rmk3dpesk} to obtain
			\begin{equation*}
				\|N_2(X,\Phi)\|_{\dot C^\kappa}\lesssim (1+m_{\Phi})\|R(X,\Phi)\|_{\dot C^{\kappa}}.
			\end{equation*}
			By Newton-Leibniz formula, we can rewrite the remainder terms as
			\begin{equation*}
				\begin{aligned}
					&R_1=\int_0^1\int_0^1\mathbf{T}''(|\widetilde\nabla\Phi|+\lambda\mu(|\widetilde\nabla X|-|\widetilde\nabla\Phi|))\lambda (|\widetilde\nabla X|-|\widetilde\nabla\Phi|)^2\widetilde\nabla\Phi d\mu d\lambda,\\
					&R_2=\int_0^1\mathbf{T}'(|\widetilde\nabla\Phi|+\lambda(|\widetilde\nabla X|-|\widetilde\nabla\Phi|))(|\widetilde\nabla X|-|\widetilde\nabla\Phi|)\widetilde\nabla(X-\Phi)d\lambda,\\
					&R_3=\mathbf{T}'(|\widetilde\nabla\Phi|)\left(\frac{\widetilde\nabla(X-\Phi):\widetilde\nabla(X-\Phi)}{|\widetilde\nabla X|+|\widetilde\nabla\Phi|}+\left(\frac{2\widetilde\nabla\Phi:\widetilde\nabla(X-\Phi)}{|\widetilde\nabla X|+|\widetilde\nabla\Phi|}-\frac{\widetilde\nabla\Phi:\widetilde\nabla(X-\Phi)}{|\widetilde\nabla\Phi|}\right)\right)\widetilde\nabla\Phi.
				\end{aligned}
			\end{equation*}
			By Lemma \ref{maininterpo} and Lemma \ref{lemcom}, we can see that
			\begin{equation*}
				\|R(X,\Phi)\|_{ C^{\kappa}}\lesssim t^{-\kappa}\mathfrak{M}_1\|X-\Phi\|_{Z_T}(\|X-\Phi\|_{Z_T}+T^{\kappa}M_{\Phi})(1+\|X-\Phi\|_{Z_T}+m_{\Phi}),
			\end{equation*}
			which infers
			\begin{equation*}
				\|N_2(X,\Phi)\|_{ C^\kappa}\lesssim t^{-\kappa}\mathcal{M}(X;\Phi).
			\end{equation*}
			For higher order derivatives of $N_2(X,\Phi)$, we only need to use \eqref{3dpeskerd}, integrating by parts and taking derivatives on $R$, then by similar way of \eqref{3dpesn1h}, we can obtain
			\begin{equation*}
				\begin{aligned}
					\|N_2(X,\Phi)\|_{ C^{m+\kappa}}\lesssim t^{-m-\kappa}\mathcal{M}(X;\Phi).
				\end{aligned}
			\end{equation*}
			For $N_2(Y,\Phi)-N_2(Z,\Phi)$, we will shortly denote $\vec{V}=(Y,Z)$ and $\vec{V}-\Phi=(Y-\Phi,Z-\Phi)$ in the following proof. We have
			\begin{equation*}
				\begin{aligned}
					\mathbf{T}(|\widetilde\nabla Y|)\widetilde\nabla Y-\mathbf{T}(|\widetilde\nabla Z|)\widetilde\nabla Z-J(\widetilde\nabla \Phi)\widetilde\nabla(Y-Z)=\mathbf{R}=\mathbf{R}_1+\mathbf{R}_2+\mathbf{R}_3+\mathbf{R}_4,
				\end{aligned}
			\end{equation*}
			with
			\begin{align*}
				&\mathbf{R}_1=(\mathbf{T}(|\widetilde\nabla Y|)-\mathbf{T}(\widetilde\nabla\Phi))\widetilde\nabla(Y-Z)+(\mathbf{T}(|\widetilde\nabla Y|)-\mathbf{T}(|\widetilde\nabla Z|))\widetilde\nabla(Z-\Phi),\\
				&\mathbf{R}_2=\big(\mathbf{T}(|\widetilde\nabla Y|)-\mathbf{T}(|\widetilde\nabla Z|)-\mathbf{T}'(|\widetilde\nabla\Phi|)(|\widetilde\nabla Y|-|\widetilde\nabla Z|)\big)\widetilde\nabla\Phi,\\
				&\mathbf{R}_3=(\mathbf{T}'(|\widetilde\nabla Y|-\mathbf{T}'(|\widetilde\nabla\Phi|))(|\widetilde\nabla Y|-|\widetilde\nabla Z|),\\
				&\mathbf{R}_4=\big(\mathbf{T}'(|\widetilde\nabla\Phi|)(|\widetilde\nabla Y|-|\widetilde\nabla Z|)\big)\widetilde\nabla\Phi-J(\widetilde\nabla\Phi)\widetilde\nabla(Y-Z).
			\end{align*}
			By applying Remark \ref{rmk3dpesk} with $\mathbf{G}=G(\Phi(\widehat{\boldsymbol{x}})-\Phi(\widehat{\boldsymbol{y}}))$, we have
			\begin{equation*}
				\|(N_2(Y,\Phi)-N_2(Z,\Phi))\|_{C^\kappa}\lesssim (1+m_{\Phi})\|\mathbf{R}\|_{\dot C^{\kappa}}.
			\end{equation*}
		 By Lemma \ref{maininterpo} we have
			\begin{equation*}
				\|\mathbf{R}\|_{ C^{\kappa}}\lesssim t^{-\kappa}\mathfrak{M}_1\|Y-Z\|_{Z_T}(\|\vec{V}-\Phi\|_{Z_T}+T^\kappa M_{\Phi})(1+\|\vec{V}-\Phi\|_{Z_T}+m_{\Phi}).
			\end{equation*}
			This yields that 
			\begin{equation*}
				\begin{aligned}
					&\|(N_2(Y,\Phi)-N_2(Z,\Phi))\|_{ C^\kappa}\\
					&\quad\quad\quad\quad\lesssim t^{-\kappa}\mathfrak{M}_1\|Y-Z\|_{Z_T}(\|\vec{V}-\Phi\|_{Z_T}+T^\kappa M_{\Phi})(1+\|\vec{V}-\Phi\|_{Z_T}+m_{\Phi})^2.
				\end{aligned}
			\end{equation*}
			For higher order derivatives, we use \eqref{3dpeskerd}, integrating by parts and taking derivatives on $\mathbf{R}$ to obtain
			\begin{equation*}
				\begin{aligned}
					&\|(N_2(Y,\Phi)-N_2(Z,\Phi))\|_{ C^{m+\kappa}}\\
					&\quad\lesssim t^{-m-\kappa}\mathfrak{M}_1\|Y-Z\|_{Z_T}(\|\vec{V}-\Phi\|_{Z_T}+T^\kappa M_{\Phi})(1+\|\vec{V}-\Phi\|_{Z_T}+m_{\Phi}+T^\kappa M_{\Phi})^{m+3}.
				\end{aligned}
			\end{equation*}
        This completes the proof of the lemma.
		\end{proof}\\
		Finally, for $N_3(\Phi)$, since $\Phi$ is smooth, it is straightforward to obtain the following estimate.
		\begin{lemma}\label{3dpeslemn3}
			Let $N_3(\Phi)$ be defined in \eqref{3dpesnlt}, it holds
			\begin{equation*}
				\begin{aligned}
					\|N_3(\Phi)\|_{C^{m+\kappa}}\lesssim \mathfrak{M}_1M_{\Phi}(1+M_{\Phi})^{m+2}.
				\end{aligned}
			\end{equation*}
		\end{lemma}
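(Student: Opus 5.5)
The plan is to treat $N_3(\Phi)$ as a fixed, time-independent boundary-integral expression in which no rough function appears. The only sources of growth are then the arc-chord constant of $\Phi$ and the smoothness of $\Phi$, and both are controlled through $M_\Phi$ and, via the tension bounds \eqref{conten}, through $\mathfrak{M}_1$. First I write $N_3(\Phi)=-\int_{\mathbb{S}^2}\mathbf{G}_\Phi(\widehat{\boldsymbol{x}},\widehat{\boldsymbol{y}})\,\widetilde\nabla\cdot F_\Phi(\widehat{\boldsymbol{y}})\,d\mu_{\mathbb{S}^2}(\widehat{\boldsymbol{y}})$, with $\mathbf{G}_\Phi(\widehat{\boldsymbol{x}},\widehat{\boldsymbol{y}})=G(\Phi(\widehat{\boldsymbol{x}})-\Phi(\widehat{\boldsymbol{y}}))$ and $F_\Phi=\mathbf{T}(|\widetilde\nabla\Phi|)\widetilde\nabla\Phi$. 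By \eqref{3dpesPc} we have $|\Phi(\widehat{\boldsymbol{x}})-\Phi(\widehat{\boldsymbol{y}})|\gtrsim \mathbf{\Theta}_0^{-1}|\widehat{\boldsymbol{x}}-\widehat{\boldsymbol{y}}|$, so $\mathbf{G}_\Phi$ obeys the kernel bounds used in the proof of Lemma \ref{3dpesmlem}. In particular, for every $l\le m+1$,
\[
r|(\widetilde\nabla_{\widehat{\boldsymbol{x}}}+\widetilde\nabla_{\widehat{\boldsymbol{y}}})^l\mathbf{G}_\Phi|+r^2|\widetilde\nabla(\widetilde\nabla_{\widehat{\boldsymbol{x}}}+\widetilde\nabla_{\widehat{\boldsymbol{y}}})^l\mathbf{G}_\Phi|+r^3|\widetilde\nabla_{\widehat{\boldsymbol{x}}}\widetilde\nabla_{\widehat{\boldsymbol{y}}}(\widetilde\nabla_{\widehat{\boldsymbol{x}}}+\widetilde\nabla_{\widehat{\boldsymbol{y}}})^l\mathbf{G}_\Phi|\lesssim M_\Phi(1+M_\Phi)^{l+2},
\]
where $r=|\widehat{\boldsymbol{x}}-\widehat{\boldsymbol{y}}|$.

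Next I differentiate $m$ times in $\widehat{\boldsymbol{x}}$ and use the binomial splitting \eqref{3dpeskerd}. This moves the derivatives either onto the combination $(\widetilde\nabla_{\widehat{\boldsymbol{x}}}+\widetilde\nabla_{\widehat{\boldsymbol{y}}})^l$ acting on the kernel, or, after integrating by parts in $\widehat{\boldsymbol{y}}$, onto $F_\Phi$. The result is a finite sum of operators of the form $\int \mathbf{G}_l\,\widetilde\nabla\cdot(\widetilde\nabla^{m-l}F_\Phi)\,d\mu$, where each $\mathbf{G}_l=(\widetilde\nabla_{\widehat{\boldsymbol{x}}}+\widetilde\nabla_{\widehat{\boldsymbol{y}}})^l\mathbf{G}_\Phi$ satisfies \eqref{conG} with constant $C_{\mathbf{G}_l}\lesssim M_\Phi(1+M_\Phi)^{l+2}$. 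Remark \ref{rmk3dpesk} then bounds each term in $C^\kappa(\mathbb{S}^2)$ by $C_{\mathbf{G}_l}\|F_\Phi\|_{C^{m-l+\kappa}}$. The lower-order $C^j$ pieces of the inhomogeneous norm are handled in the same way, or by interpolation through Lemma \ref{maininterpo}.

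It remains to bound $F_\Phi$. I apply Lemma \ref{lemcom} to $\mathbf{T}(|\widetilde\nabla\Phi|)$. Since $|\widetilde\nabla\Phi|$ is bounded above by $m_\Phi$ and below by $\mathbf{\Theta}_0^{-1}$ (again from \eqref{3dpesPc}), the derivatives of $\mathbf{T}$ are controlled by $\mathfrak{C}$ and $\mathfrak{c}^{-1}$ together with powers of $1+\mathbf{\Theta}_0$, and these factors are absorbed into $\mathfrak{M}_1$. This gives $\|F_\Phi\|_{C^{m+\kappa}}\lesssim \mathfrak{M}_1\|\Phi\|_{C^{m+2}}(1+\|\Phi\|_{C^{m+2}})^{m}$, which is dominated by $\mathfrak{M}_1(1+M_\Phi)^{m+1}$ because $M_\Phi$ involves $\|\Phi\|_{C^{10m}}$. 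Multiplying by $C_{\mathbf{G}_l}$ and summing over $l$ gives the claimed bound $\mathfrak{M}_1 M_\Phi(1+M_\Phi)^{m+2}$, after adjusting exponents.

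The only delicate point is the bookkeeping of the powers of $1+M_\Phi$ in the kernel-derivative bounds, that is, checking that derivatives falling on $\Phi$ inside $G$ produce at most $(1+M_\Phi)^{l+2}$ for $l\le m$. This is the same computation as in the estimate of $N_4$ in Lemma \ref{3dpesmlem}, so I would simply cite it.
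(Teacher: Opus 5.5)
Your argument is correct and is essentially what the paper intends. The paper gives no separate proof, only remarking that the bound is straightforward because $\Phi$ is smooth; that is, one uses the same kernel machinery as for $N_4$ in Lemma \ref{3dpesmlem} and for Lemmas \ref{3dpeslemn1}--\ref{3dpeslemn2}: the arc-chord bound \eqref{3dpesPc}, the binomial splitting \eqref{3dpeskerd}, Remark \ref{rmk3dpesk} and Lemma \ref{lemcom}.

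The one soft spot is the final exponent. Multiplying your factors literally gives a power like $\mathfrak{M}_1M_\Phi(1+M_\Phi)^{m+3}$ rather than $(1+M_\Phi)^{m+2}$, so the step you call ``adjusting exponents'' is not actually justified. This is harmless in practice: $N_3(\Phi)$ is time-independent and enters the fixed-point argument only through $\da^{1,m,\kappa}_{T,\mathbb{S}^2}(N_3(\Phi))\lesssim T^\kappa\|N_3(\Phi)\|_{C^{m+\kappa}}$, so any polynomial dependence on $M_\Phi$ suffices.
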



\begin{thebibliography}{99}
			\bibitem{Am}H. Abels and B.-V. Matioc. {\em Well-posedness of the Muskat problem in subcritical
				$L^p$-Sobolev spaces. }European Journal of Applied Mathematics, 1–43, 2021.  
			\bibitem{Agmon1964} S. Agmon, A. Douglis, L. Nirenberg. {\em Estimates near the boundary for solutions of elliptic partial differential equations satisfying general boundary conditions. II}, Comm. Pure Appl. Math. {\bf 17} (1964), 35--92.
			\bibitem{ABZ} T. Alazard, N. Burq, and C. Zuily. {\em On the Cauchy problem for gravity water waves.} Invent. math., 198:71–163, 2014.
			\bibitem{Alazard-Lazar}
			T. Alazard and O. Lazar.
			{\em Paralinearization of the {M}uskat equation and application to the
				Cauchy problem.}
			Arch. Ration. Mech. Anal., 237(2):545--583, 2020.
			
			\bibitem{AN2020endpoint}T. Alazard and Q.-H. Nguyen. {\em Endpoint Sobolev theory for the Muskat equation}.  Communications in Mathematical Physics, 397, 1043--1102, 2023.

            \bibitem{AN2021}T. Alazard and Q.-H. Nguyen.{\em On the Cauchy problem for the Muskat equation with non-Lipschitz initial data.} Comm. Partial Differential Equations, 46(11):2171–2212, 2021.
			
		
			\bibitem{Alazard2020}T. Alazard and Q.-H. Nguyen. {\em On the Cauchy problem for the Muskat equation. II: Critical initial data},  Ann. PDE , 7 (2021). https://doi.org/10.1007/s40818-021-00099-x.
			
			\bibitem{TH4}  T. Alazard and Q.-H. Nguyen. {\em Quasilinearization of the 3D Muskat equation, and applications to the critical Cauchy problem.} Adv Math,. 399, 108278, 2022.

\bibitem{RAkin} R. Alexandre. {\em Fractional order kinetic equations and hypoellipticity}, arXiv: 1102.2161.
            
			\bibitem{Amb04} D. M. Ambrose. {\em Well-posedness of two-phase Hele-Shaw flow without surface tension.}
			European J. Appl. Math., 15(5):597–607, 2004.
			\bibitem{Amb07} D. M. Ambrose. {\em Well-posedness of two-phase Darcy flow in 3D. }Quart. Appl. Math.,
			65(1):189–203, 2007.
			
			\bibitem{Fourierbook} H. Bahouri, J. Y. Chemin, R. Danchin. {\em Fourier Analysis and Nonlinear Partial Differential Equations.} Springer, 2011. DOI:10.1007/978-3-642-16830-7.

\bibitem{FB2002} F. Bouchut. {\em Hypoelliptic regularity in kinetic equations}, J. Math. Pure
				Appl. {\bf 81} (2002), 1135–1159.
            
			\bibitem{CCFG}A. Castro, D. C\'{o}rdoba, C. L. Fefferman and  F. Gancedo. {\em Breakdown of smoothness for the Muskat problem.} Arch. Ration. Mech. Anal., 208(3):805–909, 2013.	
			
			\bibitem{RT2012} A. Castro, D. C\'{o}rdoba, C. L. Fefferman, F. Gancedo and M. L\'{o}pez-Fern\'{a}ndez. {\em Rayleigh Taylor breakdown
				for the Muskat problem with applications to water waves.} Annals of Math, 175(2): 909–948, 2012.

\bibitem{CCFG2016}A. Castro,D. C\'{o}rdoba, C. Fefferman, and F. Gancedo. {\em  Splash singularities for the one-phase Muskat problem in stable regimes.} Arch. Ration. Mech. Anal., 222(1):213–243, 2016.
            
			\bibitem{Cameron2019} S. Cameron. {\em Global well-posedness for the two-dimensional Muskat problem with slope
				less than 1}. Anal. PDE, 12(4):997–1022, 2019.
			
			\bibitem{Cameron2020} S. Cameron. {\em Global well-posedness for the 3D Muskat problem with medium size slope}. arXiv:2002.00508.
			
			\bibitem{CCHNX}S. Cameron, K. Chen, R. Hu, Q.-H. Nguyen, Y. Xu.
{\em The Muskat problem with a large slope,}
Journal of Functional Analysis,
290, 4, 111257, 2026.

			
			\bibitem{Camepeskin} S. Cameron and R. M. Strain. 
			{\em Critical local well-posedness for the fully nonlinear Peskin problem}. Comm. Pure Appl. Math., 77(2):901--989, 2024.
			
			
			
			\bibitem{ChangKang} T. Chang, K. Kang. {\em Local regularity near boundary for the Stokes and Navier–Stokes Equations.} SIAM Journal on Mathematical Analysis, 55(5): 5051-5085, 2023.

            \bibitem{CHN-CNS-local}K. Chen, R. Hu, and Q.-H. Nguyen. {\em Local well-posedness of the 1d compressible Navier-Stokes system with rough data,} Calculus of Variations and Partial Differential Equations, 63(42), 2024.


            \bibitem{CHHN-CNS-global} K. Chen, L. K. Ha, R. Hu, Q.-H. Nguyen. {\em Global well-posedness of the 1d compressible Navier-Stokes system with rough data,} Journal de Math\'ematiques Pures et Appliqu\'ees, 179: 425-453, 2023.
			
			\bibitem{KHN2025} K. Chen, R. Hu and Q.-H. Nguyen. {\em Well-posedness of the Fractional Fokker-Planck Equation},	arXiv:2501.16251
			
			\bibitem{KN} K. Chen and  Q.-H. Nguyen. {\em The Peskin problem with $\dot B^1_{\infty,\infty}$ data}.  SIAM J. Math. Anal., 55(6): 6262-6304, 2023.
			\bibitem{KeC1} K. Chen, Q.-H. Nguyen and Y. Xu. {\em The Muskat problem with $C^1$  data}. Trans. Am. 
			Math. Soc. 375: 3039–3060, 2022.

            \bibitem{CNY} K. Chen, Q.-H. Nguyen, and T. Yang. {\em Well-posedness of the Boltzmann and Landau Equations in Critical Spaces.} arXiv: 2509.14845.
			
			
			
			
			\bibitem{CGS2016}	A. Cheng, R. Granero-Belinch\'{o}n and S. Shkoller. {\em Well-posedness of the Muskat problem with H2 initial data,} Adv. Math., 286 , 32-104, 2016.
			
			
			
			
			
			
			
			\bibitem{1Constantin2010}P. Constantin, D. C\'{o}rdoba, F. Gancedo, and R. M. Strain.  {\em On the global existence for the Muskat problem}. Journal of the European Mathematical Society, 15(1):201-227, 2013.
			
			
			
			\bibitem{CCG2016}P. Constantin, D. C\'{o}rdoba, F. Gancedo, L. Rodriguez-Piazza and R. M. Strain. {\em On the Muskat problem: global in time results in 2D and 3D.} Amer. J. Math 138, no. 6, 1455-1494, 2016.
			
			\bibitem{CMT}P. Constantin, A.J. Majda, and E. Tabak. {\em Formation of strong fronts in the 2-D quasigeostrophic thermal active scalar, }Nonlinearity, 7(6) (1994), 1495–1533.
			
			\bibitem{CGSV}P. Constantin, F. Gancedo, R. Shvydkoy, and V. Vicol. {\em Global regularity for
				2D Muskat equations with finite slope.} Ann. Inst. H. Poincar\'{e} Anal. Non Lineaire, 34(4):1041–
			1074, 2017.
			
			\bibitem{CI17} P. Constantin, M. Ignatova. {\em Remarks on the fractional Laplacian with Dirichlet boundary conditions and applications,} IMRN,
			2017, Issue 6, (2017), 1653-1673.
			
			\bibitem{CI16} P. Constantin, M. Ignatova. {\em Critical SQG in bounded domains,} Annals of PDE, 2 (2016), no 8.
			
			\bibitem{CI20}P. Constantin, M Ignatova. {\em Estimates near the boundary for critical SQG}, Annals of PDE, 6 (2020).
			
			\bibitem{Constantin2023}P. Constantin, M. Ignatova, Q-H. Nguyen. {\em Global regularity for critical SQG in bounded domains.} Commun. Pure Appl. Math. 78(1): 3--59, 2025.
			
			\bibitem{CLSTW} P. Constantin, M.-C. Lai, R. Sharma, Y.-H. Tseng, and J. Wu. {\em New numerical results for the surface quasi-geostrophic equation}, J. Sci. Comput., 50(1) (2012), 1–28.
			
			\bibitem{CCG2011}A. C\'{o}rdoba, D. C\'{o}rdoba, and F. Gancedo. {\em Interface evolution: the Hele-Shaw and Muskat problems.} Ann. of Math. (2), 173(1):477–542, 2011.

\bibitem{CCG2013} A. C\'{o}rdoba, D. C\'{o}rdoba, and F. Gancedo. {\em Porous media: the Muskat problem in three dimensions.} Anal. PDE, 6(2):447– 497, 2013
            
			\bibitem{Cor98}D. C\'{o}rdoba. {\em Nonexistence of simple hyperbolic blow-up for the quasi-geostrophic equation, }Ann. of Math. (2), 148(3) (1998) 1135–1152.
	
			
			\bibitem{CG07}D. C\'{o}rdoba and F. Gancedo. {\em Contour dynamics of incompressible 3-D fluids
				in a porous medium with different densities. }Communications in Mathematical Physics,
			273(2):445–471, 2007.
			\bibitem{CG09} D. C\'{o}rdoba and F. Gancedo. {\em A maximum principle for the Muskat problem for
				fluids with different densities.} Communications in Mathematical Physics, 286(2):681–696,
			2009.

\bibitem{CGZ2015}D. C\'{o}rdoba, J. G\'{o}mez-Serrano, and A. Zlato\v{s}. {\em A note on stability shifting for the Muskat problem.} Philosophical Transactions of the Royal Society of London A: Mathematical, Physical and Engineering Sciences, 373(2050):20140278, 10, 2015.

\bibitem{CGZ2017}D. C\'{o}rdoba, J. G\'{o}mez-Serrano, and A. Zlato\v{s}. {\em A note on stability shifting for the Muskat problem,} II: From stable to unstable and back to stable. Anal. PDE, 10(2):367–378, 2017.
            
			\bibitem{Cordoba-Lazar-H3/2}
			D. C{\'o}rdoba and O. Lazar. {\em  Global well-posedness for the 2d stable Muskat problem in
				${H}^{\frac{3}{2}}$}.
			To appear in Annales scientifiques de l'\'{E}cole normale sup\'{e}rieure, 2021.
			
			
			
			
			
			
			\bibitem{DF96}
			H. Darcy.
			{\em Les Fontaines publiques de la ville de Dijon. Exposition et
				application des principes {\`a} suivre et des formules {\`a} employer dans
				les questions de distribution d'eau, etc}.
			V. Dalamont, 1856.
			
			
			
			
			\bibitem{DengLei2017}F. Deng, Z. Lei, and F. H. Lin. {\em On the two-dimensional Muskat problem with monotone
				large initial data}. Comm. Pure Appl. Math., 70(6):1115–1145, 2017.
			\bibitem{DLN14}	J. J. Donaire, J. G. Llorente, and A. Nicolau. {\em Differentiability of functions in the Zygmund class}. Proc. Lond. Math. Soc. 108.1 (3): 133–158, 2014.
			
			
			
			\bibitem{Escher11}	 J. Escher and B.-V. Matioc. {\em On the parabolicity of the Muskat problem: well-posedness, fingering, and stability
				results.} Z. Anal. Anwend., 30(2):193–218, 2011.
			
			
			\bibitem{Escher12}J. Escher, A.-V. Matioc, B.-V. Matioc. {\em A generalized Rayleigh-Taylor condition for the Muskat 
				problem}. Nonlinearity 25, 73–92, 2012.
			\bibitem{Escher18}J. Escher, B.-V. Matioc,  C. Walker. {\em The domain of parabolicity for the Muskat problem.} Indiana Univ. Math. J. 67, 679–737, 2018.
			
			\bibitem{Evans98}L. C. Evans. {\em Partial Differential Equations,} Graduate Studies in Mathematics, AMS, Providence, 1998.
			



            
			\bibitem{1Gancedo2020Global}F. Gancedo and O. Lazar. {\em Global well-posedness for the 3d Muskat problem in the critical Sobolev space.} Arch. Ration. Mech. Anal., 246(1), 141-207, 2022.

\bibitem{GGPS}F. Gancedo, E. Garc\'{\i}a-Ju\'{a}rez, N. Patel, and R. M. Strain. {\em On the Muskat problem with viscosity jump: global in time
results.} Adv. Math., 345:552–597, 2019.
            
\bibitem{Gancedo:survey}F. Gancedo. {\em A survey for the Muskat problem and a new estimate.} SeMA J., 74(1):21–35, 2017.
            
			\bibitem{GancedoGlobal2020}F. Gancedo, R. Granero-Belinchón, S. Scrobogna, {\em Global existence in the Lipschitz class for the N-Peskin problem}. Indiana University Mathematics Journal, 72(2):553--602, 2023.

            
			\bibitem{GarciaViscosityContrast2020}E. Garc\'{\i}a-Ju\'{a}rez, Y. Mori and R. M. Strain. {\em The Peskin problem with viscosity contrast}. Analysis and PDE. 16(3):785--838, 2023.

            \bibitem{GGNP}E. Garc\'{\i}a-Ju\'{a}rez, J. G\'{o}mez-Serrano, Huy Q. Nguyen, B. Pausader.
{\em Self-similar solutions for the Muskat equation,}
Advances in Mathematics,
399, 108294,
2022.

\bibitem{GGHP} E. Garc\'{\i}a-Ju\'{a}rez, J. G\'{o}mez-Serrano, S. V. Haziot, B. Pausader. {\em Desingu-
larization of small moving corners for the Muskat equation,} Ann. PDE 10 (2024) 17.
            
			\bibitem{GHpeskin} E. Garc\'{\i}a-Ju\'{a}rez, S. V. Haziot. {\em Critical well-posedness for the  2D Peskin problem with general tension}. Advances in Mathematics, 460, 110047, 2025.
			
			\bibitem{3Dpeskin} E. Garc\'{\i}a-Ju\'{a}rez, P.-C. Kuo, Y. Mori and R. M. Strain.
			{\em Well-posedness of the 3D Peskin Problem}. arXiv:2311.10157.


			
			\bibitem{GBL2020}R. Granero-Belinch \'{o}n and O. Lazar. {\em Growth in the Muskat problem.} Math. Model. Nat. Phenom., 15:Paper No. 7, 23, 2020.
2020.
			
			
			\bibitem{QTLG14}Q. T. Le Gia, W. Mclean. {\em Solving the heat equation on the unit sphere
				via Laplace transforms and radial basis functions.} Adv Comput Math (2014) 40:353–375, DOI:10.1007/s10444-013-9311-6.
			
			
			
			\bibitem{GTbook}D. Gilbarg, N. S. Trudinger. {\em Elliptic Partial Differential Equations of Second Order}. Springer-Verlag Berlin Heidelberg, 1977.
			
			\bibitem{HZ2024} Z. Hao, M. R\"{o}ckner, X. Zhang, {\em Second order fractional mean-field SDEs with singular kernels and measure
					initial data}, Ann. Probab. 54(1): 1-62, 2026.
			
			\bibitem{Hormand}L. Hormander, {\em Hypoelliptic second order differential equations,} Acta Math., 119, 147–171, 1967.
			
			\bibitem{HouNumerical2012}G. Hou, J. Wang, and A. Layton. {\em Numerical methods for fluid-structure interaction: a review.} Communications in Computational Physics, 12(02):337–377, 2012.
			
			
			
			\bibitem{Ignatova}M. Ignatova, {\em Construction of solutions of the critical SQG equation in bounded domains}, Advances in Mathematics, 351
			(2019), 1000–1023.
			
			
			\bibitem{Lazar24} O. Lazar, {\em Global well-posedness of arbitrarily large Lipschitz solutions for the Muskat problem with surface tension,} arXiv: 2407.09444.

            \bibitem{NL2012} N. Lerner, Y. Morimoto, K. Pravda-Starov. {\em Hypoelliptic Estimates for a Linear Model of the Boltzmann Equation
					without Angular Cutoﬀ}, Communications in Partial Differential Equations, 
				{\bf 37}, (2012), 234-284.
			\bibitem{Li2020Stability}H. Li. {\em Stability of the Stokes Immersed Boundary problem with Bending and Stretching
				energy}. Journal of Functional Analysis. 281(9):109204, 2021.
			
			\bibitem{LinTongSolvability2019}F.-H. Lin and J.-J. Tong. {\em Solvability of the Stokes immersed boundary problem in two dimensions.} Comm. Pure Appl.
			Math., 72(1):159–226, 2019.
			
			
			
			\bibitem{CM12}C. Mantegazza, L. Martinazzi. {\em A note on quasilinear parabolic equations on manifolds.} Ann. Sc. Norm. Super. Pisa Cl. Sci. (5) Vol. XI (2012), 857-874.
			
			
			
			\bibitem{Matio2018} B.-V. Matioc. {\em Viscous displacement in porous media: the Muskat problem in 2D.} Trans. Am. 
			Math. Soc. 370, 7511–7556,  2018.
			\bibitem{1Matioc2018} B.-V. Matioc. {\em The Muskat problem in two dimensions: equivalence of formulations, well-posedness, and regularity
				results}. Analysis and PDE, 12(2):281–332, 2018.
			\bibitem{MATIOC2022} A.-V. Matioc, B.-V. Matioc. {\em A new reformulation of the Muskat problem with surface tension}. Journal of Differential Equations. 350:308--335, 2023.
			
			\bibitem{YMCX2007} Y. Morimoto, C.-J. Xu. {\em Hypoelliticity for a class of kinetic equations}, J. Math. Kyoto Univ. {\bf 47} (2007), 129–152.
				\bibitem{CV} C. Villani. A review of mathematical topics in collisional kinetic theory, in {\it Handbook of mathematical fluid dynamics, Vol. I}, 71--305, North-Holland, Amsterdam.
			
			
			
			\bibitem{MittalImmersed2005}R. Mittal and G. Iaccarino. {\em Immersed boundary methods}. Annu. Rev. Fluid Mech., 37:239–261, 2005.
			\bibitem{MoriWell2019}Y. Mori, A. Rodenberg, and D. Spirn. {\em Well-posedness and global behavior of the Peskin problem of an immersed elastic filament in Stokes flow}. Comm. Pure Appl.
			Math., 72(5):887-980, 2019.
			
			
			
			
			\bibitem{Mus34} M. Muskat. {\em Two fluid systems in porous media. The encroachment of water into an oil sand.} J. Appl. Phys., 5(9):250–264, 1934.
			
			
			\bibitem{HNguyen}H. Q. Nguyen. {\em On well-posedness of the Muskat problem with surface tension}, Adv. Math., Volume 374,
			18, 107344 (2020).
			
			
			\bibitem{1HuyNguyen2020}	H. Q. Nguyen and B. Pausader. {\em A paradifferential approach for well-posedness of the Muskat problem.} Archive for
			Rational Mechanics and Analysis, 237:35-100, 2020.
			
			\bibitem{HuyNguyen2021} H. Q. Nguyen. {\em Global solutions for the Muskat problem in the scaling invariant Besov space $\dot B^1_{\infty,1}$.} Advances in Mathematics. 394:108122, 2022.
			
			
			
			\bibitem{Peskin1972Thesis}C. S Peskin. {\em Flow patterns around heart valves: a digital computer method for solving the
				equations of motion}. PhD thesis, Sue Golding Graduate Division of Medical Sciences, Albert
			Einstein College of Medicine, Yeshiva University, 1972.
			
			\bibitem{PeskinFlow1972} C. S Peskin. {\em Flow patterns around heart valves: a numerical method.} Journal of Computational Physics, 10(2):252–271,
			1972.
			
			\bibitem{PeskinImmersed2002}C. S Peskin. {\em The immersed boundary method.} Acta Numerica, 11:479–517, 2002.
			
			\bibitem{Polden}A. Polden. {\em Curves and surfaces of least total curvature and fourth-order flows,} Ph.D. Thesis, Mathematisches Institut, Universitat Tubingen, 1996.
			
			
			
			\bibitem{Rodenberg2018} A. Rodenberg. {\em 2D Peskin Problems of an Immersed Elastic Filament in Stokes Flow.} PhD thesis, The University of Minnesota. 2018.
			
			\bibitem{sesv} G. Seregin and V. Sverak. {\em On a Bounded Shear Flow in a Half-Space. } Journal of Mathematical Sciences, 178(3): 353-356, 2011.

\bibitem{Shi:regularity-muskat} J. Shi. {\em Regularity of Solutions to the Muskat Equation.} Arch. Ration. Mech. Anal., 247(3):Paper No. 36, 2023. 

\bibitem{Shi2024}J. Shi, {\em The regularity of the solutions to the Muskat equation: the degenerate regularity near the turnover points}, Adv. Math. 454, 109850, 2024.
            
			\bibitem{SCH04}M. Siegel, R. E. Caflisch, and S. Howison. {\em Global existence, singular solutions, and ill-posedness for the Muskat problem.} Comm. Pure Appl. Math., 57(10):1374–1411, 2004.
			
			
			\bibitem{JJS}J. J. Sharples. {\em Linear and quasilinear parabolic equations in Sobolev space}. J.Differential Equations. 202, 111–142, 2004.
			
			
			
			\bibitem{Steinbook} E. M. Stein. {\em Singular Integrals and Differentiability Properties of Functions}. Princeton University Pre, 1970.
			Fluid Mech. 19, 581–611,  2017.
			\bibitem{TongRegularized2021}J. Tong. {\em Regularized Stokes Immersed Boundary Problems in Two Dimensions: Well-posedness, Singular Limit, and Error Estimates. } Comm. Pure Appl.
			Math., 74(2):366-449, 2021.
			
			\bibitem{Tong24}J. Tong. {\em Global solutions to the tangential Peskin problem in 2-D}. Nonlinearity (2024), 37(1), 015006.
			
			\bibitem{TongWei}J. Tong and D. Wei. {\em Geometric properties of the 2-D Peskin problem.} Ann. PDE. 10, 24, 2024.

            \bibitem{TW2026} J. Tong and D. Wei. {\em
The Immersed Boundary Problem in 2-D: the Navier-Stokes Case.} arXiv: 2511.16189.
            
			\bibitem{Triebel}
			H. Triebel. {\em Theory of function spaces}, Birkh\"{a}user Basel, 1983. 
			
			
			
			\bibitem{wz1}W. Wang and L. Zhang. {\em The $C^\alpha$ regularity of a class of non-homogeneous ultraparabolic equations.} Sci. China
			Ser. A, 52(8):1589–1606, 2009.
			\bibitem{wz2} W. Wang and L. Zhang. {\em The $C^\alpha$ regularity of weak solutions of ultraparabolic equations.} Discrete Contin. Dyn.
			Syst., 29(3):1261–1275, 2011.
			
			\bibitem{Yi}F. Yi. {\em Global classical solution of Muskat free boundary problem. } J. Math. Anal. Appl.,
			288(2):442–461, 2003.
			
			
			
			
			
			
		\end{thebibliography}
	\end{document}